\renewcommand{\le}{\leqslant}
\renewcommand{\ge}{\geqslant}
\renewcommand{\setminus}{\smallsetminus}
\renewcommand{\gamma}{\upgamma}
\newcommand{\Rad}{\mathrm{\bf Rad}}
\newcommand{\BMW}{\mathrm{BMW}}
\renewcommand{\P}{\mathfrak{P}}
\newcommand{\sign}{\mathrm{sign}}
\newcommand{\n}{\{1,\ldots,n\}}
\renewcommand{\d}{\delta}
\newcommand{\e}{\varepsilon}
\newcommand{\R}{\mathbb R}
\newcommand{\1}{\mathbf 1}
\renewcommand{\b}{\mathfrak{b}}
\newtheorem{theorem}{Theorem}[section]
\newtheorem{proposition}[theorem]{Proposition}
\newtheorem{lemma}[theorem]{Lemma}
\newtheorem{corollary}[theorem]{Corollary}
\theoremstyle{remark}
\newtheorem{conjecture}[theorem]{Conjecture}
\newtheorem{remark}[theorem]{Remark}
\newtheorem{question}[theorem]{Question}
\theoremstyle{definition}
\newtheorem{definition}[theorem]{Definition}
\renewcommand{\subset}{\subseteq}
\newcommand{\C}{\mathbb C}
\DeclareMathOperator{\dist}{\bf dist}
\DeclareMathOperator{\trace}{\bf Tr}
\newcommand{\E}{\mathbb{ E}}
\newcommand{\N}{\mathbb N}
\newcommand{\Z}{\mathbb Z}
\newcommand{\eqdef}{\stackrel{\mathrm{def}}{=}}
\begin{document}


\title[Metric $X_p$ inequalities]{Metric ${X}_p$ inequalities}

\author{Assaf Naor}
\address{Mathematics Department\\ Princeton University\\ Fine Hall, Washington Road, Princeton, NJ 08544-1000, USA}
\email{naor@math.princeton.edu}

\author{Gideon Schechtman}
\address {Department of Mathematics\\
Weizmann Institute of Science\\
Rehovot 76100, Israel}
\email{gideon@weizmann.ac.il}.
\date{}

\begin{abstract} For every $p\in (0,\infty)$ we associate to every metric space $(X,d_X)$ a numerical invariant $\mathfrak{X}_p(X)\in [0,\infty]$ such that if $\mathfrak{X}_p(X)<\infty$ and a metric space $(Y,d_Y)$ admits a bi-Lipschitz embedding into $X$ then also $\mathfrak{X}_p(Y)<\infty$. We prove that if $p,q\in (2,\infty)$ satisfy $q<p$ then $\mathfrak{X}_p(L_p)<\infty$ yet $\mathfrak{X}_p(L_q)=\infty$. Thus our new bi-Lipschitz invariant certifies that $L_q$ does not admit a bi-Lipschitz embedding into $L_p$ when $2<q<p<\infty$. This completes the long-standing search for bi-Lipschitz invariants that serve as an obstruction to the embeddability of $L_p$ spaces into each other, the previously understood cases of which were metric notions of type and cotype, which however fail to certify the nonembeddability of $L_q$ into $L_p$ when $2<q<p<\infty$. Among the consequences of our results are new quantitative restrictions on the bi-Lipschitz embeddability into $L_p$ of snowflakes of $L_q$ and integer grids in $\ell_q^n$, for $2<q<p<\infty$. As a byproduct of our investigations, we also obtain results on the geometry of the Schatten $p$ trace class $S_p$ that are new even in the linear setting.
\end{abstract}

\maketitle

\section{Introduction}

\subsection{Nontechnical overview}\label{sec:overview} As a special case of the main contribution of the present article, for $p\in (0,\infty)$ we associate to every metric space $(X,d_X)$ a numerical invariant $\mathfrak{X}_p(X)\in [0,\infty]$; a precise description  of this quantity appears in Definition~\ref{def:Xpmetric} below.  Given $p\in (0,\infty)$ and two metric spaces $(X,d_X)$ and $(Y,d_Y)$,  any  $f:X\to Y$ incurs distortion at least $\mathfrak{X}_p(X)/\mathfrak{X}_p(Y)$. Thus, $\mathfrak{X}_p(\cdot)$ is a bi-Lipschitz invariant. We shall prove that for $2<q<p<\infty$  we have $\mathfrak{X}_p(L_p)\asymp p/\log p$, while $\mathfrak{X}_p(L_q)=\infty$. Consequently, $L_q$ does not admit a bi-Lipschitz embedding into $L_p$.

Qualitatively, the above nonembedding conclusion is well known. Namely, the fact that $L_q$ fails to admit a bi-Lipschitz embedding into $L_p$ when $2<q<p<\infty$  follows from a differentiation argument that allows one to reduce the question to the linear theory.  Specifically, every Lipschitz mapping $f:L_q\to L_p$ must have~\cite{Man72,Chr73,Aro76} a point of G\^ateaux differentiability $x_0\in L_q$. The derivative $f'(x_0):X\to Y$ is a bounded linear operator, and if $f$ were bi-Lipschitz then it would follow that $f'(x_0)$ is invertible with a bounded inverse, and therefore $L_q$ would be isomorphic to the  linear subspace $f'(x_0)L_q$ of $L_p$. However, a classical theorem of Paley~\cite{Pal36} asserts that $L_q$ is not isomorphic to any subspace of $L_p$, so it follows that $L_q$ also fails to admit a bi-Lipschitz embedding into $L_p$. The above reasoning is due to Mankiewicz~\cite[Theorem~4]{Man72}; Section~\ref{sec:intro details} below contains a more detailed description of the relevant background.

Such differentiation arguments rely on an existential statement (a point of differentiability must exist), followed by a limiting procedure (differentiation itself) that uses the linear structure. As such, they do not apply in many settings, examples of which include understanding the $L_p$ distortion of certain (often discrete) subsets of $L_q$, as well as treating non-Lipschitz (e.g. H\"older) mappings, a setting in which  the mapping may be non-differentiable  at every point~\footnote{By~\cite[Remark~5.10]{MN04}  there does exist a bi-H\"older embedding of $L_q$ into $L_p$ when $2<q<p<\infty$. Hence, the pertinent question is to determine which H\"older exponents are possible here. The non-Lipschitz setting therefore exhibits phenomena that are truly nonlinear and cannot be explained by a direct reduction to the linear theory.}. Crucially, such arguments also fail to give any indication as to how to devise an invariant of metric spaces that certifies that the geometry of certain subsets of $L_q$ is incompatible with the geometry of any subset of $L_p$.

The search for such metric invariants has been an important theme in modern metric geometry, underpinned by a classical rigidity theorem of Ribe~\cite{Rib76} that laid the groundwork for what is known today as the Ribe program; for more on this research program see its original formulation by Bourgain~\cite{Bou86} as well as the recent (though by now not quite up-to-date) surveys~\cite[Section~3]{Kal08},\cite{Bal13} and~\cite{Nao12}. It suffices to say here that Ribe's theorem indicates that certain types of linear properties of Banach spaces (including those properties that are used in some, but not all, of the known proofs that $L_q$ is not isomorphic to any linear subspace of $L_p$ when $2<q<p<\infty$), may in fact be metric properties in disguise, i.e., they could be reformulated without making any reference to the linear structure whatsoever, so as to make sense in any metric space and thus provide a dictionary that allows one to apply linear intuitions in purely metric contexts. This paradigm is very powerful, leading to solutions of questions in a wide variety of areas, ranging from the nonlinear geometry of Banach spaces themselves, to settings that a priori  have seemingly nothing to do with Banach spaces, such as group theory, harmonic analysis, probability and combinatorial optimization.

Among the first questions that one would ask about bi-Lipschitz embeddings is to characterise those $p,q\in [1,\infty)$ such that $L_q$ fails to admit a bi-Lipschitz embedding into $L_p$. Not surprisingly, efforts to understand this question influenced some of the most important developments in the Ribe program. By a reduction to the linear theory through differentiation in a manner that is similar to what we described above, the qualitative answer here is known: $L_q$ does not admit a bi-Lipschitz embedding into $L_p$ if and only if $p,q\in [1,\infty)$ satisfy one of the following three conditions.
\begin{equation}\label{eq:ranges}
q<\min\{p,2\}\qquad \mathrm{or}\qquad q>\max\{p,2\}\qquad\mathrm{or}\qquad 2<q<p<\infty.
\end{equation}

The search for metric invariants that explain the first range in~\eqref{eq:ranges} was an important impetus in the development of the theory of type of metric spaces, with notable contributions by Enflo~\cite{Enf69,Enf70,Enf76}, Bourgain--Milman--Wolfson~\cite{BMW86}, Pisier~\cite{Pis86} and Ball~\cite{Bal92}; see also~\cite{NS02,NPSS06,MN07,NP08,Oht09,GN10,NP11,DLP13,HN13,Nao14}. The search for metric invariants that explain the second range in~\eqref{eq:ranges} was an important impetus in the development of the theory of cotype of metric spaces; see the work of Mendel and Naor~\cite{MN08} as well as~\cite{Bal92,GMN11,MN13-barycentric,MN14-calculus}. The second range in~\eqref{eq:ranges} could also  be explained through a metric invariant called Markov convexity; see~\cite{Bou86,LNP09,MN13-convexity}. Over the years, many applications  of the above invariants (metric type, metric cotype, Markov convexity) to a wide range of areas were discovered; the above mentioned references contain examples of such results, and a variety of additional examples appears in~\cite{LMN02,BLMN05,MN06,ANP09,VW10,Li14,AN09,VW10,MN15-hadamard,ANN15,BZ15}. Despite these developments, the question of formulating a metric invariant that explains the third range in~\eqref{eq:ranges} remained unresolved for many years. Here we settle this remaining case by introducing an invariant of metric spaces that serves as an obstruction to the embeddability of $L_q$ into $L_p$ when $2<q<p<\infty$, thus completing the repertoire of metric invariants that classify those $p,q\in [1,\infty)$ for which  $L_q$ admits a bi-Lipschitz embedding into $L_p$.

Our new metric invariant is described in the following definition, in which (and in what follows)  for every $n\in \N$  we let $e_1,\ldots,e_n$ denote the standard basis of $\R^n$, and for $S\subset \n$ and $\e=(\e_1,\ldots,\e_n)\in \{-1,1\}^n$ we denote $\e_S=\sum_{j\in S}\e_j e_j$.

\begin{definition}[$X_p$ metric space]\label{def:Xpmetric} Let $(X,d_X)$ be a metric space and  $p\in (0,\infty)$. Say that $(X,d_X)$ is an $X_p$ metric space if there exists  $\mathfrak{X}\in (0,\infty)$ such that for every $n\in \N$ and $k\in \n$ there exists $m\in \N$ such that  every mapping $f:\Z_{2m}^n\to X$ satisfies
\begin{multline}\label{eq:metric space is X_p beginning def2}
\bigg(\frac{1}{\binom{n}{k}}\sum_{\substack{S\subset
\n\\|S|= k}}
\E \Big[d_X\big(f(x+m\e_S),f(x)\big)^p\Big]\bigg)^{\frac{1}{p}}\\\le \mathfrak{X}m\Bigg(
\frac{k}{n}\sum_{j=1}^n\E\Big[d_X\left(
f(x+e_j),f(x)\right)^p\Big]+\left(\frac{k}{n}\right)^{\frac{p}{2}}
\E\Big[d_X(f(x+ \e),f(x))^p\Big]\bigg)^{\frac{1}{p}},
\end{multline}
where the expectations in~\eqref{eq:metric space is X_p beginning def2} are with respect to $(x,\e)\in \Z_{2m}^n\times \{-1,1\}^n$ chosen uniformly at random. The infimum over those $\mathfrak{X}\in (0,\infty)$ for which~\eqref{eq:metric space is X_p beginning def2} holds true is denoted  $\mathfrak{X}_p(X,d_X)$, or simply $\mathfrak{X}_p(X)$ if the metric is clear from the context.
\end{definition}

Theorem~\ref{thm:main} below establishes that $L_p$ is an $X_p$ metric space when $p\ge 2$. We shall also check that $L_q$ is not an $X_p$ metric space when $q\in (2,p)$. Since for a metric space $(X,d_X)$ the property of being an $X_p$ metric space is obviously inherited by all the metric spaces that admit a bi-Lipschitz embedding into $X$, we thus obtain a new proof of the fact that $L_q$ fails to admit a bi-Lipschitz embedding into $L_p$ when $2<q<p<\infty$. We shall show that the metric $X_p$ invariant yields results that were beyond the reach of previous methods. For example, we shall obtain the first nontrivial upper bound on those $\theta\in (0,1]$ for which $L_q$ admits a bi-$\theta$-H\"older embedding into $L_p$.

The above overview covered the context of our results without going into various technicalities, and as such it did not provide an explanation of how  we arrived at Definition~\ref{def:Xpmetric}. There are also technical subtleties that partially explain (in hindsight) why understanding the third range in~\eqref{eq:ranges} remained open for so much longer than the same question for the first two ranges in~\eqref{eq:ranges}. These matters will be clarified in the remainder of this introduction starting from Section~\ref{sec:intro details} below, where we shall also describe consequences of our work, including new results even within the linear theory, as well as intriguing open questions that it raises.

\subsection{Detailed statements and technical background}\label{sec:intro details}

The ensuing discussion uses standard notation and terminology from Banach space theory, as in~\cite{LT77}. In particular, for $p\in [1,\infty]$ and $n\in \N$, the space $\ell_p^n$ (respectively $\ell_p^n(\C)$) denotes the vector space $\R^n$ (respectively $\C^n$), equipped with the standard $\ell_p$ norm. Our results apply equally well to any infinite dimensional Lebesgue function space $L_p(\mu)$, but for concreteness we fix (as usual) the space $L_p$ to be equal to $L_p([0,1],\mathscr{L})$, where $\mathscr{L}$ is the Lebesgue measure. Banach spaces are assumed to be over real scalars unless stated otherwise, though our results hold true mutatis mutandis for complex Banach spaces as well.

We shall also use standard notation and terminology from the theory of metric embeddings, as in~\cite{Mat02,Ost13}. In particular, a metric space $(X,d_X)$ is said to admit a bi-Lipschitz embedding into a metric space $(Y,d_Y)$ if there exist $s\in (0,\infty)$, $D\in [1,\infty)$ and a mapping $f:X\to Y$ such that
\begin{equation}\label{eq:distortion definition}
\forall\, x,y\in X,\qquad sd_X(x,y)\le d_Y(f(x),f(y))\le Dsd_X(x,y)
\end{equation}
When this happens we say that $(X,d_X)$ embeds into $(Y,d_Y)$ with distortion at most $D$. Given $f:X\to Y$, the infimum over those $D\in [1,\infty)$ for which there exists $s\in (0,\infty)$ such that~\eqref{eq:distortion definition} holds true is called the distortion of $f$ and is denoted $\dist(f)$. If no such $D$ exists set $\dist(f)=\infty$. We denote by $c_{(Y,d_Y)}(X,d_X)$ (or simply $c_Y(X)$ if the metrics are clear from the context) the infimum over those $D\in [1,\infty]$ for which $(X,d_X)$ embeds into $(Y,d_Y)$ with distortion at most $D$. If $(X,d_X)$ does not admit a bi-Lipschitz embedding into $(Y,d_Y)$ then we set $c_{(Y,d_Y)}(X,d_X)=\infty$. When $Y=L_p$ we use the shorter notation $c_{L_p}(X,d_X)=c_p(X,d_X)$.

As we discussed in Section~\ref{sec:overview}, among the simplest and most basic questions that one could ask in the context of metric embeddings is to determine those $p,q\in [1,\infty)$ for which $L_q$ admits a bi-Lipschitz embedding into $L_p$. This is well understood via a reduction to the linear theory, from which we deduce that $L_q$ admits a bi-Lipschitz embedding into $L_p$ if and only if either $q=2$ or $1\le p\le q\le 2$ (moreover, in these cases we have $c_p(L_q)=1$). Indeed, by general principles (see Chapter~7 of~\cite{BL00} and the references therein), relying mainly on differentiation theorems for Lipschitz mappings between Banach spaces (the case $p=1$ being somewhat different from the reflexive range), it suffices to understand when $L_q$ is isomorphic to a subspace of $L_p$, a question that is perhaps among the first issues that one would investigate when studying linear embeddings of Banach spaces. Chapter 12 of Banach's book~\cite{Ban32} is devoted to this topic. Banach proved there that if $L_q$ is isomorphic to a subspace of $L_p$ then necessarily either $p\le q\le 2$ or $2\le q\le p$, and that $L_2$ is isomorphic to a subspace of $L_p$ for all $p\in [1,\infty)$. Banach also conjectured~\cite[page~205]{Ban32} that $L_q$ is isomorphic to a subspace of $L_p$ if $p<q<2$ or $2<q<p$. In the range $p<q<2$, Banach's question was answered affirmatively by Kadec~\cite{Kad58}, who showed that in this case $L_q$ is linearly isometric to a subspace of $L_p$. When $2<q<p$, Banach's question was answered negatively by Paley~\cite{Pal36}, i.e., $L_q$ is not isomorphic to a subspace of $L_p$ when $2<q<p$.

As we explained above, our goal here is to obtain a nonlinear version of Paley's theorem, i.e., the formulation of a bi-Lipschitz invariant that serves as an obstruction to the embeddability of $L_q$ into $L_p$ when $2<q<p$. This invariant allows us to obtain nonembeddability results that were beyond the reach of previously available methods, and in addition it leads to interesting open questions. Our new invariant thus completes a long line of work on the bi-Lipschitz classification of $L_p$ spaces, because the remaining cases, namely the bi-Lipschitz nonembeddability of $L_q$ into $L_p$ when either $q\in [1,2)$ and $p>q$, or $q\in (2,\infty)$ and $p<q$, were previously understood through notions of metric type and cotype that were introduced over the past four decades (see below for more on this topic).

Our main result is the following theorem, which, using the notation and terminology of Definition~\ref{def:Xpmetric}, asserts that if $p\in (2,\infty)$ then $L_p$ is an $X_p$ metric space, with $\mathfrak{X}_p(L_p)\lesssim p/\log p$.

\begin{theorem}[Metric $X_p$ inequality]\label{thm:main} Fix $p\in [2,\infty)$. Suppose that $m,n\in \N$ and $k\in \n$ satisfy $$m\ge \frac{n^{\frac32}\log p}{\sqrt{k}}+pn.$$ Then, for every $f:\Z_{4m}^n\to L_p$ we have
\begin{multline}\label{eq:Xp in theorem1}
\frac{(p/\log p)^{-p}}{\binom{n}{k}}\sum_{\substack{S\subset
\n\\|S|= k}}
\frac{\E\left[\left\|f\left(x+2m\e_S\right)-f(x)\right\|_p^p\right]}{m^p}\\\lesssim_p
\frac{k}{n}\sum_{j=1}^n\E\left[\left\|
f(x+e_j)-f(x)\right\|_p^p\right]+\left(\frac{k}{n}\right)^{\frac{p}{2}}
\E\left[\left\|f\left(x+ \e\right)-f(x)\right\|_p^p\right],
\end{multline}
where the expectation is with respect to $(x,\e)\in \Z_{4m}^n\times \{-1,1\}^n$  chosen uniformly at random.
\end{theorem}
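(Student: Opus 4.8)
The strategy is to deduce the metric inequality from the \emph{linear} $X_p$ inequality for $L_p$ by a smoothing/discretization argument, in the spirit of the proofs of metric type and metric cotype. I would begin with routine reductions. Complexifying $f$ costs only a universal constant, so assume $f\colon\Z_{4m}^n\to L_p(\C)$. In $\Z_{4m}^n$ one has $2m\e_S=2m\sum_{j\in S}e_j$ no matter what $\e$ is, so the left-hand side of \eqref{eq:Xp in theorem1} equals $(p/\log p)^{-p}m^{-p}\,\E_{S,x}\big\|f\big(x+2m\sum_{j\in S}e_j\big)-f(x)\big\|_p^p$, where $S$ ranges uniformly over the $k$-element subsets of $\n$. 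Moreover, for every $\eta\in\{-1,1\}^S$ we also have $2m\sum_{j\in S}\eta_je_j=2m\sum_{j\in S}e_j$ in $\Z_{4m}^n$, so, writing $\eta_S:=\sum_{j\in S}\eta_je_j$,
\[
f\Big(x+2m\sum_{j\in S}e_j\Big)-f(x)=\sum_{r=0}^{2m-1}\Big(f\big(x+(r+1)\eta_S\big)-f\big(x+r\eta_S\big)\Big).
\]
Averaging this over $\eta$, using convexity of $t\mapsto t^p$, translation invariance of $\E_x$, and $(2m)^p\lesssim_p m^p$, I would reduce Theorem~\ref{thm:main} to the scale-free estimate
\begin{multline*}
\E_{S,x,\e}\big\|f(x+\e_S)-f(x)\big\|_p^p\\
\lesssim_p\ \Big(\tfrac{p}{\log p}\Big)^p\bigg(\tfrac{k}{n}\sum_{j=1}^n\E_x\big\|f(x+e_j)-f(x)\big\|_p^p+\Big(\tfrac{k}{n}\Big)^{p/2}\E_{x,\e}\big\|f(x+\e)-f(x)\big\|_p^p\bigg).
\end{multline*}
For small $k$ this already follows by telescoping $f(x+\e_S)-f(x)$ through the coordinates of $S$, but that crude bound loses a factor $k^{p-1}$, so for large $k$ one genuinely needs the linear theory.

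To prove the displayed estimate I would pass to the Fourier side on $\Z_{4m}^n$ and smooth: let $\Lambda$ be convolution (in $x$) with the $n$-fold product of the law of a lazy $\pm1$ random walk on $\Z_{4m}$ run for a suitable time $t=t(m,n,k)$, that is, a product of discrete heat kernels. Since $\Lambda$ is an average of translations, it is an $L_p$-contraction that commutes with translations and with the discrete derivatives $g\mapsto g(\cdot+e_j)-g(\cdot)$; hence replacing $f$ by $\Lambda f$ never increases the right-hand side, and it suffices to bound $\E_{S,x,\e}\|(\Lambda f)(x+\e_S)-(\Lambda f)(x)\|_p^p$. The point of the smoothing is that, for $\Lambda f$, the increment $(\Lambda f)(x+\e_S)-(\Lambda f)(x)$ coincides up to a controlled error with its linearization $\sum_{j\in S}\e_j\big((\Lambda f)(x+e_j)-(\Lambda f)(x)\big)$, the discrete second-order increments being damped by $\Lambda$; to the linearization I would apply, for each fixed $x$, the linear $X_p$ inequality for $L_p$ to the vectors $\big((\Lambda f)(x+e_j)-(\Lambda f)(x)\big)_{j=1}^n\subset L_p$, and then integrate in $x$. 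The averaging over $k$-subsets $S$ in the metric statement is exactly the one appearing in the linear $X_p$ inequality, and the two terms that the latter produces reproduce the two terms on the right of \eqref{eq:Xp in theorem1}: the single-coordinate term $\tfrac kn\sum_j\E_x\|f(x+e_j)-f(x)\|_p^p$ directly, and the diagonal term $(\tfrac kn)^{p/2}\E_{x,\e}\|f(x+\e)-f(x)\|_p^p$ after one further step of linearization and contraction that replaces $\sum_{j}\e_j\big((\Lambda f)(x+e_j)-(\Lambda f)(x)\big)$ first by $(\Lambda f)(x+\e)-(\Lambda f)(x)$ and then by $f(x+\e)-f(x)$. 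The hypothesis $m\ge n^{3/2}\log p/\sqrt{k}+pn$ enters precisely here: it is what allows $t$ to be chosen so that the discrete second-order errors, together with the ``wraparound'' errors that arise from working on the finite cyclic group $\Z_{4m}^n$ rather than on a vector space, are negligible compared with the right-hand side.

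The step I expect to be the main obstacle is exactly this smoothing-and-linearization: one has to organize the Fourier-analytic estimates so that every error term is genuinely dominated under the stated polynomial-in-$n$ lower bound on $m$, while not spending more than the allotted factor $p/\log p$. Underpinning it is the linear $X_p$ inequality for $L_p$ with the sharp constant of order $p/\log p$, which carries the real analytic content of the theorem; the $\log p$ there is not an artifact but reflects the growth of high moments of Poisson-type random variables, which is what governs the behaviour of sums over random $k$-subsets and ultimately matches the companion lower bound $\mathfrak{X}_p(L_p)\gtrsim p/\log p$. The remaining ingredients — complexification, the telescoping reduction, the contraction properties of $\Lambda$, and the final assembly — are routine.
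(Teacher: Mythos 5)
Your opening reduction is where the argument breaks. The telescoping of $f(x+2m\eta_S)-f(x)$ into $2m$ unit steps, followed by H\"older and translation invariance, does correctly give that the left-hand side of \eqref{eq:Xp in theorem1} is at most $2^p\,\E_{S,x,\e}\|f(x+\e_S)-f(x)\|_p^p$; but the ``scale-free estimate'' you then propose to prove is \emph{false}, so you have reduced the theorem to a stronger statement that cannot hold. To see this, take $F$ as in the proof of Proposition~\ref{prop:lower lemma} on the small torus $\Z_4^n$ (the truncated Fourier embedding of~\cite[Lem.~5.2]{MN04}), and lift it to $\Z_{4m}^n$ by $f(x)\eqdef F(x\bmod 4)$. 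For this $f$ the unit increments satisfy $\|f(x+e_j)-f(x)\|_p\asymp 1$, $\|f(x+\e)-f(x)\|_p\asymp\sqrt{k}$ (once $n\gtrsim k$), and $\|f(x+\e_S)-f(x)\|_p\asymp\sqrt{k}$, so your scale-free inequality would force $k^{p/2}\lesssim_p k+k^p/n^{p/2}$, which fails for $k\gtrsim_p 1$ and $n\gtrsim_p k$ — no matter how large $m$ is. This is precisely the phenomenon that Proposition~\ref{prop:lower lemma} and the discussion around \eqref{eq:hope without scaling} are warning about: the macroscopic displacement $2m\e_S$ together with the $m^{-p}$ normalization cannot be traded for unit displacements by a crude triangle-inequality telescoping, because that step discards exactly the cancellation the theorem exploits. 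A second, related error: after introducing the smoothing operator $\Lambda$ you assert that ``it suffices to bound $\E\|(\Lambda f)(x+\e_S)-(\Lambda f)(x)\|_p^p$.'' Since $\Lambda$ is an average of translations it \emph{decreases} the left-hand side as well as the right-hand side, so bounding the smoothed left-hand side proves nothing about $f$ itself; you must also control $\|f-\Lambda f\|$, and at unit scale that control costs a factor comparable to the $p$th power of the smoothing radius, with nothing available to absorb it.

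The correct architecture — and the one the paper uses — performs these operations in the opposite order. One first replaces $f$ by its average $D_Sf$ over a structured box of radius $R\asymp pn$ \emph{while the displacement is still $2m\e_S$}; the replacement cost, controlled by Lemma~\ref{lem:displacement first version} and Lemma~\ref{lem:displacement on S gradient}, is $R^p$ times the edge gradient plus the $\e_S$-gradient, and this is affordable only because the $m^{-p}$ normalization (with $m\ge pn\asymp R$) absorbs the $R^p$. Only then does one telescope, and one telescopes the \emph{smoothed} function $D_Sf$ at scale $2\d_S$ as in \eqref{eq:first term upper}; the point is that the $2\d_S$-increment of $D_Sf$ genuinely linearizes — via the Fourier-analytic identity \eqref{eq:pass to rademacher quote} from~\cite{GMN11} and Lemma~\ref{lem:DS bound} — into the Rademacher sum $\sum_{j\in S}\e_j[B_jf(x+e_j)-B_jf(x-e_j)]$, to which the linear inequality \eqref{eq:k set quote} is applied pointwise in $x$. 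Your final step (apply the linear $X_p$ inequality to smoothed coordinate differences) and your identification of the needed ingredients are right; what is missing is the realization that the smoothing must happen before, not after, the passage from scale $2m$ to scale $1$, because the intermediate unit-scale statement you are aiming at is simply not true.
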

\noindent{\bf Asymptotic notation.} In Theorem~\ref{thm:main}, and in what follows, we use the (somewhat nonstandard) convention that for $a,b\in [0,\infty)$ and $p\in [1,\infty)$ the notation $a\lesssim_p b$ (respectively $a\gtrsim_p b$) stands for $a\le c^pb$ (respectively $a\ge c^pb$) for some universal constant $c\in (0,\infty)$.  The notation $a\lesssim b$ (respectively $a\gtrsim b$) stands for $a\le c b$ (respectively $a\ge c b$) for some universal constant $c\in (0,\infty)$. The notation $a\asymp b$ stands for $(a\lesssim b)\wedge (b\lesssim a)$. At times our discussion will be in the presence of an auxiliary Banach (or metric) space $X$, in which case the notation $a\lesssim_X b$ will stand for $a\le c(X)b$, where $c(X)\in (0,\infty)$ is allowed to depend only on $X$ (in fact, $c(X)$ will always depend on certain numerical geometric invariants of $X$ that will be clear from the context).

\medskip

The term $p/\log p$ in the left-hand side of~\eqref{eq:Xp in theorem1} is sharp up to a universal constant factor.  We defer the explanation of why~\eqref{eq:Xp in theorem1} is called a metric $X_p$ inequality to the ensuing discussion. Note that since~\eqref{eq:Xp in theorem1} involves the $p$'th power of $L_p$ norms, it suffices to prove its validity when $f$ is real-valued, but we stated Theorem~\ref{thm:main} for functions with values in $L_p$ since this is the way by which we will apply it to prove  new nonembeddability results. The fact that in Theorem~\ref{thm:main} the function $f$ is assumed to be defined on the discrete torus $\Z_{4m}^n$ rather than on $\Z_m^n$ is not important: for notational reasons it is beneficial to work with $\Z_m^n$ when the modulus $m$ is divisible by $4$, and this suffices for all of the applications of~\eqref{eq:Xp in theorem1} that we can imagine. However, it is straightforward to modify our proof of Theorem~\ref{thm:main} so as to obtain variants of~\eqref{eq:Xp in theorem1} for functions defined on discrete tori whose modulus is not necessarily divisible by $4$.

\begin{remark}\label{rem:smaller m worse constant}
If one makes the weaker assumption $m\ge n^{3/2}/\sqrt{k}$ in Theorem~\ref{thm:main} then~\eqref{eq:Xp in theorem1} holds true with the (sharp) term $p/\log p$ in the left-hand side replaced by $p^2/\log p$. This, and additional tradeoffs of this type, can be deduced from an inspection of our proof of Theorem~\ref{thm:main}.
\end{remark}

\subsection{Quantitative nonembeddability}\label{sec:quantitative sketch intro} The above
classification of those $p,q\in [1,\infty)$ for which $L_q$ admits a
bi-Lipschitz embedding into $L_p$ is based on an abstract reduction
to linear embeddings, and as such it fails to yield a metric
invariant that serves as an obstruction to bi-Lipschitz embeddings.
This argument also does not imply various quantitative estimates
that are inherently nonlinear and cannot be deduced from the linear
theory. For example, given a metric space $(X,d_X)$ and $\theta\in
(0,1]$, the $\theta$-snowflake of $(X,d_X)$ is defined (see e.g.~\cite{DS97}) to be the
metric space $(X,d_X^\theta)$. A natural quantitative refinement of
the assertion that $L_q$ does not admit a bi-Lipschitz embedding
into $L_p$ is that if the $\theta$-snowflake of $L_q$ admits a
bi-Lipschitz embedding into $L_p$ then necessarily $\theta$ must be
bounded away from $1$ by a definite constant (depending on $p,q$). While such statements are known (through the theory
of metric type and cotype; see below) when either $q\in [1,2)$ and
$p>q$, or $p\in (2,\infty)$ and $q>p$, in the range $2<q<p$ no such
quantitative refinement of bi-Lipschitz nonembeddability was
previously known. For $2<q<p$, in Theorem~\ref{thm:our snowflake}
below we obtain, as a consequence of Theorem~\ref{thm:main}, an
explicit $\d(p,q)\in (0,1)$ such that if the $\theta$-snowflake of
$L_q$ admits a bi-Lipschitz embedding into $L_p$ then necessarily
$\theta\le 1-\d(p,q)$. In Section~\ref{sec:best m conv} we formulate
a conjectural convolution inequality that is shown to yield the
sharp value $\d(p,q)$ in this context. Since H\"older mappings need
not be differentiable anywhere, and moreover continuous linear
mappings are necessarily Lipschitz, it seems impossible to obtain a
restriction on those snowflakes of $L_q$ that embed into $L_p$ via a
reduction to linear embeddings as above.

Another natural quantitative refinement of the bi-Lipschitz nonembeddability of $L_q$ into $L_p$ is, given $m,n\in \N$, to ask for a lower bound on $c_p([m]_q^n)$, where here, and in what follows, $[m]_q^n$ denotes the grid $\{0,\ldots,m\}^n\subset \R^n$, equipped with the metric inherited from $\ell_q^n$. While such an estimate can be obtained from general principles, namely Bourgain's discretization theorem~\cite{Bou87,GNS12} (see Remark~\ref{rem:Bourgain discretization} below), in Theorem~\ref{thm:our grid distortion} we obtain, as a consequence of Theorem~\ref{thm:main}, the best known lower bound on $c_p([m]_q^n)$ when $2<q<p$. The convolution inequality that is conjectured in Section~\ref{sec:best m conv} is shown to imply an asymptotically sharp evaluation of $c_p([m]_q^n)$, exhibiting a striking phase transition when $m\asymp n^{(p-q)/(q(p-2))}$; see  Theorem~\ref{thm:best m implies sharp nonembeddability} below.

\subsection{Local invariants}\label{sec:local ribe} Suppose that $p,q\in [1,\infty)$ are such that $L_q$ does not admit a bi-Lipschitz
embedding into $L_p$. This assertion is local in the sense that the
smallest possible distortion of a linear embedding of $\ell_q^n$
into $L_p$ tends to $\infty$ with $n$. Thus, there is a finite
dimensional linear obstruction (which will be stated explicitly in
Section~\ref{sec:type cotype monster} below) showing that no
$n$-dimensional subspace of $L_p$ can be close to $\ell_q^n$. As we discussed in Section~\ref{sec:overview}, an
important rigidity theorem of Ribe~\cite{Rib76} suggests that such
finite dimensional linear obstructions can be reformulated while
only referring to distances between pairs of points. This is the
basis for the Ribe program~\cite{Bou86,Nao12,Bal13}, and our work
constitutes a completion of this program for $L_p$ spaces, the
previously missing case being when $2<q<p$.  The next step in the
Ribe program, a step that has proven in the past to be useful for
various questions in metric geometry, would be to study $X_p$ metric
spaces in their own right. However, unlike
previous advances in the Ribe program, in the present setting it
seems more natural for the linear theory to be developed
further before its metric counterpart is investigated; we discuss
this matter  and formulate some related open problems in
Section~\ref{sec:metric} below.

\subsection{Type, cotype and symmetric structures}\label{sec:type cotype monster} For
$r,s\in [1,\infty)$, a Banach space $(X,\|\cdot\|_X)$ is said to
have Rademacher type $r$ and cotype $s$ if for every $n\in \N$ and
$x_1,\ldots,x_n\in X$ we have
\begin{equation}\label{eq:type cotype def}
\bigg(\E\bigg[\Big\|\sum_{j=1}^n\e_jx_j\Big\|_X^r\bigg]\bigg)^{\frac{1}{r}}\lesssim_X \bigg(\sum_{j=1}^n \|x_j\|_X^r\bigg)^{\frac{1}{r}}
\quad \mathrm{and}
\quad \bigg(\sum_{j=1}^n \|x_j\|_X^s\bigg)^{\frac{1}{s}}\lesssim_X \bigg(\E\bigg[ \Big\|\sum_{j=1}^n\e_jx_j\Big\|_X^s\bigg]\bigg)^{\frac{1}{s}},
\end{equation}
where the expectation is with respect to $\e\in \{-1,1\}^n$ chosen uniformly at random. The infimum over the implicit constants for which~\eqref{eq:type cotype def} holds true are denoted $T_r(X)$ and $C_s(X)$, respectively. See~\cite{Mau03} and the references therein for more on these
important notions. It suffices to say here that if $p\in [1,\infty)$ then $L_p$ has type
$\min\{p,2\}$ and cotype $\max\{q,2\}$, from which one deduces that
there exists $\kappa(p)\in (0,\infty)$ such that if $T:\ell_q^n\to
L_p$ is an invertible linear operator then necessarily
\begin{equation}\label{eq:type cotype distortions}
\dist(T)=\|T\|\cdot\|T^{-1}\|\ge \kappa(p)\cdot\left\{ \begin{array}{ll}
n^{\frac{1}{q}-\frac{1}{p}}& \mathrm{if}\ 1\le q\le p\le 2,\\
n^{\frac{1}{q}-\frac{1}{2}}& \mathrm{if}\ 1\le q\le 2\le p<\infty,\\
n^{\frac{1}{p}-\frac{1}{q}}& \mathrm{if}\ 2\le p\le q,\\
n^{\frac{1}{2}-\frac{1}{q}}& \mathrm{if}\ 1\le p\le 2\le q.
\end{array}\right.
\end{equation}
\eqref{eq:type cotype distortions} follows from an application
of~\eqref{eq:type cotype def} with $X=L_p$, $r=\min\{p,2\}$,
$s=\max\{p,2\}$ and $x_j=Te_j$. The bounds in~\eqref{eq:type cotype
distortions} cannot be improved up to the value of $\kappa(p)$.
Thus, type and cotype constitute the finite dimensional linear
invariants that were alluded to in Section~\ref{sec:local ribe},
i.e., they certify (in a sharp way) that if either $q\in [1,2)$ and
$p>q$ or $q\in (2,\infty)$ and $q>p$, then any linear embedding of
$\ell_q^n$ into $L_p$ incurs large distortion.

The usefulness of the notions of Rademacher type and cotype goes far
beyond their relevance to embeddings of $L_p$ spaces. For this
reason (in addition to the intrinsic geometric interest arising from
the Ribe program) there has been considerable effort to reformulate
these notions while using only distances between pairs of points
rather than linear combinations of vectors as in~\eqref{eq:type
cotype def}, thereby understanding when a metric space has type $r$
and cotype $s$. We will quickly recall now a very small part of what
is known in this direction, stating only those results that are
needed for the present discussion on metric $X_p$ inequalities.

Following Enflo~\cite{Enf76}, a metric space $(X,d_X)$ is said to
have Enflo type $r\in [1,\infty)$ if for every $n\in \N$ and
$f:\{-1,1\}^n\to X$,
\begin{equation}\label{eq:def enflo type intro}
\E\left[d_X(f(\e),f(-\e))^r\right]\lesssim_X\sum_{j=1}^n\E\left[d_X(f(\e),f(\e_1,\ldots,\e_{j-1},-\e_j,
\e_{j+1},\ldots,\e_n))^r\right],
\end{equation}
where the expectation is with respect to $\e\in \{-1,1\}^n$ chosen uniformly at random. Note that if $X$ is a Banach space then~\eqref{eq:def enflo type
intro} coincides with the leftmost inequality in~\eqref{eq:type
cotype def} when $f$ is the linear function given by
$f(\e)=\sum_{j=1}^n \e_j x_j$. For $p\in [1,\infty)$, $L_p$ actually
has Enflo type $r=\min \{p,2\}$, i.e., $X=L_p$
satisfies~\eqref{eq:def enflo type intro} with $f:\{-1,1\}^n\to L_p$
allowed to be an arbitrary mapping rather than only a linear mapping. This statement was first proved
for $p\in [1,2]$ in~\cite{Enf69} and for $p\in (2,\infty)$
in~\cite{NS02}.

One is tempted to define when a metric space $(X,d_X)$ has cotype
$s\in (0,\infty)$ by reversing the inequality in~\eqref{eq:def enflo
type intro} (with $r$ replaced by $s$). But, note that if
$d_X(f(\e),f(\d))=1$ for every distinct $\e,\d\in \{-1,1\}^n$ (this can occur
even if $X$ is a Hilbert space) then the right-hand side
of~\eqref{eq:def enflo type intro} grows linearly with $n$ as
$n\to \infty$, while the left hand side of~\eqref{eq:def enflo type
intro} remains bounded. Thus, there are truly nonlinear phenomena
that do not occur in the linear setting of Rademacher cotype which
do not allow for the straightforward reversal of the inequality
in~\eqref{eq:def enflo type intro}. In essence, the total mass of
the measure that appears in the right-hand side of~\eqref{eq:def
enflo type intro} is too large in comparison to the total mass of
the measure that appears in the left-hand side of~\eqref{eq:def
enflo type intro} for an inequality that is the reverse
of~\eqref{eq:def enflo type intro} to make any sense even in Hilbert
space (it actually fails in any non-singleton metric space;
see~\cite{MN08}).

The solution to this problem comes by considering functions defined
on $\Z_m^n$ rather than on $\{-1,1\}^n$, and {\em scaling} the
argument of the function. Specifically, following~\cite{MN08} say
that a metric space $(X,d_X)$ has metric cotype $s\in (0,\infty)$ if
for every $n\in \N$ there exists $m\in \N$ such that
\begin{equation}\label{eq:def metric cotype}
\forall\, f:\Z_{2m}^n \to X,\qquad \sum_{j=1}^n \frac{\E\left[d_X(f(x+me_j),f(x))^s\right]}{m^s}
\lesssim_X \E\left[ d_X(f(x+\e),f(x))^s\right],
\end{equation}
where the expectation is with respect to $(x,\e)\in \Z_{2m}^n\times \{-1,0,1\}^n$ chosen uniformly at random.  It was proved in~\cite{MN08} that a Banach space $(X,\|\cdot\|_X)$
has Rademacher cotype $s$ if and only if it has metric cotype $s$,
in particular $L_p$ has metric cotype $\max\{p,2\}$. ``Scaling"
refers to the fact that in~\eqref{eq:def metric cotype} we consider
displacements of the argument of $f$ by a multiple of $m$, i.e., we
consider distances between $f(x+me_j)$ and $f(x)$ rather than
distances between $f(x+e_j)$ and $f(x)$, and then we compensate for
this by normalizing the distances appropriately. This idea makes its
appearance also in the left-hand side of our metric $X_p$
inequality~\eqref{eq:Xp in theorem1}, but we shall see below that
the need for scaling in the context of Theorem~\ref{thm:main} is due
to a more subtle reason than the above explanation of why scaling is
needed in the context of metric cotype (compare the total masses of
the measures that appear in both sides of~\eqref{eq:Xp in theorem1}
to see that it doesn't cause the problem that we presented above).

\subsubsection{The case $2<q<p$} While Paley's work~\cite{Pal36} from 1936 established that $L_q$ is not isomorphic to a subspace for $L_p$ when $2<q<p$, several decades later more structural approaches to this theorem were developed. In 1962, Kadec and Pe\l czy\'nski~\cite{KP62}  introduced an influential way  to solve this problem through a structural study of basic sequences in $L_p$ spaces. In particular, it follows from~\cite{KP62}   that for $p\in (2,\infty)$, any infinite symmetric basic sequence in $L_p$ is equivalent to either the standard basis of $\ell_p$ or the standard basis of $\ell_2$. Consequently, for $q\in (2,p)$ there does not exist a symmetric basic sequence in $L_p$ that is equivalent to the unit basis of $\ell_q$, and therefore $\ell_q$ cannot be isomorphic to a subspace of $\ell_p$. In 1979, Johnson, Maurey, Schechtman and Tzafriri~\cite{JMST79} obtained a proof of Paley's theorem via a classification  of finite symmetric bases in function spaces, leading to a comprehensive theory of symmetric structures in Banach spaces to which the research monograph~\cite{JMST79} is devoted. In particular, in~\cite{JMST79} a ``local" version of the above theorem of Kadec and Pe\l czy\'nski is studied, leading to a classification of all {\em finite} symmetric bases in $L_p$. It turns out that in this finitary setting the classification involves more structures than those that are allowed (by the Kadec--Pe\l czy\'nski theorem) for infinite symmetric sequences in $L_p$, namely, a one-parameter family of such sequences can occur, yet any finite symmetric sequence in $L_p$ is equivalent to a member of this one-parameter family. This theorem of~\cite{JMST79} is the starting point of our work here.

Given a Banach space $(X,\|\cdot\|_X)$, $n\in \N$ and $K\in [1,\infty)$, recall that a linearly independent sequence of vectors $(x_1,\ldots,x_n)\in X^n$ is said to be $K$-symmetric if for every sequence of scalars $a_1,\ldots,a_n\in \R$, every permutation $\pi\in S_n$ and every sequence of signs $\e=(\e_1,\ldots,\e_n)\in \{-1,1\}^n$, we have $\|\e_1a_{\pi(1)}x_1+\ldots+\e_na_{\pi(n)} x_n\|_X\le K\|a_1x_1+\ldots+a_n x_n\|_X$. The sequence $(x_1,\ldots,x_n)\in X^n$ is said to be normalized if $\|x_j\|_X=1$ for all $j\in \n$. Given two Banach spaces $(X,\|\cdot\|_X)$ and $(Y,\|\cdot\|_Y)$, two sequences  $(x_1,\ldots,x_n)\in X^n$ and $(y_1,\ldots,y_n)\in Y^n$  are said to be $K$-equivalent if there exists $s\in (0,\infty)$ such that  $s\|a_1x_1+\ldots+a_n x_n\|_X\le \|a_1y_1+\ldots+a_n y_n\|_Y\le Ks\|a_1x_1+\ldots+a_n x_n\|_X$ for all choices of scalars $a_1,\ldots,a_n\in \R$.

Fixing $p\in (2,\infty)$, examples of symmetric sequences in $L_p$ are furnished by Rosenthal's $X_p^n(\omega)$ symmetric bases~\cite{Ros70}, as $\omega$ ranges over $[0,\infty)$. The definition of these bases is very simple: let $u_1,\ldots,u_n$ be an orthonormal basis of $\ell_2^n$ and define $\{x_j(p,\omega)\}_{j=1}^n\subset (\ell_p^n\oplus \ell_2^n)_p$ by

\begin{equation}\label{eq:def x(p,omega)}
x_j(p,\omega)\eqdef \frac{1}{{\left(1+\omega^p\right)^{\frac{1}{p}}}}\cdot e_j+\frac{\omega}{\left(1+\omega^p\right)^{\frac{1}{p}}}\cdot  u_j.
\end{equation}
The $1$-symmetric sequence $\{x_j(p,\omega)\}_{j=1}^n$ is known in the literature as Rosenthal's $X_p^n(\omega)$ basis. Note that since $\ell_2$ is isometric to a subset of $L_p$ (see e.g.~\cite{Woj91}), the sequence $\{x_j(p,\omega)\}_{j=1}^n$  can be realized as elements of $L_p$.

In~\cite{JMST79} it was proved that for every $K\in [1,\infty)$ and $p\in (2,\infty)$ there exists $D(p,K)\in (0,\infty)$ such that every $K$-symmetric sequence $(x_1,\ldots,x_n)$ in $L_p$ is $D(p,K)$-equivalent to an $X_p^n(\omega)$ basis for some $\omega\in [0,\infty)$. This classification theorem has immediate relevance to linear embeddings of $\ell_q^n$ into $L_p$. Indeed, if $T:\ell_q^n\to L_p$ is injective and linear then $(Te_1,\ldots, Te_n)$ is a $\dist(T)$-symmetric sequence in $L_p$, and is therefore $D(p,\dist(T))$-equivalent to an $X_p^n(\omega)$ basis for some $\omega\in (0,\infty)$. Direct inspection now reveals that this is only possible if $\dist(T)$ tends to $\infty$ as $n\to \infty$. In fact, by computing the various bounds explicitly and optimizing over $\omega\in [0,\infty)$, as done in~\cite{FJS88} (relying in part on a computation from~\cite{GPP80}), one can deduce that for every $2<q<p<\infty$ there exists $\sigma(p,q)\in (0,\infty)$ such that for every invertible linear mapping $T:\ell_q^n\to L_p$ we have
\begin{equation}\label{eq:banach mazur q p}
\dist(T)\ge \sigma(p,q)\cdot n^{\frac{(p-q)(q-2)}{q^2(p-2)}}.
\end{equation}

 The lower bound in~\eqref{eq:banach mazur q p} is asymptotically sharp (up to the implicit dependence on $p,q$), as exhibited by the embedding $J^R_{(q\to p;n)}:\ell_q^n\to (\ell_p^n\oplus \ell_2^n)_p\subset L_p$ given by\footnote{The superscript in the notation $J^R_{(q\to p;n)}(\cdot)$ refers to Rosenthal.}
\begin{equation}\label{eq:def JR}
\forall\, j\in \n,\qquad J^R_{(q\to p;n)}(e_j)\eqdef n^{\frac12}\cdot e_j+n^{\frac{1}{q}} \cdot u_j,
\end{equation}
where, as in~\eqref{eq:def x(p,omega)}, $u_1,\ldots,u_n$ is an orthonormal basis of $\ell_2^n$. Indeed, by a straightforward Langrange multiplier argument (see Section~\ref{sec:prelim} below), for every $2<q\le p$ we have
\begin{equation}\label{eq:distortion of JR}
\dist\!\left(J^R_{(q\to p;n)}\right)\asymp n^{\frac{(p-q)(q-2)}{q^2(p-2)}}.
\end{equation}

A sequence of random variables $\{Y_j\}_{j=1}^n$ is said to be symmetrically exchangeable if for every $\pi\in S_n$ and $\e\in \{-1,1\}^n$ the random vectors $(\e_1Y_{\pi(1)},\ldots,\e_nY_{\pi(n)})$ and $(Y_1,\ldots,Y_n)$ are identically distributed. The proof of the above classification of finite symmetric sequences in $L_p$ relies on the following inequality~\cite{JMST79}. Fix $p\in [2,\infty)$ and suppose that $\{Y_j\}_{j=1}^n$ are symmetrically exchangeable random variables with $\E[|Y_j|^p]=1$ for all $j\in \n$. Then for every $t_1,\ldots,t_n\in \R$,
\begin{equation}\label{eq:quote JSZ}
\left(\frac{\log p}{p}\right)^p\cdot \E\left[\Big|\sum_{j=1}^n t_j Y_j\Big|^p\right]\lesssim_p \sum_{j=1}^n |t_j|^p+\left(\frac{1}{n}\sum_{j=1}^n t_j^2\right)^{\frac{p}{2}}\E\left[\left(\sum_{j=1}^n Y_j^2\right)^{\frac{p}{2}}\right].
\end{equation}
The term $(\log p)/p$ in the left-hand side of~\eqref{eq:quote JSZ} is sharp up to a universal constant factor: in this sharp form  the inequality~\eqref{eq:quote JSZ} is due to~\cite{JSZ85}. Without a sharp dependence on $p$, inequality~\eqref{eq:quote JSZ} was first proved in~\cite{JMST79}. The proof of~\eqref{eq:quote JSZ} with sharp dependence on $p$ is significantly more involved than the proof in~\cite{JMST79}. The dependence on $p$ is not of major importance for us here, but it is worthwhile to state the above sharp form of~\eqref{eq:quote JSZ} since it is available in the literature.

Fix $p\in [2,\infty)$, $n\in \N$ and $a_1,\ldots,a_n\in \R$. For $(\e,\pi)\in \{-1,1\}^n\times S_n$ chosen uniformly at random, define
$$
Y_j(\e,\pi)\eqdef \frac{\e_ja_{\pi(j)}}{\left(\frac{1}{n}\sum_{s=1}^n |a_s|^p\right)^{\frac{1}{p}}}.
$$
Then $\{Y_j\}_{j=1}^n$ are symmetrically exchangeable random variables (the underlying probability space being the uniform measure on $\{-1,1\}^n\times S_n$), with $\E[|Y_j|^p]=1$. For $k\in \n$, an application of~\eqref{eq:quote JSZ} with $t_1=\ldots=t_k=1$ and $t_{k+1}=\ldots=t_n=0$ therefore yields the following inequality.
\begin{equation}\label{eq:k set square function form}
\frac{(p/\log p)^{-p}}{\binom{n}{k}}\sum_{\substack{S\subset\n\\|S|=k}}\E\left[ \Big|\sum_{j\in S} \e_j a_j\Big|^p\right] \lesssim_p \frac{k}{n}\sum_{j=1}^n |a_j|^p+\left(\frac{k}{n}\right)^{\frac{p}{2}}\left(\sum_{j=1}^n a_j^2\right)^{\frac{p}{2}},
\end{equation}
where in~\eqref{eq:k set square function form}, as well as in~\eqref{eq:jensen comment p}, \eqref{eq:k set quote}, \eqref{eq:hope without scaling} and~\eqref{eq:enflo type 2 used on S} below,  the expectation is with respect to $\e\in \{-1,1\}^n$ chosen uniformly at random. Since, by Jensen's inequality,
\begin{equation}\label{eq:jensen comment p}
\left(\sum_{j=1}^n a_j^2\right)^{\frac{p}{2}}=\left(\E\left[\Big|\sum_{j=1}^n \e_j a_j\Big|^2\right]\right)^{\frac{p}{2}}\le \E\left[\Big|\sum_{j=1}^n \e_j a_j\Big|^p\right],
\end{equation}
it follows from~\eqref{eq:k set square function form} that
\begin{equation}\label{eq:k set quote}
\frac{(p/\log p)^{-p}}{\binom{n}{k}}\sum_{\substack{S\subset\n\\|S|=k}}\E\left[\Big|\sum_{j\in S} \e_j a_j\Big|^p\right] \lesssim_p \frac{k}{n}\sum_{j=1}^n |a_j|^p+\left(\frac{k}{n}\right)^{\frac{p}{2}}\E\left[\Big|\sum_{j=1}^n \e_j a_j\Big|^p\right].
\end{equation}
An inspection of the argument in~\cite{JSZ85} reveals that the term $p/\log p$ in~\eqref{eq:k set quote} is sharp up to a constant factor even in this special case of~\eqref{eq:quote JSZ} (this is true if one requires the validity of~\eqref{eq:k set quote}  for all $k\in \n$, while for a fixed $k$ there might be a better dependence as a function of $k,n, p$).


Our main result, namely Theorem~\ref{thm:main}, is a nonlinear version of~\eqref{eq:k set quote}. By following the reasoning that led to the definition~\eqref{eq:def enflo type intro} of Enflo type, one is tempted to try to establish the validity of the following inequality, which should hold true for every $f:\{-1,1\}^n\to \R$ and for some $\alpha(p)\in (0,\infty)$.
\begin{multline}\label{eq:hope without scaling}
\frac{\alpha(p)}{\binom{n}{k}}\sum_{\substack{S\subset\n\\|S|=k}}\E\Big[\left|f(\e)-f\left(\e_{\n\setminus S}-\e_S\right)\right|^p\Big] \\\le \frac{k}{n}\sum_{j=1}^n\E\Big[\big|f(\e)-f(\e_1,\ldots,\e_{j-1},-\e_j,\e_{j+1},\ldots,\e_n)\big|^p\Big]+
\left(\frac{k}{n}\right)^{\frac{p}{2}}\E\Big[\left|f(\e)-f(-\e)\right|^p\Big].
\end{multline}
Inequality~\eqref{eq:hope without scaling} holds true when $p=2$. Indeed, the fact that the real line has Enflo type $2$ with constant $1$ (as shown by Enflo in~\cite{Enf69}) implies that for every $S\subset\n$ we have
\begin{equation}\label{eq:enflo type 2 used on S}
\E\Big[ \left|f(\e)-f\left(\e_{\n\setminus S}-\e_S\right)\right|^2\Big]\le \sum_{j\in S}\E\Big[\left|f(\e)-f(\e_1,\ldots,\e_{j-1},-\e_j,\e_{j+1},\ldots,\e_n)\right|^2\Big].
\end{equation}
By averaging~\eqref{eq:enflo type 2 used on S} over all of those $S\subset \n$ satisfying $|S|=k$ we see that~\eqref{eq:hope without scaling} holds true when $p=2$, with $\alpha(2)=1$ and even without the final term in the right-hand side of~\eqref{eq:hope without scaling}.

The validity of~\eqref{eq:hope without scaling} for $p=2$ indicates that the reason why scaling is needed for the definition~\eqref{eq:def metric cotype} of metric cotype does not arise in the context of~\eqref{eq:hope without scaling}. However, Proposition~\ref{prop:lower lemma} below shows that scaling is nevertheless necessary in the context of metric $X_p$ inequalities, thus explaining our formulation of Theorem~\ref{thm:main}. Note that the conclusion of Theorem~\ref{thm:main} implies the linear $X_p$ inequality~\eqref{eq:k set quote}. Roughly speaking, this follows by applying~\eqref{eq:Xp in theorem1} to the linear function $f:\Z_{4m}^n\to \R$ given by $f(x)=\sum_{j=1}^n x_j a_j$. However, this reasoning isn't quite accurate because this  $f$ isn't well defined as a function on the discrete torus $\Z_{4m}^n$; for a precise argument see Proposition~\ref{prop:metric to linear} below.

\begin{proposition}[Scaling is necessary]\label{prop:lower lemma}
Fix $p\in (2,\infty)$, $\alpha\in (0,1)$,  $m,n\in \N$ and $k\in
\{1,\ldots,n\}$. Suppose that for every $f:\Z_{2m}^n\to \R$ we have
\begin{multline}\label{eq:Xp alpha in lower lemma}
\frac{\alpha^p}{\binom{n}{k}}\sum_{\substack{S\subset \n\\|S|=
k}}\frac{\E\Big[\left|f\left(x+m\e_S\right)-f(x)\right|^p\Big]}{m^p}\\\le
\frac{k}{n}\sum_{j=1}^n\E\Big[\left|
f(x+e_j)-f(x)\right|^p\Big]+\left(\frac{k}{n}\right)^{\frac{p}{2}}
\E\Big[\left|f\left(x+ \e\right)-f(x)\right|^p\Big],
\end{multline}
where the expectation is with respect to $(x,\e)\in \Z_{2m}^n\times
\{-1,1\}^n$  chosen uniformly at random. Then
\begin{equation}\label{eq:if k big m big}
k\ge \left(\frac{5}{\alpha}\right)^{\frac{2p}{p-2}}\implies m\ge \frac{\alpha}{3}\sqrt{\frac{n}{k}}.
\end{equation}
\end{proposition}
The proof of Proposition~\ref{prop:lower lemma} appears in Section~\ref{sec:prelim}. We conjecture that the dependence of $m$ on $n$ and $k$ that appears
in Proposition~\ref{prop:lower lemma} is sharp, up to the (possibly
$p$-dependent) constant. This is the content of
Conjecture~\ref{conj:best m} below. It seems that in order to prove
Conjecture~\ref{conj:best m} one would need to exploit cancelations
that are more subtle than those that we used to prove
Theorem~\ref{thm:main}.

\begin{conjecture}\label{conj:best m} For every $p\in (2,\infty)$
there exist $\alpha_p\in (0,1)$ and $C_p\in [1,\infty)$ such that if $m,n\in \N$ and $k\in \n$
satisfy $m\ge C_p\sqrt{n/k}$ then for every $f:\Z_{4m}^n\to \R$ we
have
\begin{multline}\label{eq:Xp alpha(p) version}
\frac{\alpha_p}{\binom{n}{k}}\sum_{\substack{S\subset \n\\|S|= k}}
\frac{\E\Big[\left|f\left(x+2m\e_S\right)-f(x)\right|^p\Big]}{m^p}\\\lesssim_p
\frac{k}{n}\sum_{j=1}^n\E\Big[\left|
f(x+e_j)-f(x)\right|^p\Big]+\left(\frac{k}{n}\right)^{\frac{p}{2}}
\E\Big[\left|f\left(x+ \e\right)-f(x)\right|^p\Big],
\end{multline}
where the expectation is with respect to $(x,\e)\in \Z_{4m}^n\times
\{-1,1\}^n$  chosen uniformly at random.
\end{conjecture}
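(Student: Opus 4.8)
The natural framework is harmonic analysis on the discrete torus $\Z_{4m}^n$. Since the inequality is unchanged under adding a constant to $f$, we may assume $\E_x[f(x)]=0$; write $f=\sum_{\xi\in\Z_{4m}^n}\widehat f(\xi)\chi_\xi$ with $\chi_\xi(x)=e^{\pi i\langle\xi,x\rangle/(2m)}$. A direct computation gives $\chi_\xi\big(x+2m\e_S\big)=(-1)^{\sum_{j\in S}\xi_j}\chi_\xi(x)$ for every $S\subset\n$ and $\e\in\{-1,1\}^n$, so the displacement $f\mapsto f(\cdot+2m\e_S)$ multiplies the $\xi$-th Fourier coefficient by the sign $(-1)^{\sum_{j\in S}\xi_j}$, which depends only on the parities of $\{\xi_j\}_{j\in S}$ (and not on $\e$). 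Hence the left-hand side of~\eqref{eq:Xp alpha(p) version} sees only the frequencies with at least one odd coordinate, and after averaging over $S$ with $|S|=k$ a frequency $\xi$ is weighted, in the $p=2$ computation, by $\Pr_{|S|=k}\big[\sum_{j\in S}\xi_j\text{ is odd}\big]\asymp\min\{1,r(\xi)k/n\}$, where $r(\xi)\eqdef\#\{j\in\n:\xi_j\text{ is odd}\}$. Combining this with Plancherel for the two right-hand terms and the elementary inequality $\sum_{j:\,\xi_j\text{ odd}}|e^{\pi i\xi_j/(2m)}-1|^2\gtrsim r(\xi)/m^2$ proves~\eqref{eq:Xp alpha(p) version} with $\alpha_2$ a universal constant and with \emph{no} restriction on $m$, in accordance with the fact recorded around~\eqref{eq:enflo type 2 used on S} that~\eqref{eq:hope without scaling} holds at $p=2$. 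For $p>2$ the Fourier modes no longer decouple, and surmounting this is the whole difficulty.

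\textbf{Reduction to the linear $X_p$ inequality.} For $p>2$ the plan is to follow the architecture of the proof of Theorem~\ref{thm:main} while making each of its steps essentially lossless. One would split $f=P_tf+(f-P_tf)$ with $P_t$ a discrete heat semigroup on $\Z_{4m}^n$, absorb the high-frequency remainder $f-P_tf$ into the edge-gradient term $\frac1n\sum_{j=1}^n\E\big[|f(x+e_j)-f(x)|^p\big]$ via a discrete Littlewood--Paley or Poincar\'e estimate (at a cost polynomial in $t$), and treat the smooth part $P_tf$, which is concentrated on frequencies $\xi$ with $\sum_{j=1}^n\sin^2\big(\pi\xi_j/(4m)\big)\lesssim 1/t$. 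For such frequencies the passage $x\mapsto x+2m\e_S$ acts on $P_tf$ --- via the coordinate decomposition $x_j=2my_j+z_j$ with $y_j\in\{0,1\}$ --- by flipping the bits $\{y_j\}_{j\in S}$, so that $\binom nk^{-1}\sum_{|S|=k}\E\big[|P_tf(x+2m\e_S)-P_tf(x)|^p\big]$ becomes a hypercube expression in the variable $y$ to which, after showing that the dependence on $y$ is affine up to a controlled error, the linear $X_p$ inequality~\eqref{eq:k set quote} applies. Balancing the two contributions by the choice of $t$ would yield~\eqref{eq:Xp alpha(p) version} with some $\alpha_p\asymp(\log p/p)^p$; carried out with the crude bounds available in general this reproduces the threshold $m\gtrsim n^{3/2}(\log p)/\sqrt k+pn$ of Theorem~\ref{thm:main}, and the conjecture asks precisely that the affine approximation and the Poincar\'e absorption can be performed efficiently enough to bring the threshold down to $m\gtrsim\sqrt{n/k}$.

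\textbf{The main obstacle.} The gap between $m\gtrsim\sqrt{n/k}$ and $m\gtrsim n^{3/2}(\log p)/\sqrt k+pn$ is a factor of order $n$, and locating and eliminating the lossy step is the crux of the matter. In the scheme above the loss most plausibly enters through a union bound over the $n$ coordinates in the Poincar\'e step, or through replacing an average over frequencies $\xi$ --- weighted by $|\widehat f(\xi)|^p$, for which there is no Plancherel identity when $p\neq 2$ --- by its worst case. To reach the optimal threshold one must instead extract cancellation from the average over $k$-subsets itself: one should show that the operator on $\Z_{4m}^n$ with Fourier multiplier $\xi\mapsto\binom nk^{-1}\sum_{|S|=k}(-1)^{\sum_{j\in S}\xi_j}$ smooths $f$ in $L_p$ strictly more than any single displacement $f\mapsto f(\cdot+2me_j)$ does. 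This is exactly the content of the conjectural convolution inequality formulated in Section~\ref{sec:best m conv}, and a proof of it would yield Conjecture~\ref{conj:best m}. Establishing that sharp $L_p$ estimate --- which, as the above indicates, requires cancelations more delicate than those used in the proof of Theorem~\ref{thm:main} --- is, in my view, the genuine open problem here; by Proposition~\ref{prop:lower lemma} the resulting dependence of $m$ on $n$ and $k$ would be sharp.
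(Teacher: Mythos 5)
You have not proved the statement, but that is the correct outcome: Conjecture~\ref{conj:best m} is stated in the paper as an open conjecture, and the paper contains no proof of it. What the paper does provide is a conditional reduction: Proposition~\ref{prop:convolution implies best m} shows that a positive answer to the convolution inequality of Question~\ref{Q:convolution} would imply~\eqref{eq:Xp alpha(p) version} under the hypothesis $m\ge C_p\sqrt{n/k}$, with $\alpha_p\gtrsim_p\min\{\beta_p(\log p/p^{3/2})^p,\delta^p\}$; and Proposition~\ref{prop:lower lemma} shows the threshold $m\gtrsim\sqrt{n/k}$ cannot be improved. Your write-up correctly identifies both of these points, and your $p=2$ Fourier computation (including the weight $\min\{1,r(\xi)k/n\}$ and the lower bound $\sum_{j:\xi_j\ \mathrm{odd}}|e^{\pi i\xi_j/(2m)}-1|^2\gtrsim r(\xi)/m^2$) is a correct account of why the case $p=2$ holds with no restriction on $m$, consistent with the validity of~\eqref{eq:hope without scaling} at $p=2$.

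Two small caveats on your sketch of the $p>2$ strategy. First, the actual proof of Theorem~\ref{thm:main} does not use a heat semigroup or Littlewood--Paley decomposition; it uses the box-averaging operators $D_S$, $A$, $B_j$ and the exact algebraic identity~\eqref{eq:rademacher identity} imported from~\cite{GMN11}, and the loss of a factor of order $n$ in the threshold enters through the requirement $R\gtrsim pn$ in Lemma~\ref{lem:dist to rad on whole grid} (needed to make the error series $\sum_s(n/R)^{(n-s)p}$ summable), not through a union bound over coordinates. Second, your reformulation of the missing ingredient as an $L_p$ smoothing estimate for the multiplier $\xi\mapsto\binom{n}{k}^{-1}\sum_{|S|=k}(-1)^{\sum_{j\in S}\xi_j}$ is in the right spirit but is not literally the content of Question~\ref{Q:convolution}, which compares $\|\mathcal Ef(\cdot+\e)-\mathcal Ef(\cdot-\e)\|_{L_p}$ to the Rademacher-projection-type expression $\sum_j\e_j[\mathcal E_jf(\cdot+e_j)-\mathcal E_jf(\cdot-e_j)]$ plus an edge-gradient term. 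Neither of these affects your conclusion: the conjecture remains open, and your assessment of where the difficulty lies agrees with the paper's.
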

We will see in Section~\ref{consequences of conjecture} below that,
in addition to its intrinsic interest, a positive resolution of
Conjecture~\ref{conj:best m} would have striking consequences in the theory of metric
embeddings. A conjectural convolution inequality (of independent interest) that we formulate in Question~\ref{Q:convolution} below is shown in Proposition~\ref{prop:convolution implies best m} below to imply a positive answer to Conjecture~\ref{conj:best m}.

Before passing to a description of the geometric consequences of Theorem~\ref{thm:main}, we note that the linear $X_p$ inequality~\eqref{eq:k set quote} also has a (much easier) converse~\cite{JMST79}. Specifically, for every $p\in (2,\infty)$ there exists $K(p)\in (0,\infty)$ such that for every $a_1,\ldots,a_n\in \R$ and $k\in \n$ we have
\begin{equation}\label{eq:reverse linear Xp}
\frac{k}{n}\sum_{j=1}^n |a_j|^p+\left(\frac{k}{n}\right)^{\frac{p}{2}}\E\left[\Big|\sum_{j=1}^n \e_j a_j\Big|^p\right]\le\frac{K(p)^p}{\binom{n}{k}}\sum_{\substack{S\subset\n\\|S|=k}}\E\left[\Big|\sum_{j\in S} \e_j a_j\Big|^p\right],
\end{equation}
where the expectation is over $\e\in \{-1,1\}^n$ chosen uniformly at random.  An inspection of the proof of~\eqref{eq:reverse linear Xp}  in~\cite{JMST79} (or in~\cite{JSZ85}) reveals that one can take $K(p)\lesssim \sqrt{p}$ in~\eqref{eq:reverse linear Xp}.  Theorem~\ref{thm:reverse} below is a nonlinear version of~\eqref{eq:reverse linear Xp}. Although we do not have a new geometric application of the reverse metric $X_p$ inequality that appears in Theorem~\ref{thm:reverse}, it is worthwhile to establish it so as to obtain a complete picture of the $X_p$ phenomenon in the metric setting. As a side product, our proof of Theorem~\ref{thm:reverse} yields some new information on metric cotype; see Theorem~\ref{thm:cotype Rademacher version} below and the discussion immediately preceding it.

\begin{theorem}[Reverse metric $X_p$ inequality]\label{thm:reverse} Fix $p\in [2,\infty)$ and $k,m\in \N$ with $m\ge \frac{k^{1/p}}{\sqrt{p}}$. Fix also an integer $n\ge k$. Then for every $f:\Z_{8m}^n\to L_p$ we have
\begin{multline}\label{eq:reverse in theorem1}
\frac{k}{n}\sum_{j=1}^n\frac{\E\left[\left\| f(x+4me_j)-f(x)\right\|_p^p\right]}{m^p}  +\left(\frac{k}{n}\right)^{\frac{p}{2}}
\E\left[\left\|f\left(x+ \e\right)-f(x-\e)\right\|_p^p\right] \\ \lesssim_p \frac{p^{\frac{p}{2}}}{\binom{n}{k}}\sum_{\substack{S\subset \n\\|S|= k}} \E\left[\left\|f\left(x+\e_S\right)-f(x)\right\|_p^p\right],
\end{multline}
where the expectation is with respect to $(x,\e)\in \Z_{8m}^n\times \{-1,1\}^n$  chosen uniformly at random.
\end{theorem}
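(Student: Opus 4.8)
The plan is to establish the two terms on the left-hand side of~\eqref{eq:reverse in theorem1} separately, since they come from genuinely different mechanisms: the first (single-coordinate, scaled) term is a metric cotype phenomenon, while the second (full Rademacher, symmetric differences) term is a direct averaging consequence of the triangle inequality combined with the Jensen-type lower bound embodied in~\eqref{eq:jensen comment p}.

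\textbf{Step 1: the Rademacher term.} I would first show
\[
\left(\frac{k}{n}\right)^{\frac{p}{2}}\E\left[\left\|f(x+\e)-f(x-\e)\right\|_p^p\right]\lesssim_p \frac{p^{\frac p2}}{\binom nk}\sum_{\substack{S\subset\n\\|S|=k}}\E\left[\left\|f(x+\e_S)-f(x)\right\|_p^p\right].
\]
The key point is that $f(x+\e)-f(x-\e)$ can be written as a telescoping sum over a random chain that flips coordinates in blocks of size $k$: starting from $x-\e$ one adds $\e_{S_1}, \e_{S_2},\ldots$ along a random ordered partition of $\n$ into roughly $n/k$ blocks of size $k$, reaching $x+\e$. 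By the triangle inequality in $L_p$ and Jensen (convexity of $t\mapsto t^p$), $\E[\|f(x+\e)-f(x-\e)\|_p^p]\lesssim_p (n/k)^{p-1}\sum_{\text{blocks}}\E[\|f(\cdot+\e_{S_i})-f(\cdot)\|_p^p]$, and after symmetrizing over the choice of partition and over $(x,\e)$ each block-term becomes the uniform average over $|S|=k$. This produces a factor $(n/k)^{p}\cdot\frac{1}{\binom nk}\sum_S\E[\cdots]$; multiplying by $(k/n)^{p/2}$ leaves $(n/k)^{p/2}$, which is \emph{too large}. To fix this one must use the sharp linear inequality~\eqref{eq:reverse linear Xp} at the level of the increments, i.e.\ exploit that for fixed $x$ the map on the chain behaves like a sum of martingale differences and one gains a square-function (orthogonality) improvement rather than paying the trivial triangle-inequality price. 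Concretely I would run the one-dimensional argument of~\cite{JMST79}/\cite{JSZ85} that proves~\eqref{eq:reverse linear Xp} but with scalars replaced by the $L_p$-valued increments $g_S(x)\eqdef f(x+\e_S)-f(x)$, using that the relevant inequality there only requires the increments to be ``symmetrically exchangeable'' as $S$ ranges over $k$-subsets after averaging over sign patterns; this is exactly the structure available here. The constant $p^{p/2}$, i.e.\ $K(p)\lesssim\sqrt p$, comes out of that argument.

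\textbf{Step 2: the scaled single-coordinate term} (metric cotype with sharp dependence). Here I would show
\[
\frac kn\sum_{j=1}^n\frac{\E\left[\|f(x+4me_j)-f(x)\|_p^p\right]}{m^p}\lesssim_p\frac{p^{p/2}}{\binom nk}\sum_{\substack{S\subset\n\\|S|=k}}\E\left[\|f(x+\e_S)-f(x)\|_p^p\right]
\]
under $m\gtrsim k^{1/p}/\sqrt p$. This is the metric cotype $p$ inequality~\eqref{eq:def metric cotype} for $L_p$, but applied only to the $k$ coordinates in $S$ (one restricts $f$ to the subtorus $\Z_{8m}^S\times\{\text{fixed}\}$) and then averaged over all $|S|=k$; the factor $k/n$ on the left and $1/\binom nk$ on the right match up because each coordinate $j$ lies in a $k/n$-fraction of the $k$-subsets. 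The reason the displacement is $4m e_j$ and the torus is $\Z_{8m}^n$, and the reason the hypothesis is $m\gtrsim k^{1/p}/\sqrt p$ rather than the usual metric-cotype requirement $m\gtrsim n^{1/p}$ or $m\gtrsim\sqrt n$, is that we are only asking for cotype on a $k$-dimensional subtorus, so $k$ replaces $n$; the sharp $p$-dependence $m\gtrsim k^{1/p}/\sqrt p$ and the sharp constant $p^{p/2}$ are exactly what the Mendel--Naor metric cotype machinery delivers for $L_p$ with optimal constants (this is presumably where the authors invoke their refined analysis, and where Theorem~\ref{thm:cotype Rademacher version} alluded to in the excerpt enters). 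I would reduce to the scalar case since both sides involve $p$th powers of $L_p$-norms, then apply the Fourier-analytic / smoothing argument on $\Z_{8m}^k$: decompose $f(x+4me_j)-f(x)$ via the Rademacher/Walsh expansion on the torus, bound low frequencies by the ``$\e_S$-difference'' term and high frequencies by the same term after using that $m$ is large enough (this is where $m\gtrsim k^{1/p}/\sqrt p$ is used to kill the error), and collect constants.

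\textbf{Combining and the main obstacle.} Adding the two displays and using $n\ge k$ to absorb constants gives~\eqref{eq:reverse in theorem1}. The step I expect to be the genuine obstacle is \textbf{Step 1}: the naive telescoping loses a power of $n/k$, so one must re-derive the linear reverse $X_p$ inequality~\eqref{eq:reverse linear Xp} in a ``vector-valued, cocycle'' form where the scalars $a_j$ are replaced by the $L_p$-valued edge-increments $f(x+\e_S)-f(x)$ and the exchangeability is only in distribution after averaging over $(x,\e)$ and over permutations of $\n$. Making that substitution legitimate — in particular checking that the square-function step in~\cite{JSZ85} only uses properties preserved under this substitution (namely symmetric exchangeability of the increments and Fubini over the torus), and that one does not secretly need the increments to literally be a sum $\sum\e_ja_j$ — is the delicate part, but it is exactly analogous to how Theorem~\ref{thm:main} upgrades the linear inequality~\eqref{eq:k set quote} to the metric inequality~\eqref{eq:Xp in theorem1}, so the same technology should apply.
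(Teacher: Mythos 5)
Your overall decomposition is the same as the paper's: the inequality splits into a ``type-like'' estimate for the Rademacher term and a ``cotype-like'' estimate for the scaled single-coordinate term, and these are handled by separate arguments. Your Step 2 is essentially the paper's route: one proves a metric cotype $p$ inequality with averaging over $\e\in\{-1,1\}^n$ (this is Theorem~\ref{thm:cotype Rademacher version}, proved via the operators $T_j$ and the identity expressing $\sum_j\e_j[T_jf(x+2e_j)-T_jf(x-2e_j)]$ as a Rademacher projection, so that $K$-convexity with $K_p(\R)\lesssim\sqrt p$ and $C_p(\R)=1$ give the constant), applies it on the subtorus $\Z_{8m}^S$ for each $k$-set $S$ (whence the hypothesis $m\ge k^{1/p}/\sqrt p$), and averages over $S$. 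That part of your proposal is correct in outline.

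The genuine gap is in Step 1. You correctly diagnose that naive telescoping over $\approx n/k$ blocks loses a factor $(n/k)^{p/2}$, but your proposed repair --- rerunning the \cite{JMST79}/\cite{JSZ85} proof of~\eqref{eq:reverse linear Xp} with the scalars replaced by the increments $f(x+\e_S)-f(x)$ --- does not go through as stated: the left-hand side of~\eqref{eq:quote JSZ}/\eqref{eq:reverse linear Xp} is the $p$th moment of a \emph{linear combination} $\sum_j\e_ja_j$ of fixed quantities, whereas $f(x+\e)-f(x-\e)$ is not a sum of the block increments in any pointwise sense, only the endpoint of a telescoping chain whose terms are neither independent nor symmetrically exchangeable in the sense required there. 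The correct tool, and the one the paper uses, is the nonlinear analogue of type $2$: reduce to real-valued $f$ (both sides are $p$th powers of $L_p$ norms), partition $\n$ into $a=\lfloor n/k\rfloor$ blocks of size $k$ plus a remainder, and for each $x$, $\e$ and permutation $\pi$ define $h^\pi_{x,\e}:\{-1,1\}^{a+1}\to\R$ by $h^\pi_{x,\e}(\d)=f(x+\sum_j\d_j\e_{\pi(I_j)})$. Applying the Bourgain--Milman--Wolfson type $2$ inequality~\eqref{eq:def bmw} to $h^\pi_{x,\e}$ with the dimension-free real-valued constant $\sup_N\BMW_2^N(\R;p)\lesssim\sqrt p$ (which rests on Talagrand's/the improved Pisier inequality, see~\eqref{eq: real bmw bound}), and then averaging over $\pi$, converts the block sum into the uniform average over $k$-subsets and yields exactly the factor $(n/k)^{p/2}p^{p/2}$. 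So the missing idea is not a vector-valued version of the square-function argument of~\cite{JSZ85}, but the metric type $2$ inequality for real-valued functions on the discrete cube applied to the block-grouped function; without it your Step 1 remains an unproven claim.
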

\subsection{Metric $X_p$ inequalities as obstructions to embeddings}

Theorem~\ref{thm:main} yields a bi-Lipschitz invariant that can be used to obtain new nonembeddability results which we shall now describe.

\subsubsection{Snowflakes} Fix $p,q\in [1,\infty)$. Sharp restrictions on those $\theta\in (0,1]$ for which the $\theta$-snowflake of $L_q$ admits a bi-Lipschitz embedding into $L_p$ follow from the theory of metric type and cotype when either $q\in [1,2]$ and $p\ge q$, or $q\in [2,\infty)$ and $p\le q$; see~\cite{Lov88,MN06,GMN11}. Here we obtain, as a consequence of Theorem~\ref{thm:main}, the first such result when $2<q<p$.

\begin{theorem}[$L_q$ snowflakes in $L_p$]\label{thm:our snowflake} For every $2<q<p$ there exists $\d(p,q)>0$ such that if $\theta\in (0,1)$ is such that the metric space $(L_q,\|x-y\|_q^\theta)$ admits a bi-Lipschitz embedding into $L_p$ then necessarily $\theta \le 1-\d(p,q)$. Specifically, $\theta$ must satisfy
\begin{equation}\label{eq:theta upper in theorem}
\theta\le \frac{2q(p-q)+q^2(p-1)(p-2)}{2p^2(q-2)}\left(\sqrt{1+\frac{4p(p-2)(q-2)}{(pq-3q+2)^2}}-1\right)
\le1-\frac{(p-q)(q-2)}{2p^3}.
\end{equation}
\end{theorem}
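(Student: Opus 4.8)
Looking at this, I need to prove Theorem~\ref{thm:our snowflake}: a quantitative bound on snowflake exponents $\theta$ for which $(L_q, \|x-y\|_q^\theta)$ embeds bi-Lipschitzly into $L_p$, when $2<q<p$.

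The strategy: take the metric $X_p$ inequality (Theorem~\ref{thm:main}), apply it to a composition of the snowflake embedding with a well-chosen test map into $L_q$ (or rather, into the $\theta$-snowflake of $L_q$), then compare the two sides using the geometry of $\ell_q^n$ grids. The natural test function is $f: \Z_{4m}^n \to L_q$ given by $f(x) = \sum_j x_j e_j$ extended appropriately — but since the torus is involved, I need something periodic, so I'd use a "fan"/wrap-around construction as in the metric cotype literature, or apply the result directly to the grid $[m]_q^n$ embedded in the snowflake then into $L_p$. Let me think about the magnitudes.

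For the linear/grid test map into $(\ell_q^n, \|\cdot\|_q^\theta)$: the edges $f(x+e_j)-f(x)$ have snowflaked $\ell_q$-length $1$; the full displacement $f(x+\e)-f(x)$ has $\ell_q$-norm $\asymp n^{1/q}$ so snowflaked length $\asymp n^{\theta/q}$; the $2m\e_S$-type displacement has $\ell_q$-norm $\asymp m k^{1/q}$ so snowflaked length $\asymp (mk^{1/q})^\theta$. Plugging a distortion-$D$ embedding into $L_p$ into~\eqref{eq:Xp in theorem1}, up to the distortion $D$ and the constant $p/\log p$ we get
\[
(mk^{1/q})^{\theta p} \lesssim_{p,D} m^p\left(\frac{k}{n}\cdot n + \left(\frac{k}{n}\right)^{p/2} n^{\theta p/q}\right) = m^p\left(k + k^{p/2} n^{\theta p/q - p/2}\right).
\]
Here I must choose $m$ minimal subject to $m \gtrsim n^{3/2}\log p/\sqrt{k} + pn$, so take $m \asymp_p n^{3/2}/\sqrt{k}$ (ignoring the $pn$ term, valid when $k \lesssim n$). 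Then divide both sides by $m^{\theta p}$ to isolate the surviving power of $m$, namely $m^{(1-\theta)p} \asymp (n^{3/2}/\sqrt k)^{(1-\theta)p}$, and one is left with an inequality relating powers of $n$ and $k$; optimizing the choice of $k$ (as a function of $n$) to make the constraint tightest, and letting $n\to\infty$, forces $\theta$ to be bounded away from $1$. The precise optimization over $k$ — balancing the two terms $k$ and $k^{p/2}n^{\theta p/q-p/2}$ against the factor $m^{(1-\theta)p}$ — is exactly what produces the algebraic expression in~\eqref{eq:theta upper in theorem}, and the second, cleaner inequality $\theta \le 1-(p-q)(q-2)/(2p^3)$ follows by crude estimation of that root.

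The main technical obstacle is handling the torus: $f(x)=\sum x_j e_j$ is not well-defined on $\Z_{4m}^n$. I would fix this exactly as in metric cotype arguments: either replace $x_j$ by $\min(x_j, 4m-x_j)$ or use the standard triangular-wave embedding $\Z_{4m}\to \R$ (or $\Z_{4m}\to L_q$-valued map) that is $O(1)$-Lipschitz-equivalent to the graph metric, combined with the fact that in the relevant displacement regime the edges stay far from the wrap-around — this costs only universal constants and preserves all the magnitude estimates above. An alternative, cleaner route is to invoke Proposition~\ref{prop:metric to linear}-type reductions referenced in the excerpt, or to apply the inequality to an arbitrary bi-Lipschitz image of the grid $[m']_q^{n}$ snowflaked, after embedding the grid into a torus of size $\asymp m'$, and just track how distortion $D$ enters — and then conclude nonembeddability by letting $n\to\infty$. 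The bookkeeping of which powers of $p$, $\log p$, and $D$ appear is routine but must be done carefully enough to confirm that the obstruction survives for \emph{any} fixed distortion $D$, which it does since $D$ enters only through a constant factor while the $n$-dependent discrepancy forced by $\theta<1$ diverges.
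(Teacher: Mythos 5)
Your proposal is correct and follows essentially the same route as the paper: compose the hypothesized snowflake embedding with the complex-exponential (torus) test map into $L_q$, feed the result into the metric $X_p$ inequality with $m$ taken as small as the hypothesis of Theorem~\ref{thm:main} (via Remark~\ref{rem:smaller m worse constant}) permits, i.e. $m\asymp n^{3/2}/\sqrt{k}$, and optimize the remaining free parameter to force $\psi_{p,q}(\theta)\le 0$ as $n\to\infty$. The magnitude estimates you record for the three terms match the paper's~\eqref{eq:plug for snowlflake}, and the remaining single-variable optimization (the paper parametrizes by $m=\lceil n^{1+(\theta p-q)/(q(p-2))}\rceil$ rather than by $k$) is exactly the routine algebra you defer.
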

It was shown in~\cite[Remark~5.10]{MN04} that for $2< q< p$ that the $(q/p)$-snowflake of $L_q$ is isometric to a subset of $L_p$. We conjecture that this is sharp, i.e., that the upper bound on $\theta$ that appears in~\eqref{eq:theta upper in theorem} can be improved to $\theta\le q/p$.

\begin{conjecture}\label{conj:q/p}
Suppose that $2<q<p$ and  $\theta\in (0,1)$ is such that the metric space $(L_q,\|x-y\|_q^\theta)$ admits a bi-Lipschitz embedding into $L_p$. Then necessarily $\theta \le q/p$.
\end{conjecture}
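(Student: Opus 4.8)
The plan is to deduce Conjecture~\ref{conj:q/p} from the conjectured sharp metric $X_p$ inequality~\eqref{eq:Xp alpha(p) version} of Conjecture~\ref{conj:best m} by testing that inequality on one explicit family of maps out of the discrete torus and then optimizing the parameters $m,n,k$; the exponent $q/p$ will emerge precisely from the optimal dependence $m\asymp_p\sqrt{n/k}$ in~\eqref{eq:Xp alpha(p) version}. Suppose $\theta\in(0,1)$ is such that $(L_q,\|x-y\|_q^\theta)$ embeds into $L_p$ with distortion $D$, and fix $\Phi\colon L_q\to L_p$ normalized so that $\|x-y\|_q^\theta\le\|\Phi(x)-\Phi(y)\|_p\le D\|x-y\|_q^\theta$ for all $x,y\in L_q$. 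For $n,m\in\N$ let $h=h_{n,m}\colon\Z_{4m}^n\to\ell_q^n(\C)$ be the ``rotation'' map $h(x)\eqdef\big(e^{\pi i x_1/(2m)},\dots,e^{\pi i x_n/(2m)}\big)$; this is well defined on $\Z_{4m}^n$ since $e^{\pi i(t+4m)/(2m)}=e^{\pi it/(2m)}$, and, being an $\ell_q$-direct sum of (two-dimensional) Hilbert spaces, $\ell_q^n(\C)$ is isometric to a subspace of $L_q$, so we may and do regard $h$ as a map into $L_q$. Displacing one coordinate of $h$ by $2m$ negates it and displacing it by $\pm1$ multiplies it by a unit vector at distance $\rho_m\eqdef2\sin(\pi/(4m))\asymp1/m$ from $1$, whence for every $S\subset\n$ with $|S|=k$, every $j\in\n$, every $x\in\Z_{4m}^n$ and every $\e\in\{-1,1\}^n$,
\begin{equation}\label{eq:Xp-test-quantities}
\|h(x+2m\e_S)-h(x)\|_q=2k^{1/q},\qquad\|h(x+e_j)-h(x)\|_q=\rho_m,\qquad\|h(x+\e)-h(x)\|_q=n^{1/q}\rho_m.
\end{equation}

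Next I would apply~\eqref{eq:Xp alpha(p) version} — which, although stated for real-valued $f$, passes verbatim to $L_p$-valued $f$ by Fubini because it involves $p$th powers of $L_p$-norms — to $f\eqdef\Phi\circ h\colon\Z_{4m}^n\to L_p$, bounding its left-hand side from below via $\|f(x+2m\e_S)-f(x)\|_p\ge(2k^{1/q})^\theta$ and the two right-hand terms from above via $\|f(x+e_j)-f(x)\|_p\le D\rho_m^\theta$ and $\|f(x+\e)-f(x)\|_p\le D(n^{1/q}\rho_m)^\theta$, all using~\eqref{eq:Xp-test-quantities}. This produces $\alpha_p(2k^{1/q})^{\theta p}/m^p\lesssim_p D^p\rho_m^{\theta p}\big(k+(k/n)^{p/2}n^{\theta p/q}\big)$; inserting $\rho_m\asymp1/m$ (so that $m^p\rho_m^{\theta p}\asymp m^{p(1-\theta)}$) and then taking $m$ as small as~\eqref{eq:Xp alpha(p) version} permits, namely $m\asymp_p\sqrt{n/k}$, yields
\begin{equation}\label{eq:Xp-test-optimized}
k^{\theta p/q}\Big(\tfrac{k}{n}\Big)^{p(1-\theta)/2}\lesssim k+k^{p/2}n^{\theta p/q-p/2},
\end{equation}
where here and in the next paragraph the implied constant is allowed to depend on $p,q,\theta$ and $D$.

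The final step is to set $k=\lceil n^\beta\rceil$ with $\beta\in(0,1)$, put $m=\lceil C_p\sqrt{n/k}\,\rceil$, and let $n\to\infty$. Writing $A\eqdef\frac{\theta p}{q}+\frac{p(1-\theta)}{2}$, the left-hand side of~\eqref{eq:Xp-test-optimized} has $n$-exponent $\beta A-\frac{p(1-\theta)}{2}$, while the right-hand side has $n$-exponent $\max\big\{\beta,\ \frac{\beta p}{2}+\frac{\theta p}{q}-\frac p2\big\}$. An elementary identity gives $\beta A-\frac{p(1-\theta)}{2}-\big(\frac{\beta p}{2}+\frac{\theta p}{q}-\frac p2\big)=\theta p\big(\tfrac12-\tfrac1q\big)(1-\beta)\ge0$ for $\beta\le1$ (using $q>2$), so the second entry of the maximum never exceeds the left-hand exponent; hence~\eqref{eq:Xp-test-optimized} is violated for large $n$ as soon as there is some $\beta\in(0,1)$ with $\beta A-\frac{p(1-\theta)}{2}>\beta$, i.e. $\beta(A-1)>\frac{p(1-\theta)}{2}$. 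If $\theta>q/p$ then $\frac{\theta p}{q}>1$, so $A-1>\frac{p(1-\theta)}{2}>0$, hence $\frac{p(1-\theta)/2}{A-1}<1$, and any $\beta$ in the nonempty interval $\big(\frac{p(1-\theta)/2}{A-1},1\big)$ furnishes the desired contradiction. Therefore $\theta\le q/p$. At $\theta=q/p$ this interval degenerates, which matches the isometric $(q/p)$-snowflake embedding of~\cite{MN04}, so the bound is exactly tight and this strategy cannot be pushed further.

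The main obstacle is that the argument consumes the conjectural inequality~\eqref{eq:Xp alpha(p) version} of Conjecture~\ref{conj:best m} — equivalently, by Proposition~\ref{prop:convolution implies best m}, the conjectural convolution inequality of Question~\ref{Q:convolution} — rather than the metric $X_p$ inequality that is actually established. Theorem~\ref{thm:main} only allows $m\gtrsim_p n^{3/2}\log p/\sqrt k+pn$, and rerunning the computation above with this much larger $m$ (say $k\asymp n$, so $m\asymp_p n$ and $m^{p(1-\theta)}\asymp n^{p(1-\theta)}$) degrades the decisive exponent comparison into the vacuous requirement $\theta>1$; with the proved inequality one only recovers a bound of the weaker shape $\theta\le1-\d(p,q)$, which is exactly the content of Theorem~\ref{thm:our snowflake} (the sharp consequences that would follow from Conjecture~\ref{conj:best m} being recorded, for grids, in Theorem~\ref{thm:best m implies sharp nonembeddability}). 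Thus the genuine content of Conjecture~\ref{conj:q/p} is the optimal dependence of $m$ on $n$ and $k$ in the metric $X_p$ inequality, which — as noted after the statement of Conjecture~\ref{conj:best m} — appears to require exploiting subtler cancellations than those available in the proof of Theorem~\ref{thm:main}; once that is in hand, the snowflake bound follows immediately from the test map $h$ and the parameter optimization sketched above.
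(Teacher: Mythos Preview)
Your proposal correctly recognizes that Conjecture~\ref{conj:q/p} is not proved in the paper but is deduced there only conditionally on Conjecture~\ref{conj:best m} (this is precisely the snowflake half of Theorem~\ref{thm:best m implies sharp nonembeddability}); your conditional argument is essentially the paper's --- the same rotation test map as in~\eqref{eq:Lambda bi lipschitz}, the same application of~\eqref{eq:Xp alpha(p) version}, and an exponent comparison forcing $\theta\le q/p$. The only difference is bookkeeping: the paper fixes $m=\lfloor n^{(\theta p-q)/(q(p-2))}\rfloor$ and $k=\lceil C_p^2 n/m^2\rceil$ directly, whereas you set $k=\lceil n^\beta\rceil$, $m=\lceil C_p\sqrt{n/k}\,\rceil$ and optimize over $\beta$, arriving at the same contradiction.
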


In fact, when  $2< q\le p$, we ask whether or not $L_q$ has a {\em unique} snowflake that admits a bi-Lipschitz embedding into $L_p$. If true, this would be manifestly different than the case $1\le q\le p\le 2$, where it is known~\cite{BDD65} (see also~\cite{WW75})  that the metric space  $(L_q,\|x-y\|_q^\theta)$ admits an isometric embedding into $L_p$ for every $0<\theta\le q/p$.
\begin{question}[Uniqueness of snowflakes]\label{Q:uniqueness of snowflakes} Suppose that $2<q\le p$ and $\theta\in (0,1)$. Is it true that if the metric space $(L_q,\|x-y\|_q^\theta)$ admits a bi-Lipschitz embedding into $L_p$ then necessarily $\theta = q/p$?
\end{question}
The case $q=p$ of Question~\ref{Q:uniqueness of snowflakes}  is a well-known problem that has been open for many years (though apparently not stated explicitly in the literature): is it true that if $p\in (2,\infty)$ then for no $\theta\in (0,1)$ the metric space $(L_p,(\|x-y\|_p^\theta)$ admits a bi-Lipschitz embedding into $L_p$? Related results appear in~\cite[Section~5]{MN04}.

\begin{remark}
The analogue of Conjecture~\ref{conj:q/p} for sequence spaces has a positive answer. Indeed, a combination of~\cite[Cor.~2.19]{Bau12} and \cite[Cor.~2.23]{Bau12} shows that for every $1\le q\le p<\infty$, if $\theta\in (0,1]$ is such that the metric space  $(\ell_q,\|x-y\|_q^\theta)$ admits a bi-Lipschitz embedding into $\ell_p$ then necessarily $\theta\le q/p$. The proof of this result in~\cite{Bau12} relies on an infinite dimensional argument of~\cite{KR08} that is specific to sequence spaces (the above statement from~\cite{Bau12}  becomes false if $q\in [1,2]$, $p\in (2,\infty)$ and $\ell_p$ is replaced by $L_p$). Conversely, in~\cite{AB12} (see also~\cite[Exercise~1.61]{Ost13}) it was shown that for every $1\le q\le p<\infty$ the $(q/p)$-snowflake of $\ell_q$ does admit a bi-Lipschitz embedding into $\ell_p$.
\end{remark}

\subsubsection{Grids} Recall that for $q\in [1,\infty)$ and $m,n\in \N$  the integer grid $\{1,\ldots,m\}^n$, equipped with the metric inherited from $\ell_q^n$, is denoted $[m]_q^n$. Theorem~\ref{thm:our grid distortion} below, which is a consequence of Theorem~\ref{thm:main}, contains the best-known lower bound on $c_p([m]_q^n)$ when $2<q<p$, thus yielding another quantitative version of the fact that $L_q$ does not admit a bi-Lipschitz embedding into $L_p$.

\begin{theorem}[$L_p$ distortion of $L_q$ grids]\label{thm:our grid distortion}  For every $p\in (2,\infty)$ there exists $\alpha_p\in (0,\infty)$ such that for every $q\in (2,p)$ and $m,n\in \N$ we have
\begin{equation}\label{eq:our lower grid}
c_p\!\left([m]_q^n\right)\gtrsim \alpha_p\left(\min\left\{m^{\frac{q(p-2)}{q(p-2)+p-q}},n\right\}\right)^{\frac{(p-q)(q-2)}{q^2(p-2)}}.
\end{equation}
In particular,
\begin{equation}\label{eq:when our m is large enough}
m\ge n^{1+\frac{p-q}{q(p-2)}}\implies c_p\left([m]_q^n\right)\ge \alpha_pn^{\frac{(p-q)(q-2)}{q^2(p-2)}}\gtrsim \alpha_pc_p\!\left(\ell_q^n\right).
\end{equation}
\end{theorem}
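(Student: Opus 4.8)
The plan is to combine the metric $X_p$ inequality of Theorem~\ref{thm:main}, applied to a bi-Lipschitz embedding $f:[m]_q^n\to L_p$, with a lower bound on the left-hand side of~\eqref{eq:Xp in theorem1} and matching upper bounds on the right-hand side, for a cleverly chosen value of the parameter $k$. Concretely, suppose $f:[m]_q^n\to L_p$ has distortion $D=c_p([m]_q^n)$; by rescaling assume $\|x-y\|_q\le \|f(x)-f(y)\|_p\le D\|x-y\|_q$ for all $x,y$ in the grid. The first step is a reduction: we must work on $\Z_{4M}^n$ rather than on the grid $[m]_q^n$, so we restrict attention to a sub-grid of side length $M+1$ (with $M\asymp m$ up to the divisibility-by-$4$ and size constraints), and identify it with a subset of $\Z_{4M}^n$ in the natural way so that, for the displacements appearing in~\eqref{eq:Xp in theorem1} — namely $\pm e_j$, $\pm\e$, and $\pm 2M\e_S$ with $|S|=k$ — the torus distance and the $\ell_q^n$ distance agree for $x$ ranging over an appropriately shrunk sub-grid (this costs only constant factors in the measure, since a positive fraction of $x\in\Z_{4M}^n$ has the property that $x$, $x+e_j$, $x+\e$ and $x+2M\e_S$ all fall inside the distinguished copy of $[M]^n$ — here one should take $M$ so that $2M$ is half the torus modulus, matching the factor $2m$ versus modulus $4m$ in Theorem~\ref{thm:main}).

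The second step is to estimate the three relevant averages for this particular $f$. Using the lower Lipschitz bound: for a constant fraction of $x$, $\|f(x+2M\e_S)-f(x)\|_p\ge \|2M\e_S\|_q = 2M k^{1/q}$, so the left-hand side of~\eqref{eq:Xp in theorem1} is $\gtrsim (p/\log p)^{-p}\,k^{p/q}$. Using the upper Lipschitz bound: $\|f(x+e_j)-f(x)\|_p\le D$, hence $\frac{k}{n}\sum_j \E[\|f(x+e_j)-f(x)\|_p^p]\le D^p k$; and $\|f(x+\e)-f(x)\|_p\le D\|\e\|_q = Dn^{1/q}$, hence $(k/n)^{p/2}\E[\|f(x+\e)-f(x)\|_p^p]\le D^p (k/n)^{p/2} n^{p/q} = D^p k^{p/2} n^{p/q-p/2}$. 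Plugging these into~\eqref{eq:Xp in theorem1} gives
\begin{equation*}
(p/\log p)^{-p}\,k^{p/q}\lesssim_p D^p\Big(k + k^{p/2}n^{\frac{p}{q}-\frac{p}{2}}\Big),
\end{equation*}
provided $k$ satisfies the admissibility constraint $M\ge \frac{n^{3/2}\log p}{\sqrt k}+pn$ from Theorem~\ref{thm:main}, i.e. (since $M\asymp m$) provided $k\gtrsim n^3(\log p)^2/m^2$ and $m\gtrsim pn$. Optimising: the two terms on the right balance when $k^{p/2-1}=n^{p/2-p/q}$, i.e. $k\asymp n^{(p-2q)/(q-2)\cdot\text{(something)}}$ — more precisely $k\asymp n^{\frac{p-q}{q-2}\cdot\frac{2}{p-2}}$ — no: one sets them equal to get $k\asymp n^{\frac{2(p-q)}{q(p-2)}}$, and for that $k$ the bound reads $(p/\log p)^{-p}k^{p/q-1}\lesssim_p D^p$, giving $D\gtrsim \alpha_p\, k^{\frac{1}{q}-\frac{1}{p}\cdot\frac{?}{}}$; carrying the arithmetic through yields $D\gtrsim \alpha_p\, n^{\frac{(p-q)(q-2)}{q^2(p-2)}}$. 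This is exactly the exponent in~\eqref{eq:banach mazur q p}, as it must be.

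The third step handles the case of small $m$: the above optimal $k$ may violate the admissibility constraint $k\gtrsim n^3(\log p)^2/m^2$, in which case we are forced to take the smallest admissible $k$, namely $k\asymp \max\{1, n^3/m^2\}$ (absorbing $\log p$ factors into $\alpha_p$, cf.\ Remark~\ref{rem:smaller m worse constant}), and also we need $n$ large enough that $m\gtrsim pn$ fails — but in fact when $m\lesssim pn$ one should instead pass to a sub-grid of smaller side and smaller ambient dimension, replacing $n$ by some $n'\le n$ chosen so that the constraint $m\ge pn'$ holds; this is why the final bound~\eqref{eq:our lower grid} features $\min\{m^{q(p-2)/(q(p-2)+p-q)},\,n\}$ in place of $n$. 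Tracking which of the two constraints ($k$ large vs.\ $n$ small relative to $m$) is binding, and checking that the resulting effective dimension is $\asymp\min\{m^{q(p-2)/(q(p-2)+p-q)},n\}$, is the bookkeeping-heavy part of the argument; the exponent $1+\frac{p-q}{q(p-2)}$ in~\eqref{eq:when our m is large enough} is precisely the threshold at which $m^{q(p-2)/(q(p-2)+p-q)}\ge n$, i.e.\ at which the grid is ``thick enough'' for the full $\ell_q^n$-distortion lower bound $c_p(\ell_q^n)\asymp_p n^{(p-q)(q-2)/(q^2(p-2))}$ to be recovered. I expect the main obstacle to be not any single inequality but the careful simultaneous management of all the side constraints on $m,n,k$ — in particular ensuring the sub-grid-to-torus identification is valid for the chosen parameters and that no constraint forces $k$ outside $\{1,\dots,n\}$ — so that the clean-looking bound~\eqref{eq:our lower grid} genuinely follows; everything else is a direct substitution into Theorem~\ref{thm:main}.
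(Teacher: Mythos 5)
Your overall strategy is the paper's: apply Theorem~\ref{thm:main} to (a periodization of) a hypothetical distortion-$D$ embedding, lower-bound the left side of~\eqref{eq:Xp in theorem1} by $(p/\log p)^{-p}k^{p/q}$ via the $2M\e_S$ displacements, upper-bound the right side by $D^p(k+k^{p/2}n^{p/q-p/2})$, balance the two terms (the correct balancing value is $k\asymp n^{p(q-2)/(q(p-2))}$; both exponents you wrote down for $k$ are wrong, though you do land on the correct final distortion exponent), and obtain the $\min$ in~\eqref{eq:our lower grid} by shrinking the ambient dimension to $n'=\min\{m^{q(p-2)/(q(p-2)+p-q)},n\}$ so that $m\ge (n')^{1+\frac{p-q}{q(p-2)}}$, which is exactly the admissibility constraint $m\ge (n')^{3/2}/\sqrt{k}$ of Remark~\ref{rem:smaller m worse constant} for the chosen $k$.

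The genuine gap is your first step, the grid-to-torus identification, and it cannot be repaired in the way you describe. For a sub-grid of $\Z_{4M}^n$ to contain both $x$ and $x+2M\e_S$ it must have side length greater than $2M$ in every coordinate of $S$ (with a box of side $\le 2M$ the set of such $x$ is empty, so your lower bound on the left-hand side evaluates an empty average). But once the distinguished copy of the grid covers more than half of each circle $\Z_{4M}$, the wrap-around seam is unavoidable: a proportion $\asymp 1/M$ of the points $x$ have $x+e_j$ on the other side of the seam, where the $\ell_q$ distance between the corresponding grid points is of order $M$ rather than $1$, so these points alone contribute $\gtrsim M^{p-1}$ to $\E\big[\|f(x+e_j)-f(x)\|_p^p\big]$ and destroy the upper bound $D^p$ that your argument needs (folding/reflection tricks fix the seam but then kill the lower bound, since the folded displacement by $2M\e_S$ can be zero). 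This is precisely why the paper introduces Lemma~\ref{lem:approximate cosine}: one composes $f$ with a discretization of $x\mapsto\big(\cos(\pi x_j/(2m)),\sin(\pi x_j/(2m))\big)_{j\le n}$, which realizes $\Z_{4m}^n$ with the chordal metric $\big(\sum_j|e^{\pi ix_j/(2m)}-e^{\pi iy_j/(2m)}|^q\big)^{1/q}$, rescaled by $m$, as a bi-Lipschitz subset of the grid $[16m]_q^{2n}$ (note the doubling of the dimension). The resulting $F=f\circ h_{4m}^n:\Z_{4m}^n\to L_p$ is genuinely periodic and satisfies $\|F(x+e_j)-F(x)\|_p\lesssim D$, $\|F(x+\e)-F(x)\|_p\lesssim Dn^{1/q}$ and $\|F(x+2m\e_S)-F(x)\|_p\gtrsim m|S|^{1/q}$ for \emph{every} $x$, which is what legitimizes the substitution into Theorem~\ref{thm:main}. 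The missing ingredient is thus a real idea (the Schoenberg-type torus-into-grid embedding, standard in the metric cotype literature), not bookkeeping.
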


The fact that the lower bound in~\eqref{eq:our lower grid} becomes weaker for smaller $m$ is necessary, as exhibited by the following embedding from~\cite{MN06}. First, let $G_{2,p}:L_2\to L_p$ be an isometric embedding of $L_2$ into $L_p$. By a classical theorem of Schoenberg~\cite{Sch38} (see also~\cite{WW75}) there exists an isometric embedding of the $(2/q)$-snowflake of $\ell_2^n$ into $L_2$, i.e., there exists $\psi_q^n:\ell_2^n\to L_2$ such that
$$
\forall\, x,y\in \ell_2^n,\qquad \|\psi_q^n(x)-\psi_q^n(y)\|_2=\|x-y\|_2^{\frac{2}{q}}.
$$
Finally, let $I_{q\to 2}^n:\ell_q^n\to \ell_2^n$ be the identity mapping, and define\footnote{The superscript in the notation $J^S_{(q\to p;n)}(\cdot)$ refers to Schoenberg.}
\begin{equation}\label{eq:def JS}
J_{(q\to p;n)}^S\eqdef G_{2,p}\circ \psi_q^n\circ I_{q\to 2}^n: \ell_q^n\to L_p.
\end{equation}
As argued in~\cite{MN06}, the distortion of the restriction of  $J^S_{(q\to p;n)}$ to $[m]_q^n$ satisfies
$$
\dist\!\left(\left.J^S_{(q\to p;n)}\right|_{[m]_q^n}\right)\le m^{1-\frac{2}{q}}.
$$
Recalling the definition of the embedding $J^R_{(q\to p;n)}$ in~\eqref{eq:def JR}, we therefore have
\begin{equation}\label{eq:better of two embeddings}
c_p\!\left([m]_q^n\right)\le \min\left\{\dist\!\left(\left.J^R_{(q\to p;n)}\right|_{[m]_q^n}\right),\dist\!\left(\left.J^S_{(q\to p;n)}\right|_{[m]_q^n}\right)\right\}\lesssim \min\left\{n^{\frac{(p-q)(q-2)}{q^2(p-2)}},m^{1-\frac{2}{q}}\right\}.
\end{equation}
We conjecture that~\eqref{eq:better of two embeddings} is asymptotically sharp up to constant factors that depend only on $p, q$.
\begin{conjecture}\label{conj:sharp grid} For $2<q<p$ and $m,n\in \N$, the better of the embeddings $J^R_{(q\to p;n)}$ and $J^S_{(q\to p;n)}$ appearing in~\eqref{eq:def JR} and~\eqref{eq:def JS}, respectively, is the best possible bi-Lipschitz embedding of the $L_q$ integer grid  $[m]_q^n$ into $L_p$. Equivalently, $c_p([m]_q^n)$ is bounded from above and from below by positive constants that may depend only on $p$ and $q$ times the quantity
\begin{equation}\label{eq:sharp distortion grid transition}
\min\left\{n^{\frac{(p-q)(q-2)}{q^2(p-2)}},m^{1-\frac{2}{q}}\right\}.
\end{equation}
In particular, there exists $\eta(p,q)\in (0,1)$ such that
\begin{equation*}
m\ge n^{\frac{p-q}{q(p-2)}}\implies c_p\!\left([m]_q^n\right)\ge \eta(p,q) c_p(\ell_q^n),
\end{equation*}
yet
\begin{equation*}
m=o\!\left(n^{\frac{p-q}{q(p-2)}}\right)\implies c_p\!\left([m]_q^n\right)= o\!\left(c_p(\ell_q^n)\right)\ \ (\mathrm{as}\ n\to\infty).
\end{equation*}
\end{conjecture}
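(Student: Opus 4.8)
The upper bound claimed in Conjecture~\ref{conj:sharp grid}, i.e.\ that $c_p([m]_q^n)$ is at most a constant depending only on $p,q$ times the quantity~\eqref{eq:sharp distortion grid transition}, is already provided by the embeddings $J^R_{(q\to p;n)}$ and $J^S_{(q\to p;n)}$ through~\eqref{eq:better of two embeddings}. So the whole content of the conjecture is the matching lower bound
\[
c_p\!\left([m]_q^n\right)\gtrsim_{p,q}\min\left\{n^{\beta},\,m^{1-\frac2q}\right\},\qquad\beta\eqdef\frac{(p-q)(q-2)}{q^2(p-2)},
\]
and it is precisely here that Theorem~\ref{thm:our grid distortion} is not yet sharp: in the range $m\lesssim_{p,q}n^{1+(p-q)/(q(p-2))}$ its bound carries a strictly smaller exponent on $m$ than the conjectured $1-2/q$, and it places the phase transition at $m\asymp n^{1+(p-q)/(q(p-2))}$ rather than at the conjectured $m\asymp n^{(p-q)/(q(p-2))}$. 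The reason is structural: the proof of Theorem~\ref{thm:our grid distortion} pushes the grid into the metric $X_p$ inequality of Theorem~\ref{thm:main}, whose hypothesis requires $m$ of order at least $n^{3/2}(\log p)/\sqrt k$ rather than merely $\sqrt{n/k}$ as predicted by Conjecture~\ref{conj:best m}. The plan is to re-run the embedding obstruction of Theorem~\ref{thm:our grid distortion} with the conjectural sharp inequality~\eqref{eq:Xp alpha(p) version} of Conjecture~\ref{conj:best m} in place of Theorem~\ref{thm:main}.

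Concretely, fix $f\colon[m]_q^n\to L_p$ normalised so that $\|x-y\|_q\le\|f(x)-f(y)\|_p\le D\|x-y\|_q$ with $D\le 2c_p([m]_q^n)$; the goal is a lower bound on $D$. Let $n'\le n$ be the largest integer with $n'\le c(p,q)\,m^{q(p-2)/(p-q)}$ (so $n'=n$ once $m\gtrsim_{p,q}n^{(p-q)/(q(p-2))}$), put $k\asymp_{p,q}(n')^{p(q-2)/(q(p-2))}$ (which lies in $\{1,\ldots,n'\}$ because $q\le p$), and take $m'=\lceil C_p\sqrt{n'/k}\rceil$ so that~\eqref{eq:Xp alpha(p) version} applies in dimension $n'$; a suitable choice of $c(p,q)$ forces $4m'\le m$. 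Let $\phi\colon\Z_{4m'}\to\{0,\ldots,2m'\}$, $\phi(t)\eqdef\min(t,4m'-t)$, be the zigzag map, which moves by $\pm1$ at every unit step and satisfies $|\phi(t+2m')-\phi(t)|=2\big|m'-(t\bmod 2m')\big|$, and let $\Phi\colon\Z_{4m'}^{n'}\to[m]_q^{n'}\subset[m]_q^n$ be its coordinatewise extension. Now apply~\eqref{eq:Xp alpha(p) version} in dimension $n'$ to $f\circ\Phi$ — legitimate for $L_p$-valued $f$ because the inequality is linear in the $p$-th powers of the distances and so passes from real-valued functions to $L_p$-valued ones by integrating over the underlying measure space — and estimate the three resulting terms by sandwiching $\|f(\Phi(x))-f(\Phi(y))\|_p$ between $\|\Phi(x)-\Phi(y)\|_q$ and $D\|\Phi(x)-\Phi(y)\|_q$. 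For the left-hand side, the displacement formula shows that each of the $k$ active coordinates contributes at least $\tfrac12(m')^q$ on average to $\|\Phi(x+2m'\e_S)-\Phi(x)\|_q^q$ (because $2|m'-(t\bmod 2m')|\ge m'$ for a $\ge\tfrac12$ fraction of $t\in\Z_{4m'}$), so $\E\big[\|\Phi(x+2m'\e_S)-\Phi(x)\|_q^q\big]\ge\tfrac12 k(m')^q$, and convexity of $s\mapsto s^{p/q}$ (valid since $p>q$) upgrades this to $\E\big[\|\Phi(x+2m'\e_S)-\Phi(x)\|_q^p\big]\ge(k/2)^{p/q}(m')^p$; hence the left side of~\eqref{eq:Xp alpha(p) version} is at least a positive $p$-dependent constant times $k^{p/q}$. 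For the right-hand side, a one-coordinate displacement has $\ell_q$-length $1$, so the first term is at most $\tfrac{k}{n'}\sum_{j=1}^{n'}D^p=kD^p$, and the displacement $\e$ has $\ell_q$-length $(n')^{1/q}$, so the second term is at most $(k/n')^{p/2}D^p(n')^{p/q}$. Consequently Conjecture~\ref{conj:best m} yields
\[
\alpha_p\,k^{p/q}\lesssim_p D^p\Big(k+k^{p/2}(n')^{\frac pq-\frac p2}\Big).
\]
The value of $k$ was chosen to be precisely the one that makes the two terms in the parenthesis comparable, so the parenthesis is $\asymp_{p,q}k$, and dividing by it gives $D^p\gtrsim_{p,q}k^{(p-q)/q}$, i.e.\ $D\gtrsim_{p,q}k^{(p-q)/(pq)}\asymp_{p,q}(n')^{\beta}$.

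It remains to read off the two regimes. If $m\gtrsim_{p,q}n^{(p-q)/(q(p-2))}$ then $n'=n$, giving $D\gtrsim_{p,q}n^{\beta}$; otherwise $n'\asymp_{p,q}m^{q(p-2)/(p-q)}$, giving $D\gtrsim_{p,q}(n')^{\beta}\asymp_{p,q}m^{(q-2)/q}=m^{1-2/q}$. In either case $D\gtrsim_{p,q}\min\{n^{\beta},m^{1-2/q}\}$, which together with~\eqref{eq:better of two embeddings} establishes $c_p([m]_q^n)\asymp_{p,q}\min\{n^{\beta},m^{1-2/q}\}$, the phase transition occurring at $m\asymp n^{(p-q)/(q(p-2))}$ — the value at which $m^{1-2/q}$ overtakes $n^{\beta}$. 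The two ``in particular'' clauses of the conjecture then follow from $c_p(\ell_q^n)\asymp_{p,q}n^{\beta}$, whose upper bound is~\eqref{eq:distortion of JR} applied to the linear embedding $J^R_{(q\to p;n)}$, and whose lower bound is~\eqref{eq:when our m is large enough} letting $m\to\infty$.

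Every step above is routine once Conjecture~\ref{conj:best m} is available, so the entire difficulty is concentrated there, and Theorem~\ref{thm:main} genuinely does not suffice: its stronger hypothesis forces the auxiliary dimension to be only of order $m^{q(p-2)/(q(p-2)+p-q)}$ instead of $m^{q(p-2)/(p-q)}$, which is exactly the deficit in the exponent of $m$ in Theorem~\ref{thm:our grid distortion}. Thus the main obstacle is to prove the sharp-scaling metric $X_p$ inequality of Conjecture~\ref{conj:best m} — equivalently, by Proposition~\ref{prop:convolution implies best m}, the conjectural convolution inequality of Question~\ref{Q:convolution} — which, as the discussion immediately following Conjecture~\ref{conj:best m} indicates, appears to demand cancellations more delicate than those exploited in the proof of Theorem~\ref{thm:main}.
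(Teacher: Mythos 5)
Your proposal correctly recognizes that the statement is an open conjecture which the paper itself establishes only conditionally on Conjecture~\ref{conj:best m}, and your conditional derivation coincides with the paper's proof of Theorem~\ref{thm:best m implies sharp nonembeddability}: the same choice $k\asymp_{p,q} (n')^{p(q-2)/(q(p-2))}$, the same restriction to a sub-grid of dimension $n'\asymp_{p,q}\min\{m^{q(p-2)/(p-q)},n\}$ to produce the minimum in~\eqref{eq:sharp distortion grid transition}, and the same balancing of the two right-hand terms of~\eqref{eq:Xp alpha(p) version}. The only deviation is your coordinatewise zigzag map in place of the cosine-based embedding of Lemma~\ref{lem:approximate cosine}; this is a harmless (indeed slightly cleaner) substitute here, since the left-hand side of the metric $X_p$ inequality is an expectation and your averaged lower bound $\E\big[\|\Phi(x+2m'\e_S)-\Phi(x)\|_q^q\big]\ge \tfrac12 k (m')^q$ together with Jensen's inequality does the job of the paper's pointwise estimate~\eqref{eq:eps S F}.
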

An affirmative answer to Conjecture~\ref{conj:sharp grid} would imply that if the linear embedding $J^R_{(q\to p;n)}$ of $\ell_q^n$ into an appropriate Rosenthal $X_p(\omega)$ space fails to yield the best possible bi-Lipschitz embedding of $[m]_q^n$ into $L_p$ then (up to constant factors that are independent of $m,n$), the best possible way to embed $[m]_q^n$ into $L_p$ would be to embed it into $L_2$ (ignoring the fact that we are seeking an embedding into the larger space $L_p$), via the (highly nonlinear) Schoenberg embedding $\psi_q^n$. Admittedly, if true, this phenomenon would be quite exotic, but we conjecture that it indeed occurs partially because it is a consequence of Conjecture~\ref{conj:best m}, as we shall see in Section~\ref{consequences of conjecture} below.

\begin{remark}
There are also interesting open problems related to embeddings of $[m]_p^n$ into $L_q$ when $p>q>2$. Specifically, by combining the upper bound in~\cite{MN06} with the metric cotype-based lower bound in~\cite{MN08}, we see that
\begin{equation}\label{eq:from cotype}
\frac{1}{\sqrt{q}}\cdot \min\left\{n^{\frac{1}{q}-\frac{1}{p}},m^{1-\frac{q}{p}}\right\}\lesssim c_q([m]_p^n)\le \min\left\{n^{\frac{1}{q}-\frac{1}{p}},m^{1-\frac{2}{p}}\right\}.
\end{equation}
The bounds in~\eqref{eq:from cotype} match only when $q=2$, and it remains open to evaluate  $c_q([m]_p^n)$ up to constant factors that are independent of $m,n$. An inspection of the argument in~\cite{MN06} reveals that the lower bound on $c_q([m]_p^n)$ in~\eqref{eq:from cotype} would be sharp (up to constant factors that may depend only on $p,q$) if the $(q/p)$-snowflake of $L_q$ admitted a bi-Lipschitz embedding into $L_q$. When $q=2$ this is indeed the case due to the theorem of Schoenberg that was quoted above, but for $q>2$ a positive answer to Question~\ref{Q:uniqueness of snowflakes} (see also the paragraph immediately following Question~\ref{Q:uniqueness of snowflakes}) would imply that no nontrivial snowflake of $L_q$ admits a bi-Lipschitz embedding into $L_q$. In the spirit of Conjecture~\ref{conj:sharp grid}, one is tempted to ask whether or not the upper bound on $c_q([m]_p^n)$ in~\eqref{eq:from cotype} is asymptotically sharp, i.e., if also in this setting it is best to embed $[m]_p^n$ into $L_q$ via an appropriate embedding into the smaller space $L_2$. However, if this were true then one would need to find a better lower bound on $c_q([m]_p^n)$ than what follows from the fact that $L_q$ has metric cotype $q$. For this reason, at present we do not have a concrete conjecture as to the sharp asymptotics of $c_q([m]_p^n)$ when $p>q>2$.
\end{remark}

\subsubsection{Consequences of Conjecture~\ref{conj:best m}}\label{consequences of conjecture}
The following theorem asserts that Conjecture~\ref{conj:best m} implies a positive solution of Conjecture~\ref{conj:q/p} and Conjecture~\ref{conj:sharp grid}. Thus, obtaining the conjecturally sharp value of $m$ in the metric $X_p$ inequality of Theorem~\ref{thm:main}, in addition to its intrinsic analytic interest, would yield striking nonembeddability results. As we mentioned earlier, in Section~\ref{sec:best m conv} we present a concrete convolution inequality (that is interesting on its own right) and prove that it implies an affirmative answer to Conjecture~\ref{conj:best m}, and hence also to Conjecture~\ref{conj:q/p} and Conjecture~\ref{conj:sharp grid}.

\begin{theorem}\label{thm:best m implies sharp nonembeddability} If Conjecture~\ref{conj:best
m} holds true then for every $2<q<p$ and $\theta\in (0,1)$,
\begin{equation}\label{eq:sharp snowflake bound}
c_p\!\left(L_q,\|x-y\|_q^\theta\right)<\infty\implies \theta\le \frac{q}{p}.
\end{equation}
Moreover, for every $m,n\in \N$ the $L_p$ distortion of the $L_q$ grid $[m]_q^n$ is bounded from above and below by a constant that may depend only on $p$ times the quantity appearing in~\eqref{eq:sharp distortion grid transition}.
\end{theorem}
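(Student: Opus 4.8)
The plan is to adapt the proofs of Theorem~\ref{thm:our snowflake} and Theorem~\ref{thm:our grid distortion} from Theorem~\ref{thm:main}, substituting the metric $X_p$ inequality~\eqref{eq:Xp alpha(p) version} of Conjecture~\ref{conj:best m} for the conclusion~\eqref{eq:Xp in theorem1} of Theorem~\ref{thm:main}. The only change is in the admissible range of parameters: \eqref{eq:Xp alpha(p) version} holds once $m\gtrsim_p\sqrt{n/k}$, whereas \eqref{eq:Xp in theorem1} required the much larger $m\gtrsim n^{3/2}(\log p)/\sqrt k+pn$. Re-optimising the free parameters $m,n,k$ subject to the weaker constraint is exactly what upgrades the bounds of Theorem~\ref{thm:our snowflake} and Theorem~\ref{thm:our grid distortion} to the sharp exponent $q/p$ of~\eqref{eq:sharp snowflake bound} and to the quantity~\eqref{eq:sharp distortion grid transition}. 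Since~\eqref{eq:Xp alpha(p) version} involves $p$'th powers, it extends to $L_p$-valued mappings by integration over the underlying measure space, exactly as noted after Theorem~\ref{thm:main}. In both parts we test~\eqref{eq:Xp alpha(p) version} on $f=\Phi\circ\iota\colon\Z_{4m}^n\to L_p$, where $\Phi$ is the hypothetical embedding and $\iota$ is the coordinatewise ``clock'' map sending each factor $\Z_{4m}$ onto a scaled Euclidean circle. This reproduces the $\ell_q$ torus metric on $\Z_{4m}^n$ up to a factor depending only on $q$, so that $d_{C_{4m}}(a,a+1)\asymp 1$, $d_{C_{4m}}(a,a+2m)=2m$ and the full-increment distance $\asymp n^{1/q}$ are faithfully represented; for the grid statement we further scale $\iota$ so its increments exceed $1$ and round its image to nearest lattice points, which places a copy of $\Z_{4m}^n$ inside an $\ell_q^n$ grid whenever $n\lesssim_q N$ and $m\lesssim M$, at the cost of a $q$-dependent constant.

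\emph{Snowflakes.} Let $\Phi\colon(L_q,\|x-y\|_q^\theta)\to L_p$ have distortion $D<\infty$ and apply~\eqref{eq:Xp alpha(p) version} to $f=\Phi\circ\iota$. On the left, $\iota(x+2m\e_S)-\iota(x)$ has $\ell_q$ norm $\asymp_q mk^{1/q}$ deterministically, so the left side is $\gtrsim_{p,q}\alpha_p(mk^{1/q})^{\theta p}m^{-p}$; on the right, the single-coordinate increments have norm $\asymp_q 1$ and the full increment has norm $\asymp_q n^{1/q}$, so the right side is $\lesssim_{p,q}D^p\big(k+(k/n)^{p/2}n^{\theta p/q}\big)$. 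Choosing $m\asymp_p\sqrt{n/k}$ (the smallest admissible value, which \emph{maximises} the negative power $m^{(\theta-1)p}$) and $k=\lceil n^{\beta}\rceil$, the inequality reduces to $\alpha_p\,n^{E_L}\lesssim_{p,q} D^p\,n^{E_R}$, where $E_L$ is affine in $\beta$ and $E_R$ is the maximum of two affine functions of $\beta$. A short computation, crucially using $q>2$, shows that if $\theta>q/p$ then some $\beta$ slightly below $1$ satisfies $E_L>E_R$; as $D$ and the implied constants are fixed, letting $n\to\infty$ contradicts~\eqref{eq:Xp alpha(p) version}. Hence $\theta\le q/p$, which is~\eqref{eq:sharp snowflake bound}; this is sharp by~\cite[Remark~5.10]{MN04}.

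\emph{Grids.} The upper bound on the $L_p$-distortion of $[m]_q^n$ is~\eqref{eq:better of two embeddings}, so only the lower bound $c_p([m]_q^n)\gtrsim_{p,q}\min\{n^{(p-q)(q-2)/(q^2(p-2))},m^{1-2/q}\}$ has to be proved. Given an embedding of a grid $[M]_q^N$ (whose parameters we now rename $M,N$, keeping $m,n,k$ for~\eqref{eq:Xp alpha(p) version}) into $L_p$ with distortion $D$, we feed~\eqref{eq:Xp alpha(p) version} with its composition with the rounded, scaled clock copy of $\Z_{4m}^n$ inside $[M]_q^N$; this is legitimate whenever $n\lesssim_q N$ and $m\lesssim M$, i.e.\ (combined with $m\gtrsim_p\sqrt{n/k}$) whenever $n\lesssim_p kM^2$. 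Estimating the three terms as above, and using $\theta=1$, yields $\alpha_p k^{p/q}\lesssim_{p,q} D^p\big(k+k^{p/2}n^{p/q-p/2}\big)$ once the scaling factor cancels. Maximising $k^{p/q}\big/\big(k+k^{p/2}n^{p/q-p/2}\big)$ over $1\le k\le n$ with $n\lesssim_p\min\{N,kM^2\}$, the optimum occurs at $k\asymp N^{p(q-2)/(q(p-2))}$ with $n=N$ when $M\gtrsim_p N^{(p-q)/(q(p-2))}$, giving $D\gtrsim_{p,q}N^{(p-q)(q-2)/(q^2(p-2))}\asymp c_p(\ell_q^N)$, and at $k\asymp M^{p(q-2)/(p-q)}$ with $n\asymp kM^2$ when $M\lesssim_p N^{(p-q)/(q(p-2))}$, giving $D\gtrsim_{p,q}M^{1-2/q}$. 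Combining the two regimes produces exactly~\eqref{eq:sharp distortion grid transition}, which together with~\eqref{eq:better of two embeddings} also settles Conjecture~\ref{conj:sharp grid}.

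The main obstacle is the parameter optimisation: one must check that the affine exponents $E_L,E_R$ (snowflakes) and the rational function of $(k,n)$ (grids) are extremised precisely at the configurations producing $q/p$ and the phase transition at $m\asymp n^{(p-q)/(q(p-2))}$, and in particular that the genuinely weaker requirement $m\gtrsim_p\sqrt{n/k}$ of Conjecture~\ref{conj:best m}---rather than the $n^{3/2}/\sqrt k$ of Theorem~\ref{thm:main}---is what allows $n$ to be pushed up to $\asymp kM^2$, and hence allows one to recover the full linear lower bound $c_p(\ell_q^n)$. A secondary point is to confirm that the clock realisation of the $\ell_q$ torus metric, together with its lattice rounding, costs only $q$-dependent factors, which is harmless since the target bounds are stated up to $(p,q)$-dependent constants.
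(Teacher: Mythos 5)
Your proposal is correct and follows essentially the same route as the paper: both parts test~\eqref{eq:Xp alpha(p) version} on the composition of the hypothetical embedding with the (rounded) clock map, take $m\asymp C_p\sqrt{n/k}$ at the new admissible minimum, and optimize the remaining free exponent. The only cosmetic differences are that the paper fixes the explicit value $k\asymp n^{1-2(\theta p-q)/(q(p-2))}$ where you argue by continuity near $\beta=1$, and that for the grid the paper proves only the regime $m\ge n^{(p-q)/(q(p-2))}$ and reduces the other regime to it by passing to a sub-grid, which amounts to exactly your choice $n\asymp kM^2\asymp M^{q(p-2)/(p-q)}$.
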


\subsection{$X_p$ metric spaces?}\label{sec:metric} For $p\in (0,\infty)$, by pursuing the Ribe program in light of Theorem~\ref{thm:main}, one arrives at Definition~\ref{def:Xpmetric}  of when  a metric space $(X,d_X)$ is an $X_p$ metric space. One would then want to investigate the structure of such metric spaces, motivated in part by analogies from the linear theory. However, in contrast to previous successful steps in the Ribe program, in the present setting the linear theory of $X_p$ spaces hasn't been studied yet, and it therefore seems to be more natural to first understand what makes a Banach space an $X_p$ Banach space. Specifically, say that a Banach space $(X,\|\cdot\|_X)$ is an $X_p$ Banach space if for every $n\in \N$ and $k\in \n$, every  $v_1,\ldots,v_n\in X$ satisfy
\begin{equation*}
\frac{1}{\binom{n}{k}}\sum_{\substack{S\subset\n\\|S|=k}}\E\left[\Big\|\sum_{j\in S} \e_j v_j\Big\|_X^p\right] \lesssim_X \frac{k}{n}\sum_{j=1}^n \|v_j\|_X^p+\left(\frac{k}{n}\right)^{\frac{p}{2}}\E\left[\Big\|\sum_{j=1}^n \e_j v_j\Big\|_X^p\right],
\end{equation*}
where the expectation is over $\e\in \{-1,1\}^n$ chosen uniformly at random.  Being an $X_p$ Banach space is clearly a local property. Our proof of Theorem~\ref{thm:main} shows that a Banach space is an $X_p$ metric space if and only if it is an $X_p$ Banach space, thus completing the Ribe program in this setting.

For $p>2$, it seems that the only Banach spaces that were previously known to be $X_p$ Banach spaces were those that are isomorphic to subspaces of $L_p$. However, there exist separable $X_p$ Banach spaces that are not isomorphic to a subspace of $L_p$. In Section~\ref{sec:appendix} we prove that for $p\in [2,\infty)$ the Schatten $p$ trace class $S_p$ is an $X_p$ Banach space. The fact that $S_p$ is not isomorphic to a subspace of $L_p$ was proved in~\cite{McC67} (see also~\cite{Pis78}). Obtaining a satisfactory understanding of those Banach spaces that are $X_p$ spaces remains an interesting, though probably quite difficult, research challenge.

Since $S_p$ is an $X_p$ Banach space,  our work here shows that it is also an $X_p$ metric space. The nonembeddability results that were stated above for embeddings  into  $L_p$ therefore hold true for embeddings into $S_p$ as well. In the setting of $S_p$, these nonembeddability results are new even in the linear category. It was known that for $2<q<p$ the Banach--Mazur distance of $\ell_q^n$ to any subspace of $S_p$ must tend to $\infty$ with $n$: this follows from the non-commutative Kadec--Pe\l czy\'nski result in~\cite{Suk96}; see also Theorem~10.7 in~\cite{PX03}. The literature gives no information on the rate at which $c_{S_p}(\ell_q^n)$ tends to infinity with $n$ (extracting quantitative estimates from the proof in~\cite{Suk96}, if at all possible, would probably require significant effort and yield weak bounds). Here we see that  $c_{S_p}(\ell_q^n)$ is asymptotically $n^{(p-q)(q-2)/(q^2(p-2))}$, up to constant factors that may depend only on $p, q$.

\section{Preliminaries}\label{sec:prelim}

Here we establish some initial facts and prove some of the simpler statements that were presented in the Introduction. The results of the present section will not be used for the proofs of Theorem~\ref{thm:main} and its consequences, so they could be skipped on first reading.

We shall start with the proof of Proposition~\ref{prop:lower lemma}, i.e., that scaling is needed for the metric $X_p$ inequality of Theorem~\ref{thm:main} to hold true.

\begin{proof}[Proof of Proposition~\ref{prop:lower lemma}] We shall use here the notation that was introduced in the statement of Proposition~\ref{prop:lower lemma}. Since
$\ell_2$ embeds isometrically into $L_p$, by~\cite[Lem.~5.2]{MN04}
there exists $F:\Z_{2m}^n\to L_p$ such that for every distinct
$x,y\in \Z_{2m}^n$ we have
\begin{equation}\label{eq:truncation}
1\le \frac{\|F(x)-F(y)\|_p}{\min\left\{2\sqrt{k},\sqrt{\sum_{j=1}^n
\left|e^{\pi i(x_j-y_j)/m}-1\right|^2}\right\}}\le
2.
\end{equation}
By integrating~\eqref{eq:Xp alpha in lower lemma} we see that
\begin{multline}\label{eq:Xp alpha Lower in Lp}
\frac{\alpha^p}{\binom{n}{k}}\sum_{\substack{S\subset \n\\|S|=
k}}\frac{\E\left[\left\|F\left(x+m\e_S\right)-F(x)\right\|_p^p\right]}{m^p}\\\le
\frac{k}{n}\sum_{j=1}^n\E\left[\left\|
F(x+e_j)-F(x)\right\|_p^p\right]+\left(\frac{k}{n}\right)^{\frac{p}{2}}
\E\left[\left\|F\left(x+ \e\right)-F(x)\right\|_p^p\right].
\end{multline}
It follows from~\eqref{eq:truncation} that if $S\subseteq \n$
satisfies $|S|=k$ then $\|F(x+m\e_S)-F(x)\|_p\ge 2\sqrt{k}$ for
every $x\in \Z_{2m}^n$. Also, the elementary inequality $|e^{\pi
i/m}-1|^2\le \pi^2/m^2$ implies that for every $j\in \n$ we have
$\|F(x+e_j)-F(x)\|_p\le 2\pi/m$, and for every $(x,\e)\in
\Z_{2m}^n\times  \{-1,1\}^n$ we have $\|F(x+\e)-F(x)\|_p\le
4\sqrt{k}$. In conjunction with~\eqref{eq:Xp alpha Lower in Lp}
these estimates show that
$$
\frac{2^p \alpha^p k^{\frac{p}{2}}}{m^p}\le \frac{2^{p}\pi^pk}{m^p}+
\frac{(4k)^p}{n^{\frac{p}{2}}},
$$
which yields the desired implication~\eqref{eq:if k big m big}.
\end{proof}

Next, we shall check the validity of~\eqref{eq:distortion of JR}, i.e., evaluate the distortion of the mapping $J^R_{(q\to p;n)}$ given in~\eqref{eq:def JR}. This is a known (and easy) statement which is included here only because we could not locate a clean reference for it.

\begin{proof}[Proof of~\eqref{eq:distortion of JR}] The definition~\eqref{eq:def JR} implies that for every $x\in \ell_q^n$ we have
$$
\left\|J^R_{(q\to p;n)}(x)\right\|_{(\ell_p^n\oplus\ell_2^n)_p}^p=n^{\frac{p}{2}}\|x\|_p^p+n^{\frac{p}{q}}\|x\|_2^p.
$$
Consequently, it suffices to show that for every $x\in \ell_q^n$ we have
\begin{equation}\label{eq:lagrange goal}
\frac{n^{\frac{p}{2}}\|x\|_q^p}{2^{\frac{p}{q}}n^{\frac{p(p-q)(q-2)}{q^2(p-2)}}}\le n^{\frac{p}{2}}\|x\|_p^p+n^{\frac{p}{q}}\|x\|_2^p\le 2n^{\frac{p}{2}}\|x\|_q^p.
\end{equation}
The rightmost inequality in~\eqref{eq:lagrange goal} is an immediate consequence of the estimates $\|x\|_2\le n^{\frac12-\frac{1}{q}}\|x\|_q$ and  $\|x\|_p\le \|x\|_q$, which hold true  because $2<q<p$.

Let $x\in \ell_q^n$  with $\|x\|_q=1$ be such that $n^{\frac{p}{2}}\|x\|_p^p+n^{\frac{p}{q}}\|x\|_2^p$ is minimal. We may also assume that the number of nonzero entries of $x$ is minimal, and that $x_1,\ldots,x_k>0$ and $x_{k+1}=\ldots=x_n=0$ for some $k\in \n$. Hence, there exists (a Lagrange multiplier) $\lambda\in \R$ such that
\begin{equation}\label{eq:lagrange identity}
\forall\, j\in \{1,\ldots,k\},\qquad n^{\frac{p}{2}}x_j^{p-1}+n^{\frac{p}{q}}\|x\|_2^{p-2}x_j=\lambda x_j^{q-1}.
\end{equation}
For $s\in [0,\infty)$ write $\psi(s)\eqdef n^{\frac{p}{2}}s^{p-2}-\lambda s^{q-2}+n^{\frac{p}{q}}\|x\|_2^{p-2}$. Since $p,q>2$ we have $\psi(0)>0$, and since $p>q$ we have $\lim_{s\to\infty} \psi(s)=\infty$. It follows from~\eqref{eq:lagrange identity} that $\lambda>0$, and therefore there is a unique $s_0\in (0,\infty)$ for which $\psi'(s_0)=0$. This means that $\psi$ starts at a positive value, decreases on $(0,s_0)$, and then increases to $\infty$. Consequently, there exist $a,b\in (0,\infty)$ such that $\psi(s)=0\implies s\in \{a,b\}$ for  every $s\in (0,\infty)$. Since  by~\eqref{eq:lagrange identity} we have $\psi(x_j)=0$ for every $j\in \{1,\ldots,k\}$, it follows that there exists $S\subset\{1,\ldots k\}$ such that $x_j=a\1_S(j)+b\1_{\{1,\ldots,k\}\setminus S}(j)$ for all $j\in \{1,\ldots,k\}$. Since $\|x\|_q=1$, we may assume without loss of generality that $a^q|S|\ge 1/2$, i.e., that $a\ge 1/(2|S|)^{1/q}$. Consequently,
$$
n^{\frac{p}{2}}\|x\|_p^p+n^{\frac{p}{q}}\|x\|_2^p\ge n^{\frac{p}{2}}|S|a^p+n^{\frac{p}{q}}|S|^{\frac{p}{2}}a^p\ge \frac{n^{\frac{p}{2}}}{2^{\frac{p}{q}}|S|^{\frac{p}{q}-1}}+
\frac{n^{\frac{p}{q}}|S|^{\frac{p}{2}-\frac{p}{q}}}{2^{\frac{p}{q}}}\ge 2^{-\frac{p}{q}}n^{\frac{p}{2}-\frac{p(p-q)(q-2)}{q^2(p-2)}},
$$
where the last step follows by computing the minimum of $n^{\frac{p}{2}}/s^{\frac{p}{q}-1}+n^{\frac{p}{q}}s^{\frac{p}{2}-\frac{p}{q}}$ over $s\in (0,\infty)$.
\end{proof}

In the present work, Banach spaces are assumed to be over real scalars unless stated otherwise. However, it will sometimes be notationally convenient to work with complex Banach spaces, and in fact all the results presented below hold true for Banach spaces over the complex numbers as well. This follows from a straightforward complexification argument. Specifically, given a real Banach space $(Z,\|\cdot\|_Z)$  and $p\in [1,\infty)$ denote by $Z_p(\C)$ the following $p$-complexification of $Z$. As a vector space, $Z_p(\C)=Z\times Z$. As usual,  we consider $Z_p(\C)$ as a vector space over $\C$ by setting $(a+bi)(u,v)=(au-bv,av+bu)$ for every $u,v\in Z$ and $a,b\in \R$. The norm on $Z_p(\C)$ is given by
\begin{equation}\label{eq:def complexification norm}
\forall(u,v)\in Z\times Z,\qquad \|(u,v)\|_{Z_p(\C)} \eqdef \left(\int_0^{2\pi} \| (\cos \theta)u-(\sin\theta)v\|_Z^pd\theta \right)^{\frac{1}{p}}.
\end{equation}
This turns $(Z_p(\C),\|\cdot\|_{Z_p(\C)})$ into a Banach space over
the complex numbers, which is isometric as a real Banach space to a
subspace of $L_p([0,2\pi],Z)$. For every $z\in Z$ we have
\begin{equation}\label{eq:z0}
\|(z,0)\|_{Z_p(\C)}^p=\|z\|_Z^p\int_0^{2\pi} |\cos\theta|^pd\theta= \frac{4\sqrt{\pi}\Gamma\left(\frac{p}{2}+\frac{1}{2}\right)}{\Gamma\left(\frac{p}{2}+1\right)}\|z\|_Z^p.
\end{equation}
Hence, by considering an appropriate rescaling of the first coordinate of elements of $Z_p(\C)$, we see that $Z$ is isometric to a subspace of $Z_p(\C)$. Since $Z_p(\C)$ is a subspace of $L_p([0,2\pi],Z)$, all properties that are closed under $\ell_p$ sums are inherited by $Z_p(\C)$ from $Z$.

The final matter that will be treated in the present section is to show that the metric $X_p$ inequality of Theorem~\ref{thm:main} implies the linear $X_p$ inequality~\eqref{eq:k set quote}. We shall show this in the context of general Banach spaces, i.e., if a Banach space is an $X_p$ metric space then it is also an $X_p$ Banach space. The converse of this assertion, i.e., that an $X_p$ Banach space is also an $X_p$ metric space, follows from the proof of Theorem~\ref{thm:main}  that can be found in Section~\ref{sec:proof of main}.

\begin{proposition}[Metric $X_p$ inequalities imply linear $X_p$ inequalities]\label{prop:metric to linear}
Let $(Z,\|\cdot\|_Z)$ be a Banach space. Fix $p\in [2,\infty)$ and $\gamma\in (0,1)$. Fix also $m,n\in \N$ and $k\in \n$. Suppose that for every $f:\Z_{2m}^n\to Z$ we have
\begin{multline}\label{eq:Xp gamma in lemma}
\frac{\gamma}{2^n\binom{n}{k}}\sum_{\substack{S\subset
\n\\|S|=k}}\sum_{\e\in \{-1,1\}^n}\sum_{x\in
\Z_{2m}^n}\frac{\|f(x+m\e_S)-f(x)\|_Z^p}{m^p}\\\le
\frac{k}{n}\sum_{j=1}^n \sum_{x\in \Z_{2m}^n}
\|f(x+e_j)-f(x)\|_Z^p+\frac{(k/n)^{\frac{p}{2}}}{2^n}\sum_{\e\in
\{-1,1\}^n} \sum_{x\in \Z_{2m}^n} \|f(x+\e)-f(x)\|_Z^p.
\end{multline}
Then for every $z_1,\ldots,z_n\in Z$ we have
\begin{equation}\label{eq:rademacher some from metric}
\frac{(2/\pi)^{2p}\gamma}{2^n\binom{n}{k}}\sum_{\substack{S\subset
\n\\|S|=k}}\sum_{\e\in \{-1,1\}^n} \left\|\sum_{j\in S}\e_j
z_j\right\|_Z^p\le
\frac{k}{n}\sum_{j=1}^n\|z_j\|_Z^p+\frac{(k/n)^{\frac{p}{2}}}{2^n}\sum_{\e\in
\{-1,1\}^n}\left\|\sum_{j=1}^n \e_j z_j\right\|_Z^p.
\end{equation}
\end{proposition}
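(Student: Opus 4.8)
\smallskip

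The plan is to apply the metric inequality~\eqref{eq:Xp gamma in lemma} to a family of functions $f_t \colon \Z_{2m}^n \to Z$ that approximate the linear function $x \mapsto \sum_j x_j z_j$, and then pass to a limit as $m$ is replaced by large multiples of itself. The natural candidate is a trigonometric function: fix $z_1,\dots,z_n\in Z$ and set, for $x\in\Z_{2m}^n$,
\[
f(x)\eqdef \sum_{j=1}^n \sin\!\left(\frac{\pi x_j}{m}\right) z_j,
\]
which is genuinely well defined on the torus $\Z_{2m}^n$ since $\sin(\pi(x_j+2m)/m)=\sin(\pi x_j/m)$. The point of using $\sin(\pi x_j/m)$ rather than $x_j$ directly is precisely the obstruction flagged in the paragraph preceding the statement: $x\mapsto x_j a_j$ is not a function on $\Z_{2m}^n$. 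The key elementary observations are: (i) $f(x+m\e_S)-f(x) = -2\sum_{j\in S}\sin(\pi x_j/m)z_j$ whenever $\e\in\{-1,1\}^n$, because $\sin(\pi(x_j\pm m)/m)=-\sin(\pi x_j/m)$; (ii) a single-coordinate shift $f(x+e_j)-f(x) = \bigl(\sin(\pi(x_j+1)/m)-\sin(\pi x_j/m)\bigr)z_j$ has norm at most $(\pi/m)\|z_j\|_Z$; and (iii) the full shift $f(x+\e)-f(x)$ needs to be controlled, which is where the $(2/\pi)$ factors will enter.

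\smallskip

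The main point is a lower bound, in terms of $\bigl\|\sum_{j\in S}\e_j z_j\bigr\|_Z$, for the average over $x\in\Z_{2m}^n$ of $\bigl\|\sum_{j\in S}\sin(\pi x_j/m)z_j\bigr\|_Z^p$. I would handle this by a randomization/sign-symmetrization argument: for a uniformly random $x\in\Z_{2m}^n$, conditionally on the signs $\e_j=\mathrm{sign}(\sin(\pi x_j/m))$ the quantities $|\sin(\pi x_j/m)|$ are independent, each with mean $\tfrac1{2m}\sum_{r}|\sin(\pi r/m)| = \Avg|\sin|\geq 2/\pi$ as $m\to\infty$ (this is a Riemann sum for $\tfrac1\pi\int_0^\pi|\sin\theta|\,d\theta=2/\pi$). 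Writing $\sin(\pi x_j/m)=\e_j|\sin(\pi x_j/m)|$ and using convexity of $t\mapsto \|t\|_Z^p$ together with Jensen's inequality applied to the conditionally-independent magnitudes, one gets
\[
\Avg_x\Bigl[\Bigl\|\sum_{j\in S}\sin(\tfrac{\pi x_j}{m})z_j\Bigr\|_Z^p\Bigr]
\;\ge\; \Bigl(\frac{2}{\pi}\Bigr)^{p}\,\Avg_{\e}\Bigl[\Bigl\|\sum_{j\in S}\e_j z_j\Bigr\|_Z^p\Bigr] - o(1),
\]
where the $o(1)$ is a correction that vanishes as $m\to\infty$ once one makes the Riemann sums exact. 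For the single-coordinate terms on the right-hand side of~\eqref{eq:Xp gamma in lemma}, item (ii) gives $\sum_x\|f(x+e_j)-f(x)\|_Z^p \leq 2m\cdot(\pi/m)^p\|z_j\|_Z^p$, which after multiplying by $2$ from $f(x+m\e_S)-f(x)$ being $-2(\cdots)$ and dividing by $m^p$ produces terms of order $m^{1-p}\to 0$; so this term disappears in the limit and does not even contribute the $\tfrac kn\sum\|z_j\|_Z^p$ on the right of~\eqref{eq:rademacher some from metric}. The full-shift term $\Avg\|f(x+\e)-f(x)\|_Z^p$ must be shown to converge (or be bounded above) by $\bigl(\tfrac2\pi\bigr)^{-p}\Avg\|\sum_j\e_j z_j\|_Z^p$ up to the same constant — this again reduces to a Riemann-sum estimate for $\int|\sin(\theta+\tfrac\pi m)-\sin\theta|$-type quantities, but here one wants an \emph{upper} bound, and the crude bound $|\sin(\pi(x_j\pm1)/m)-\sin(\pi x_j/m)|\le \pi/m$ is too weak; instead I would note $f(x+\e)-f(x)$ is again a sum of the form $\sum_j c_j(x)z_j$ with $|c_j(x)|\le \pi/m$, forcing this term to $0$ as well, and conclude that in fact \emph{both} terms on the right of~\eqref{eq:Xp gamma in lemma} vanish in the limit after normalization — wait, that would make the inequality trivial. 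So the correct reading is that one must \emph{not} normalize the right-hand side, i.e. one applies~\eqref{eq:Xp gamma in lemma} with $f$ replaced by $F(x)\eqdef m f(x)$; then $F(x+e_j)-F(x)$ has norm $\le\pi\|z_j\|_Z$, and $\tfrac1m\bigl(F(x+m\e_S)-F(x)\bigr)$ still equals $-2\sum_{j\in S}\sin(\pi x_j/m)z_j$, so after this rescaling the $m^p$ in the denominator exactly cancels and every term stabilizes as $m\to\infty$.

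\smallskip

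So the actual sequence of steps is: (1) fix $z_1,\dots,z_n$, set $F(x)=\sum_j m\sin(\pi x_j/m)z_j$ on $\Z_{2m}^n$; (2) plug $F$ into~\eqref{eq:Xp gamma in lemma} and simplify each of the three averages using identities (i)--(iii); (3) on the left, apply the conditional-independence/Jensen argument to get the $(2/\pi)^{p}\Avg_\e\|\sum_{j\in S}\e_j z_j\|_Z^p$ lower bound with an $m$-dependent error; on the right, bound the single-shift term by $\pi^p\tfrac kn\sum_j\|z_j\|_Z^p$ plus error, and the full-shift term by $\pi^p(\tfrac kn)^{p/2}\Avg_\e\|\sum_j\e_j z_j\|_Z^p$ plus error (again via a Riemann sum, now for $\Avg_x$ of a product expression, using that $\mathrm{sign}(\sin(\pi x_j/m))$ equidistributes); (4) let $m\to\infty$ so all error terms vanish, giving~\eqref{eq:rademacher some from metric} with the constant $(2/\pi)^{p}/\pi^p=(2/\pi^2)^p$ — and since $2/\pi^2 < (2/\pi)^2 = 4/\pi^2$... hmm, one needs to be a little more careful to land exactly $(2/\pi)^{2p}$, which suggests the intended test function uses $\sin$ on the left-relevant coordinates and a sharper comparison on the right, or absorbs an extra $2$ somewhere; I would reconcile the constants at the end rather than at the outset. \textbf{The main obstacle} is the lower-bound step (3), i.e. showing that the sinusoidal function genuinely witnesses the Rademacher average and not something smaller: the magnitudes $|\sin(\pi x_j/m)|$ are correlated with the number of sign patterns, and one must check that averaging over $x$ does not degrade the bound below $(2/\pi)^p$ times the sign-average. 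The cleanest route is to write $\Avg_x = \Avg_\e \Avg_{x\mid\e}$, use that conditionally the $|\sin(\pi x_j/m)|$ are i.i.d.\ with a known mean, and then invoke convexity in the form $\Avg\|\sum_j t_j z_j\|_Z^p \ge \|\sum_j (\Avg t_j) z_j\|_Z^p$ coordinatewise — valid because the $t_j=|\sin(\pi x_j/m)|\ge0$ are independent, so one can condition on all but one coordinate at a time and apply the triangle inequality plus Jensen. Everything else is bookkeeping with Riemann sums and the identities (i)--(iii).
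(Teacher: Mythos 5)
Your overall strategy --- testing \eqref{eq:Xp gamma in lemma} against a trigonometric surrogate of $x\mapsto\sum_j x_jz_j$, exploiting $\sin(\pi(x_j\pm m)/m)=-\sin(\pi x_j/m)$ so that the $m$-shift exactly reproduces a signed sum of the $z_j$'s, and using Jensen to turn the average magnitude into the constant $2/\pi$ --- is the right one and is in the same spirit as the paper's argument. But as written the proof has a genuine gap: it needs $m\to\infty$, and $m$ is fixed. The proposition assumes \eqref{eq:Xp gamma in lemma} for a single given $m$ and requires the conclusion with the constant $(2/\pi)^{2p}$ for that $m$. Your lower bound on the left-hand side rests on the discrete average $\frac{1}{2m}\sum_{r\in\Z_{2m}}|\sin(\pi r/m)|=\frac1m\cot\!\left(\frac{\pi}{2m}\right)$, which is strictly smaller than $2/\pi$, depends on $m$, and degenerates entirely at $m=1$, where $\sin(\pi x_j/m)\equiv 0$ and your test function vanishes identically. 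So for fixed $m$ the Riemann sum never delivers $(2/\pi)^p$, only something weaker, and the ``$-o(1)$'' corrections cannot be removed. The paper circumvents exactly this by passing to the $p$-complexification $Z_p(\C)$ (the hypothesis survives complexification since $Z_p(\C)$ embeds in $L_p([0,2\pi],Z)$) and testing against $f_\d(x)=\sum_j\d_je^{\pi ix_j/m}(z_j,0)$: the norm of $Z_p(\C)$ is by definition an integral over a continuous angle $\theta\in[0,2\pi]$, so the relevant average is $\frac1{2\pi}\int_0^{2\pi}|\cos\theta|\,d\theta=2/\pi$ exactly, for every $x$ and every $m$. Some device supplying this continuous angle is indispensable if you want the stated constant at a fixed modulus.

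The second gap is your treatment of the full-shift term. After rescaling, $F(x+\e)-F(x)=\sum_jc_j(x_j)z_j$ with $|c_j(x_j)|\le\pi$, but these coefficients are not signs, and in a general Banach space one cannot bound $\|\sum_jc_jz_j\|_Z^p$ by $\pi^p$ times a Rademacher average without genuine independent signs to which the contraction principle \eqref{eq:contraction} can be applied. The paper builds the signs in from the start by inserting an auxiliary $\d\in\{-1,1\}^n$ into the test function ($z_j\rightsquigarrow\d_jz_j$) and averaging the resulting inequality over $\d$. In your setup one could instead note that the substitution $x_j\mapsto m-x_j-\e_j$ negates $c_j$, so the $c_j$ are independent symmetric random variables and can be symmetrized before contracting; but this step has to be supplied, since without it the claim that the full-shift term is controlled by $\pi^p(k/n)^{p/2}$ times $\frac{1}{2^n}\sum_\e\|\sum_j\e_jz_j\|_Z^p$ is unjustified. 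The constant bookkeeping you flag is the least of the issues: the factor $2^p$ you are missing comes from $F(x+m\e_S)-F(x)=-2\sum_{j\in S}m\sin(\pi x_j/m)z_j$ on the left-hand side, and it is precisely what upgrades $(2/\pi^2)^p$ to $(4/\pi^2)^p=(2/\pi)^{2p}$.
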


\begin{proof} Since~\eqref{eq:Xp gamma in lemma} holds true in $Z$, it also holds true in its $p$-complexification $Z_p(\C)$. Fixing $z_1,\ldots,z_n\in Z$ and $\d\in \{-1,1\}^n$, apply~\eqref{eq:Xp gamma in lemma} to the function $f_\d:\Z_{2m}^n\to Z_p(\C)$ given by
$$
\forall\, x\in \Z_{2m}^n,\qquad f_\d(x)\eqdef \sum_{j=1}^n \d_je^{\frac{\pi ix_j}{m}}(z_j,0)\in Z\times Z.
$$
By averaging the resulting inequality over $\d\in \{-1,1\}^n$, we deduce that
\begin{align}\label{eq:use Xp on complexification}
\frac{2^p\gamma}{2^n\binom{n}{k}}&\sum_{\substack{S\subset \n\\|S|=k}}\sum_{\d\in \{-1,1\}^n}\sum_{x\in \Z_{2m}^n}\frac{\left\|\sum_{j\in S}\d_je^{\frac{\pi ix_j}{m}}(z_j,0)\right\|_{Z_p(\C)}^p}{m^p}\nonumber
\\\nonumber&\le \frac{k(2m)^n}{n} \sum_{j=1}^n\left|1-e^{\frac{\pi i}{m}}\right|^p\cdot\|(z_j,0)\|_{Z_p(\C)}^p\\
&\qquad +\frac{(k/n)^{\frac{p}{2}}}{4^n}\sum_{x\in \Z_{2m}^n}\sum_{\e,\d\in \{-1,1\}^n}\left\|\sum_{j=1}^n \d_j\left(e^{\frac{\pi i(x_j+\e_j)}{m}}-e^{\frac{\pi ix_j}{m}}\right) (z_j,0)\right\|_{Z_p(\C)}^p,
\end{align}
where for the left-hand side of~\eqref{eq:use Xp on complexification} we used the fact that $e^{\frac{\pi i(x+m\sigma)}{m}}-e^{\frac{\pi ix}{m}}=-2e^{\frac{\pi ix}{m}}$ for every $\sigma\in \{-1,1\}$ and $x\in \Z_{2m}$.

Recalling the definition~\eqref{eq:def complexification norm} of the norm of $Z_p(\C)$, for every $S\subset \n$ we have
\begin{align}
\sum_{\d\in \{-1,1\}^n}\sum_{x\in \Z_{2m}^n}\left\|\sum_{j\in S}\d_je^{\frac{\pi ix_j}{m}}(z_j,0)\right\|_{Z_p(\C)}^p&=\sum_{x\in \Z_{2m}^n}\int_0^{2\pi}\sum_{\d\in \{-1,1\}^n}\left\|\sum_{j\in S} \d_j\cos\left(\theta+\frac{\pi x_j}{m}\right)z_j\right\|_Z^pd\theta\nonumber\\
&\nonumber=\sum_{x\in \Z_{2m}^n}\int_0^{2\pi}\sum_{\d\in \{-1,1\}^n}\left\|\sum_{j\in S} \d_j\left|\cos\left(\theta+\frac{\pi x_j}{m}\right)\right|z_j\right\|_Z^pd\theta\\
&\ge2\pi (2m)^n\sum_{\d\in \{-1,1\}^n} \left\|\sum_{j\in S} \frac{\d_j}{2\pi}\left(\int_0^{2\pi}\left|\cos\theta\right|d\theta\right)z_j\right\|_Z^p\label{eq:jensen for contraction}\\
&=\frac{2^{p+1}(2m)^n}{\pi^{p-1}}\sum_{\d\in \{-1,1\}^n} \left\|\sum_{j\in S} \d_jz_j\right\|_Z^p,
\label{eq:to lower bound for contraction}
\end{align}
where in~\eqref{eq:jensen for contraction} we used Jensen's inequality.

To bound the first term in the right-hand side of~\eqref{eq:use Xp on complexification}, use the fact that $|1-e^{i\theta}|\le \theta$ for every $\theta\in [0,\pi]$, and the identity~\eqref{eq:z0} to get
\begin{equation}\label{eq:use z0 identity}
 \sum_{j=1}^n\left|1-e^{\frac{\pi i}{m}}\right|^p\cdot\|(z_j,0)\|_{Z_p(\C)}^p\le \frac{\pi^p}{m^p}\left(\int_0^{2\pi}|\cos\theta|^pd\theta\right)\sum_{j=1}^n \|z_j\|_Z^p\le \frac{\pi^{p+1}}{m^p}\sum_{j=1}^n \|z_j\|_Z^p,
\end{equation}
where we used the fact that, since $p\ge 2$, we have $\int_0^{2\pi}|\cos\theta|^pd\theta\le \int_0^{2\pi}\cos^2\theta d\theta=\pi$. To bound the second term in the right-hand side of~\eqref{eq:use Xp on complexification}, recall the contraction principle (see~\cite[Thm.~4.4]{LT91}), which asserts that
for every $a_1,\ldots,a_n\in \R$ we have
\begin{equation}\label{eq:contraction}
\sum_{\d\in \{-1,1\}^n} \left\|\sum_{j=1}^n a_j\d_j z_j \right\|_Z^p\le \left(\max_{j\in \n}|a_j|^p\right)\sum_{\d\in \{-1,1\}^n} \left\|\sum_{j=1}^n\d_j z_j \right\|_Z^p.
\end{equation}
Hence, for every $x\in \Z_{2m}^n$ and $\e\in \{-1,1\}^n$ we have
\begin{align}
\nonumber&\sum_{\d\in \{-1,1\}^n}\left\|\sum_{j=1}^n
\d_j\left(e^{\frac{\pi i(x_j+\e_j)}{m}}-e^{\frac{\pi ix_j}{m}}\right) (z_j,0)\right\|_{Z_p(\C)}^p\\\nonumber&=
\int_0^{2\pi} \sum_{\d\in \{-1,1\}^n}
\left\|\sum_{j=1}^n\left(\cos\left(\theta+\frac{\pi x_j}{m}+\frac{\pi\e_j}{m}\right)-
\cos\left(\theta+\frac{\pi x_j}{m}\right)\right)\d_jz_j \right\|_Z^pd\theta\\
&\le 2\pi\left(\max_{\theta\in [0,2\pi]}\max_{j\in \n}\left|\cos\left(\theta+\frac{\pi x_j}{m}+\frac{\pi\e_j}{m}\right)-
\cos\left(\theta+\frac{\pi x_j}{m}\right)\right|^p\right)\sum_{\d\in \{-1,1\}^n}\left\|\sum_{j=1}^n\d_jz_j \right\|_Z^p\label{eq:use contraction}\\
&\le \frac{2\pi^{p+1}}{m^p}\sum_{\d\in \{-1,1\}^n}\left\|\sum_{j=1}^n\d_jz_j \right\|_Z^p,\label{eq:use contraction principle}
\end{align}
where~\eqref{eq:use contraction} uses~\eqref{eq:contraction} and~\eqref{eq:use contraction principle} uses
$\left|\cos\left(\alpha\pm\pi/m\right)-\cos\alpha\right|=
\left|\int_{\alpha}^{\alpha\pm\pi/m}\sin tdt\right|\le \pi/m$, which holds true for every $\alpha\in [0,2\pi]$. The
desired inequality~\eqref{eq:rademacher some from metric}
follows by combining~\eqref{eq:use Xp on complexification},
\eqref{eq:to lower bound for contraction}, \eqref{eq:use z0
identity}, \eqref{eq:use contraction principle}.
\end{proof}

\section{Nonembeddability}

Here we assume for the moment the validity of Theorem~\ref{thm:main}, whose proof appears in Section~\ref{sec:proof of main}, and proceed to deduce its geometric consequences that were stated in the Introduction. Namely, we will prove here Theorem~\ref{thm:our snowflake}, Theorem~\ref{thm:our grid distortion} and Theorem~\ref{thm:best m implies sharp nonembeddability}.

\begin{proof}[Proof of Theorem~\ref{thm:our snowflake}] We first make some preparatory elementary estimates that explain the origin of the quantities that appear in~\eqref{eq:theta upper in theorem}. Define $\psi_{p,q}:\R\to \R$ by
\begin{align}
\psi_{p,q}(t)&\eqdef \frac{3t p}{q}-3+\left(t p+2-\frac{2t p}{q}-p\right)\left(1+\frac{t p-q}{q(p-2)}\right)\nonumber
=\frac{p^2(q-2)}{q^2(p-2)}t^2+\frac{p(pq-3q+2)}{q(p-2)}t-p.
\end{align}
Then for every $s\in (0,1)$ we have
\begin{multline*}
\frac{q^2(p-2)}{p}\cdot \psi_{p,q}(1-s)-(p-q)(q-2)=-(2pq+2q-4p+pq^2-3q^2)s+p(q-2)s^2\\> -(2pq+2q-4p+pq^2)s> -(2p^2-2p+p^3)s> -2p^3s.
\end{multline*}
Hence $\psi_{p,q}(1-(p-q)(q-2)/(2p^3))> 0$. Note that $\psi_{p,q}(0)=-p<0$ and $\psi_{p,q}(q/p)=-(p-q)<0$. Since $\psi$ is quadratic with $\lim_{t\to \pm \infty}\psi_{p,q}(t)=\infty$, it follows that $\psi_{p,q}$ has exactly one positive zero that lies in the interval $(q/p,1-(p-q)(q-2)/(2p^3))$. One checks that $\psi_{p,q}(\theta_{p,q})=0$, where
$$
\theta_{p,q}\eqdef \frac{2q(p-q)+q^2(p-1)(p-2)}{2p^2(q-2)}\left(\sqrt{1+\frac{4p(p-2)(q-2)}{(pq-3q+2)^2}}-1\right).
$$
Consequently, $q/p<\theta_{p,q}<1-(p-q)(q-2)/(2p^3)$ (in particular, the rightmost inequality in~\eqref{eq:theta upper in theorem} is valid), and
\begin{equation}\label{eq:psi pq neg}
\forall\, \theta\in (0,1),\qquad \psi_{p,q}(\theta)\le 0\implies \theta\le \theta_{p,q}.
\end{equation}

Now, suppose that $(L_q,\|x-y\|_q^\theta)$ admits a bi-Lipschitz embedding into $L_p$. If $\theta\le q/p<\theta_{p,q}$ then we are done, so we may assume below that $\theta>q/p$. Since $\ell_q(\C)$ embeds isometrically into $L_q$, there exists $\Lambda\in [1,\infty)$ such that for every $m,n\in \N$ there is a mapping $f_{m,n}:\Z_{4m}^n\to L_p$ that satisfies for every $x,y\in \Z_{4m}^n$,
\begin{equation}\label{eq:Lambda bi lipschitz}
\left(\sum_{j=1}^n \left|e^{\frac{\pi i x_j}{2m}}-e^{\frac{\pi i y_j}{2m}}\right|^q\right)^{\frac{\theta}{q}}\le \|f_{m,n}(x)-f_{m,n}(y)\|_p\le \Lambda\left(\sum_{j=1}^n \left|e^{\frac{\pi i x_j}{2m}}-e^{\frac{\pi i y_j}{2m}}\right|^q\right)^{\frac{\theta}{q}}.
\end{equation}
Suppose that $m\ge n$ and define $k\in \n$ by $k\eqdef \lceil n^3/m^2\rceil$. By Theorem~\ref{thm:main} and Remark~\ref{rem:smaller m worse constant}, in conjunction with~\eqref{eq:Lambda bi lipschitz}, we have
\begin{equation}\label{eq:plug for snowlflake}
\frac{n^{\frac{3\theta p}{q}}}{{m^{\frac{2\theta p}{q}+p}}}=\frac{k^{\frac{\theta p}{q}}}{m^p}\le (c(p)\Lambda)^p\left(\frac{k}{m^{\theta p}}+\left(\frac{k}{n}\right)^{\frac{p}{2}}\cdot \frac{n^{\frac{\theta p}{q}}}{m^{\theta p}}\right)=(2c(p)\Lambda)^p\left(\frac{n^3}{m^{\theta p+2}}+\frac{n^{p+\frac{\theta p}{q}}}{m^{(1+\theta)p}}\right),
\end{equation}
where $c(p)\in (1,\infty)$ may depend only on $p$.

Choose $m\in \N$ by setting
$$
m\eqdef \left\lceil n^{\frac{p-3+\theta p/q}{p-2}}\right\rceil= \left\lceil n^{1+\frac{\theta p-q}{q(p-2)}}\right\rceil.
$$
Observe that since $\theta >q/p$ and $p>2$ we have $m\ge n$. The above choice of $m$ ensures that
$$
\frac{n^3}{m^{\theta p+2}}+\frac{n^{p+\frac{\theta p}{q}}}{m^{(1+\theta)p}}\lesssim_p \frac{n^3}{m^{\theta p+2}},
$$
and therefore by~\eqref{eq:plug for snowlflake} (and our choice of $m$) we have
\begin{equation}\label{eq:n complicated power}
n^{\frac{3\theta p}{q}-3+\left(\theta p+2-\frac{2\theta p}{q}-p\right)\left(1+\frac{\theta p-q}{q(p-2)}\right)}\lesssim_p (c(p)\Lambda)^p.
\end{equation}
Since~\eqref{eq:n complicated power} is supposed to hold true for $n$ that can be arbitrarily large, we necessarily have
$$
\psi_{p,q}(\theta)=\frac{3\theta p}{q}-3+\left(\theta p+2-\frac{2\theta p}{q}-p\right)\left(1+\frac{\theta p-q}{q(p-2)}\right)\le 0.
$$
Recalling~\eqref{eq:psi pq neg}, this implies that $\theta\le\theta_{p,q}$, as required.
\end{proof}

Before proving Theorem~\ref{thm:our grid distortion}  we record for future use the following very simple lemma.

\begin{lemma}\label{lem:approximate cosine}
For every two integers $m,n\ge 2$ there exists a mapping $h_m^n:\Z_{m}^n\to \{0,\ldots,4m\}^{2n}$ such that for every $q\in [2,\infty)$ and $x,y\in \Z_m^n$ we have
$$
m\left(\sum_{j=1}^n \left|e^{\frac{2\pi i x_j}{m}}-e^{\frac{2\pi i y_j}{m}}\right|^q\right)^{\frac{1}{q}}\le \left\|h_m^n(x)-h_m^n(y)\right\|_q\le 3m\left(\sum_{j=1}^n \left|e^{\frac{2\pi i x_j}{m}}-e^{\frac{2\pi i y_j}{m}}\right|^q\right)^{\frac{1}{q}}.
$$
\end{lemma}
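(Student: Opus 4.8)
The plan is to discretize the embedding of the $m$-th roots of unity into the plane and then take an $n$-fold Cartesian product. For $t\in\Z_m$ I would set
\[
\varphi_m(t)\eqdef \Big(\big\lfloor 2m+2m\cos(2\pi t/m)\big\rceil,\ \big\lfloor 2m+2m\sin(2\pi t/m)\big\rceil\Big)\in\Z^2,
\]
where $\lfloor s\rceil\in\Z$ denotes a nearest integer to $s\in\R$, so $|s-\lfloor s\rceil|\le\frac12$, and then define $h_m^n(x)\eqdef(\varphi_m(x_1),\ldots,\varphi_m(x_n))$ for $x\in\Z_m^n$. Since $2m+2m\cos\theta$ and $2m+2m\sin\theta$ take values in $[0,4m]$, each coordinate of $\varphi_m(t)$ lies in $\{0,1,\ldots,4m\}$, so $h_m^n$ maps into $\{0,\ldots,4m\}^{2n}$ as required; the constant $4m$ in the target is precisely what forces the scaling factor to be (at most) $2m$.

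For the two-sided distance estimate, fix $x,y\in\Z_m^n$ and, for each $j\in\{1,\ldots,n\}$, let $u_j\in\R^2$ be the difference of the $j$-th coordinate blocks of $h_m^n(x)$ and $h_m^n(y)$ \emph{before} rounding, namely $u_j\eqdef 2m\big(\cos(2\pi x_j/m)-\cos(2\pi y_j/m),\ \sin(2\pi x_j/m)-\sin(2\pi y_j/m)\big)$, and let $w_j\in\Z^2$ be the $j$-th block of $h_m^n(x)-h_m^n(y)$. Then $\|h_m^n(x)-h_m^n(y)\|_q^q=\sum_{j=1}^n\|w_j\|_q^q$, while $\|u_j\|_2=2m\,\big|e^{2\pi ix_j/m}-e^{2\pi iy_j/m}\big|$ and $\|w_j-u_j\|_2\le\sqrt2$ because each of the two coordinates of $w_j-u_j$ is a difference of two rounding errors, hence of modulus $\le1$. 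The one genuine ingredient beyond bookkeeping is the elementary separation bound: if $x_j\ne y_j$ in $\Z_m$ then $\big|e^{2\pi ix_j/m}-e^{2\pi iy_j/m}\big|=2\big|\sin(\pi(x_j-y_j)/m)\big|\ge 2\sin(\pi/m)\ge\frac4m$, by concavity of $\sin$ on $[0,\pi/2]$ and $m\ge2$. Consequently $\|u_j\|_2\ge8$ whenever $x_j\ne y_j$, so the $O(1)$ rounding error is dominated by the main term: $\|w_j\|_2$ lies in $\big[(1-\tfrac{\sqrt2}{8})\|u_j\|_2,\,(1+\tfrac{\sqrt2}{8})\|u_j\|_2\big]$. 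Combining this with the planar norm comparison $\tfrac{1}{\sqrt2}\|w_j\|_2\le\|w_j\|_q\le\|w_j\|_2$, valid for $q\ge2$, yields for each $j$ with $x_j\ne y_j$
\[
\big(\sqrt2-\tfrac14\big)\,m\,\big|e^{2\pi ix_j/m}-e^{2\pi iy_j/m}\big|\ \le\ \|w_j\|_q\ \le\ \big(2+\tfrac{\sqrt2}{4}\big)\,m\,\big|e^{2\pi ix_j/m}-e^{2\pi iy_j/m}\big|,
\]
an estimate that holds trivially (all three quantities being $0$) when $x_j=y_j$. Raising to the power $q$, summing over $j$, and using $\sqrt2-\tfrac14>1$ together with $2+\tfrac{\sqrt2}{4}<3$, I obtain the two displayed inequalities of the lemma.

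I do not anticipate a real obstacle: the lemma is intentionally crude. The only points that need a moment's care are that the scaling factor should be taken as large as the codomain $\{0,\ldots,4m\}$ allows — namely $2m$, using $1+\cos$ and $1+\sin$ to center the values inside $[0,4m]$ — since any smaller factor would push the lower constant below $1$; and that one must round to a \emph{nearest} integer, keeping the per-point error at most $\frac12$ and hence $\|w_j-u_j\|_2\le\sqrt2$, rather than, say, taking the floor. Together with the sharp root-of-unity gap $\big|e^{2\pi ix_j/m}-e^{2\pi iy_j/m}\big|\ge 4/m$, this guarantees that the additive rounding error never exceeds a $\tfrac{\sqrt2}{8}$-fraction of the leading term, which is exactly the slack that places $\|h_m^n(x)-h_m^n(y)\|_q$ between $m$ and $3m$ times $\big(\sum_j|e^{2\pi ix_j/m}-e^{2\pi iy_j/m}|^q\big)^{1/q}$.
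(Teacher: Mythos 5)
Your proposal is correct and is essentially identical to the paper's proof: the same map (rounding $2m+2m\cos(2\pi t/m)$ and $2m+2m\sin(2\pi t/m)$ to nearest integers in $\{0,\dots,4m\}$), the same separation bound $|e^{2\pi iu/m}-e^{2\pi iv/m}|\ge 4/m$ to absorb the $\sqrt2$ rounding error, and the same $\ell_2$-versus-$\ell_q$ comparison in $\R^2$, yielding the same constants $\sqrt2-\tfrac14>1$ and $2+\tfrac{\sqrt2}{4}<3$.
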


\begin{proof}
For every $u\in \Z_m$ choose $a_m(u),b_m(u)\in \{0,\ldots, 4m\}$ such that
$$
\left|2m+ 2m\cos\left(\frac{2\pi u}{m}\right)-a_m(u)\right|\le \frac12\qquad\mathrm{and}\qquad \left|2m+ 2m\sin\left(\frac{2\pi u}{m}\right)-b_m(u)\right|\le \frac12.
$$
Then, for every distinct $u,v\in \Z_m$ we have
\begin{align*}
\left(|a_m(u)-a_m(v)|^q+|b_m(u)-b_m(v)|^q\right)^{\frac1{q}}&\le \sqrt{|a_m(u)-a_m(v)|^2+|b_m(u)-b_m(v)|^2}\\&\le 2m\left|e^{\frac{2\pi i u}{m}}-e^{\frac{2\pi i v}{m}}\right|+\frac{2}{\sqrt{2}}\le 3m\left|e^{\frac{2\pi i u}{m}}-e^{\frac{2\pi i v}{m}}\right|,
\end{align*}
since for distinct $u,v\in \Z_m$ we have $\left|e^{\frac{2\pi i u}{m}}-e^{\frac{2\pi i v}{m}}\right|\ge \left|e^{\frac{2\pi i }{m}}-1\right|\ge \frac{4}{m}$. Similarly,
\begin{align*}
\left(|a_m(u)-a_m(v)|^q+|b_m(u)-b_m(v)|^q\right)^{\frac1{q}}&\ge \frac{1}{\sqrt{2}}\sqrt{|a_m(u)-a_m(v)|^2+|b_m(u)-b_m(v)|^2}\\&\ge
 \frac{2-\frac{\sqrt{2}}{4}}{\sqrt{2}}m\left|e^{\frac{2\pi i u}{m}}-e^{\frac{2\pi i v}{m}}\right|\ge m\left|e^{\frac{2\pi i u}{m}}-e^{\frac{2\pi i v}{m}}\right|.
\end{align*}
Hence $h_m^n(x)\eqdef (a_m(x_1),b_m(x_1),a_m(x_2),b_m(x_2),\ldots,a_m(x_n),b_m(x_n))$ has the desired property.
\end{proof}

\begin{proof}[Proof of Theorem~\ref{thm:our grid distortion}]
We shall show that for an appropriate choice of $\beta_p\in (0,\infty)$ we have
\begin{equation}\label{eq:8m}
m\ge n^{1+\frac{p-q}{q(p-2)}} \implies c_p\left([16m]^{2n}_q\right)\ge  \beta_pn^{\frac{(p-q)(q-2)}{q^2(p-2)}}.
\end{equation}
Since $[M]_q^n\supseteq [m]_q^n$ for every integer $M\ge m$ and $[m]_q^N$ contains an isometric copy of $[m]_q^n$ for every integer $N\ge n$, the validity of~\eqref{eq:8m} implies the desired estimate~\eqref{eq:our lower grid}.

Fix $D\in [1,\infty)$ and suppose that $f:[16m]_q^{2m}\to L_p$ satisfies
\begin{equation}\label{eq:distortion f D for lower bound}
\forall\, x,y\in [16m]_q^{2n},\qquad \|x-y\|_q\le \|f(x)-f(y)\|_p\le D\|x-y\|_q.
\end{equation}
Our goal is to bound $D$ from below. Define $F:\Z_{4m}^n\to L_p$ by $F=f\circ h_{4m}^n$, where $h_{4m}^n$ is the mapping from Lemma~\ref{lem:approximate cosine}. Then for every $x\in \Z_{4m}^n$, every $j\in \n$, every $\e\in \{-1,1\}^n$ and every $S\subset \n$ we have
\begin{equation}\label{eq:partialj F}
\|F(x+e_j)-F(x)\|_p\le 3mD\left|e^{\frac{\pi i}{2m}}-1\right|\lesssim D,
\end{equation}
\begin{equation}\label{eq:partial eps F}
\|F(x+\e)-F(x)\|_p\le 3mD\left(\sum_{j=1}^n \left|e^{\frac{\pi i \e_j}{2m}}-1\right|^q\right)^{\frac{1}{q}}\lesssim Dn^{\frac1{q}},
\end{equation}
and
\begin{equation}\label{eq:eps S F}
\|F(x+2m\e_S)-F(x)\|_p\ge m \left(\sum_{j\in S} \left|e^{\pi i}-1\right|^q\right)^{\frac{1}{q}}\gtrsim m|S|^{\frac{1}{q}}.
\end{equation}

Denote
\begin{equation}\label{eq:choice of k for embeddings}
k=\left\lceil n^{\frac{p(q-2)}{q(p-2)}} \right\rceil.
\end{equation}
Then $k\le n$ and the assumption on $m$ in~\eqref{eq:8m} implies that $m\ge n^{3/2}/\sqrt{k}$. Hence, by Theorem~\ref{thm:main} and Remark~\ref{rem:smaller m worse constant},  combined with~\eqref{eq:partialj F}, \eqref{eq:partial eps F} and~\eqref{eq:eps S F}, there exists $K_p\in (0,\infty)$ such that
$$
n^{\frac{p^2(q-2)}{q^2(p-2)}}\le k^{\frac{p}{q}}\le K_p^pD^p\left(k+\frac{k^{\frac{p}{2}}}{n^{\frac{p}{2}-\frac{p}{q}}}\right)\lesssim_p K_p^pD^pn^{\frac{p(q-2)}{q(p-2)}}.
$$
Consequently,
\begin{equation*}
D\gtrsim \frac{n^{\frac{(p-q)(q-2)}{q^2(p-2)}}}{K_p}.\qedhere
\end{equation*}
\end{proof}

\begin{remark}\label{rem:Bourgain discretization}
Lower bounds on $c_p([m]_q^n)$ that are weaker than those of Theorem~\ref{thm:our grid distortion} can also be deduced from general discretization principles (combined with the asymptotic computation of $c_p(\ell_q^n)$ in~\cite{FJS88}), namely from Bourgain's discretization theorem~\cite{Bou87} and its quantitative improvement for $L_p$ spaces in~\cite{GNS12}. Specifically, let $B_q^n$ denote the unit ball of $\ell_q^n$. Observe that $\frac{1}{m}\{-m,\ldots,m\}^n$ contains a $\d$-dense subset of $B_q^n$, with $\d\le n^{1/q}/m$. By Theorem~1.3 in~\cite{GNS12} (and the discussion immediately following it) we see that there exists a universal constant $\gamma\in (0,1)$ such that if
$$
\frac{n^{\frac{1}{q}}}{m}\le   \frac{\gamma}{\sigma(p,q)n^{2+\frac{(p-q)(q-2)}{q^2(p-2)}}} \le \frac{\beta}{n^2c_p(\ell_q^n)}
$$
then
$$
c_p\left([2m]_q^n\right)\ge \frac{c_p(\ell_q^n)}{2}\gtrsim \sigma(p,q) n^{\frac{(p-q)(q-2)}{q^2(p-2)}},
$$
where $\sigma(p,q)\in (0,\infty)$ is as in~\eqref{eq:banach mazur q p}. Consequently,
\begin{equation}\label{eq:m large full distrotion from discretization}
m\ge \frac{\sigma(p,q)}{\gamma}\cdot n^{2+\frac{1}{q}+\frac{(p-q)(q-2)}{q^2(p-2)}}= \frac{\sigma(p,q)}{\gamma}\cdot n^{2+\frac{p-q}{q(p-2)}+\frac{p(q-2)}{q^2(p-2)}}\implies c_p([2m]_q^n)\gtrsim \sigma(p,q) n^{\frac{(p-q)(q-2)}{q^2(p-2)}}.
\end{equation}
We note that a direct application of Bourgain's discretization theorem~\cite{Bou87} (which holds true also for target spaces that need not be $L_p$ spaces) would imply the same bound on $c_p([2m]_q^n)$ as in~\eqref{eq:m large full distrotion from discretization}, provided that $m$ is much larger than the requirement appearing in~\eqref{eq:m large full distrotion from discretization} (specifically, $m$ would have to be at least doubly exponential in $n\log n$).
\end{remark}


\begin{proof}[Proof of Theorem~\ref{thm:best m implies sharp nonembeddability}]
The proof  follows the proofs of Theorem~\ref{thm:our snowflake} and Theorem~\ref{thm:our grid distortion} with a different (optimal) setting of parameters that is made possible due to the assumed validity of Conjecture~\ref{conj:best m}. Specifically, we are now assuming that~\eqref{eq:Xp alpha(p) version} holds true provided $m\ge C_p\sqrt{n/k}$.

Dealing first with~\eqref{eq:sharp snowflake bound}, fix $\theta\in (q/p,1]$ and $n\in \N$. Choose $m,k\in \N$ as follows.
\begin{equation}\label{eq:choice mk}
m\eqdef \left\lfloor n^{\frac{\theta p -q}{q(p-2)}}\right\rfloor \qquad\mathrm{and}\qquad k\eqdef \left\lceil \frac{C_p^2 n}{m^2}\right\rceil.
\end{equation}
Since $\theta>q/p$ we may assume that $n$ is large enough so that $m\ge C_p$, in which case we have $k\in \n$ and $m\ge C_p\sqrt{n/k}$. Suppose for the sake of obtaining a contradiction that there exists $f_{m,n}:\Z_{4m}^n\to L_p$ satisfying~\eqref{eq:Lambda bi lipschitz}. An application of~\eqref{eq:Xp alpha(p) version} then yields the following estimate.
\begin{equation}\label{eq:optimal theta constraint}
\alpha_pC_p^{\frac{2\theta p}{q}}n^{\frac{p(q^2-2\theta^2p)}{q^2(p-2)}}\stackrel{\eqref{eq:choice mk}}{\le}\frac{\alpha_pk^{\frac{\theta p}{q}}}{m^p}\stackrel{\eqref{eq:Xp alpha(p) version}\wedge \eqref{eq:Lambda bi lipschitz}}{\le} \Lambda^p\left(\frac{k}{m^{\theta p}}+\left(\frac{k}{n}\right)^{\frac{p}{2}}\cdot\frac{n^{\frac{\theta p}{q}}}{m^{\theta p}}\right)
\stackrel{\eqref{eq:choice mk}}{\lesssim_p} (C_p\Lambda)^pn^{1-(2+\theta p)\frac{\theta p-q}{q(p-2)}}.
\end{equation}
Since~\eqref{eq:optimal theta constraint} holds true for arbitrarily large $n$, we conclude that
$$
\frac{p(q^2-2\theta^2p)}{q^2(p-2)}\le 1-(2+\theta p)\frac{\theta p-q}{q(p-2)}= \frac{p(q^2-2\theta^2p)}{q^2(p-2)}-\frac{\theta p(q-2)(\theta p-q)}{q^2(p-2)}.
$$
Consequently $\theta\le q/p$, contradicting the initial assumption that $\theta>q/p$. This proves~\eqref{eq:sharp snowflake bound}.

Next, we have already seen in~\eqref{eq:better of two embeddings} that $c_p([m]_q^n)$ is bounded from above by a constant multiple of the quantity appearing in~\eqref{eq:sharp distortion grid transition}. By arguing as in the beginning of the proof of Theorem~\ref{thm:our grid distortion}, it therefore suffices to show that for every $m,n\in \N$ we have
\begin{equation}\label{eq:8m-sharp}
m\ge n^{\frac{p-q}{q(p-2)}} \implies c_p\left([16m]^{2n}_q\right)\ge  \xi(p)n^{\frac{(p-q)(q-2)}{q^2(p-2)}}
\end{equation}
for some $\xi(p)\in (0,\infty)$. To this end, suppose that there exists $f:[16m]_q^{2n}\to L_p$ satisfying~\eqref{eq:distortion f D for lower bound}, our goal being to bound $D$ from below. As explained in the proof of  Theorem~\ref{thm:our grid distortion}, this implies the existence of $F:\Z_{4m}^n\to L_p$ that satisfies~\eqref{eq:partialj F}, \eqref{eq:partial eps F} and~\eqref{eq:eps S F}. Similarly to~\eqref{eq:choice of k for embeddings}, choose $k\in \N$ to be
\begin{equation}\label{eq:new k Cp}
k\eqdef \left\lceil C_p^2 n^{\frac{p(q-2)}{q(p-2)}}\right\rceil.
\end{equation}
We may suppose that $n$ is large enough so that $k\in \n$, since otherwise~\eqref{eq:8m-sharp} is vacuous. The lower bound on $m$ that is assumed in~\eqref{eq:8m-sharp} implies that $m\ge C_p\sqrt{n/k}$, so  we may apply~\eqref{eq:Xp alpha(p) version}, yielding, in conjunction with~\eqref{eq:partialj F}, \eqref{eq:partial eps F} and~\eqref{eq:eps S F}, that the following holds true.
\begin{equation*}
\alpha_p C_p^{\frac{p}{q}}n^{\frac{p^2(q-2)}{q^2(p-2)}}\stackrel{\eqref{eq:new k Cp}}{\le}\alpha_p k^{\frac{p}{q}}\lesssim_p D^p\left(k+\frac{k^{\frac{p}{2}}}{n^{\frac{p}{2}-\frac{p}{q}}}\right)\stackrel{\eqref{eq:new k Cp}}{\lesssim_p} (C_pD)^pn^{\frac{p(q-2)}{q(p-2)}}\implies D\gtrsim \frac{\alpha_p^{\frac1{p}}}{C_p^{1-\frac{1}{q}}}\cdot n^{\frac{(p-q)(q-2)}{q^2(p-2)}}.\qedhere
\end{equation*}
\end{proof}

\section{Proof of Theorem~\ref{thm:main}}\label{sec:proof of main}

Suppose from now on that $m,n\in \N$ satisfy $m\ge n$ and that $R\in [n,2m]$ is an odd integer. In what follows we shall use the canonical identification of $\Z_{4m}^n$ with $[-(2m-1),2m-1]^n\cap \Z^n$. Fix $S\subset \n$ and define $U_S\subset \Z_{4m}^n$ by
\begin{equation}\label{eq:def US}
U_S\eqdef \left\{y\in [-R,R]^n:\ \forall(i,j)\in S\times (\n\setminus S),\quad (y_i,y_j)\in (2\Z)\times (1+2\Z)\right\}.
\end{equation}
Thus $U_S$ consists for those $y\in \Z_{4m}^n$ satisfying $|y_j|\le R$ for every $j\in \n$, and such that $y_j$ is even for every $j\in S$ and $y_j$ is odd for every $j\in\n\setminus S$. Observe that since $R$ is odd, for every $y\in U_S$ we actually have $|y_j|<R$ if $j\in S$. Hence $|U_S|=R^{|S|}(R+1)^{n-|S|}$. Given a Banach space $(X,\|\cdot\|_X)$, the averaging operator corresponding to $U_S$ will be denoted below by $D_{S}:L_2(\Z_{4m}^n,X)\to L_2(\Z_{4m}^n,X)$, i.e., for every $f:\Z_{4m}^n\to X$ and $x\in \Z_{4m}^n$ we set
\begin{equation}\label{eq:def DS}
D_{S}f(x)\eqdef \frac{1}{|U_S|}\sum_{y\in U_S}f(x+y).
\end{equation}

The following lemma extends Lemma~5.1 in~\cite{MN08}, which corresponds to the special case $|S|=1$.

\begin{lemma}\label{lem:displacement first version} Suppose that $m,n\in \N$, and that $R\in \{1,\ldots,2m-1\}$ is odd. Let $(X,\|\cdot\|_X)$ be a Banach space and $p\in [1,\infty)$. Then for every $f:\Z_{4m}^n\to X$ and $S\subset\n$ we have
\begin{multline}\label{eq:average displacement ineq}
\sum_{x\in \Z_{4m}^n}\left\|f(x)-D_{S}f(x)\right\|_X^p\\\lesssim_p \frac{R^p}{2^n}\sum_{\e\in \{-1,1\}^n}\sum_{x\in \Z_{4m}^n} \left\|f(x+\e)-f(x)\right\|_X^p+\frac{1}{2^n}\sum_{\e\in \{-1,1\}^n}\sum_{x\in \Z_{4m}^n}\left\|f(x+\e_S)-f(x)\right\|_X^p.
\end{multline}
\end{lemma}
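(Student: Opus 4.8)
The plan is to interpolate between $f(x)$ and $D_Sf(x)$ along a random lattice path, turning the average displacement $x\mapsto x+y$ (with $y$ uniform in $U_S$) into a telescoping sum of unit steps $e_j$ for $j\in\n\setminus S$ together with a single simultaneous sign-flip of all coordinates in $S$. The structure of $U_S$ makes this natural: writing $y=y'+y''$ where $y''$ is supported on $S$ and has even entries in $[-R,R]$ (hence $|y''_j|\le R-1$), and $y'$ is supported on $\n\setminus S$ with odd entries in $[-R,R]$, one sees that $U_S$ is a product set, so $D_S$ factors as a composition of averaging over the $S$-coordinates and averaging over the $(\n\setminus S)$-coordinates. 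I would bound the two factors separately by the two terms on the right-hand side of~\eqref{eq:average displacement ineq}.

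First I would handle the $\n\setminus S$ part. For each $j\notin S$ and each $x$, the difference $f(x+e_j)-f(x)$ controls, by telescoping, $f(x+te_j)-f(x)$ for any $|t|\le R$, at the cost of a factor $R$ inside the $p$-th power via the convexity/triangle inequality $\|\sum_{i=1}^{R}a_i\|_X^p\le R^{p-1}\sum\|a_i\|_X^p$; summing over $x\in\Z_{4m}^n$ and using translation invariance of the counting measure converts $\sum_x\|f(x+te_j)-f(x)\|_X^p$ into a bounded multiple of $\sum_x\|f(x+e_j)-f(x)\|_X^p$, and then into the average over $\e$ by symmetry. Iterating this coordinate by coordinate over $j\in\n\setminus S$ (at most $n$ coordinates, but with the displacements decomposed into unit steps so that the accumulated constant is $\lesssim R^p$, not $(nR)^p$ — here one uses that a uniformly random point of $[-R,R]\cap(1+2\Z)$ is reached by a path of expected length $O(R)$, and Jensen in the form $\E\|{\cdot}\|^p$) yields that the averaging over the $(\n\setminus S)$-coordinates moves $f$ by at most $\lesssim_p R^p\,\frac1{2^n}\sum_\e\sum_x\|f(x+\e)-f(x)\|_X^p$ in the $\ell_p$ sense. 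This is essentially the argument of Lemma~5.1 in~\cite{MN08} applied in parallel across the coordinates outside $S$.

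Next I would handle the $S$ part. Averaging $f$ over $y''$ uniform in $(2\Z)^S\cap[-R,R]^S$ can again be written as a telescope in the even steps $2e_i$, $i\in S$; but to produce the second term $\frac1{2^n}\sum_\e\sum_x\|f(x+\e_S)-f(x)\|_X^p$ rather than another $R^p$-type term, I would instead compare $f(x)$ to $f(x+\e_S)$ directly and exploit that, conditioned on the $\n\setminus S$ coordinates, averaging over all even vectors in a symmetric box is a \emph{contraction} that is already captured by a single $\pm1$-flip on $S$: concretely, a uniform even point of $[-R,R]^S$ is $\e_S$-symmetric in distribution, so Jensen lets one dominate the averaged displacement by $\E_\e$ of the one-step displacement $\|f(x+\e_S)-f(x)\|_X^p$ after first conditioning (the cross term between the two factorized averages is absorbed by translation invariance and the triangle inequality $\|a-c\|^p\lesssim_p\|a-b\|^p+\|b-c\|^p$, which only costs a $2^{p-1}$ factor). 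Summing over $x$ and using that the counting measure on $\Z_{4m}^n$ is invariant under $x\mapsto x+\e_S$ gives the claimed bound.

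The main obstacle I expect is bookkeeping the constants in the $\n\setminus S$ step so that the accumulated factor is $\lesssim_p R^p$ and does \emph{not} pick up a spurious power of $n$ or of $|{\n\setminus S}|$: naively telescoping over all coordinates and all displacements up to $R$ would give $(nR)^{p-1}$. The fix is to average over $y'$ \emph{simultaneously} (not coordinate-by-coordinate) and invoke a single application of Jensen's inequality together with the fact that the $\ell_1$-length of a uniformly random lattice point in a box of side $R$ in $\R^{n-|S|}$ has expectation $O((n-|S|)R)$ — but since each unit step is then weighted and re-summed using $\sum_x$-translation-invariance, the per-edge contributions telescope back to the single-edge quantities with total constant $\lesssim_p R^p$; making this precise (as in the one-dimensional Lemma~5.1 of~\cite{MN08}, extended to the product setting) is the technical heart of the proof, and the rest is a routine combination of the triangle inequality, Jensen, and translation invariance of counting measure on $\Z_{4m}^n$.
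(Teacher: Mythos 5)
Your overall toolkit (telescoping, Jensen, translation invariance of the counting measure on $\Z_{4m}^n$) is the right one, and you correctly identify the main danger of picking up a spurious power of $n$, but the two key steps you propose do not work, and the claimed fix for the first one is not a fix. For the coordinates outside $S$ you telescope in unit steps $e_j$, which would produce the quantity $\sum_{j}\sum_x\|f(x+e_j)-f(x)\|_X^p$; this is not the first term on the right-hand side of~\eqref{eq:average displacement ineq}, which involves full diagonal increments $f(x+\e)-f(x)$ with $\e\in\{-1,1\}^n$, and coordinate increments are \emph{not} controlled by diagonal increments in the direction you need (take $f(x)=(-1)^{x_1+x_2}$ for $n\ge 2$: every diagonal difference $f(x+\e)-f(x)$ vanishes while $f(x+e_1)-f(x)$ does not). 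Moreover, no amount of ``simultaneous averaging'' removes the extra factor: a path of unit coordinate steps from $x$ to $x+y'$ has $\ell_1$-length of order $nR$, and the convexity bound $\|\sum_{i=1}^{L}a_i\|_X^p\le L^{p-1}\sum_i\|a_i\|_X^p$ inevitably costs $L^{p-1}$ in the path length $L$. The resolution in the paper (and this is also the actual mechanism of Lemma~5.1 of~\cite{MN08}, which you cite but seem to have misremembered) is to use a \emph{diagonal} geodesic: for $y\in U_S$ and $\eta\in\{-1,1\}^n$ the vector $y-\eta_S$ has all coordinates odd, so one can join $x+\eta_S$ to $x+y$ by a path of $\ell_\infty$-length $\|y-\eta_S\|_\infty\le R$ all of whose steps lie in $\{-1,1\}^n$; this handles all coordinates, inside and outside $S$, at once with only $R^{p-1}$ from H\"older, and the single initial step $x\mapsto x+\eta_S$ --- whose sole purpose is to fix the parity on $S$ --- produces the second term. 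One then still needs an equidistribution/double-counting argument showing that, summed over $\eta$, each directed diagonal edge $(z,z+\e)$ is traversed at most $R|U_S|$ times, uniformly in $(z,\e)$, in order to fold the path sums back into $\sum_\e\sum_z\|f(z+\e)-f(z)\|_X^p$; your appeal to translation invariance does not supply this for the path system you describe.

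Your treatment of the $S$-coordinates is also incorrect: averaging $f$ over a uniform even point $y''$ of $[-R,R]^S$ is not dominated by the one-step quantity $\E_\e\big[\|f(x+\e_S)-f(x)\|_X^p\big]$. Symmetry of the distribution of $y''$ gives nothing here; Jensen only reduces the problem to $\E_{y''}\big[\|f(x+y'')-f(x)\|_X^p\big]$, and $y''$ has entries as large as $R-1$, so this can exceed $\E_\e\big[\|f(x+\e_S)-f(x)\|_X^p\big]$ by a factor of order $R^p$ (consider $f$ depending essentially linearly on one coordinate in $S$). In the correct proof the bulk of the displacement in the $S$-directions is likewise absorbed by the $R^p$ diagonal term; the $\e_S$-term accounts only for the single parity-fixing step.
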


\begin{proof} For every $w\in \Z^n$ all of whose coordinates are odd fix $\gamma_w:\N\cup\{0\}\to \Z^n$ that satisfies $\gamma_w(0)=0$, $\gamma_w(\|w\|_\infty)=w$ and  $\gamma_w(t)-\gamma_w(t-1)\in \{-1,1\}^n$ for every $t\in \N$. The existence of such $\gamma_w$ is explained in~\cite[Lem.~5.1]{MN08}, and we shall quickly recall now why this is so for the sake of completeness. We may assume without loss of generality that all the coordinates of $w$ are positive, since for general $w$  we could then define $\gamma_w=\sign(w)\cdot \gamma_{|w|}$, where the multiplication is coordinate-wise and we denote $\sign(w)=(\sign(w_1),\ldots,\sign(w_n))$ and $|w|=(|w_1|,\ldots,|w_n|)$. Now, supposing that all the coordinates of $w$ are positive, define $\gamma_w(0)=0$ and, inductively, for every $t\in \N$ such that $\gamma_w(2t-2)$ has already been defined, set
$$
\gamma_w(2t-1)\eqdef \gamma_w(2t-2)+\sum_{j=1}^n e_j\quad\mathrm{and}\quad \gamma_w(2t)\eqdef \gamma_w(2t-1)+
\sum_{\substack{j\in \n\\ \gamma_w(2t-1)_j<w_j}}e_j-\sum_{\substack{j\in \n\\ \gamma_w(2t-1)_j=w_j}}e_j.
$$
This explicit definition of $\gamma_w$ is not used below; we shall only need to know that $\gamma_w$ exists, and that, as our construction guarantees, we have $\e\gamma_w=\gamma_{\e w}$ for every $\e\in \{-1,1\}^n$. Note that, since the restriction of $\gamma_w$ to $\{0,\ldots,\|w\|_\infty\}$ is an $\ell_\infty$ geodesic joining $0$ and $w$, for every distinct $s,t\in \{0,\ldots,\|w\|_\infty\}$ we have $\gamma_w(s)\neq \gamma_w(t)$.

If $y\in U_S$ and $\eta\in \{-1,1\}^n$ then all the coordinates of $y-\eta_S$ are odd, and we can therefore consider $\gamma_{y-\eta_S}$. For every $x\in \Z_{4m}^n$ define $\gamma_{x, y}^\eta:\N\to \Z^n$ by $\gamma_{x,y}^\eta=x+\eta_S+\gamma_{y-\eta_S}$. Thus $\gamma_{x,y}^\eta(0)=x+\eta_S$, $\gamma_{x,y}^\eta(\|y-\eta_S\|_\infty)=x+y $ and $\gamma_{x,y}^\eta(t)-\gamma_{x,y}^\eta(t-1)\in \{-1,1\}^n$ for all $t\in \N$. Note that $\gamma_{x,y}^\eta$ depends only on those coordinates of $\eta$ that belong to $S$.

For every $z\in \Z_{4m}^n$ and $\e,\eta\in \{-1,1\}^n$ define
$$
F_\eta(z,\e)\eqdef\left\{(x,y)\in \Z_{4m}^n\times U_S:\ \gamma_{x,y}^\eta(t-1)=z\ \mathrm{and} \ \gamma_{x,y}^\eta(t)=z+\e \mathrm{\ for\ some\ }t\in [1,\|y-\eta_S\|_\infty]\right\}.
$$
Observe that for every $(x,y)\in \Z_{4m}^n\times U_S$ and $\eta\in \{-1,1\}^n$ there is at most one $t\in \{1,\ldots, \|y-\eta_S\|_\infty\}$ for which $\gamma_{x,y}^\eta(t-1)=z$ and $\gamma_{x,y}^\eta(t)=z+\e$.

We claim that
\begin{equation}\label{eq:def N integer}
N\eqdef \sum_{\eta\in \{-1,1\}^n}|F_\eta(z,\e)|
\end{equation}
is independent of $z\in \Z_{4m}^n$ and $\e\in \{-1,1\}^n$. Indeed, for every $\e,\d\in \{-1,1\}^n$ and
$z,w\in \Z_{4m}^n$ define a bijection $\psi_{z,w}^{\e,\d}: \Z_{4m}^n\times U_S\to \Z_{4m}^n\times U_S$ by $$\psi_{z,w}^{\e,\d}(x,y)\eqdef (w-\e\d z +\e\d x,\e\d y).$$
Then for every $\eta\in \{-1,1\}^n$ we have $\gamma_{\psi_{z,w}^{\e,\d}(x,y)}^{\e\d\eta}=w-\e\d z+\e\d\gamma_{x,y}^\eta$. Consequently,
$$
\left(\gamma_{x,y}^\eta(t-1),\gamma_{x,y}^\eta(t)\right)=(z,z+\e)\iff \left(\gamma_{\psi_{z,w}^{\e,\d}(x,y)}^{\e\d\eta}(t-1),\gamma_{\psi_{z,w}^{\e,\d}(x,y)}^{\e\d\eta}(t)\right)=(w,w+\d)
$$
for every $t\in \{1,\ldots,\|y-\eta_S\|_\infty\}$. This shows that for every $\eta\in \{-1,1\}^n$ the mapping $\psi_{z,w}^{\e,\d}$ is a bijection between $F_\eta(z,\e)$ and $F_{\e\d\eta}(w,\d)$, whence $|F_\eta(z,\e)|=|F_{\e\d\eta}(w,\d)|$. Consequently,
$$
\sum_{\eta\in \{-1,1\}^n}|F_\eta(z,\e)|=\sum_{\eta\in \{-1,1\}^n}|F_{\e\d\eta}(w,\d)|=\sum_{\eta\in \{-1,1\}^n}|F_\eta(w,\d)|,
$$
implying that the integer $N$ defined in~\eqref{eq:def N integer} is indeed independent of $(z,\e)\in \Z_{4m}^n\times \{-1,1\}^n$.

We shall need an estimate on $N$, which is proved by double counting as follows.
\begin{align*}
N(8m)^n&=\sum_{z\in \Z_{4m}^n}\sum_{\e,\eta\in \{-1,1\}^n}|F_\eta(z,\e)|\\&=
\sum_{z\in \Z_{4m}^n}\sum_{\e,\eta\in \{-1,1\}^n}\sum_{(x,y)\in \Z_{4m}^n\times U_S} \sum_{t=1}^{\|y-\eta_S\|_\infty}\1_{\{\gamma_{x,y}^\eta(t-1)=z\ \wedge \ \gamma_{x,y}^\eta(t)=z+\e\}}\\&=\sum_{\eta\in \{-1,1\}^n}\sum_{(x,y)\in \Z_{4m}^n\times U_S} \|y-\eta_S\|_\infty\\&\le R(8m)^n|U_S|.
\end{align*}
Consequently,
\begin{equation}\label{eq:N estimate}
N\le R|U_S|.
\end{equation}

Now, fix $f:\Z_{4m}^n\to X$. For every $x\in \Z_{4m}^n$, $y\in U_S$ and $\eta\in \{-1,1\}^n$ we have
\begin{align*}
\|f(x)&-f(x+y)\|_X^p\\&\lesssim_p \left\|f(x)-f(x+\eta_S)\right\|_X^p+\left\|f\left(\gamma_{x,y}^\eta(0)\right)-f\left(\gamma_{x,y}^\eta(\|y-\eta_S\|_\infty)\right)\right\|_X^p\\
&\lesssim_p \left\|f(x)-f(x+\eta_S)\right\|_X^p+\|y-\eta_S\|_\infty^{p-1} \sum_{t=1}^{\|y-\eta_S\|_\infty}\left\|f\left(\gamma_{x,y}^\eta(t-1)\right)-f\left(\gamma_{x,y}^\eta(t)\right)\right\|_X^p\\
&\le \left\|f(x)-f(x+\eta_S)\right\|_X^p+R^{p-1}\sum_{t=1}^{\|y-\eta_S\|_\infty}\left\|f\left(\gamma_{x,y}^\eta(t-1)\right)-f\left(\gamma_{x,y}^\eta(t)\right)\right\|_X^p.
\end{align*}
By averaging this inequality over $\eta\in \{-1,1\}^n$ we see that
\begin{multline*}
\!\!\!\!\!\|f(x)-f(x+y)\|_X^p\\\lesssim_p \frac{1}{2^n}\sum_{\eta\in \{-1,1\}^n} \left\|f(x)-f(x+\eta_S)\right\|_X^p+\frac{R^{p-1}}{2^n}\sum_{\eta\in \{-1,1\}^n} \sum_{t=1}^{\|y-\eta_S\|_\infty}\left\|f\left(\gamma_{x,y}^\eta(t-1)\right)-f\left(\gamma_{x,y}^\eta(t)\right)\right\|_X^p.
\end{multline*}
Consequently, using the definition of the operator $D_S$ and convexity, we see that
\begin{align*}
\sum_{x\in Z_{4m}^n}&\left\|f(x)-D_S f(x)\right\|_X^p\\&\le \frac{1}{|U_S|} \sum_{x\in Z_{4m}^n}\sum_{y\in U_S} \left\|f(x)-f(x+y)\right\|_X^p\\
&\lesssim_p \frac{1}{2^n}\sum_{\eta\in \{-1,1\}^n}\sum_{x\in Z_{4m}^n} \left\|f(x)-f(x+\eta_S)\right\|_X^p\\ &\qquad+\frac{R^{p-1}}{2^n|U_S|}\sum_{\eta\in \{-1,1\}^n} \sum_{t=1}^{\|y-\eta_S\|_\infty}\sum_{x\in Z_{4m}^n}\sum_{y\in U_S}\left\|f\left(\gamma_{x,y}^\eta(t-1)\right)-f\left(\gamma_{x,y}^\eta(t)\right)\right\|_X^p\\
&=\frac{1}{2^n}\sum_{\eta\in \{-1,1\}^n}\sum_{x\in Z_{4m}^n} \left\|f(x)-f(x+\eta_S)\right\|_X^p+\frac{R^{p-1}N}{2^n|U_S|}\sum_{\e\in \{-1,1\}^n}\sum_{z\in \Z_{4m}^n}\|f(z+\e)-f(z)\|_X^p.
\end{align*}
Recalling the upper bound on $N$ appearing in~\eqref{eq:N estimate}, this implies the desired estimate~\eqref{eq:average displacement ineq}.
\end{proof}

We record for future use the following very simple lemma.

\begin{lemma}\label{lem:displacement on S gradient} Suppose that $(X,d_X)$ is a metric space and $p\in [1,\infty)$. Then for every $f:\Z_{4m}^n\to X$, $\e\in \{-1,1\}^n$ and $S\subset \n$ we have
\begin{equation}\label{eq:average displacement ineq-radient}
\sum_{x\in \Z_{4m}^n} d_X\left(f\left(x+\e_S\right),f(x)\right)^p\le |S|^{p-1} \sum_{j\in S}\sum_{x\in \Z_{4m}^n} d_X\left(f\left(x+e_{j}\right),f\left(x\right)\right)^p.
\end{equation}
\end{lemma}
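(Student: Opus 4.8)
\textbf{Proof plan for Lemma~\ref{lem:displacement on S gradient}.} The statement is a routine telescoping estimate; the plan is to build a coordinate-by-coordinate path inside $S$, apply the triangle inequality together with the elementary convexity bound $(\sum_{i=1}^k a_i)^p\le k^{p-1}\sum_{i=1}^k a_i^p$ (valid for $a_i\ge 0$ and $p\ge 1$), and then use translation invariance of the counting measure on $\Z_{4m}^n$ to absorb the signs.

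Concretely, enumerate $S=\{j_1,\ldots,j_k\}$, where $k=|S|$, and for $x\in \Z_{4m}^n$ set $x_0\eqdef x$ and $x_i\eqdef x+\sum_{l=1}^i \e_{j_l}e_{j_l}$ for $i\in\{1,\ldots,k\}$, so that $x_k=x+\e_S$ and $x_i-x_{i-1}=\e_{j_i}e_{j_i}$ for every $i$. The triangle inequality in $(X,d_X)$ followed by the convexity bound gives, for every $x\in\Z_{4m}^n$,
\begin{equation*}
d_X\big(f(x+\e_S),f(x)\big)^p\le \left(\sum_{i=1}^k d_X\big(f(x_i),f(x_{i-1})\big)\right)^{p}\le k^{p-1}\sum_{i=1}^k d_X\big(f(x_i),f(x_{i-1})\big)^p.
\end{equation*}
Summing over $x\in\Z_{4m}^n$ and interchanging the two sums reduces matters to bounding $\sum_{x\in\Z_{4m}^n} d_X(f(x_i),f(x_{i-1}))^p$ for each fixed $i\in\{1,\ldots,k\}$.

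For fixed $i$, the map $x\mapsto x_{i-1}$ is a bijection of $\Z_{4m}^n$, so substituting $y=x_{i-1}$ turns that sum into $\sum_{y\in\Z_{4m}^n} d_X(f(y+\e_{j_i}e_{j_i}),f(y))^p$. If $\e_{j_i}=1$ this is exactly $\sum_{y}d_X(f(y+e_{j_i}),f(y))^p$; if $\e_{j_i}=-1$, the further substitution $z=y-e_{j_i}$ together with the symmetry of $d_X$ again yields $\sum_{z}d_X(f(z+e_{j_i}),f(z))^p$. Hence in either case $\sum_{x}d_X(f(x_i),f(x_{i-1}))^p=\sum_{x\in\Z_{4m}^n}d_X(f(x+e_{j_i}),f(x))^p$, and plugging this back gives
\begin{equation*}
\sum_{x\in\Z_{4m}^n} d_X\big(f(x+\e_S),f(x)\big)^p\le k^{p-1}\sum_{i=1}^{k}\sum_{x\in\Z_{4m}^n} d_X\big(f(x+e_{j_i}),f(x)\big)^p=|S|^{p-1}\sum_{j\in S}\sum_{x\in\Z_{4m}^n} d_X\big(f(x+e_j),f(x)\big)^p,
\end{equation*}
as claimed. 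There is no genuine obstacle here: the only points requiring (minimal) care are invoking $p\ge 1$ for the convexity inequality and using the symmetry of the metric to handle coordinates $j\in S$ with $\e_j=-1$, both of which are immediate.
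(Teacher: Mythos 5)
Your proof is correct and follows essentially the same route as the paper: a telescoping path through the coordinates of $S$, the bound $(\sum_{i=1}^k a_i)^p\le k^{p-1}\sum_{i=1}^k a_i^p$ (the paper phrases it as H\"older's inequality), and a translation change of variables on $\Z_{4m}^n$ to absorb the signs $\e_{j}$. No gaps.
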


\begin{proof}
Write $S=\{j(1),\ldots,j(|S|)\}$ and for every $\ell\in \{0,\ldots,|S|\}$ denote $S(\ell)=\{j(1),\ldots,j(\ell)\}$ (with the convention $S(0)=\emptyset$).  Then by the triangle inequality and H\"older's inequality, for every $\e \in \{-1,1\}^n$ we have
$$
d_X\left(f\left(x+\e_S\right),f(x)\right)^p\le |S|^{p-1}\sum_{\ell=1}^{|S|} d_X\left(f\left(x+\e_{S(\ell-1)}+\e_{j(\ell)}e_{j(\ell)}\right),f\left(x+\e_{S(\ell-1)}\right)\right)^p.
$$
Hence,
\begin{multline*}
\sum_{x\in \Z_{4m}^n} d_X\left(f\left(x+\e_S\right),f(x)\right)^p\le |S|^{p-1} \sum_{\ell=1}^{|S|} \sum_{y\in \Z_{4m}^n} d_X\left(f\left(y+\e_{j(\ell)}e_{j(\ell)}\right),f\left(y\right)\right)^p\\
= |S|^{p-1} \sum_{\ell=1}^{|S|} \sum_{z\in \Z_{4m}^n} d_X\left(f\left(z+e_{j(\ell)}\right),f\left(z\right)\right)^p=|S|^{p-1} \sum_{j\in S}\sum_{z\in \Z_{4m}^n} d_X\left(f\left(z+e_{j}\right),f\left(z\right)\right)^p.\tag*\qedhere
\end{multline*}
\end{proof}

\begin{lemma} Suppose that $m,n\in \N$, and that $R\in \{1,\ldots,2m-1\}$ is odd and $k\in \n$. Let $(X,\|\cdot\|_X)$ be a Banach space and $p\in [1,\infty)$. Then for every $f:\Z_{4m}^n\to X$ and $\d\in \{-1,1\}^n$,
\begin{align}\label{eq:desired averaged on S}
\nonumber\frac{1}{\binom{n}{k}}&\sum_{\substack{S\subset \n\\|S|=k}}\sum_{x\in \Z_{4m}^n}\left\|f(x+2m\d_S)-f(x)\right\|_X^p\\ \nonumber&\lesssim_p
\frac{m^p}{\binom{n}{k}}\sum_{\substack{S\subset \n\\|S|=k}}\sum_{x\in \Z_{4m}^n}\left\|D_Sf(x+2\d_S)-D_Sf(x)\right\|_X^p\\& \qquad +\frac{R^p}{2^n}\sum_{\e\in \{-1,1\}^n}\sum_{x\in \Z_{4m}^n} \left\|f(x+\e)-f(x)\right\|_X^p+\frac{k^p}{n}\sum_{j=1}^n \sum_{x\in \Z_{4m}^n} \left\|f(x+e_j)-f(x)\right\|_X^p.
\end{align}
\end{lemma}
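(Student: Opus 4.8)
The plan is to interpolate between $f$ and its $U_S$-average $D_Sf$, arranging matters so that the factor $m^p$ is attached only to the already-smoothed $D_S$-term, while the discrepancy $f-D_Sf$ is absorbed by Lemma~\ref{lem:displacement first version} with no power of $m$. Fix $S\subset\n$ with $|S|=k$ and $\d\in\{-1,1\}^n$. The starting point is the splitting
\[
f(x+2m\d_S)-f(x)=\bigl(f(x+2m\d_S)-D_Sf(x+2m\d_S)\bigr)+\bigl(D_Sf(x+2m\d_S)-D_Sf(x)\bigr)+\bigl(D_Sf(x)-f(x)\bigr),
\]
to which I would apply convexity, $\|a+b+c\|_X^p\lesssim_p\|a\|_X^p+\|b\|_X^p+\|c\|_X^p$, and then sum over $x\in\Z_{4m}^n$. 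Since $x\mapsto x+2m\d_S$ is a bijection of $\Z_{4m}^n$, the first and third terms together contribute $2\sum_x\|f(x)-D_Sf(x)\|_X^p$.

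For the middle term I would telescope along the arithmetic progression $x,\,x+2\d_S,\,x+4\d_S,\,\dots,\,x+2m\d_S$ inside $\Z_{4m}^n$: writing $D_Sf(x+2m\d_S)-D_Sf(x)=\sum_{\ell=0}^{m-1}\bigl(D_Sf(x+2(\ell+1)\d_S)-D_Sf(x+2\ell\d_S)\bigr)$ and applying the power-mean inequality bounds $\|D_Sf(x+2m\d_S)-D_Sf(x)\|_X^p$ by $m^{p-1}\sum_{\ell=0}^{m-1}\|D_Sf(x+2(\ell+1)\d_S)-D_Sf(x+2\ell\d_S)\|_X^p$; summing over $x$ and using translation invariance collapses each of the $m$ summands to $\sum_x\|D_Sf(x+2\d_S)-D_Sf(x)\|_X^p$, producing precisely $m^p$ times the $D_S$-term appearing in~\eqref{eq:desired averaged on S}.

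It then remains to bound $\sum_x\|f(x)-D_Sf(x)\|_X^p$ and to average over $S$. Lemma~\ref{lem:displacement first version} bounds $\sum_x\|f(x)-D_Sf(x)\|_X^p$ by $\frac{R^p}{2^n}\sum_\e\sum_x\|f(x+\e)-f(x)\|_X^p+\frac1{2^n}\sum_\e\sum_x\|f(x+\e_S)-f(x)\|_X^p$; the first summand is independent of $S$ and survives the averaging verbatim, while to the second I would apply Lemma~\ref{lem:displacement on S gradient}, which (since $|S|=k$) bounds $\sum_x\|f(x+\e_S)-f(x)\|_X^p$ by $k^{p-1}\sum_{j\in S}\sum_x\|f(x+e_j)-f(x)\|_X^p$ for every $\e$. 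Finally, averaging over the $\binom nk$ subsets $S$ and using that each $j\in\n$ lies in exactly $\binom{n-1}{k-1}$ of them, i.e.\ $\frac1{\binom nk}\sum_{|S|=k}\sum_{j\in S}a_j=\frac kn\sum_{j=1}^n a_j$ for $a_j\ge0$, turns $\frac1{\binom nk}\sum_{|S|=k}k^{p-1}\sum_{j\in S}\sum_x\|f(x+e_j)-f(x)\|_X^p$ into $\frac{k^p}{n}\sum_{j=1}^n\sum_x\|f(x+e_j)-f(x)\|_X^p$. Collecting the three contributions gives~\eqref{eq:desired averaged on S}.

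There is no essential obstacle here; the only non-routine point is the decision to telescope the \emph{smoothed} function $D_Sf$ rather than $f$ itself — telescoping $f$ directly would multiply the error terms by a spurious factor of $m^p$ — together with the observation that the three-term splitting cleanly separates the single scaled contribution from the two unscaled ones. Everything else is bookkeeping of $p$-dependent constants of the form $c^p$ and the elementary combinatorial averaging over $S$.
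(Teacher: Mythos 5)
Your proposal is correct and follows essentially the same route as the paper's proof: the identical three-term splitting through $D_Sf$ (with translation invariance merging the two discrepancy terms), the same telescoping of the smoothed function along $x, x+2\d_S,\ldots,x+2m\d_S$ to produce the $m^p$ factor, and the same combination of Lemma~\ref{lem:displacement first version} with Lemma~\ref{lem:displacement on S gradient} followed by the $\frac{1}{\binom{n}{k}}\sum_{|S|=k}\sum_{j\in S}a_j=\frac{k}{n}\sum_j a_j$ averaging. No gaps.
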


\begin{proof}
For every $S\subset \n$ with $|S|=k$ we have
\begin{align}\label{eq:pass to average on S}
\nonumber \sum_{x\in \Z_{4m}^n}&\left\|f(x+2m\d_S)-f(x)\right\|_X^p\\&\lesssim_p \sum_{x\in \Z_{4m}^n}\left\|D_Sf(x+2m\d_S)-D_Sf(x)\right\|_X^p\nonumber \\&\qquad+\sum_{x\in \Z_{4m}^n}\left\|D_Sf(x+2m\d_S)-f(x+2m\d_S)\right\|_X^p+ \sum_{x\in \Z_{4m}^n}\left\|D_Sf(x)-f(x)\right\|_X^p\nonumber\\
&= \sum_{x\in \Z_{4m}^n}\left\|D_Sf(x+2m\d_S)-D_Sf(x)\right\|_X^p+2\sum_{x\in \Z_{4m}^n}\left\|D_Sf(x)-f(x)\right\|_X^p.
\end{align}
The first term in~\eqref{eq:pass to average on S} can be bounded as follows.
\begin{align}\label{eq:first term upper}
\sum_{x\in \Z_{4m}^n}\left\|D_Sf(x+2m\d_S)-D_Sf(x)\right\|_X^p&\le m^{p-1}\sum_{t=1}^m \sum_{x\in \Z_{4m}^n} \left\|D_Sf(x+2t\d_S)-D_Sf(x+(2t-2)\d_S)\right\|_X^p\nonumber\\
&=m^p \sum_{x\in \Z_{4m}^n}\left\|D_Sf(x+2\d_S)-D_Sf(x)\right\|_X^p.
\end{align}
The second term in~\eqref{eq:pass to average on S}  is bounded using Lemma~\ref{lem:displacement first version} and Lemma~\ref{lem:displacement on S gradient} as follows.
\begin{multline}\label{eq:second term upper use lemmas}
\sum_{x\in \Z_{4m}^n}\left\|D_Sf(x)-f(x)\right\|_X^p\\ \lesssim_p \frac{R^p}{2^n}\sum_{\e\in \{-1,1\}^n}\sum_{x\in \Z_{4m}^n} \left\|f(x+\e)-f(x)\right\|_X^p+|S|^{p-1}\sum_{j\in S}\sum_{x\in \Z_{4m}^n} \left\|f\left(x+e_{j}\right)-f\left(x\right)\right\|_X^p.
\end{multline}

Note that for every $x\in \Z_{4m}^n$,
\begin{align*}\label{eq:binomial ratios}
\frac{1}{\binom{n}{k}}\sum_{\substack{S\subset \n\\|S|=k}}\sum_{j\in S} \left\|f\left(x+e_{j}\right)-f\left(x\right)\right\|_X^p
=\frac{k}{n}\sum_{j=1}^n\left\|f\left(x+e_{j}\right)-f\left(x\right)\right\|_X^p.
\end{align*}
Hence, the desired inequality~\eqref{eq:desired averaged on S} follows by substituting~\eqref{eq:first term upper} and~\eqref{eq:second term upper use lemmas} into~\eqref{eq:pass to average on S} and averaging the resulting inequality over all $S\subset \n$ with $|S|=k$.
\end{proof}

Our next goal is to bound the first term in the right-hand side of~\eqref{eq:desired averaged on S}. To this end we first recall some results from~\cite{GMN11}.

Fixing a Banach space $(X,\|\cdot\|_X)$, consider the averaging operator $A:L_2(\Z_{4m}^n,X)\to L_2(\Z_{4m}^n,X)$ given, for every $f:\Z_{4m}^n\to X$ and $x\in \Z_{4m}^n$, by
\begin{equation}\label{eq:def A}
Af(x)\eqdef \frac{1}{R^n}\sum_{y\in (-R,R)^n\cap (2\Z)^n} f(x+y).
\end{equation}
For $j\in \n$ denote $B_j=D_{\{j\}}$, i.e., $B_j$ is the averaging operator corresponding to the set $U_{\{j\}}$, which consists of those $y\in [-R,R]^n$ such that $y_j$ is even and $y_\ell$ is odd for every $\ell\in \n\setminus\{j\}$. (In~\cite{GMN11} the set  $U_{\{j\}}$ was denoted $S(j,R)$ and the operator $B_j$ was denoted $\mathcal{E}_j$.)

It follows from~\cite{GMN11} that for every $f:\Z_{4m}^n\to X$, every $p\in [1,\infty)$ and every $\e\in \{-1,1\}^n$ we have
\begin{multline}\label{eq:pass to rademacher quote}
\sum_{x\in \Z_{4m}^n}\left\|\left(\frac{R}{R+1}\right)^{n-1}\left(Af(x+\e)-Af(x-\e)\right)-\sum_{j=1}^n\e_j\left[B_jf(x+e_j)-B_jf(x-e_j)\right]\right\|_X^p\\
\lesssim_p p^p\sum_{s=0}^{n-1} \frac{(n/R)^{(n-s)p}}{\binom{n}{s}}\sum_{\substack{S\subset\n\\ |S|=s}}\sum_{x\in \Z_{4m}^n}\left\|f(x+2\e_S)-f(x)\right\|_X^p.
\end{multline}
Since~\eqref{eq:pass to rademacher quote} is only implicit in~\cite{GMN11} (it follows from proofs in~\cite{GMN11} rather than from explicit statements in~\cite{GMN11}), we shall now  explain how to establish~\eqref{eq:pass to rademacher quote}.

\begin{proof}[Proof of~\eqref{eq:pass to rademacher quote}] For every $T\subset \n$ define $L_T\subseteq \Z_{4m}^n$ by
$$
L_T\eqdef \left\{y\in (-R,R)^n:\ \forall (i,j)\in T\times (\n\setminus T),\ (y_i,y_j)\in 2\Z\times \{0\}\right\}.
$$
Thus $L_T$ consists of those $y\in (-R,R)^n$ all of whose coordinates are even, and all of whose coordinates that lie outside $T$ vanish. As in~\cite[Def.~3.2]{GMN11}, we let $\Delta_T:L_2(\Z_{4m}^n,X)\to L_2(\Z_{4m}^n,X)$ denote the averaging operator corresponding to $L_T$, i.e., for every $f:\Z_{4m}^n\to X$ and $x\in \Z_{4m}^n$,
$$
\Delta_Tf(x)\eqdef \frac{1}{|L_T|}\sum_{y\in L_T} f(x+y).
$$
We note in passing that the operator $A$ given in~\eqref{eq:def A} coincides with $\Delta_{\n}$.

For $\e\in \{-1,1\}^n$, $\alpha\in \{0,\ldots,n\}$ and $\beta\in \{0,\ldots,\alpha\}$ define $V_{\alpha,\beta}^\e: L_2(\Z_{4m}^n,X)\to L_2(\Z_{4m}^n,X)$ by setting for every $f:\Z_{4m}^n\to X$ and $x\in \Z_{4m}^n$,
\begin{equation}\label{eq:def V alpha beta}
V_{\alpha,\beta}^\e f(x)\eqdef \sum_{\substack{T\subset \n\\|T|=n-\alpha}}\sum_{\substack{\d\in \{-1,1\}^{\n\setminus T}\\ \langle \d,\e_{\n\setminus T}\rangle=\alpha-2\beta}}\left[\Delta_{T}f\left(x+R\d+\e_{T}\right)-\Delta_{ T}f\left(x+R\d-\e_{ T}\right)\right].
\end{equation}
Here $\langle \cdot,\cdot\rangle$ denotes the standard scalar product on $\R^n$. It is worthwhile to compare the right-hand side of~\eqref{eq:def V alpha beta} to the right-hand side of equation (44) in~\cite{GMN11} (however, note that there is a difference of a normalization factor. Our $R$ is the same as the parameter $k$ of~\cite{GMN11}).  By combining Lemma~3.8 of~\cite{GMN11} with Lemma~3.5 of~\cite{GMN11} and identity (44) of~\cite{GMN11} we see that for every $\alpha\in \{0,\ldots,n\}$ and $\beta\in \{0,\ldots,\alpha\}$ there exists $h_{\alpha,\beta}\in \R$ (related to the bivariate Bernoulli numbers; see~\cite[Sec.~3.1]{GMN11}) such that $h_{0,0}=1$,
\begin{equation}\label{eq:bernoulli number bound}
\forall\,\alpha\in \{0,\ldots,n\},\ \forall\, \beta\in \{0,\ldots,\alpha\},\qquad |h_{\alpha,\beta}|\lesssim \frac{(\alpha-\beta)!\beta!}{2^\alpha},
\end{equation}
and for every $f:\Z_{4m}^n\to X$ and $x\in \Z_{4m}^n$,
\begin{equation}\label{eq:rademacher identity}
\sum_{j=1}^n\e_j\left[B_jf(x+e_j)-B_jf(x-e_j)\right]=\left(\frac{R}{R+1}\right)^{n-1}\sum_{\alpha=0}^n\sum_{\beta=0}^\alpha \frac{h_{\alpha,\beta}}{R^\alpha}V_{\alpha,\beta}^\e f(x).
\end{equation}

Observe that $V^\e_{0,0}f(x)=Af(x+\e)-Af(x-\e)$, so it follows from~\eqref{eq:rademacher identity} that
\begin{multline}\label{eq:substract zero term}
\left\|\sum_{j=1}^n\e_j\left[B_jf(x+e_j)-B_jf(x-e_j)\right]-\left(\frac{R}{R+1}\right)^{n-1}\left(Af(x+\e)-Af(x-\e)\right)\right\|_X\\
\stackrel{\eqref{eq:rademacher identity}}{\le} \sum_{\alpha=1}^n\sum_{\beta=0}^\alpha \frac{|h_{\alpha,\beta}|}{R^\alpha}\left\|V_{\alpha,\beta}^\e f(x)\right\|_X\stackrel{\eqref{eq:bernoulli number bound}}{\lesssim} \sum_{\alpha=1}^n\frac{1}{2^\alpha}\sum_{\beta=0}^\alpha \frac{(\alpha-\beta)!\beta!}{R^\alpha}\left\|V_{\alpha,\beta}^\e f(x)\right\|_X.
\end{multline}
By convexity, it follows from~\eqref{eq:substract zero term} that
\begin{multline*}
\left\|\sum_{j=1}^n\e_j\left[B_jf(x+e_j)-B_jf(x-e_j)\right]-\left(\frac{R}{R+1}\right)^{n-1}\left(Af(x+\e)-Af(x-\e)\right)\right\|_X^p\\
\lesssim_p \sum_{\alpha=1}^n\frac{1}{2^\alpha}\left(\sum_{\beta=0}^\alpha \frac{(\alpha-\beta)!\beta!}{R^\alpha}\left\|V_{\alpha,\beta}^\e f(x)\right\|_X\right)^p\le
\sum_{\alpha=1}^n\sum_{\beta=0}^\alpha \frac{(\alpha+1)^{p-1}\left((\alpha-\beta)!\beta!\right)^p}{2^\alpha R^{\alpha p}}\left\|V_{\alpha,\beta}^\e f(x)\right\|_X^p.
\end{multline*}
We can therefore bound the left-hand side of~\eqref{eq:pass to rademacher quote} as follows.
\begin{multline}\label{eq:reduce to bounding the V}
\sum_{x\in \Z_{4m}^n}\left\|\left(\frac{R}{R+1}\right)^{n-1}\left(Af(x+\e)-Af(x-\e)\right)-\sum_{j=1}^n\e_j\left[B_jf(x+e_j)-B_jf(x-e_j)\right]\right\|_X^p\\
\lesssim_p  \sum_{\alpha=1}^n\sum_{\beta=0}^\alpha \frac{(\alpha+1)^{p-1}\left((\alpha-\beta)!\beta!\right)^p}{2^\alpha R^{\alpha p}}
\sum_{x\in \Z_{4m}^n}\left\|V_{\alpha,\beta}^\e f(x)\right\|_X^p.
\end{multline}

Since the number of terms in the sums that appear in the definition~\eqref{eq:def V alpha beta} of $V_{\alpha,\beta}^\e$ is $\binom{n}{\alpha}\binom{\alpha}{\beta}$,
\begin{align}
\nonumber&\sum_{x\in \Z_{4m}^n}\left\|V_{\alpha,\beta}^\e f(x)\right\|_X^p\\ \nonumber&\le \binom{n}{\alpha}^{p-1}\binom{\alpha}{\beta}^{p-1}\sum_{\substack{T\subset \n\\|T|=n-\alpha}}\sum_{\substack{\d\in \{-1,1\}^{\n\setminus T}\\ \langle \d,\e_{\n\setminus T}\rangle=\alpha-2\beta}} \sum_{x\in \Z_{4m}^n}\left\|\Delta_{T}f\left(x+R\d+\e_{T}\right)-\Delta_{ T}f\left(x+R\d-\e_{ T}\right)\right\|_X^p\\ \nonumber
&=\binom{n}{\alpha}^{p-1}\binom{\alpha}{\beta}^p\sum_{\substack{T\subset \n\\|T|=n-\alpha}}\sum_{x\in \Z_{4m}^n}\left\|\Delta_{T}f\left(x+2\e_{T}\right)-\Delta_{ T}f\left(x\right)\right\|_X^p\\
&\le \binom{n}{\alpha}^{p-1}\binom{\alpha}{\beta}^p\sum_{\substack{T\subset \n\\|T|=n-\alpha}}\sum_{x\in \Z_{4m}^n}\left\|f\left(x+2\e_{T}\right)-f\left(x\right)\right\|_X^p,
\label{eq:double sum holder}
\end{align}
where~\eqref{eq:double sum holder} is valid since $\Delta_T$ is an averaging operator.

By combining~\eqref{eq:reduce to bounding the V} with~\eqref{eq:double sum holder} we see that
\begin{align}\label{eq:before alpha estimates}
\nonumber \sum_{x\in \Z_{4m}^n}&\left\|\left(\frac{R}{R+1}\right)^{n-1}\left(Af(x+\e)-Af(x-\e)\right)-\sum_{j=1}^n\e_j\left[B_jf(x+e_j)-B_jf(x-e_j)\right]\right\|_X^p\\ \nonumber
&\lesssim_p  \sum_{\alpha=1}^n\sum_{\beta=0}^\alpha \frac{(\alpha+1)^{p-1}\left((\alpha-\beta)!\beta!\right)^p}{2^\alpha R^{\alpha p}}\binom{n}{\alpha}^{p-1}\binom{\alpha}{\beta}^p\sum_{\substack{T\subset \n\\|T|=n-\alpha}}\sum_{x\in \Z_{4m}^n}\left\|f\left(x+2\e_{T}\right)-f\left(x\right)\right\|_X^p
\\& =\sum_{\alpha=1}^n \frac{(\alpha+1)^{p}}{2^\alpha R^{\alpha p}\binom{n}{\alpha}}\left(\frac{n!}{(n-\alpha)!}\right)^p\sum_{\substack{T\subset \n\\|T|=n-\alpha}} \sum_{x\in \Z_{4m}^n}
\left\|f\left(x+2\e_{T}\right)-f\left(x\right)\right\|_X^p.
\end{align}
The desired estimate~\eqref{eq:pass to rademacher quote} is a consequence of~\eqref{eq:before alpha estimates} via the change of variable $s=n-\alpha$ and by using the bounds $n!/(n-\alpha)!\le n^\alpha$ and $(\alpha+1)^p/2^\alpha\le (2p)^p$. \end{proof}

In what follows, we will use the following simple lemma several times.

\begin{lemma}\label{lem:2eps} Suppose that $(X,d_X)$ is a metric space. Fix $S\subset \n$ and $p\in [1,\infty)$. Then for every $f:\Z_{4m}^n\to X$ we have
\begin{equation}\label{eq:2eps}
\sum_{\e\in \{-1,1\}^n} \sum_{x\in \Z_{4m}^n}d_X\left(f(x),f(x+2\e_S)\right)^p\le 2^p \sum_{\e\in \{-1,1\}^n} \sum_{x\in \Z_{4m}^n}d_X\left(f(x),f(x+\e)\right)^p.
\end{equation}
\end{lemma}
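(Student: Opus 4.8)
The plan is to realize the displacement $2\e_S$ as a concatenation of two displacements, each of which already lies in $\{-1,1\}^n$, and then to exploit that the pairing between these two displacements is a bijection of $\{-1,1\}^n$.

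Fix $\e\in\{-1,1\}^n$ and set $\e'\eqdef 2\e_S-\e$. Since $\e'_j=2\e_j-\e_j=\e_j$ for every $j\in S$ and $\e'_j=-\e_j$ for every $j\in\n\setminus S$, we have $\e'\in\{-1,1\}^n$, and moreover $x+2\e_S=(x+\e)+\e'$ for every $x\in\Z_{4m}^n$. Hence, by the triangle inequality together with the convexity estimate $(a+b)^p\le 2^{p-1}(a^p+b^p)$ (valid since $p\ge 1$),
\begin{equation*}
d_X\big(f(x),f(x+2\e_S)\big)^p\le 2^{p-1}\Big(d_X\big(f(x),f(x+\e)\big)^p+d_X\big(f(x+\e),f(x+\e+\e')\big)^p\Big).
\end{equation*}
Now I would sum this over $x\in\Z_{4m}^n$ and $\e\in\{-1,1\}^n$. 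The first term on the right contributes exactly $2^{p-1}\sum_{\e}\sum_x d_X(f(x),f(x+\e))^p$. For the second term, for each fixed $\e$ the substitution $y=x+\e$ (a bijection of $\Z_{4m}^n$) gives $\sum_x d_X(f(x+\e),f(x+\e+\e'))^p=\sum_y d_X(f(y),f(y+\e'))^p$. Since $\e'_j=\e_j$ for $j\in S$ we have $\e'_S=\e_S$, so $2\e'_S-\e'=2\e_S-(2\e_S-\e)=\e$; thus the map $\e\mapsto\e'$ is an involution, hence a bijection of $\{-1,1\}^n$, and summing over $\e$ yields $\sum_{\e}\sum_y d_X(f(y),f(y+\e'))^p=\sum_{\e}\sum_y d_X(f(y),f(y+\e))^p$. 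Adding the two contributions produces $2^{p-1}\cdot 2=2^p$ times $\sum_{\e}\sum_x d_X(f(x),f(x+\e))^p$, which is precisely \eqref{eq:2eps}.

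There is essentially no obstacle here; the only points requiring (minimal) care are checking that $\e'=2\e_S-\e$ is indeed $\{-1,1\}$-valued and that $\e\mapsto\e'$ is a bijection, so that re-indexing the second sum recovers the same quantity rather than losing a factor. The degenerate case $S=\emptyset$ is trivial, since then $2\e_S=0$ and the left-hand side of \eqref{eq:2eps} vanishes.
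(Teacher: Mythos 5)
Your proof is correct and follows essentially the same route as the paper: both decompose the displacement $2\e_S$ as the sum of two sign vectors that agree on $S$ and are opposite off $S$, apply the triangle inequality with the convexity bound $2^{p-1}$, and re-index to recover the full average over $\{-1,1\}^n$. The only (immaterial) difference is that the paper introduces an independent auxiliary sign vector $\d$ and averages over it, whereas you make the deterministic choice corresponding to $\d=\e$ and observe that $\e\mapsto 2\e_S-\e$ is an involution.
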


\begin{proof}
For every $\e,\d\in \{-1,1\}^n$ we have
\begin{multline*}
d_X\left(f(x),f(x+2\e_S)\right)^p\\\le 2^{p-1}d_X\left(f(x),f(x+\e_S+\d_{\n\setminus S})\right)^p+2^{p-1}d_X\left(f(x+\e_S+\d_{\n\setminus S}),f(x+2\e_S)\right)^p.
\end{multline*}
Hence,
\begin{align}\label{eq:to average over delta}
\nonumber &\sum_{x\in \Z_{4m}^n}d_X\left(f(x),f(x+2\e_S)\right)^p\\ \nonumber& \le 2^{p-1}\sum_{x\in \Z_{4m}^n}\left(d_X\left(f(x),f(x+\e_S+\d_{\n\setminus S})\right)^p+d_X\left(f(x+\e_S+\d_{\n\setminus S}),f(x+2\e_S)\right)^p\right)\\&= 2^{p-1}\sum_{x\in \Z_{4m}^n}\left(d_X\left(f(x),f(x+\e_S+\d_{\n\setminus S})\right)^p+d_X\left(f(x),f(x+\e_S-\d_{\n\setminus S})\right)^p\right).
\end{align}
By averaging~\eqref{eq:to average over delta} over $\d\in \{-1,1\}^n$ while using the fact that $\d_{\n\setminus S}$ and $-\d_{\n\setminus S}$ are identically distributed, we deduce that
\begin{equation}\label{eq:delta average to average over eps}
\sum_{x\in \Z_{4m}^n}d_X\left(f(x),f(x+2\e_S)\right)^p\le \frac{2^p}{2^n}\sum_{\d\in \{-1,1\}^n} \sum_{x\in \Z_{4m}^n}d_X\left(f(x),f(x+\e_S+\d_{\n\setminus S})\right)^p.
\end{equation}

If $\e$ and $\d$ are i.i.d. and uniformly distributed over $\{-1,1\}^n$ then the vector $\e_S+\d_{\n\setminus S}$ is also uniformly distributed over $\{-1,1\}^n$. Consequently, the desired estimate~\eqref{eq:2eps} follows by averaging~\eqref{eq:delta average to average over eps} over $\e\in \{-1,1\}^n$.
\end{proof}

The following two lemmas contain estimates that will be used crucially in the ensuing discussion.

\begin{lemma}\label{lem:dist to rad on whole grid} Let $(X,\|\cdot\|_X)$ be a Banach space.
Suppose that $R\ge 2n-1$ (in addition to the previous assumptions on $R$, i.e., that it is an odd integer with $R\le 2m$). Then for every $p\in [1,\infty)$ and $f:\Z_{4m}^n\to X$ we have
\begin{multline}\label{eq:displacement rademacher p}
\sum_{\e\in \{-1,1\}^n}\sum_{x\in \Z_{4m}^n}\left\|\left(\frac{R}{R+1}\right)^{n-1}\left(Af(x+\e)-Af(x-\e)\right)-\sum_{j=1}^n\e_j\left[B_jf(x+e_j)-B_jf(x-e_j)\right]\right\|_X^p\\
\lesssim_p \left(\frac{pn}{R}\right)^p\sum_{\e\in \{-1,1\}^n} \sum_{x\in \Z_{4m}^n}\left\|f(x)-f(x+\e)\right\|_X^p.
\end{multline}
\end{lemma}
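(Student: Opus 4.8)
The plan is to derive~\eqref{eq:displacement rademacher p} as a quick consequence of two facts already established above: the pointwise-in-$\e$ estimate~\eqref{eq:pass to rademacher quote}, and the $S$-independent bound of Lemma~\ref{lem:2eps}. First I would fix $\e\in\{-1,1\}^n$ and apply~\eqref{eq:pass to rademacher quote}, then sum the resulting inequalities over all $\e\in\{-1,1\}^n$. Since the implied constant in~\eqref{eq:pass to rademacher quote} does not depend on $\e$, writing $E\eqdef\sum_{\e\in\{-1,1\}^n}\sum_{x\in\Z_{4m}^n}\left\|f(x)-f(x+\e)\right\|_X^p$ for the right-hand side of~\eqref{eq:displacement rademacher p} (up to the factor $(pn/R)^p$), this shows that the left-hand side of~\eqref{eq:displacement rademacher p} is
\begin{equation*}
\lesssim_p p^p\sum_{s=0}^{n-1}\frac{(n/R)^{(n-s)p}}{\binom{n}{s}}\sum_{\substack{S\subset\n\\|S|=s}}\sum_{\e\in\{-1,1\}^n}\sum_{x\in\Z_{4m}^n}\left\|f(x+2\e_S)-f(x)\right\|_X^p.
\end{equation*}

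The key step is then to invoke Lemma~\ref{lem:2eps}, which bounds $\sum_{\e}\sum_{x}\left\|f(x+2\e_S)-f(x)\right\|_X^p\le 2^p E$ with a constant that does not depend on $S$ (indeed, not even on $|S|$). Substituting this into the previous display, the inner sum over $S$ with $|S|=s$ contributes a factor $\binom{n}{s}$ that cancels the $\binom{n}{s}$ in the denominator, so the left-hand side of~\eqref{eq:displacement rademacher p} is
\begin{equation*}
\lesssim_p p^p\left(\sum_{s=0}^{n-1}\Big(\frac{n}{R}\Big)^{(n-s)p}\right)E.
\end{equation*}

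It remains only to estimate the geometric sum $\sum_{s=0}^{n-1}(n/R)^{(n-s)p}=\sum_{j=1}^{n}(n/R)^{jp}$, and this is precisely where the hypothesis $R\ge 2n-1$ enters. For $n\ge 2$ it yields $n/R\le n/(2n-1)\le 2/3$, hence $(n/R)^p\le (2/3)^p\le 2/3$ for every $p\ge 1$, so $\sum_{j\ge 1}\big((n/R)^p\big)^j=\frac{(n/R)^p}{1-(n/R)^p}\le 3(n/R)^p$; when $n=1$ the sum has the single term $(1/R)^p=(n/R)^p$. Thus $\sum_{s=0}^{n-1}(n/R)^{(n-s)p}\lesssim (n/R)^p$, and plugging this back in gives that the left-hand side of~\eqref{eq:displacement rademacher p} is $\lesssim_p p^p(n/R)^pE=(pn/R)^pE$, which is exactly~\eqref{eq:displacement rademacher p}.

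There is essentially no obstacle here: all the genuine work has already been done, either in~\eqref{eq:pass to rademacher quote} (whose proof, via the bivariate Bernoulli number identities of~\cite{GMN11}, reduces the discrepancy between the averaged operator $A$ and the ``Rademacher projection'' $\sum_j\e_j[B_jf(x+e_j)-B_jf(x-e_j)]$ to $2\e_S$-displacements) or in Lemma~\ref{lem:2eps}. The only point requiring care is that the summation of the geometric series must use the precise hypothesis $R\ge 2n-1$: it is this assumption that keeps the ratio $n/R$ bounded away from $1$ and thereby guarantees the $s$-sum is dominated, up to a universal constant, by its leading term $(n/R)^p$ rather than growing with $n$.
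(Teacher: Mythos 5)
Your proposal is correct and is essentially the paper's own proof: sum \eqref{eq:pass to rademacher quote} over $\e$, apply Lemma~\ref{lem:2eps} to eliminate the dependence on $S$ (cancelling the $\binom{n}{s}$), and bound the resulting geometric series by its leading term $(n/R)^p$ using $R\ge 2n-1$. The only difference is that you spell out the geometric-series estimate (including the $n=1$ case), which the paper leaves implicit.
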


\begin{proof}
By summing~\eqref{eq:pass to rademacher quote} over $\e\in \{-1,1\}^n$ and using Lemma~\ref{lem:2eps} we see that the left-hand side of~\eqref{eq:displacement rademacher p} is at most $(O(1)p)^p$ times the following quantity
$$
\left(\sum_{s=0}^{n-1} \left(\frac{n}{R}\right)^{(n-s)p}\right)\sum_{\e\in \{-1,1\}^n} \sum_{x\in \Z_{4m}^n}\left\|f(x)-f(x+\e)\right\|_X^p\lesssim \left(\frac{n}{R}\right)^p\sum_{\e\in \{-1,1\}^n} \sum_{x\in \Z_{4m}^n}\left\|f(x)-f(x+\e)\right\|_X^p,
$$
where in the last step we used the fact that $R\ge 2n-1$.
\end{proof}

The following lemma contains an estimate that will be used to control the average over all $\d\in \{-1,1\}^n$ of the first term in the right-hand side of~\eqref{eq:desired averaged on S}.

\begin{lemma}\label{lem:DS bound} Let $(X,\|\cdot\|_X)$ be a Banach space and fix $S\subset \n$. Suppose that $R$ is an odd integer satisfying  $2|S|-1\le R\le 2m$. Then for every $p\in [1,\infty)$ and $f:\Z_{4m}^n\to X$ we have
\begin{align}\label{eq:in lemma to average over S}
\nonumber\sum_{\d\in \{-1,1\}^n}&\sum_{x\in \Z_{4m}^n}\left\|D_Sf(x+2\d_S)-D_Sf(x)\right\|_X^p\\ \nonumber&\lesssim_p\sum_{\e\in \{-1,1\}^n}\sum_{x\in \Z_{4m}^n}\left\|\sum_{j\in S} \e_j\left[B_jf(x+e_j)-B_jf(x-e_j)\right]\right\|_X^p\\&\qquad +\left(\frac{p|S|}{R}\right)^p\sum_{\e\in \{-1,1\}^n} \sum_{x\in \Z_{4m}^n}\left\|f(x)-f(x+\e)\right\|_X^p.
\end{align}
\end{lemma}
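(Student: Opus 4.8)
The plan is to reduce \eqref{eq:in lemma to average over S} to its one‑block analogue, namely the $|S|$‑dimensional instance of \eqref{eq:pass to rademacher quote}. Write $k=|S|$ and, after relabelling, assume $S=\{1,\ldots,k\}$; identify $\Z_{4m}^n$ with $\Z_{4m}^k\times\Z_{4m}^{n-k}$ and write a generic point as $(z,w)$, so that $z$ carries the coordinates indexed by $S$. Since $R$ is odd, the set $U_S$ of \eqref{eq:def US} factors as $U_S=E\times O$ with $E=(-R,R)^k\cap(2\Z)^k$ (of size $R^k$) and $O=[-R,R]^{n-k}\cap(1+2\Z)^{n-k}$ (of size $(R+1)^{n-k}$). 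For each fixed $w$ define $h_w:\Z_{4m}^k\to X$ by $h_w(z)=\frac{1}{(R+1)^{n-k}}\sum_{v\in O}f(z,w+v)$, and let $\mathsf A$, $\mathsf B_i$ ($i\in\{1,\ldots,k\}$) be the $k$‑dimensional analogues of the operators $A$, $B_i=D_{\{i\}}$ from \eqref{eq:def A}, with the same odd $R$. Pre‑composing the $k$‑dimensional average that defines $\mathsf A$ (respectively $\mathsf B_i$) with the odd average over the last $n-k$ coordinates built into $h_w$ reproduces the average over $U_S$ (respectively over $U_{\{i\}}$), so that
\[\mathsf A h_w(z)=D_Sf(z,w)\qquad\text{and}\qquad \mathsf B_i h_w(z)=B_if(z,w)\qquad\bigl(z\in\Z_{4m}^k,\ w\in\Z_{4m}^{n-k},\ i\in S\bigr).\]

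Next I would apply the $k$‑dimensional instance of \eqref{eq:pass to rademacher quote} --- legitimate because $R\ge 2k-1\ge k$ and $R\le 2m$ --- to $h_w$ and to each $\eta\in\{-1,1\}^k$, and sum over $\eta$ and over $w$. On the left‑hand side, the substitution $z\mapsto z-\eta$ rewrites $\mathsf A h_w(z+\eta)-\mathsf A h_w(z-\eta)$ as the displacement $\mathsf A h_w(z+2\eta)-\mathsf A h_w(z)=D_Sf((z,w)+2\eta)-D_Sf(z,w)$ appearing in \eqref{eq:in lemma to average over S}, and lifting each $\eta\in\{-1,1\}^k$ to the $2^{n-k}$ vectors of $\{-1,1\}^n$ extending it supplies the correct normalisation. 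Applying the triangle inequality $\|a\|_X^p\lesssim_p\|a-b\|_X^p+\|b\|_X^p$ with $a=\mathsf A h_w(z+\eta)-\mathsf A h_w(z-\eta)$ and $b=\bigl(\tfrac{R+1}{R}\bigr)^{k-1}\sum_{i=1}^k\eta_i\bigl[\mathsf B_i h_w(z+e_i)-\mathsf B_i h_w(z-e_i)\bigr]$, and using $\bigl(\tfrac{R+1}{R}\bigr)^{(k-1)p}\le e^{p/2}$ (valid since $R\ge 2k-1$) together with the second identity of the first paragraph, shows that the left‑hand side of \eqref{eq:in lemma to average over S} is bounded by a $p$‑dependent constant times the first term on its right‑hand side, plus $2^{n-k}$ times the right‑hand side of the $k$‑dimensional \eqref{eq:pass to rademacher quote}, summed over $\eta$ and $w$.

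The delicate step --- and the reason one must invoke \eqref{eq:pass to rademacher quote} itself rather than applying Lemma~\ref{lem:dist to rad on whole grid} directly to $h_w$ --- is that the right‑hand side of \eqref{eq:pass to rademacher quote} involves only the \emph{even} displacements $h_w(z+2\eta_T)-h_w(z)$ ($T\subset\{1,\ldots,k\}$), which descend to displacements of $f$: by convexity of $\|\cdot\|_X^p$ and the identity $\sum_w\sum_{v\in O}g(w+v)=(R+1)^{n-k}\sum_u g(u)$ one gets $\sum_w\sum_z\|h_w(z+2\eta_T)-h_w(z)\|_X^p\le\sum_{x\in\Z_{4m}^n}\|f(x+2\e_T)-f(x)\|_X^p$ for any lift $\e\in\{-1,1\}^n$ of $\eta$. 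Averaging over $\eta\in\{-1,1\}^k$ and applying Lemma~\ref{lem:2eps} then yields $\sum_{\eta\in\{-1,1\}^k}\sum_x\|f(x+2\e_T)-f(x)\|_X^p\le\tfrac{2^p}{2^{n-k}}\sum_{\e\in\{-1,1\}^n}\sum_x\|f(x)-f(x+\e)\|_X^p$, a bound independent of $T$; summing over $s\in\{0,\ldots,k-1\}$ and over $|T|=s$ as in \eqref{eq:pass to rademacher quote} makes the binomial coefficients $\binom ks$ cancel and leaves the factor $\sum_{s=0}^{k-1}(k/R)^{(k-s)p}$, which is $\lesssim(k/R)^p$ because $R\ge 2k-1$ forces $k/R\le2/3$ when $k\ge2$ (and the sum is the single term $(1/R)^p$ when $k=1$). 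Multiplying back the factor $2^{n-k}$, the contribution in question is $\lesssim_p(pk/R)^p\sum_\e\sum_x\|f(x)-f(x+\e)\|_X^p=(p|S|/R)^p\sum_\e\sum_x\|f(x)-f(x+\e)\|_X^p$, which is exactly the error term on the right‑hand side of \eqref{eq:in lemma to average over S}. I expect this last step to be the main obstacle: applying Lemma~\ref{lem:dist to rad on whole grid} to $h_w$ would instead produce the \emph{odd}‑shift gradient $\|h_w(z)-h_w(z+\eta)\|_X^p$, which cannot be dominated by $\|f(x)-f(x+\e)\|_X^p$ since the partial averaging defining $h_w$ does nothing to damp oscillation of $f$ in the $S$‑directions, so it is essential to descend to $f$ by convexity before converting even displacements into odd ones via Lemma~\ref{lem:2eps}.
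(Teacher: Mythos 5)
Your proposal is correct and follows essentially the same route as the paper's proof: the same splitting $\Z_{4m}^n\cong\Z_{4m}^S\times\Z_{4m}^{\n\setminus S}$, the same partially averaged functions ($h_w$ is the paper's $f_y$), the same product identities $\mathsf A h_w=D_Sf$ and $\mathsf B_ih_w=B_if$, followed by the $k$-dimensional instance of \eqref{eq:pass to rademacher quote}, descent to $f$ by convexity, Lemma~\ref{lem:2eps}, and the geometric series bound using $R\ge 2|S|-1$. The only difference is expository: you make explicit the triangle-inequality step absorbing the factor $\bigl(\tfrac{R+1}{R}\bigr)^{(k-1)p}$ and the $2^{n-k}$ normalisation bookkeeping, both of which the paper leaves implicit.
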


\begin{proof} Denote $k\eqdef |S|$,  $T\eqdef \n\setminus S$ and consider $\Z_{4m}^n$ as being equal to $\Z_{4m}^S\times \Z_{4m}^T$. For every $y\in \Z_{4m}^T$ define $f_y:\Z_{4m}^S\to X$ by setting for every $x\in \Z_{4m}^S$,
$$
f_y(x)\eqdef \frac{1}{(R+1)^{n-k}} \sum_{z\in (1+2\Z)^T\cap [-R,R]^T} f(x,y+z).
$$
Let $A^{(S)}$ be the averaging operator corresponding to~\eqref{eq:def A} with $\Z_{4m}^n$ replaced by $\Z_{4m}^S$, i.e., for every $h:\Z_{4m}^S\to X$ and $x\in \Z_{4m}^S$,
$$
A^{(S)}h(x) \eqdef \frac{1}{R^k}\sum_{w\in (-R,R)^S\cap (2\Z)^S} h(x+w).
$$
Similarly, for every $j\in S$ let $B_j^{(S)}$ be the averaging operator analogous to $B_j$ but with $\Z_{4m}^n$ replaced by $\Z_{4m}^S$, i.e., for every $h:\Z_{4m}^S\to X$ and $x\in \Z_{4m}^S$,
$$
B^{(S)}_jh(x)\eqdef \frac{1}{R(R+1)^{k-1}}\sum_{\substack{a\in [-R,R]\cap (2\Z)\\b\in ([-R,R]\cap (1+2\Z))^{S\setminus \{j\}}}} h\left(x+a e_j+\sum_{s\in S\setminus\{j\}} b_s e_s\right)
$$
With these definitions, for every $(x,y)\in \Z_{4m}^S\times \Z_{4m}^T$ and $j\in S$ we have
\begin{equation}\label{eq:product identities}
D_Sf(x,y)=A^{(S)}f_y(x)\qquad\mathrm{and}\qquad B_jf(x,y)= B_j^{(S)}f_y(x).
\end{equation}

Since $R\ge 2k-1$, an application of~\eqref{eq:pass to rademacher quote} to $f_y$ yields the following estimate, which holds true for every fixed $\d\in \{-1,1\}^n$ and $y\in \Z_{4m}^T$.
\begin{align}\label{eq:to average restriction on S}
\sum_{x\in \Z_{4m}^S}&\left\|A^{(S)}f_y(x+2\d_S)-A^{(S)}f_y(x)\right\|_X^p=\sum_{x\in \Z_{4m}^S}\left\|A^{(S)}f_y(x+\d_S)-A^{(S)}f_y(x-\d_S)\right\|_X^p\nonumber \\
&\qquad\qquad\lesssim_p \sum_{x\in \Z_{4m}^S}\left\|\sum_{j\in S}\d_j\left[B_j^{(S)}f_y(x+e_j)-B_j^{(S)}f_y(x-e_j)\right]\right\|_X^p\nonumber\\&\qquad \qquad\qquad +p^p\sum_{s=0}^{k-1}\frac{(k/R)^{(k-s)p}}{\binom{k}{s}}
\sum_{\substack{W\subset S\\ |W|=s}}\sum_{x\in \Z_{4m}^S}\left\|f_y(x+2\d_W)-f_y(x)\right\|_X^p.
\end{align}
By summing~\eqref{eq:to average restriction on S} over $\d\in \{-1,1\}^n$ and $y\in \Z_{4m}^T$, while using the identities~\eqref{eq:product identities}, we see that
 \begin{align}\label{eq:now averaged restriction on S}
\sum_{\d\in \{-1,1\}^n}&\sum_{z\in \Z_{4m}^n}\left\|D_Sf(z+2\d_S)-D_Sf(z)\right\|_X^p\nonumber \\
&\lesssim_p \sum_{\d\in \{-1,1\}^n}\sum_{z\in \Z_{4m}^n}\left\|\sum_{j\in S}\d_j\left[B_jf(z+e_j)-B_jf(x-e_j)\right]\right\|_X^p\nonumber\\&\qquad +p^p\sum_{s=0}^{k-1}\frac{(k/R)^{(k-s)p}}{\binom{k}{s}}
\sum_{\substack{W\subset S\\ |W|=s}}\sum_{\d\in \{-1,1\}^n}\sum_{x\in \Z_{4m}^S}\sum_{y\in \Z_{4m}^T}\left\|f_y(x+2\d_W)-f_y(x)\right\|_X^p.
\end{align}
Recalling that $f_y$ is obtained from $f$ by averaging, it follows by convexity that for every $W\subseteq S$ and $\d\in \{-1,1\}^n$ we have
$$
\sum_{x\in \Z_{4m}^S}\sum_{y\in \Z_{4m}^T}\left\|f_y(x+2\d_W)-f_y(x)\right\|_X^p\le \sum_{z\in \Z_{4m}^n}\left\|f(z+2\d_W)-f(z)\right\|_X^p.
$$
Consequently, using Lemma~\ref{lem:2eps} and the assumption $R\ge 2k-1$, the final term in~\eqref{eq:now averaged restriction on S} is at most $(O(1)p)^p$ times the following quantity
\begin{multline*}
\left(\sum_{s=0}^{k-1}\left(\frac{k}{R}\right)^{(k-s)p}\right) \sum_{\e\in \{-1,1\}^n} \sum_{z\in \Z_{4m}^n}\left\|f(z+2\e)-f(z)\right\|_X^p\\\lesssim_p \left(\frac{k}{R}\right)^p \sum_{\e\in \{-1,1\}^n} \sum_{z\in \Z_{4m}^n}\left\|f(z+\e)-f(z)\right\|_X^p.
\end{multline*}
Hence~\eqref{eq:now averaged restriction on S}  implies the desired inequality~\eqref{eq:in lemma to average over S}.
\end{proof}


\begin{proof}[Proof of Theorem~\ref{thm:main}] From now on choose $R$ to be the smallest odd integer that is greater than $pn$,  and suppose that $$m\ge \frac{n^{3/2}\log p}{\sqrt{k}}+pn.$$ In particular we have $2n\le R\le 2m$. Fix $x\in \Z_{4m}^n$ and apply inequality~\eqref{eq:k set quote} to the scalars  $a_j=B_jf(x+e_j)-B_jf(x-e_j)$. The resulting estimate is
\begin{align}\label{eq:finalay use JMST}
\frac{(p/\log p)^{-p}}{2^n\binom{n}{k}}\sum_{\substack{S\subset \n\\|S|=k}}& \sum_{\e\in \{-1,1\}^n} \left|\sum_{j\in S} \e_j\left[B_jf(x+e_j)-B_jf(x-e_j)\right]\right|^p\nonumber \\&\lesssim_p
\frac{k}{n}\sum_{j=1}^n \left|B_jf(x+e_j)-B_jf(x-e_j)\right|^p\nonumber \\&\qquad +\frac{(k/n)^{\frac{p}{2}}}{2^n}\sum_{\e\in \{-1,1\}^n}\left|\sum_{j=1}^n \e_j\left[B_jf(x+e_j)-B_jf(x-e_j)\right]\right|^p.
\end{align}
 By summing~\eqref{eq:finalay use JMST} over $x\in \Z_{4m}^n$ we deduce that
\begin{align}\label{eq:use linear Xp}
\nonumber\frac{(p/\log p)^{-p}}{2^n\binom{n}{k}}\sum_{\substack{S\subset \n\\|S|=k}} &\sum_{\e\in \{-1,1\}^n} \sum_{x\in \Z_{4m}^n}\left|\sum_{j\in S} \e_j\left[B_jf(x+e_j)-B_jf(x-e_j)\right]\right|^p\\\nonumber&\lesssim_p
\frac{k}{n}\sum_{x\in \Z_{4m}^n}\sum_{j=1}^n \left|B_jf(x+2e_j)-B_jf(x)\right|^p\\
&\qquad +\frac{(k/n)^{\frac{p}{2}}}{2^n}\sum_{\e\in \{-1,1\}^n}\sum_{x\in \Z_{4m}^n}\left|\sum_{j=1}^n \e_j\left[B_jf(x+e_j)-B_jf(x-e_j)\right]\right|^p.
\end{align}

For every $j\in \n$, since $B_j$ is an averaging operator we have
\begin{multline}\label{eq:use Bj average}
 \sum_{x\in \Z_{4m}^n} \left|B_jf(x+2e_j)-B_jf(x)\right|^p\le \sum_{x\in \Z_{4m}^n} \left|f(x+2e_j)-f(x)\right|^p
 \lesssim_p \sum_{x\in \Z_{4m}^n} \left|f(x+e_j)-f(x)\right|^p.
\end{multline}
Recalling that $R\ge pn$, by Lemma~\ref{lem:dist to rad on whole grid} we have
\begin{align}
\nonumber \sum_{\e\in \{-1,1\}^n}&\sum_{x\in \Z_{4m}^n}\left|\sum_{j=1}^n \e_j\left[B_jf(x+e_j)-B_jf(x-e_j)\right]\right|^p\\&\lesssim_p  \sum_{\e\in \{-1,1\}^n}\sum_{x\in \Z_{4m}^n}\left|Af(x+2\e)-Af(x)\right|^p+\sum_{\e\in \{-1,1\}^n}\sum_{x\in \Z_{4m}^n}|f(x+\e)-f(x)|^p\nonumber \\
&\le \sum_{\e\in \{-1,1\}^n}\sum_{x\in \Z_{4m}^n}\left|f(x+2\e)-f(x)\right|^p+\sum_{\e\in \{-1,1\}^n}\sum_{x\in \Z_{4m}^n}|f(x+\e)-f(x)|^p \label{eq:A averaging}\\
&\lesssim_p \sum_{\e\in \{-1,1\}^n}\sum_{x\in \Z_{4m}^n}|f(x+\e)-f(x)|^p, \label{eq:implicit power p 2}
\end{align}
where in~\eqref{eq:A averaging} we used the fact that $A$ is an averaging operator.

By substituting~\eqref{eq:use Bj average} and~\eqref{eq:implicit power p 2} into~\eqref{eq:use linear Xp} we see that
\begin{multline}\label{eq:rademacher B_j bound}
\frac{(p/\log p)^{-p}}{2^n\binom{n}{k}}\sum_{\substack{S\subset \n\\|S|=k}} \sum_{\e\in \{-1,1\}^n} \sum_{x\in \Z_{4m}^n} \left|\sum_{j\in S} \e_j\left[B_jf(x+e_j)-B_jf(x-e_j)\right]\right|^p\\
\lesssim_p \frac{k}{n}\sum_{j=1}^n\sum_{x\in \Z_{4m}^n} \left|f(x+e_j)-f(x)\right|^p+ \frac{1}{2^n}\left(\frac{k}{n}\right)^{\frac{p}{2}}\sum_{\e\in \{-1,1\}^n}\sum_{x\in \Z_{4m}^n}|f(x+\e)-f(x)|^p.
\end{multline}

By averaging~\eqref{eq:in lemma to average over S} over all $S\subset \n$ with $|S|=k$ and substituting~\eqref{eq:rademacher B_j bound} into the resulting inequality, we obtain the following estimate.
\begin{multline}\label{eq:DS upper bound two terms}
\frac{(p/\log p)^{-p}}{2^n\binom{n}{k}}\sum_{\substack{S\subset \n\\|S|=k}}\sum_{\d\in \{-1,1\}^n}\sum_{x\in \Z_{4m}^n}\left|D_Sf(x+\d_S)-D_Sf(x-\d_S)\right|^p\\
\lesssim_p \frac{k}{n}\sum_{j=1}^n\sum_{x\in \Z_{4m}^n} \left|f(x+e_j)-f(x)\right|^p+\frac{1}{2^n}\left(\frac{k}{n}\right)^{\frac{p}{2}}\sum_{\e\in \{-1,1\}^n}\sum_{x\in \Z_{4m}^n}|f(x+\e)-f(x)|^p.
\end{multline}
Next, average~\eqref{eq:desired averaged on S} over $\d\in \{-1,1\}^n$ and substitute~\eqref{eq:DS upper bound two terms} into the resulting inequality, thus obtaining the following estimate (recall that in the present setting $R\lesssim pn$).
\begin{align}\label{eq;almost done main}
\nonumber \frac{1}{2^n\binom{n}{k}}&\sum_{\substack{S\subset \n\\|S|=k}}\sum_{\e\in \{-1,1\}^n}\sum_{x\in \Z_{4m}^n}\frac{\left|f(x+2m\e_S)-f(x)\right|^p}{m^p}\\ &
\nonumber \lesssim_p \left(\frac{p^p}{(\log p)^p}+\frac{k^{p-1}}{m^p}\right)\frac{k}{n}\sum_{j=1}^n\sum_{x\in \Z_{4m}^n} \left|f(x+e_j)-f(x)\right|^p\\&\qquad+\left(\frac{p^p}{(\log p)^p}+\frac{(pn)^p}{m^p}\left(\frac{n}{k}\right)^{\frac{p}{2}}\right)\frac{1}{2^n}\left(\frac{k}{n}\right)^{\frac{p}{2}}\sum_{\e\in \{-1,1\}^n}\sum_{x\in \Z_{4m}^n}|f(x+\e)-f(x)|^p.
\end{align}
Since $m\ge \frac{n^{3/2}\log p}{\sqrt{k}}$, the desired inequality~\eqref{eq:Xp in theorem1} is a consequence of~\eqref{eq;almost done main}.
\end{proof}

\section{Proof of Theorem~\ref{thm:reverse}}\label{sec:reverse}

The desired inequality~\eqref{eq:reverse in theorem1} is equivalent to the conjunction of the following two inequalities.

\begin{equation}\label{eq:reverse goal 1}
\sum_{\e\in \{-1,1\}^n}\sum_{x\in \Z_{8m}^n}|f(x+2\e)-f(x)|^p\lesssim_p \frac{(pn/k)^{\frac{p}{2}}}{\binom{n}{k}}\sum_{\substack{S\subset \n\\|S|= k}}
\sum_{\e\in \{-1,1\}^n}\sum_{x\in \Z_{8m}^n}|f(x+\e_S)-f(x)|^p,
\end{equation}
and
\begin{equation}\label{eq:reverse goal 2}
\sum_{j=1}^n\sum_{x\in \Z_{8m}^n}\frac{|f(x+4me_j)-f(x)|^p}{m^p}\lesssim_p \frac{p^{\frac{p}{2}}}{2^n\binom{n-1}{k-1}}\sum_{\substack{S\subset \n\\|S|= k}}
\sum_{\e\in \{-1,1\}^n}\sum_{x\in \Z_{8m}^n}|f(x+\e_S)-f(x)|^p.
\end{equation}
The proofs of~\eqref{eq:reverse goal 1} and~\eqref{eq:reverse goal 2} are of a different nature: \eqref{eq:reverse goal 1} is related to metric type and~\eqref{eq:reverse goal 2} is related to metric cotype. We therefore treat~\eqref{eq:reverse goal 1} and~\eqref{eq:reverse goal 2} in separate subsections.

\subsection{Metric type and proof of~\eqref{eq:reverse goal 1}}  For every $n\in \N$ and $j\in \n$ let $\sigma^j\in \{-1,1\}^n$ be given by
$$
\sigma^j\eqdef -e_j+\sum_{s\in \n\setminus \{j\}} e_s.
$$
Thus, for every $\e\in \{-1,1\}^n$, coordinate-wise multiplication by $\sigma^j$ yields
$$
\sigma^j\e=(\e_1,\ldots,\e_{j-1},-\e_j,\e_{j+1},\ldots,\e_n).
$$

Suppose that $(X,\|\cdot\|_X)$ is a Banach space and that $p\in [1,\infty]$. Slightly abusing notation that was introduced in~\cite{HN13}, let $\P_p^n(X)$ be the infimum over those $\P\in (0,\infty)$ such that for every $h:\{-1,1\}^n\to X$ we have
\begin{equation}\label{eq:pisier}
\bigg(\frac{1}{2^n}\sum_{\e\in \{-1,1\}^n}\|h(\e)-h(-\e)\|_X^p\bigg)^{\frac{1}{p}}\le \P\bigg(\frac{1}{4^n}\sum_{\e,\d\in \{-1,1\}^n} \Big\|\sum_{j=1}^n \d_j\left[h\left(\sigma^j\e\right)-h(\e)\right]\Big\|_X^p\bigg)^{\frac{1}{p}}.
\end{equation}
Note that in~\cite{HN13} the quantity $\P_p^n(X)$ denotes the best constant in an inequality that is stronger than but closely related to~\eqref{eq:pisier}. However, this distinction is not important for us here and we prefer to use the notation $\P_p^n(X)$ rather than introducing ad hoc terminology.

The quantity $\P_p^n(X)$ is called the Pisier constant of $(X,\|\cdot\|_X)$ (corresponding to dimension $n$ and exponent $p$). In the context of his work on metric type, Pisier proved in~\cite{Pis86} that $\P_p^n(X)\lesssim \log n$ for every Banach space $(X,\|\cdot\|_X)$. In order to prove~\eqref{eq:reverse goal 1} we will deal with $X=\R$, in which case it will be important that $\sup_{n\in \N} \P_p^n(\R)<\infty$. This strengthening of Pisier's inequality for real-valued functions is due to Talagrand~\cite{Tal93}, who proved that $\sup_{n\in \N} \P_p^n(\R)\le K^p$ for some universal constant $K\in (1,\infty)$, an estimate that was later improved in~\cite{NS02} to $\sup_{n\in \N} \P_p^n(\R)\lesssim p$. The rate of growth of $\sup_{n\in \N} \P_p^n(\R)$ as $p\to \infty$ remains unknown, the best available lower bound, due to Talagrand~\cite{Tal93}, being that $\sup_{n\in \N} \P_p^n(\R)$ is at least a constant multiple of $\log p$.  We refer to~\cite{Wag00,NS02,HN13} for additional classes of Banach space $(X,\|\cdot\|_X)$ for which $\sup_{n\in \N} \P_p^n(X)<\infty$.

Given a metric space $(X,\|\cdot\|_X)$, for every $n\in \N$ and $q\in [1,\infty)$ define $\BMW_q^n(X;p)$ to be the infimum over those $B\in [1,\infty)$ such that for every $h:\{-1,1\}^n\to X$ we have
\begin{equation}\label{eq:def bmw}
\sum_{\e\in \{-1,1\}^n} d_X(h(\e),h(-\e))^p\le B^pn^{\frac{p}{q}-1} \sum_{j=1}^n\sum_{\e\in \{-1,1\}^n}d_X\left(h\left(\sigma^j\e\right),h(\e)\right)^p.
\end{equation}
The quantity $\BMW_q^n(X;p)$ is called the Bourgain-Milman-Wolfson type $q$ constant of $(X,\|\cdot\|_X)$ (corresponding to dimension $n$ and exponent $p$). It was introduced and studied by Bourgain, Milman and Wolfson in~\cite{BMW86}, though, as we explained in the Introduction, the case $p=q$ was previously introduced by Enflo~\cite{Enf76}  and Gromov~\cite{Gro83} (Gromov only dealt with the case $p=q=2$). It follows from~\eqref{eq:pisier} and H\"older's inequality that if $(X,\|\cdot\|_X)$ is a Banach space then
\begin{equation}\label{eq:use pisier}
\BMW_q^n(X;p)\le \P_p^n(X)\cdot T_q^n(X;p)\lesssim (\log n) \cdot T_q^n(X;p),
\end{equation}
where the (Rademacher type $q$) constant $T_q^n(X;p)$ is defined to be the infimum over those $T\in (0,\infty)$ such that for every $x_1,\ldots,x_n\in X$ we have
$$
\left(\frac{1}{2^n}\sum_{\d\in \{-1,1\}^n} \left\|\sum_{i=1}^n \d_i x_i\right\|_X^p\right)^{\frac{1}{p}}\le T\left(\sum_{j=1}^n \|x_j\|_X^q\right)^{\frac{1}{q}}
$$

Since for many Banach spaces $(X,\|\cdot\|_X)$ good estimates on $T_q^n(X;p)$ are known, in conjunction with the available bounds on $\P_p^n(X)$, inequality~\eqref{eq:use pisier} often yields a satisfactory estimate on $\BMW_q^n(X;p)$. Such an estimate will be relevant to the ensuing proof of a metric-space-valued extension of~\eqref{eq:reverse goal 1}. There are also several important classes of (non-Banach) metric spaces $(X,d_X)$ for which good bounds on $\BMW_q^n(X;p)$ have been obtained; see for example~\cite{NS02,NPSS06,Oht09,NS11,Nao14}.  When $X=\R$, a bound that is even better than what follows from~\eqref{eq:use pisier} is known: see inequality~(6.32) in~\cite{Nao14}, which yields the estimate
\begin{equation}\label{eq: real bmw bound}
\sup_{n\in \N} \BMW_2^n(\R;p)\lesssim \sqrt{p}.
\end{equation}

The following lemma, in conjunction with~\eqref{eq: real bmw bound}, implies~\eqref{eq:reverse goal 1}. Note that there is no requirement that $m$ is sufficiently large here: the lower bound on $m$ that is assumed in Theorem~\ref{thm:reverse} will be needed only for the proof of~\eqref{eq:reverse goal 2}.

\begin{lemma}\label{lem:metric type} Suppose that $(X,d_X)$ is a metric space and $p,q\in [1,\infty)$. Then for every $n\in \N$, $k\in \n$ and $f:\Z_{8m}^n\to X$ we have
\begin{multline}\label{eq:reverse goal 1 in lemma}
\sum_{\e\in \{-1,1\}^n}\sum_{x\in \Z_{8m}^n}d_X(f(x+2\e),f(x))^p\\ \lesssim_p \left(\BMW_q^{\lfloor n/k\rfloor+1}(X;p)\right)^p\frac{(n/k)^{\frac{p}{q}}}{\binom{n}{k}}\sum_{\substack{S\subset \n\\|S|= k}}
\sum_{\e\in \{-1,1\}^n}\sum_{x\in \Z_{8m}^n}d_X(f(x+\e_S),f(x))^p.
\end{multline}
\end{lemma}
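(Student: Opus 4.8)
The plan is to reduce the statement to the defining inequality~\eqref{eq:def bmw} of the Bourgain--Milman--Wolfson type constant $\BMW_q^N(X;p)$ with $N\approx n/k$, applied not to the coordinates $\{1,\ldots,n\}$ directly but to a partition of $\n$ into roughly $n/k$ blocks of size $k$ each. The point is that the right-hand side of~\eqref{eq:reverse goal 1 in lemma} involves displacements by $\e_S$ over $k$-element sets $S$, so these should play the role of ``single coordinate flips'' in an $N$-dimensional cube whose ``coordinates'' are the blocks. Concretely, write $N\eqdef \lceil n/k\rceil$ and partition $\n$ (after possibly padding with dummy coordinates) into blocks $S_1,\ldots,S_N$ with $|S_r|=k$. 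For a fixed $x\in \Z_{8m}^n$ and fixed sign pattern $\e\in\{-1,1\}^n$, define $h:\{-1,1\}^N\to X$ by letting $h(\eta)\eqdef f\!\big(x+\sum_{r=1}^N \tfrac{1-\eta_r}{2}\,2(\e_{S_r})\big)$, i.e. flipping $\eta_r$ from $+1$ to $-1$ corresponds to adding the displacement $2\e_{S_r}$. Then $h(\1)=f(x)$, $h(-\1)=f(x+2\e)$, and $d_X(h(\sigma^r\eta),h(\eta))$ is a distance between two points of $X$ that differ by an $\e_{S_r}$-type displacement of the argument of $f$.

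After setting this up, I would apply~\eqref{eq:def bmw} to this $h$ in the $N$-cube:
\begin{equation*}
\sum_{\eta\in\{-1,1\}^N} d_X(h(\eta),h(-\eta))^p\le \BMW_q^N(X;p)^p\, N^{\frac pq-1}\sum_{r=1}^N\sum_{\eta\in\{-1,1\}^N} d_X\big(h(\sigma^r\eta),h(\eta)\big)^p.
\end{equation*}
The left-hand side, after averaging over $x$ and over the $\e$-coordinates that actually matter (and using that flipping the block-signs inside $\eta$ is absorbed into relabelling $\e$), reproduces a constant multiple of $\sum_\e\sum_x d_X(f(x+2\e),f(x))^p$; the number $N^{\frac pq-1}$ is $\asymp (n/k)^{p/q-1}$. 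On the right-hand side, each term $d_X(h(\sigma^r\eta),h(\eta))^p$ is, after translating the base point, a distance of the form $d_X(f(y+2\e_{S_r}),f(y))^p$; invoking Lemma~\ref{lem:2eps} (the ``$2\e$ versus $\e$'' lemma) converts the displacement $2\e_{S_r}$ into $\e_{S_r}$ at the cost of a $2^p$ factor. Then one averages over which block $S_r$ is used: since a uniformly random $k$-subset $S$ of $\n$ is, up to constants and the padding, distributed like a uniformly random block in a uniformly random partition, the average over $r\in\{1,\ldots,N\}$ of $\sum_\e\sum_y d_X(f(y+\e_{S_r}),f(y))^p$ is comparable to $\binom{n}{k}^{-1}\sum_{|S|=k}\sum_\e\sum_x d_X(f(x+\e_S),f(x))^p$. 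Collecting the factors $N$ from the sum over $r$ and $N^{p/q-1}$ from the BMW inequality yields the claimed $(n/k)^{p/q}$ and completes the proof.

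\textbf{Main obstacle.} The bookkeeping that identifies averages over random $k$-subsets $S$ with averages over (block, random-partition) pairs is the delicate part: one must check that padding $\n$ up to size $kN$ and choosing a uniformly random ordered partition into $N$ blocks of size $k$ induces (after projecting back to a single block) exactly the uniform distribution on $k$-subsets, and that the padding coordinates, on which $f$ is genuinely constant by construction, contribute nothing to either side. One must also be careful that in defining $h$ the signs $\e_{S_r}$ for different $r$ are ``frozen'' while the cube variable $\eta$ ranges, and that after the final average over $\e\in\{-1,1\}^n$ the terms $d_X(h(\sigma^r\eta),h(\eta))^p$ really do become $d_X(f(y+\e_{S_r}),f(y))^p$ for $\e$ uniform, with the extra averaging over $\eta$ harmlessly re-randomizing $\e$ on $S_r$ (this is where the $2\e\to\e$ step via Lemma~\ref{lem:2eps} is forced, since the raw displacement is $2\e_{S_r}$ rather than $\e_{S_r}$). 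Everything else is a routine application of convexity (to move expectations inside) and the triangle inequality.
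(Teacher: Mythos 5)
Your proposal follows essentially the same route as the paper: partition $\n$ into $\approx n/k$ blocks of size $k$, apply the $\BMW$ inequality to the induced function on the block hypercube, randomize the blocks (the paper does this by averaging over $\pi\in S_n$ with a fixed block structure $I_1,\ldots,I_a,I_{a+1}$ rather than by padding) so that a block becomes a uniform $k$-subset, and use Lemma~\ref{lem:2eps} to trade the $2\e_S$ displacements for $\e_S$ displacements. The one claim to correct is that the padded/leftover block does \emph{not} ``contribute nothing'': it produces displacement terms over subsets $T$ with $|T|<k$, which are not on the right-hand side of the target inequality and must be dominated by averaging $d_X(f(x+\e_S),f(x))^p$ over $k$-supersets $S\supseteq T$ (again via Lemma~\ref{lem:2eps}), exactly as in the paper's passage from $T$ to $S$.
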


\begin{proof}
Write $n=ak+b$ where $a= \lfloor n/k\rfloor$ and $b\in \{0,\ldots,k-1\}$. For every $j\in \{1,\ldots, a\}$ define $I_j=\{(j-1)k+1,\ldots, jk\}$, and also define $I_{a+1}=\{ak+1,\ldots,ak+b\}$. Fix $x\in \Z_{8m}^n$ and $\e\in \{-1,1\}^n$. For every permutation $\pi \in S_n$ define $h^\pi_{x,\e}:\{-1,1\}^{a+1}\to X$ by
$$
\forall\,\d\in \{-1,1\}^{a+1},\qquad h^\pi_{x,\e}(\d)\eqdef f\left(x+\sum_{j=1}^{a+1} \d_j\e_{\pi(I_j)}\right).
$$
Note that for every $\pi\in S_n$, every $x\in \Z_{8m}^n$ and every $\d\in \{-1,1\}^{a+1}$ we have
\begin{align}\label{eq:invariance identity 1}
\sum_{\e\in \{-1,1\}^n}d_X\left(h^\pi_{x,\e}(\d),h^\pi_{x,\e}(-\d)\right)^p&=\sum_{\e\in \{-1,1\}^n}d_X(f(x+\e),f(x-\e))^p\nonumber\\&=\sum_{\e\in \{-1,1\}^n}d_X(f(x+2\e),f(x))^p.
\end{align}
Also, for every $\pi\in S_n$ and $j\in \n$ we have
\begin{align}\label{eq:invariance identity 2}
\nonumber\frac{1}{2^{a+1}}\sum_{\e\in \{-1,1\}^n}&\sum_{\d\in \{-1,1\}^{a+1}}\sum_{x\in \Z_{8m}^n}d_X\left(h^\pi_{x,\e}\left(\sigma^j\d\right),h^\pi_{x,\e}(\d)\right)^p\\ \nonumber&=\sum_{\e\in \{-1,1\}^n}\sum_{x\in \Z_{8m}^n}d_X\left(f\left(x+\e_{\pi(I_j)}\right),f\left(x-\e_{\pi(I_j)}\right)\right)^p\\&=\sum_{\e\in \{-1,1\}^n}\sum_{x\in \Z_{8m}^n}d_X\left(f\left(x+2\e_{\pi(I_j)}\right),f\left(x\right)\right)^p.
\end{align}

Fix $B>\BMW_q^{a+1}(X;p)$, apply~\eqref{eq:def bmw} to $h^\pi_{x,\e}$, and sum the resulting inequality over $\e\in \{-1,1\}^n$ and $x\in \Z_{8m}^n$, while using the identities~\eqref{eq:invariance identity 1} and~\eqref{eq:invariance identity 2}. The resulting inequality is
\begin{multline}\label{eq:to average over pi}
\sum_{\e\in \{-1,1\}^n}\sum_{x\in \Z_{8m}^n}d_X(f(x+2\e),f(x))^p\\\le B^p(a+1)^{\frac{p}{q}-1}\sum_{j=1}^{a+1}\sum_{\e\in \{-1,1\}^n}\sum_{x\in \Z_{8m}^n}d_X\left(f\left(x+2\e_{\pi(I_j)}\right),f\left(x\right)\right)^p.
\end{multline}
By averaging~\eqref{eq:to average over pi} over $\pi\in S_n$ we see that
\begin{align}\label{eq:use BMW}
\nonumber B^{-p}&(a+1)^{1-\frac{p}{q}}\sum_{\e\in \{-1,1\}^n}\sum_{x\in \Z_{8m}^n}d_X(f(x+2\e),f(x))^p\\ \nonumber
 &\le \sum_{j=1}^{a} \sum_{\substack{S\subset \n\\|S|= k}} \frac{|\{\pi \in S_n:\ \pi(I_j)=S\}|}{n!}\sum_{\e\in \{-1,1\}^n}\sum_{x\in \Z_{8m}^n}d_X(f(x+2\e_S),f(x))^p\\ \nonumber
 &\qquad+ \sum_{\substack{T\subset \n\\|T|= b}}\frac{|\{\pi \in S_n:\ \pi(I_{a+1})=T\}|}{n!}\sum_{\e\in \{-1,1\}^n}\sum_{x\in \Z_{8m}^n}d_X\left(f(x+2\e_T),f(x)\right)^p\\ \nonumber
 &= \frac{a}{\binom{n}{k}}\sum_{\substack{S\subset \n\\|S|= k}} \sum_{\e\in \{-1,1\}^n}\sum_{x\in \Z_{8m}^n}d_X\left(f(x+2\e_S),f(x)\right)^p\\&\qquad+\frac{1}{\binom{n}{b}}\sum_{\substack{T\subset \n\\|T|= b}} \sum_{\e\in \{-1,1\}^n}\sum_{x\in \Z_{8m}^n}d_X\left(f(x+2\e_T),f(x)\right)^p.
\end{align}

By Lemma~\ref{lem:2eps}, if $T\subseteq S\subset \n$  then
\begin{equation}\label{eq:ST bk}
 \sum_{\e\in \{-1,1\}^n}\sum_{x\in \Z_{8m}^n}d_X\left(f(x+2\e_T),f(x)\right)^p\le 2^p\sum_{\e\in \{-1,1\}^n}\sum_{x\in \Z_{8m}^n}d_X\left(f(x+\e_S),f(x)\right)^p.
\end{equation}
Fixing $T\subset\n$ with $|T|=b$, by averaging~\eqref{eq:ST bk} over all $k$-point subsets $S\subset \n$ with $S\supseteq T$ we see that
$$
 \sum_{\e\in \{-1,1\}^n}\sum_{x\in \Z_{8m}^n}d_X\left(f(x+2\e_T),f(x)\right)^p\le \frac{2^p}{\binom{n-b}{k-b}}\sum_{\substack{T\subset S\subset \n\\|S|=k}}\sum_{\e\in \{-1,1\}^n}\sum_{x\in \Z_{8m}^n}d_X\left(f(x+\e_S),f(x)\right)^p.
$$
Consequently,
\begin{align}\label{eq:pass from T to S}
\nonumber \frac{1}{\binom{n}{b}}&\sum_{\substack{T\subset \n\\|T|= b}} \sum_{\e\in \{-1,1\}^n}\sum_{x\in \Z_{8m}^n}d_X(f(x+2\e_T),f(x))^p\\
&\le \frac{2^p}{\binom{n}{b}\binom{n-b}{k-b}}\sum_{\substack{S\subset \n\\|S|=k}}|\{T\subset S:\ |T|=b\}|\sum_{\e\in \{-1,1\}^n}\sum_{x\in \Z_{8m}^n}d_X\left(f(x+\e_S),f(x)\right)^p\nonumber \\
&= \frac{2^p\binom{k}{b}}{\binom{n}{b}\binom{n-b}{k-b}}\sum_{\substack{S\subset \n\\|S|=k}}\sum_{\e\in \{-1,1\}^n}\sum_{x\in \Z_{8m}^n}d_X\left(f(x+\e_S),f(x)\right)^p\nonumber\\
&= \frac{2^p}{\binom{n}{k}}\sum_{\substack{S\subset \n\\|S|= k}} \sum_{\e\in \{-1,1\}^n}\sum_{x\in \Z_{8m}^n}d_X(f(x+\e_S),f(x))^p.
\end{align}
Since by the triangle inequality we also have
$$
\sum_{\substack{S\subset \n\\|S|= k}} \sum_{x\in \Z_{8m}^n}d_X(f(x+2\e_S),f(x))^p\le 2^p\sum_{\substack{S\subset \n\\|S|= k}} \sum_{x\in \Z_{8m}^n}d_X(f(x+\e_S),f(x))^p,
$$
it follows from~\eqref{eq:use BMW} and~\eqref{eq:pass from T to S} that
\begin{align*}
\sum_{\e\in \{-1,1\}^n}&\sum_{x\in \Z_{8m}^n}d_X(f(x+2\e),f(x))^p\\&\le
\frac{(2B)^p(a+1)^{\frac{p}{q}}}{\binom{n}{k}}\sum_{\substack{S\subset \n\\|S|= k}} \sum_{\e\in \{-1,1\}^n}\sum_{x\in \Z_{8m}^n}d_X(f(x+\e_S),f(x))^p\\
&\le  \frac{(2B)^p(2n/k)^{\frac{p}{q}}}{\binom{n}{k}}\sum_{\substack{S\subset \n\\|S|= k}} \sum_{\e\in \{-1,1\}^n}\sum_{x\in \Z_{8m}^n}d_X(f(x+\e_S),f(x))^p. \tag*\qedhere
\end{align*}
\end{proof}

\subsection{Metric cotype and proof of~\eqref{eq:reverse goal 2}} Given a metric space $(X,d_X)$
and $m,n\in \N$, for $p\in (1,\infty)$ define $\Gamma_p(X;m,n)$ to be the infimum over those $\Gamma\in (0,\infty)$ such that for every $f:\Z_{2m}^n\to X$,
\begin{equation}\label{eq:def metric cotype in section}
\sum_{j=1}^n\sum_{x\in \Z_{2m}^n} \frac{d_X\left(f(x+me_j),f(x)\right)^p}{m^p}\le \frac{\Gamma^p}{3^n}
\sum_{\e\in \{-1,0,1\}^n} \sum_{x\in \Z_{2m}^n} d_X(f(x+\e),f(x))^p.
\end{equation}
As discussed in the Introduction, following~\cite{MN08}, we say that $(X,d_X)$ has metric cotype $p$ if
$$
\Gamma_p(X)\eqdef \sup_{n\in \N}\inf_{m\in \N} \Gamma_p(X;m,n)<\infty.
$$

We need to briefly recall some facts related to $K$-convexity of Banach spaces; see the survey~\cite{Mau03} for much more on this topic. Given a Banach space $(X,\|\cdot\|_X)$, $p\in (1,\infty)$ and $n\in \N$, for every $f:\{-1,1\}^n\to X$ define its Rademacher projection $\Rad(f):\{-1,1\}^n\to X$ by
$$
\forall\, \e\in \{-1,1\}^n,\qquad \Rad(f)(\e)\eqdef \sum_{j=1}^n \frac{\sum_{\d\in \{-1,1\}^n}f(\d)\d_j}{2^n}\e_j.
$$
For $p\in (1,\infty)$ let $K_p(X)\in [1,\infty]$ be the infimum over those $K\in [1,\infty]$ such that for every $n\in \N$ and every $f:\{-1,1\}^n\to X$ we have
$$
\sum_{\e\in \{-1,1\}^n}\|\Rad(f)(\e)\|_X^p\le K^p \sum_{\e\in \{-1,1\}^n}\|f(\e)\|_X^p.
$$
A simple application of Khinchine's inequality (with asymptotically sharp constant, see~\cite[Lem.~2]{PZ30}) shows that $K_p(\R)\lesssim \sqrt{p}$ for $p\in [2,\infty)$. A Banach space $(X,\|\cdot\|_X)$ is said to be $K$-convex if $K_p(X)<\infty$ for some (equivalently for all) $p\in (1,\infty)$; see~\cite{Mau03} and the references therein.

Theorem~\ref{thm:cotype Rademacher version} below establishes a sharp metric cotype inequality for $K$-convex Banach spaces, with one difference: the averaging on the right-hand side is over $\e\in \{-1,1\}^n$ rather than $\e\in \{-1,0,1\}^n$. The same result with averages over $\e\in \{-1,0,1\}^n$ (and $x\in \Z_{4m}^n$ rather than $x\in \Z_{8m}^n$) is the content of Theorem 4.1 in~\cite{MN08}. The proof here follows the argument in~\cite{MN08} with some technical modifications. It seems likely that a similar statement could be proved for the metric cotype $p$ inequalities for Banach spaces of Rademacher cotype $p$ (with no assumption of $K$-convexity) in~\cite{MN08,GMN11}, though this may require changes to the arguments of~\cite{MN08,GMN11} that are more substantial  than what we do here.

\begin{theorem}\label{thm:cotype Rademacher version} Fix $p\in
[2,\infty)$ and $\alpha\in [1,\infty)$. Let $(X,\|\cdot\|_X)$ be a
$K$-convex Banach space of cotype $p$. Suppose that $m,n\in \N$
satisfy
\begin{equation}\label{eq:m K convexity cotype}
m\ge \frac{n^{1/p}}{\alpha K_p(X)C_p(X)},
\end{equation}
where, recalling~\eqref{eq:type cotype def}, $C_p(X)$ is the cotype $p$ constant of $X$. Then for every $f:\Z_{8m}^n\to X$ we have
\begin{equation}\label{eq:cotype with rademachers}
\sum_{j=1}^n\sum_{x\in \Z_{8m}^n} \frac{\|f(x+4me_j)-f(x)\|_X^p}{m^p}\lesssim_p \frac{(\alpha K_p(X)C_p(X))^p}{2^n}
\sum_{\e\in \{-1,1\}^n}\sum_{x\in \Z_{8m}^n} \|f(x+\e)-f(x)\|_X^p.
\end{equation}
\end{theorem}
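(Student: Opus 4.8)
The plan is to follow the proof of the metric cotype inequality for $K$-convex Banach spaces from~\cite[Section~4]{MN08}, making the two modifications that the present statement requires: the right-hand side of~\eqref{eq:cotype with rademachers} averages over $\{-1,1\}^n$ rather than over $\{-1,0,1\}^n$, and we work on $\Z_{8m}^n$ with the ``half-period'' displacement $4me_j$ instead of on $\Z_{2m}^n$ with $me_j$ (the extra room in the torus is exactly what lets us trade the $\{-1,0,1\}^n$-average of~\cite{MN08} for a $\{-1,1\}^n$-average, using Lemma~\ref{lem:2eps}). The argument opens with a Fourier reduction. Writing $f=\sum_{\xi\in\Z_{8m}^n}\widehat f(\xi)e_\xi$ with $e_\xi(x)=e^{\pi i\langle\xi,x\rangle/(4m)}$, the identity $e_\xi(x+4me_j)=(-1)^{\xi_j}e_\xi(x)$ gives $f(x+4me_j)-f(x)=-2P_jf(x)$, where $P_j$ is the Fourier projection onto $\{\xi:\xi_j\ \mathrm{odd}\}$. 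Hence the left-hand side of~\eqref{eq:cotype with rademachers} equals $(2/m)^p\sum_{j=1}^n\sum_{x\in\Z_{8m}^n}\|P_jf(x)\|_X^p$, so it suffices to bound $\sum_{j=1}^n\|P_jf\|_{L_p(\Z_{8m}^n,X)}^p$ by $(\alpha K_p(X)C_p(X)m)^p$ times the $\{-1,1\}^n$-edge energy $2^{-n}\sum_{\e\in\{-1,1\}^n}\sum_x\|f(x+\e)-f(x)\|_X^p$.

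This bound is obtained through three ingredients, as in~\cite[Section~4]{MN08}. \emph{Smoothing.} Replace $f$ by its average $Af$ over an $R$-cube (notation as in~\eqref{eq:def A}, with $R$ an odd integer to be optimized); since $A$ commutes with every $P_j$ and, by Lemma~\ref{lem:displacement first version} applied with $S=\emptyset$, the error $\sum_x\|f(x)-Af(x)\|_X^p$ is dominated by $R^p$ times the edge energy (up to a factor polynomial in $n$), one may pass to $Af$ at the cost of the target constant once $R$ is chosen compatibly with~\eqref{eq:m K convexity cotype}. The point of smoothing is to confine the surviving Fourier frequencies to a band on which the ensuing kernel estimates are favourable. \emph{Rademacher cotype.} Fix $x$ and apply cotype $p$ of $X$ to the vectors $P_jAf(x)$, getting $\sum_{j=1}^n\|P_jAf(x)\|_X^p\lesssim C_p(X)^p\,2^{-n}\sum_{\eta\in\{-1,1\}^n}\big\|\sum_{j=1}^n\eta_jP_jAf(x)\big\|_X^p$. \emph{$K$-convexity.} The operator $g\mapsto\sum_j\eta_jP_jg$ is a Fourier multiplier (with symbol $\xi\mapsto\sum_{j:\,\xi_j\ \mathrm{odd}}\eta_j$); the heart of the matter is to compare it, on the truncated frequency band, with a genuine combination of edge differences of $f$ of the type appearing on the right-hand side of~\eqref{eq:cotype with rademachers}, up to a multiplication operator whose norm on $L_p(\Z_{8m}^n,X)$ is controlled. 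For an arbitrary Banach space this comparison loses a logarithmic factor (of Pisier type); $K$-convexity of $X$ removes the logarithm, leaving a bound depending only on $K_p(X)$ and the normalization $\asymp m$. This is the precise cotype-side counterpart of the way Talagrand's inequality~\cite{Tal93} supersedes Pisier's inequality in the metric-type half of Theorem~\ref{thm:reverse}.

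Assembling the three ingredients and undoing the normalizations, one reduces $m^{-p}\sum_{j=1}^n\|P_jf\|_{L_p(X)}^p$ to a bounded multiple of $(\alpha K_p(X)C_p(X))^p$ times a Rademacher average $2^{-n}\sum_{\eta\in\{-1,1\}^n}\sum_x\|Af(x+2\eta)-Af(x)\|_X^p$; since $A$ is an averaging operator this does not exceed $2^{-n}\sum_\eta\sum_x\|f(x+2\eta)-f(x)\|_X^p$, which by Lemma~\ref{lem:2eps} (with $S=\n$, valid verbatim on $\Z_{8m}^n$) is at most $2^p\cdot 2^{-n}\sum_\eta\sum_x\|f(x+\eta)-f(x)\|_X^p$ --- the right-hand side of~\eqref{eq:cotype with rademachers} up to constants. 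The main obstacle is the $K$-convexity ingredient: proving the sharp, logarithm-free multiplier estimate, and porting it from the $\{-1,0,1\}^n$-kernel of~\cite{MN08}, whose Fourier transform $\prod_j\tfrac{1+2\cos(\pi\xi_j/(4m))}{3}$ is positive on a wide frequency band, to the $\{-1,1\}^n$-kernel $\prod_j\cos(\pi\xi_j/(4m))$, which vanishes and changes sign. Here one exploits that the smoothing step has already localized the Fourier support to a band on which $\prod_j\cos(\pi\xi_j/(4m))$ is bounded below (or, failing that, regularizes with an extra difference), and one must track constants carefully to extract the clean exponent $(\alpha K_p(X)C_p(X))^p$ and the threshold~\eqref{eq:m K convexity cotype}.
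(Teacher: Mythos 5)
Your plan correctly identifies the ancestor (the $K$-convex metric cotype argument of~\cite[Section~4]{MN08}) and the two cosmetic changes (the torus $\Z_{8m}^n$ and the $\{-1,1\}^n$-averages via Lemma~\ref{lem:2eps}), but the step you yourself flag as ``the main obstacle'' --- the $K$-convexity multiplier comparison --- is precisely the content of the theorem, and the route you sketch for it does not close. Two concrete problems. First, the opening reduction $f(x+4me_j)-f(x)=-2P_jf(x)$ commits you to bounding $\sum_j\|P_jf\|_{L_p(X)}^p$ by $m^p$ times edge differences, i.e.\ to inverting, up to a factor $\asymp m$, the multiplier with symbol $\1[\xi_j\ \mathrm{odd}]$ against the discrete derivative symbol $i\sin(\pi\xi_j/(2m))$; the quotient blows up like $m/\xi_j$ for small odd $\xi_j$ and is a discrete Hilbert-transform-type multiplier, whose $L_p(X)$-boundedness requires UMD, not merely $K$-convexity. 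Second, the $R$-cube smoothing $A$ does not localize Fourier support to a band (it multiplies by a decaying Dirichlet-type kernel), so ``$\prod_j\cos(\pi\xi_j/(4m))$ bounded below on the surviving frequencies'' is not something you can arrange: frequencies with $\xi_j$ odd and close to $2m$ survive and make that product arbitrarily small. In effect you have imported machinery from the general-cotype proof (\cite[Section~5]{MN08}, \cite{GMN11}) into the $K$-convex case, where it is neither needed nor sufficient.

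The paper's proof avoids both issues. It introduces $T_jf(x)=2^{-n}\sum_{\e\in\{-1,1\}^n}f(x+2\e_{\n\setminus\{j\}})$, replaces $f$ by $T_jf$ in the $j$-th term at a cost controlled by~\eqref{eq:Tj close to identity}, and obtains the factor $m^p$ by telescoping $T_jf(x+4me_j)-T_jf(x)$ into $m$ steps $T_jf(\cdot+2e_j)-T_jf(\cdot-2e_j)$ --- no multiplier inversion anywhere. Cotype $p$ is then applied, for each fixed $x$, to these $n$ vectors, and the resulting Rademacher sum is handled by the exact identity~\eqref{eq:rademacher projection identity}: $\sum_j\e_j[T_jf(x+2e_j)-T_jf(x-2e_j)]$ equals $\frac{2}{i}\Rad(h^x)(\e)$ for $h^x(\e)=f(x+2\e)-f(x)$, so $K$-convexity bounds it directly by the $\{-1,1\}^n$ edge energy via~\eqref{eq:rademacher sum Tj} and Lemma~\ref{lem:2eps}. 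The threshold~\eqref{eq:m K convexity cotype} then merely absorbs the additive error term $n/m^p$ coming from~\eqref{eq:Tj close to identity}; no parameter $R$ appears. To salvage your write-up, replace the Fourier projections $P_j$ and the smoothing operator $A$ by the operators $T_j$, and prove the Rademacher-projection identity (a short computation on the characters $W_y$).
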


Before proving Theorem~\ref{thm:cotype Rademacher version} we deduce
the following simple corollary, which implies~\eqref{eq:reverse goal
2} because $C_p(\R)=1$ and $K_p(\R)\lesssim\sqrt{p}$.

\begin{corollary}
Fix $p\in [2,\infty)$ and $\alpha\in [1,\infty)$. Let
$(X,\|\cdot\|_X)$ be a $K$-convex Banach space of cotype $p$.
Suppose that $m,n\in \N$ and $k\in \n$ satisfy
\begin{equation*}
m\ge \frac{k^{1/p}}{\alpha K_p(X)C_p(X)}.
\end{equation*}
Then for every $f:\Z_{8m}^n\to X$ we have
\begin{multline}\label{eq:cotype with rademachers on S sets}
\sum_{j=1}^n\sum_{x\in \Z_{8m}^n}
\frac{\|f(x+4me_j)-f(x)\|_X^p}{m^p}\\\lesssim_p \frac{(\alpha
K_p(X)C_p(X))^p}{2^n\binom{n-1}{k-1}}\sum_{\substack{S\subset
\n\\|S|= k}} \sum_{\e\in \{-1,1\}^n}\sum_{x\in
\Z_{8m}^n}\|f(x+\e_S)-f(x)\|_X^p.
\end{multline}
\end{corollary}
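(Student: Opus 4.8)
The plan is to deduce the Corollary from Theorem~\ref{thm:cotype Rademacher version} by a straightforward restriction-and-averaging argument, running the $K$-convex metric cotype inequality separately on each $k$-dimensional coordinate subtorus. Fix $S\subset\n$ with $|S|=k$ and identify $\Z_{8m}^n$ with $\Z_{8m}^S\times\Z_{8m}^{\n\setminus S}$. For each fixed $y\in\Z_{8m}^{\n\setminus S}$, apply Theorem~\ref{thm:cotype Rademacher version} to the restriction $f_y\eqdef f(\cdot,y):\Z_{8m}^S\to X$, viewed as a function on a $k$-dimensional discrete torus; the hypothesis $m\ge k^{1/p}/(\alpha K_p(X)C_p(X))$ is precisely~\eqref{eq:m K convexity cotype} in ambient dimension $k$. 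Summing the resulting inequalities over $y\in\Z_{8m}^{\n\setminus S}$ restores $\Z_{8m}^n$, and since the right-hand side depends only on the $S$-coordinates of the sign vector one may freely replace $2^{-k}\sum_{\e\in\{-1,1\}^S}$ by $2^{-n}\sum_{\e\in\{-1,1\}^n}$. This yields, for each $S$ with $|S|=k$,
\[
\sum_{j\in S}\sum_{x\in\Z_{8m}^n}\frac{\|f(x+4me_j)-f(x)\|_X^p}{m^p}\lesssim_p \frac{(\alpha K_p(X)C_p(X))^p}{2^n}\sum_{\e\in\{-1,1\}^n}\sum_{x\in\Z_{8m}^n}\|f(x+\e_S)-f(x)\|_X^p.
\]

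Next I would average this inequality over all $k$-element subsets $S\subset\n$. On the left, $\binom{n}{k}^{-1}\sum_{|S|=k}\sum_{j\in S}(\cdots)$ equals $\frac{k}{n}\sum_{j=1}^n(\cdots)$, because each $j$ lies in $\binom{n-1}{k-1}$ of the sets $S$; multiplying through by $n/k$ reproduces the left-hand side of~\eqref{eq:cotype with rademachers on S sets}, while the prefactor in front of the right-hand side becomes $\frac{n}{k\binom{n}{k}}=\frac{1}{\binom{n-1}{k-1}}$, which is exactly the normalization appearing in~\eqref{eq:cotype with rademachers on S sets}. Taking $X=\R$, for which $C_p(\R)=1$ and $K_p(\R)\lesssim\sqrt p$ by Khinchine's inequality with the sharp constant, then gives~\eqref{eq:reverse goal 2} with constant $\lesssim p^{p/2}$, which together with Lemma~\ref{lem:metric type} and~\eqref{eq: real bmw bound} completes the proof of Theorem~\ref{thm:reverse}.

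I expect no genuine obstacle here: the entire analytic content is already carried by Theorem~\ref{thm:cotype Rademacher version}, and the Corollary is merely its ``$k$ out of $n$ coordinates'' version obtained by tensorizing over the complementary directions. The only points that require care are purely bookkeeping — keeping the two Rademacher normalizations ($2^{-k}$ versus $2^{-n}$) consistent when passing between $\{-1,1\}^S$ and $\{-1,1\}^n$, and the elementary identity $\binom{n}{k}=\frac{n}{k}\binom{n-1}{k-1}$ used to align the constants after averaging over $S$.
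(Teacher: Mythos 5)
Your proposal is correct and matches the paper's own argument: apply Theorem~\ref{thm:cotype Rademacher version} on each $k$-coordinate subtorus (which is why the hypothesis involves $k^{1/p}$ rather than $n^{1/p}$) and then average over the $\binom{n}{k}$ choices of $S$, using $\binom{n}{k}=\frac{n}{k}\binom{n-1}{k-1}$ to align the normalizations. The paper states the per-$S$ inequality without spelling out the restriction-and-tensorization step, which you make explicit, but the content is identical.
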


\begin{proof}
By Theorem~\ref{thm:cotype Rademacher version}, for every
$S\subset\n$ with $|S|=k$ we have
$$
\sum_{j\in S}\sum_{x\in \Z_{8m}^n} \frac{\|f(x+4me_j)-f(x)\|_X^p}{m^p}\lesssim_p
\frac{(\alpha K_p(X)C_p(X))^p}{2^n}
\sum_{\e\in \{-1,1\}^n}\sum_{x\in \Z_{8m}^n} \|f(x+\e_S)-f(x)\|_X^p.
$$
By averaging this inequality over all $S\subset\n$ with $|S|=k$ we
obtain~\eqref{eq:cotype with rademachers on S sets}.
\end{proof}

In order to prove Theorem~\ref{thm:cotype Rademacher version} we
first introduce a small amount of notation and prove an auxiliary
lemma. For every $j\in \n$ define a linear operator
$T_j:L_2(\Z_{8m}^n,X)\to L_2(\Z_{8m}^n,X)$ by setting for every
$f:\Z_{8m}^n\to X$ and $x\in \Z_{8m}^n$,
\begin{equation}\label{eq:def Tj}
T_jf(x)\eqdef \frac{1}{2^n}\sum_{\e\in \{-1,1\}^n} f\left(x+2\e_{\n\setminus \{j\}}\right).
\end{equation}

\begin{lemma}
Let $(X,\|\cdot\|_X)$ be a Banach space and $p\in [1,\infty)$. Fix
also $m,n\in \N$ and $j\in \n$. Then for every $f:\Z_{8m}^n\to X$ we
have
\begin{equation}\label{eq:Tj close to identity}
\sum_{x\in \Z_{8m}^n}\left\|f(x)-T_jf(x)\right\|_X^p\le \frac{2^p}{2^n}\sum_{\e\in \{-1,1\}^n}
\sum_{x\in \Z_{8m}^n} \|f(x+\e)-f(x)\|_X^p.
\end{equation}
Moreover, for every $x\in \Z_{8m}^n$ we have
\begin{multline}\label{eq:rademacher sum Tj}
\sum_{\e\in \{-1,1\}^n} \left\|\sum_{j=1}^n
\e_j\left[T_jf(x+2e_j)-T_jf(x-2e_j) \right]\right\|_X^p\\\le
(2K_p(X))^p\sum_{\e\in
\{-1,1\}^n}\left\|f(x+2\e)-f(x)\right\|_X^p\le(4K_p(X))^p\sum_{\e\in
\{-1,1\}^n}\left\|f(x+\e)-f(x)\right\|_X^p.
\end{multline}
\end{lemma}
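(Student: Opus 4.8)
The plan is to treat the two displayed inequalities separately, both via the observation that $T_j$ is an averaging operator: since the summand in~\eqref{eq:def Tj} does not depend on $\e_j$, one has $T_jf(x)=\frac1{2^n}\sum_{\e\in\{-1,1\}^n}f\big(x+2\e_{\n\setminus\{j\}}\big)$, an average of translates of $f$ by even vectors supported on $\n\setminus\{j\}$. For~\eqref{eq:Tj close to identity} I would write $f(x)-T_jf(x)=\frac1{2^n}\sum_{\e}\big(f(x)-f(x+2\e_{\n\setminus\{j\}})\big)$, apply convexity of $t\mapsto\|t\|_X^p$ to pull the average outside the norm, sum over $x\in\Z_{8m}^n$, and finish with Lemma~\ref{lem:2eps} applied to $S=\n\setminus\{j\}$, which introduces exactly the factor $2^p$.

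For the first inequality in~\eqref{eq:rademacher sum Tj}, fix $x$ and set $g(\e)\eqdef f(x+2\e)$ for $\e\in\{-1,1\}^n$. The key point is a short computation: because the average over $\e_j$ is unconstrained, $T_jf(x+2e_j)=\frac1{2^{n-1}}\sum_{\d:\,\d_j=1}g(\d)$ and $T_jf(x-2e_j)=\frac1{2^{n-1}}\sum_{\d:\,\d_j=-1}g(\d)$, whence
\[
T_jf(x+2e_j)-T_jf(x-2e_j)=\frac{2}{2^n}\sum_{\d\in\{-1,1\}^n}\d_j\,g(\d),
\]
twice the $j$-th linear Fourier coefficient of $g$. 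Summing against $\e_j$ and comparing with the definition of the Rademacher projection gives $\sum_{j=1}^n\e_j\big[T_jf(x+2e_j)-T_jf(x-2e_j)\big]=2\,\Rad(g)(\e)$. Since $\Rad$ annihilates constant functions we have $\Rad(g)=\Rad(g-f(x))$, so the definition of $K_p(X)$ applied to $g-f(x)$ yields
\[
\sum_{\e}\big\|\Rad(g)(\e)\big\|_X^p\le K_p(X)^p\sum_{\e}\big\|g(\e)-f(x)\big\|_X^p=K_p(X)^p\sum_{\e}\big\|f(x+2\e)-f(x)\big\|_X^p,
\]
and multiplying by $2^p$ is the claimed bound.

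The second inequality in~\eqref{eq:rademacher sum Tj} records only the elementary passage from displacements by $2\e$ to displacements by $\e$; after summing over $x\in\Z_{8m}^n$ (the form in which~\eqref{eq:rademacher sum Tj} is invoked in the proof of Theorem~\ref{thm:cotype Rademacher version}) it is immediate from Lemma~\ref{lem:2eps} with $S=\n$. There is no serious obstacle in this lemma: the only step needing a moment's care is recognizing the telescoped combination of the $T_j$'s as $2\,\Rad(g)$, after which one must be careful to invoke $K$-convexity for $g-f(x)$ rather than $g$, so as to exploit that the Rademacher projection kills the constant $f(x)$.
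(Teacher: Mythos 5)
Your argument is correct and follows the paper's proof in all essentials: the first inequality is convexity of $t\mapsto\|t\|_X^p$ plus Lemma~\ref{lem:2eps} with $S=\n\setminus\{j\}$, and the second rests on identifying $\sum_{j=1}^n\e_j\left[T_jf(x+2e_j)-T_jf(x-2e_j)\right]$ with twice a Rademacher projection and then invoking the definition of $K_p(X)$. The one genuine difference is how that identity is verified: the paper reduces to scalar-valued $f$ via linear functionals and checks the identity on the characters $W_y$ of $\Z_{8m}^n$, whereas you read it off directly from the Walsh expansion of $\d\mapsto f(x+2\d)$; your computation is more elementary and avoids the (harmless, unimodular) factor of $i$ appearing in the paper's identity~\eqref{eq:rademacher projection identity}. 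Your remark that $K$-convexity must be applied to $g-f(x)$ rather than to $g$, because $\Rad$ annihilates constants, is precisely the role played by the function $h^x$ in the paper. Finally, you are right that the last inequality in~\eqref{eq:rademacher sum Tj} (the passage from increments $2\e$ to increments $\e$) is only available after summing over $x\in\Z_{8m}^n$, since Lemma~\ref{lem:2eps} is an averaged statement and the pointwise version can fail; read literally, the paper's ``for every $x$'' overstates that final step, but the summed form is all that is used in the proof of Theorem~\ref{thm:cotype Rademacher version}, so your treatment loses nothing.
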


\begin{proof} By the definition~\eqref{eq:def Tj} and convexity,
\begin{align*}
\sum_{x\in \Z_{8m}^n}\left\|f(x)-T_jf(x)\right\|_X^p&\le \frac{1}{2^n}\sum_{\e\in \{-1,1\}^n}
\sum_{x\in \Z_{8m}^n} \left\|f(x)-f\left(x+2\e_{\n\setminus \{j\}}\right)\right\|_X^p\\
&\le \frac{2^p}{2^n}\sum_{\e\in \{-1,1\}^n} \sum_{x\in \Z_{8m}^n} \|f(x+\e)-f(x)\|_X^p,
\end{align*}
where in the last step we used Lemma~\ref{lem:2eps}. This
proves~\eqref{eq:Tj close to identity}.

To prove~\eqref{eq:rademacher sum Tj}, for every fixed $x\in
\Z_{8m}^n$ define $h^x:\{-1,1\}^n\to X$ by
$$
\forall\, \e\in \{-1,1\}^n,\qquad h^x(\e)\eqdef f(x+2\e)-f(x).
$$
We claim that the following identity holds true.
\begin{equation}\label{eq:rademacher projection identity}
\Rad(h^x)(\e)=\frac{i}{2}\sum_{j=1}^n\e_j\left[T_jf(x+2e_j)-T_jf(x-2e_j) \right].
\end{equation}
Once~\eqref{eq:rademacher projection identity} is proved, the
desired inequality~\eqref{eq:rademacher sum Tj} would follow from
the definition of $K_p(X)$.

By composing with linear functionals, it suffices to verify the
validity of~\eqref{eq:rademacher projection identity} when $X=\C$.
Moreover, for every $y\in \Z_{8m}^n$ define $W_y:\Z_{8m}^n\to \C$ by
$$
\forall\, x\in \Z_{8m}^n,\qquad W_y(x)\eqdef \exp\left({\frac{\pi i}{4m}\sum_{j=1}^n x_jy_j}\right).
$$
Then $\{W_y\}_{y\in \Z_{8m}^n}$ forms an orthonormal basis of
$L_2(\Z_{8m}^n)$, and therefore it suffices to verify the validity
of~\eqref{eq:rademacher projection identity} when $f=W_y$ for some
$y\in \Z_{8m}^n$. Now,
\begin{align*}
W_y^x(\e)&=-\left(1-\prod_{j=1}^n\left(\cos\left(\frac{\pi
\e_jy_j}{2m}\right) +i\sin\left(\frac{\pi
\e_jy_j}{2m}\right)\right)\right)W_y(x)\\
&= -\left(1-\prod_{j=1}^n\left(\cos\left(\frac{\pi y_j}{2m}\right)
+i\e_j\sin\left(\frac{\pi y_j}{2m}\right)\right)\right)W_y(x)
\end{align*}
Consequently,
\begin{equation}\label{eq:specific rad formula}
\Rad(W^x_y)(\e)=i\left(\sum_{j=1}^n\e_j \sin\left(\frac{\pi y_j}{2m}\right)\prod_{s\in \n\setminus\{j\}}
\cos\left(\frac{\pi y_s}{2m}\right)\right)W_y(x).
\end{equation}
At the same time, for every $j\in \n$ we have
\begin{align*}
T_jW_y(x) &=\left(\frac{1}{2^n}\sum_{\e\in \{-1,1\}^n}\prod_{s\in \n\setminus\{j\}}\exp\left(\frac{\pi i\e_sy_s}{2m}
\right)\right)W_y(x)\\&=\left(\prod_{s\in \n\setminus\{j\}}
\cos\left(\frac{\pi y_s}{2m}\right)\right)W_y(x).
\end{align*}
Therefore
\begin{align*}
\sum_{j=1}^n\e_j&\left[T_jW_y(x+2e_j)-T_jW_y(x-2e_j)\right]\\
&=
\left(\sum_{j=1}^n\e_j\left(W_y(2e_j)-W_y(-2e_j)\right)\prod_{s\in \n\setminus\{j\}}
\cos\left(\frac{\pi y_s}{2m}\right)\right)W_y(x)\\
&=2\left(\sum_{j=1}^n\e_j \sin\left(\frac{\pi y_j}{2m}\right)\prod_{s\in \n\setminus\{j\}}
\cos\left(\frac{\pi y_s}{2m}\right)\right)W_y(x)=\frac{2}{i}\Rad(W^x_y)(\e),
\end{align*}
where in the last step we used~\eqref{eq:specific rad formula}.
\end{proof}

\begin{proof}[Proof of Theorem~\ref{thm:cotype Rademacher version}]
By the triangle inequality, for every $j\in \n$ we have
\begin{multline*}
\left\|f(x+4me_j)-f(x)\right\|_X^p\\\lesssim_p
\left\|T_jf(x+4me_j)-T_jf(x)\right\|_X^p
+\left\|f(x+4me_j)-T_jf(x+4me_j)\right\|_X^p+\left\|f(x)-T_jf(x)\right\|_X^p.
\end{multline*}
Hence, using~\eqref{eq:Tj close to identity} we see that
\begin{multline}\label{eq:three triangle cotype}
\sum_{x\in \Z_{8m}^n}\left\|f(x+4me_j)-f(x)\right\|_X^p\\\lesssim_p
\sum_{x\in \Z_{8m}^n}
\left\|T_jf(x+4me_j)-T_jf(x)\right\|_X^p+\frac{1}{2^n}\sum_{\e\in
\{-1,1\}^n} \sum_{x\in \Z_{8m}^n} \|f(x+\e)-f(x)\|_X^p.
\end{multline}
By the triangle inequality combined with H\"older's inequality we
have
\begin{align*}
\sum_{x\in \Z_{8m}^n}
\left\|T_jf(x+4me_j)-T_jf(x)\right\|_X^p&\le m^{p-1}\sum_{s=1}^m \sum_{x\in \Z_{8m}^n}\left\|T_jf(x+4se_j)-T_jf(x+4(s-1)e_j)\right\|_X^p\\
&= m^p\sum_{x\in \Z_{8m}^n}\left\|T_jf(x+2e_j)-T_jf(x-2e_j)\right\|_X^p.
\end{align*}
In combination with~\eqref{eq:three triangle cotype}, this implies
that
\begin{multline}\label{eq:set up to use cotype}
\sum_{j=1}^n\sum_{x\in \Z_{8m}^n}\frac{\left\|f(x+4me_j)-f(x)\right\|_X^p}{m^p}\\
\lesssim_p \sum_{j=1}^n\sum_{x\in
\Z_{8m}^n}\left\|T_jf(x+2e_j)-T_jf(x-2e_j)\right\|_X^p+\frac{n}{m^p2^n}\sum_{\e\in
\{-1,1\}^n} \sum_{x\in \Z_{8m}^n} \|f(x+\e)-f(x)\|_X^p.
\end{multline}
By the definition of the cotype $p$ constant $C_p(X)$, for every
$x\in \Z_{8m}^n$ we have
\begin{align}\label{eq:use rademacher for cotype}
\nonumber\sum_{j=1}^n\left\|T_jf(x+2e_j)-T_jf(x-2e_j)\right\|_X^p
&\le \nonumber \frac{C_p(X)^p}{2^n}\sum_{\e\in \{-1,1\}^n}\left\|\sum_{j=1}^n
\e_j\left[T_jf(x+2e_j)-T_jf(x-2e_j) \right]\right\|_X^p\\
& \lesssim_p \frac{(K_p(X)C_p(X))^p}{2^n}\sum_{\e\in
\{-1,1\}^n} \|f(x+\e)-f(x)\|_X^p,
\end{align}
where in the last step of~\eqref{eq:use rademacher for cotype} we
used~\eqref{eq:rademacher sum Tj}. By substituting~\eqref{eq:use
rademacher for cotype} into~\eqref{eq:set up to use cotype} we
conclude that
\begin{multline}\label{eq:before using m large cotype}
\sum_{j=1}^n\sum_{x\in
\Z_{8m}^n}\frac{\left\|f(x+4me_j)-f(x)\right\|_X^p}{m^p}\\\lesssim_p
\frac{(K_p(X)C_p(X))^p+n/m^p}{2^n}\sum_{\e\in \{-1,1\}^n} \sum_{x\in
\Z_{8m}^n} \|f(x+\e)-f(x)\|_X^p.
\end{multline}
Due to~\eqref{eq:m K convexity cotype}, \eqref{eq:before using m
large cotype} implies the desired inequality~\eqref{eq:cotype with
rademachers}.
\end{proof}

\section{A conjectural convolution inequality as a way to prove Conjecture~\ref{conj:best
m}}\label{sec:best m conv}

For every $j\in \n$ define an averaging operator $E_j:L_2(\Z_{m}^n)\to L_2(\Z_{m}^n)$ by setting for every $f:\Z_{m}^n\to \R$ and $x\in \Z_{m}^n$,
$$
E_jf(x)\eqdef \frac{f(x+e_j)+f(x-e_j)}{2}.
$$
We also set $\mathcal{E}_j\eqdef \prod_{s\in \n\setminus \{j\}} E_s$ and $\mathcal{E}\eqdef \prod_{s=1}^n E_j$. Thus, for every $f:\Z_{m}^n\to \R$ and $x\in \Z_{m}^n$,
\begin{equation}\label{eq:cal E operators}
\mathcal{E}_jf(x)= \frac{1}{2^n}\sum_{\e\in \{-1,1\}^n} f\left(x+\e_{\n\setminus \{j\}}\right)\qquad \mathrm{and}\qquad \mathcal{E}f(x)=\frac{1}{2^n}\sum_{\e\in \{-1,1\}^n} f(x+\e).
\end{equation}

\begin{question}\label{Q:convolution} Is it true that for every $p\in (2,\infty)$ there exists $\beta_p\in (0,1]$ such that for every $m,n\in \N$, every $f:\Z_{m}^n\to \R$ satisfies
\begin{multline}\label{eq:our convolution conjecture}
\frac{\beta_p}{2^n}\sum_{\e\in \{-1,1\}^n}\sum_{x\in \Z_{m}^n}\left|\mathcal{E}f(x+\e)-\mathcal{E}f(x-\e)\right|^p\\\le \frac{1}{2^n}\sum_{\e\in \{-1,1\}^n}\sum_{x\in \Z_{m}^n}\left|\sum_{j=1}^n\e_j\left[\mathcal{E}_jf(x+e_j)-\mathcal{E}_jf(x-e_j)\right]\right|^p+\sum_{j=1}^n\sum_{x\in \Z_{m}^n}\left|f(x+e_j)-f(x)\right|^p.
\end{multline}
\end{question}
It may very well be the case that~\eqref{eq:our convolution conjecture} holds true without the second term that appears in the right-hand side, i.e., that
$$
\beta_p\sum_{\e\in \{-1,1\}^n}\sum_{x\in \Z_{m}^n}\left|\mathcal{E}f(x+\e)-\mathcal{E}f(x-\e)\right|^p\le \sum_{\e\in \{-1,1\}^n}\sum_{x\in \Z_{m}^n}\left|\sum_{j=1}^n\e_j\left[\mathcal{E}_jf(x+e_j)-\mathcal{E}_jf(x-e_j)\right]\right|^p.
$$
We formulated Question~\ref{Q:convolution} in the above weaker form since it suffices for the following  proposition.

\begin{proposition}\label{prop:convolution implies best m}
A positive answer to Question~\ref{Q:convolution} implies that Conjecture~\ref{conj:best m} holds true, and hence also that all the conclusions of Theorem~\ref{thm:best m implies sharp nonembeddability} hold true. Specifically, for every $\d\in (0,\infty)$, if $m,n\in \N$ and $k\in \n$ satisfy $m\ge \d\sqrt{n/k}$ then~\eqref{eq:Xp alpha(p) version} holds true with
$$
\alpha_p\gtrsim_p \min\left\{\beta_p\left(\frac{\log p}{p^{\frac32}}\right)^p,\delta^p\right\}.
$$
where $\beta_p$ is as in~\eqref{eq:our convolution conjecture}.
\end{proposition}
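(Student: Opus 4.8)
The plan is to run the proof of Theorem~\ref{thm:main} verbatim, but with the ``width one'' averaging operators $\mathcal{E}$ and $\mathcal{E}_j$ in place of the box averages $A$ and $B_j$ of Section~\ref{sec:proof of main}: there, those box averages were taken over a window of size $R\asymp pn$, whose sole purpose was to make a discrete displacement of the smoothed function close to a Rademacher-type sum via the identity~\eqref{eq:pass to rademacher quote}, which forces $R\gtrsim|S|$ and hence $m\gtrsim pn$; here that passage is instead supplied directly by the conjectural convolution inequality~\eqref{eq:our convolution conjecture}. Since no window of width $\asymp pn$ enters, all error terms become negligible as soon as $m\gtrsim\sqrt{n/k}$. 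Fix $\d\in(0,\infty)$, $m,n\in\N$, $k\in\n$ with $m\ge\d\sqrt{n/k}$, and $f:\Z_{4m}^n\to\R$. Write $g\eqdef f-\mathcal{E}f$, so that $f(x+2m\e_S)-f(x)=\big(\mathcal{E}f(x+2m\e_S)-\mathcal{E}f(x)\big)+\big(g(x+2m\e_S)-g(x)\big)$, and bound the two resulting contributions to the left-hand side of~\eqref{eq:Xp alpha(p) version} separately.

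For the rough part I would argue crudely: since $\mathcal{E}$ is an average of translations and $x\mapsto x+2m\e_S$ is a bijection of $\Z_{4m}^n$, one has $\E_x|g(x+2m\e_S)-g(x)|^p\lesssim_p\E_x|g(x)|^p\le\E_{(x,\e)}|f(x+\e)-f(x)|^p$ by convexity. Dividing by $m^p$ and using $m^{-p}\le\d^{-p}(k/n)^{p/2}$, this contributes at most $2^p\alpha_p\d^{-p}(k/n)^{p/2}\E_{(x,\e)}|f(x+\e)-f(x)|^p$, which is $\lesssim_p(k/n)^{p/2}\E_{(x,\e)}|f(x+\e)-f(x)|^p$ as soon as $\alpha_p\le\d^p$. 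This is the only place the hypothesis on $m$ is used, and it produces the $\d^p$ alternative in the asserted lower bound for $\alpha_p$.

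For the smooth part, first telescope $\mathcal{E}f(x+2m\e_S)-\mathcal{E}f(x)$ along the $m$ steps $x,x+2\e_S,\dots,x+2m\e_S$ and use H\"older together with translation invariance of $\sum_x$ to get $m^{-p}\sum_x|\mathcal{E}f(x+2m\e_S)-\mathcal{E}f(x)|^p\le\sum_x|\mathcal{E}f(x+2\e_S)-\mathcal{E}f(x)|^p=\sum_x|\mathcal{E}f(x+\e_S)-\mathcal{E}f(x-\e_S)|^p$ (the last equality by $x\mapsto x+\e_S$). The heart of the argument is to apply~\eqref{eq:our convolution conjecture} \emph{in the $k$ coordinates indexed by $S$}: writing $\Z_{4m}^n=\Z_{4m}^S\times\Z_{4m}^{\n\setminus S}$, decomposing $\mathcal{E}=\mathcal{E}^S\circ\mathcal{E}^{\n\setminus S}$, and using that $\mathcal{E}^S\circ\mathcal{E}^{\n\setminus S}=\mathcal{E}$ and $\mathcal{E}^{S\setminus\{j\}}\circ\mathcal{E}^{\n\setminus S}=\mathcal{E}_j$ for $j\in S$, an application of~\eqref{eq:our convolution conjecture} with ambient dimension $k$ to the function $\mathcal{E}^{\n\setminus S}f$ (for each frozen value of the $\n\setminus S$ coordinates), followed by summation over those coordinates, gives, up to an appropriate power of $2$,
\begin{multline*}
\beta_p\sum_{\e\in\{-1,1\}^n}\sum_{x\in\Z_{4m}^n}\big|\mathcal{E}f(x+\e_S)-\mathcal{E}f(x-\e_S)\big|^p\le\sum_{\e}\sum_x\Big|\sum_{j\in S}\e_j\big[\mathcal{E}_jf(x+e_j)-\mathcal{E}_jf(x-e_j)\big]\Big|^p\\+2^n\sum_{j\in S}\sum_x\big|\mathcal{E}^{\n\setminus S}f(x+e_j)-\mathcal{E}^{\n\setminus S}f(x)\big|^p.
\end{multline*}
Since $\mathcal{E}^{\n\setminus S}$ is an average of translations commuting with translation by $e_j$ for $j\in S$, $\sum_x|\mathcal{E}^{\n\setminus S}f(x+e_j)-\mathcal{E}^{\n\setminus S}f(x)|^p\le\sum_x|f(x+e_j)-f(x)|^p$, so after averaging over $S$ the last term matches the first term on the right-hand side of~\eqref{eq:Xp alpha(p) version}.

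It then remains to estimate $\binom nk^{-1}\sum_{|S|=k}\E_x\E_\e\big|\sum_{j\in S}\e_j a_j(x)\big|^p$, where $a_j(x)\eqdef\mathcal{E}_jf(x+e_j)-\mathcal{E}_jf(x-e_j)$. For each fixed $x$ apply the scalar linear $X_p$ inequality~\eqref{eq:k set quote} to the numbers $a_1(x),\dots,a_n(x)$ and integrate in $x$, bounding this by $c^p(p/\log p)^p$ times $\frac kn\sum_{j=1}^n\E_x|a_j(x)|^p+(k/n)^{p/2}\E_{(x,\e)}\big|\sum_{j=1}^n\e_j a_j(x)\big|^p$. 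Here $\mathcal{E}_j$ is an average of translations commuting with translation by $e_j$, so $\E_x|a_j(x)|^p=\E_x|\mathcal{E}_jf(x+2e_j)-\mathcal{E}_jf(x)|^p\le\E_x|f(x+2e_j)-f(x)|^p\lesssim_p\E_x|f(x+e_j)-f(x)|^p$, matching the first term of~\eqref{eq:Xp alpha(p) version} once more. For the Rademacher term, a direct computation from the definition of $\mathcal{E}_j$ gives, for every $x$ and $\e$,
\[
\sum_{j=1}^n\e_j\big[\mathcal{E}_jf(x+e_j)-\mathcal{E}_jf(x-e_j)\big]=\frac1{2^{n-1}}\sum_{\sigma\in\{-1,1\}^n}\Big(\sum_{j=1}^n\e_j\sigma_j\Big)f(x+\sigma)=2\,\Rad(g_x)(\e),
\]
where $g_x:\{-1,1\}^n\to\R$ is given by $g_x(\sigma)\eqdef f(x+\sigma)$. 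Since $\Rad$ annihilates constants, $\Rad(g_x)=\Rad(g_x-f(x)\1)$, so by the definition of $K_p(\R)$ and the bound $K_p(\R)\lesssim\sqrt p$ recorded before Theorem~\ref{thm:cotype Rademacher version}, $\E_\e|\Rad(g_x)(\e)|^p\le K_p(\R)^p\,\E_\e|f(x+\e)-f(x)|^p\lesssim_p p^{p/2}\,\E_\e|f(x+\e)-f(x)|^p$; integrating in $x$, the Rademacher term is $\lesssim_p p^{p/2}(k/n)^{p/2}\E_{(x,\e)}|f(x+\e)-f(x)|^p$. Collecting everything, the smooth part contributes at most a universal constant to the power $p$ times $\alpha_p\beta_p^{-1}(p/\log p)^p$ times $\frac kn\sum_j\E_x|f(x+e_j)-f(x)|^p+p^{p/2}(k/n)^{p/2}\E_{(x,\e)}|f(x+\e)-f(x)|^p$, which is dominated by the right-hand side of~\eqref{eq:Xp alpha(p) version} as soon as $\alpha_p(p/\log p)^pp^{p/2}\lesssim_p\beta_p$, i.e. $\alpha_p\lesssim_p\beta_p(\log p/p^{3/2})^p$; the extra factor $p^{p/2}$ (hence the exponent $3/2$) is precisely the price of the $K$-convexity estimate $K_p(\R)\lesssim\sqrt p$. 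Combined with the constraint $\alpha_p\le\d^p$ from the rough part, this yields~\eqref{eq:Xp alpha(p) version} with $\alpha_p\gtrsim_p\min\{\beta_p(\log p/p^{3/2})^p,\d^p\}$, and hence Conjecture~\ref{conj:best m}; the remaining assertions follow from Theorem~\ref{thm:best m implies sharp nonembeddability}. The only genuinely delicate point in the write-up is the coordinate bookkeeping in the $S$-restricted application of~\eqref{eq:our convolution conjecture}; apart from that, no idea beyond the convolution inequality itself is required, which is why Question~\ref{Q:convolution}, rather than a complete proof, is the true obstacle.
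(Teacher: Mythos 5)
Your argument is correct and follows essentially the same route as the paper's proof: decompose via $\mathcal{E}f$, telescope to extract the $m^p$, apply the conjectural convolution inequality in the $S$-coordinates to the partially smoothed function $\mathcal{E}^{\n\setminus S}f$, then combine the linear $X_p$ inequality~\eqref{eq:k set quote} with the Rademacher-projection identity and $K_p(\R)\lesssim\sqrt{p}$, which is exactly where the $p^{3/2}$ arises in both write-ups. The only (immaterial) difference is that you absorb the rough term $f-\mathcal{E}f$ into the $(k/n)^{p/2}$ term up front using $m\ge\d\sqrt{n/k}$, whereas the paper carries it to the final display before invoking that hypothesis.
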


\begin{proof} Fix $f:\Z_{4m}^n\to \R$.  By convexity, it follows from~\eqref{eq:cal E operators} that
$$
\sum_{x\in \Z_{4m}^n}|f(x)-\mathcal{E}f(x)|^p\le \frac{1}{2^n}\sum_{\e\in \{-1,1\}^n}\sum_{x\in \Z_{4m}^n} |f(x+\e)-f(x)|^p.
$$
Hence, for every $S\subset \n$ we have
\begin{multline}\label{eq:beta approx cal E on S}
\sum_{\e\in \{-1,1\}^n}\sum_{x\in \Z_{4m}^n} |f(x+2m \e_S)-f(x)|^p\\\lesssim_p \sum_{\e\in \{-1,1\}^n}\sum_{x\in \Z_{4m}^n} |\mathcal{E}f(x+2m \e_S)-\mathcal{E}f(x)|^p+\sum_{\e\in \{-1,1\}^n}\sum_{x\in \Z_{4m}^n} |f(x+\e)-f(x)|^p.
\end{multline}
Arguing as in~\eqref{eq:first term upper}, it follows from the triangle inequality that
\begin{equation}\label{eq:get m triangle beta}
\sum_{\e\in \{-1,1\}^n}\sum_{x\in \Z_{4m}^n} |\mathcal{E}f(x+2m \e_S)-\mathcal{E}f(x)|^p\le m^p\sum_{\e\in \{-1,1\}^n}\sum_{x\in \Z_{4m}^n}\left|\mathcal{E}f(x+\e_S)-\mathcal{E}f(x-\e_S)\right|^p.
\end{equation}

For every $z\in \Z_{4m}^{\n\setminus S}$ apply~\eqref{eq:our convolution conjecture} to the mapping $(y\in \Z_{4m}^S)\mapsto \prod_{j\in \n\setminus S} E_jf(y,z)$, and then average the resulting inequality over $z$. The estimate thus obtained is
\begin{multline}\label{eq:use beta inequality on S}
\frac{\beta_p}{2^n}\sum_{\e\in \{-1,1\}^n}\sum_{x\in \Z_{4m}^n}\left|\mathcal{E}f(x+\e_S)-\mathcal{E}f(x-\e_S)\right|^p\\\le \frac{1}{2^n}\sum_{\e\in \{-1,1\}^n}\sum_{x\in \Z_{4m}^n}\left|\sum_{j\in S}\e_j\left[\mathcal{E}_jf(x+e_j)-\mathcal{E}_jf(x-e_j)\right]\right|^p+\sum_{j\in S}\sum_{x\in \Z_{4m}^n}\left|f(x+e_j)-f(x)\right|^p.
\end{multline}
By averaging~\eqref{eq:use beta inequality on S} over those $S\subset\n$ with $|S|=k$ we see that
\begin{align}\label{eq:beta averaged over S}
\nonumber \frac{\beta_p}{2^n\binom{n}{k}}&\sum_{\substack{S\subset\n\\|S|=k}}\sum_{\e\in \{-1,1\}^n}\sum_{x\in \Z_{4m}^n}\left|\mathcal{E}f(x+\e_S)-\mathcal{E}f(x-\e_S)\right|^p\\
&\le \frac{1}{2^n\binom{n}{k}}\sum_{\substack{S\subset\n\\|S|=k}}\sum_{\e\in \{-1,1\}^n}\sum_{x\in \Z_{4m}^n}\left|\sum_{j\in S}\e_j\left[\mathcal{E}_jf(x+e_j)-\mathcal{E}_jf(x-e_j)\right]\right|^p\nonumber\\&\qquad+\frac{k}{n}\sum_{j=1}^n\sum_{x\in \Z_{4m}^n}\left|f(x+e_j)-f(x)\right|^p.
\end{align}

Note that since $\mathcal{E}_j$ is an averaging operator,
$$
\sum_{j=1}^n\sum_{x\in \Z_{4m}^n} \left|\mathcal{E}_jf(x+e_j)-\mathcal{E}_jf(x-e_j)\right|^p\le 2^p \sum_{j=1}^n\sum_{x\in \Z_{4m}^n} \left|f(x+e_j)-f(x)\right|^p.
$$
Hence, using the linear $X_p$ inequality~\eqref{eq:k set quote}, we deduce that
\begin{align}\label{eq:use Xp beta}
\nonumber\frac{(p/\log p)^{-p}}{2^n\binom{n}{k}}&\sum_{\substack{S\subset\n\\|S|=k}}\sum_{\e\in \{-1,1\}^n}\sum_{x\in \Z_{4m}^n}\left|\sum_{j\in S}\e_j\left[\mathcal{E}_jf(x+e_j)-\mathcal{E}_jf(x-e_j)\right]\right|^p
\\\nonumber&\lesssim_p \frac{k}{n}\sum_{j=1}^n\sum_{x\in \Z_{4m}^n} \left|f(x+e_j)-f(x)\right|^p\\&\qquad+\frac{\left(k/n \right)^{\frac{p}{2}}}{2^n}\sum_{\e\in \{-1,1\}^n}\sum_{x\in \Z_{4m}^n}
\left|\sum_{j=1}^n\e_j\left[\mathcal{E}_jf(x+e_j)-\mathcal{E}_jf(x-e_j)\right]\right|^p.
\end{align}

The same reasoning that leads to the identity~\eqref{eq:rademacher projection identity} (alternatively, by~\cite[Sec.~5]{MN08}) shows that if for fixed $x\in \Z_{4m}^n$ we define $g^x:\{-1,1\}^n\to \R$ by setting $g^x(\e)=f(x+\e)-f(x)$ for every $\e\in \{-1,1\}^n$, then that Rademacher projection of $g^x$ satisfies
$$
\Rad(g^x)(\e)=\frac{i}{2}\sum_{j=1}^n\e_j\left[\mathcal{E}_jf(x+e_j)-\mathcal{E}_jf(x-e_j) \right].
$$
Hence, recalling that the $K$-convexity constant of $\R$ satisfies $K_p(\R)\lesssim \sqrt{p}$,
\begin{equation}\label{eq:use K convexity in beta ineq}
\sum_{\e\in \{-1,1\}^n}\sum_{x\in \Z_{4m}^n}
\left|\sum_{j=1}^n\e_j\left[\mathcal{E}_jf(x+e_j)-\mathcal{E}_jf(x-e_j)\right]\right|^p\lesssim_p p^{\frac{p}{2}}\sum_{\e\in \{-1,1\}^n}\sum_{x\in \Z_{4m}^n}|f(x+\e)-f(x)|^p.
\end{equation}
By combining~\eqref{eq:beta averaged over S} with~\eqref{eq:use Xp beta} and~\eqref{eq:use K convexity in beta ineq} we have
\begin{multline*}
\frac{(p^{3/2}/\log p)^{-p}\beta_p}{2^n\binom{n}{k}}\sum_{\substack{S\subset\n\\|S|=k}}
\sum_{\e\in \{-1,1\}^n}\sum_{x\in \Z_{4m}^n}\left|\mathcal{E}f(x+\e_S)-\mathcal{E}f(x-\e_S)\right|^p\\
\lesssim_p \frac{k}{n}\sum_{j=1}^n\sum_{x\in \Z_{4m}^n} \left|f(x+e_j)-f(x)\right|^p+\frac{\left(k/n \right)^{\frac{p}{2}}}{2^n}\sum_{\e\in \{-1,1\}^n}\sum_{x\in \Z_{4m}^n}|f(x+\e)-f(x)|^p.
\end{multline*}
Recalling~\eqref{eq:beta approx cal E on S} and ~\eqref{eq:get m triangle beta}, we therefore have
\begin{align*}
\frac{(p^{3/2}/\log p)^{-p}\beta_p}{2^n\binom{n}{k}}&\sum_{\substack{S\subset\n\\|S|=k}}\sum_{\e\in \{-1,1\}^n}\sum_{x\in \Z_{4m}^n} \frac{|f(x+2m \e_S)-f(x)|^p}{m^p}\\
&\lesssim_p \frac{k}{n}\sum_{j=1}^n\sum_{x\in \Z_{4m}^n} \left|f(x+e_j)-f(x)\right|^p\\&\qquad+\left(1+\frac{(p^{3/2}/\log p)^{-p}}{m^p\left(k/n \right)^{\frac{p}{2}}}\beta_p\right)\frac{\left(k/n \right)^{\frac{p}{2}}}{2^n}\sum_{\e\in \{-1,1\}^n}\sum_{x\in \Z_{4m}^n}|f(x+\e)-f(x)|^p.\qedhere
\end{align*}
\end{proof}

\section{The Schatten $p$ trace class is an $X_p$ Banach space}\label{sec:appendix}

For $p\in [1,\infty)$ and $d\in \N$, the Schatten $p$-norm of a $d$ by $d$ matrix $A\in M_d(\R)$ is defined as
$$
\|A\|_{S_p}=\left(\trace\!\left((A^*A)^{\frac{p}{2}}\right)\right)^{\frac{1}{p}}=
\left(\trace\!\left((AA^*)^{\frac{p}{2}}\right)\right)^{\frac{1}{p}}.
$$
See~\cite{PX03} for relevant background. The following theorem asserts that $S_p$ is an $X_p$ Banach space.

\begin{theorem}\label{cor:schatten short} Fix $p\in [2,\infty)$, $d,n\in \N$ and $k\in \n$. Then for every $A_1,\ldots,A_n\in M_d(\R)$,
\begin{equation*}
\frac{(p/\sqrt{\log p})^{-p}}{2^n\binom{n}{k}}\sum_{\substack{S\subset\n\\|S|=k}}\sum_{\e\in \{-1,1\}^n}\left\|\sum_{j\in S} \e_j A_j\right\|_{S_p}^p\lesssim_p \frac{k}{n}\sum_{j=1}^n \|A_j\|_{S_p}^p+\frac{\left(k/n\right)^{\frac{p}{2}}}{2^n}\sum_{\e\in \{-1,1\}^n}\left\|\sum_{j=1}^n \e_j A_j\right\|_{S_p}^p.
\end{equation*}
\end{theorem}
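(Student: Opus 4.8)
The plan is to deduce Theorem~\ref{cor:schatten short} from three classical ingredients: the noncommutative Khintchine inequality, the noncommutative Rosenthal inequality for sums of independent positive operators, and an elementary comparison between choosing a uniformly random $k$-element subset of $\n$ and including each index independently with a fixed probability. Since $M_d(\R)$ sits isometrically in $M_d(\C)$ for every Schatten norm, and since $\sum_{j\in S}\e_jA_j$ is a real matrix when $\e\in\{-1,1\}^n$, we may pass freely to complex scalars. Fix $p\in[2,\infty)$ and write $r\eqdef p/2\ge1$. For $T\subset\n$ set $\sigma_C(T)\eqdef\big\|(\sum_{j\in T}A_j^*A_j)^{1/2}\big\|_{S_p}$ and $\sigma_R(T)\eqdef\big\|(\sum_{j\in T}A_jA_j^*)^{1/2}\big\|_{S_p}$, so that $\sigma_C(T)^p=\trace\big((\sum_{j\in T}A_j^*A_j)^r\big)$ and similarly for $\sigma_R$. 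The first step uses the noncommutative Khintchine inequality \cite{PX03} in both directions: for $p\ge2$ there is $K_p\lesssim\sqrt p$ such that for every $T\subset\n$,
\[
\max\{\sigma_C(T),\sigma_R(T)\}\;\le\;\Big(\E_\e\Big\|\sum_{j\in T}\e_jA_j\Big\|_{S_p}^p\Big)^{1/p}\;\le\;K_p\,\max\{\sigma_C(T),\sigma_R(T)\}.
\]
The lower bound has constant $1$ and follows from $S_p$--$S_{p'}$ duality and the Schatten H\"older inequality; the upper bound is where the factor $\sqrt p$ is lost, a loss already forced by diagonal matrices. Applying the upper bound to each $k$-subset $S$, averaging, using $\max\{a,b\}^p\le a^p+b^p$, and using that $A_j\mapsto A_j^*$ swaps $\sigma_C$ with $\sigma_R$, we reduce to the following estimate, with $P_j\eqdef A_j^*A_j\ge0$ (so that $\|P_j\|_{S_r}^r=\|A_j\|_{S_p}^p$ and $\big\|\sum_jP_j\big\|_{S_r}^r=\sigma_C(\n)^p$):
\[
\frac1{\binom nk}\sum_{\substack{S\subset\n\\|S|=k}}\Big\|\sum_{j\in S}P_j\Big\|_{S_r}^r\;\lesssim_p\;\Big(\frac{r}{\log(r+2)}\Big)^r\bigg(\frac kn\sum_{j=1}^n\|P_j\|_{S_r}^r+\Big(\frac kn\Big)^r\Big\|\sum_{j=1}^nP_j\Big\|_{S_r}^r\bigg),
\]
which we denote by $(\star)$. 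Granting $(\star)$, the theorem follows: combine it with the two displayed Khintchine estimates, with $\sigma_C(\n)^p,\sigma_R(\n)^p\le\E_\e\big\|\sum_j\e_jA_j\big\|_{S_p}^p$ (the lower Khintchine bound), and with the numerical fact that $(p/\sqrt{\log p})^{-p}K_p^p(r/\log(r+2))^r\lesssim_p 1$, which holds because $K_p\asymp\sqrt p$ and, for $r=p/2$, the quantity $(r/\log(r+2))^r$ is $\lesssim_p (p/\log p)^{p/2}=p^{p/2}(\log p)^{-p/2}$.

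It remains to establish $(\star)$, a noncommutative Rosenthal inequality for the random positive operator $\sum_{j\in S}P_j$. When $k>n/2$ it is immediate: $\sum_{j\in S}P_j\le\sum_{j=1}^nP_j$, and $0\le Q_1\le Q_2$ implies $\trace(Q_1^r)\le\trace(Q_2^r)$ for all $r\ge1$ (via $S_r$--$S_{r'}$ duality, since $\trace((Q_2-Q_1)Y)\ge0$ whenever $Y\ge0$), so the left side of $(\star)$ is at most $\big\|\sum_jP_j\big\|_{S_r}^r\le(2k/n)^r\big\|\sum_jP_j\big\|_{S_r}^r$. Assume henceforth $k\le n/2$ and let $\delta_1,\dots,\delta_n$ be i.i.d.\ with $\Pr[\delta_j=1]=1-\Pr[\delta_j=0]=2k/n$. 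Conditioning on $D\eqdef\delta_1+\dots+\delta_n$, the random set $\{j:\delta_j=1\}$ is uniform among subsets of size $D$; since $d\mapsto\E_{|S|=d}\big\|\sum_{j\in S}P_j\big\|_{S_r}^r$ is nondecreasing (again by the trace-monotonicity above) and $\Pr[D\ge k]\ge\tfrac12$, we obtain
\[
\frac1{\binom nk}\sum_{\substack{S\subset\n\\|S|=k}}\Big\|\sum_{j\in S}P_j\Big\|_{S_r}^r\;\le\;2\,\E_\delta\Big\|\sum_{j=1}^n\delta_jP_j\Big\|_{S_r}^r.
\]
Now $\delta_1P_1,\dots,\delta_nP_n$ are independent and positive, $\|\delta_jP_j\|_{S_r}^r=\delta_j\|P_j\|_{S_r}^r$ has mean $\tfrac{2k}{n}\|P_j\|_{S_r}^r$, and $\sum_j\E(\delta_jP_j)=\tfrac{2k}{n}\sum_jP_j$. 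Hence the noncommutative Rosenthal inequality for independent positive operators (Junge and Xu; or a direct truncation argument) bounds $\E_\delta\big\|\sum_j\delta_jP_j\big\|_{S_r}^r$ by $(Cr/\log(r+2))^r$ times $\tfrac{2k}{n}\sum_j\|P_j\|_{S_r}^r+\big(\tfrac{2k}{n}\big)^r\big\|\sum_jP_j\big\|_{S_r}^r$, which is $(\star)$ up to the harmless factor $2^r$.

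The heart of the matter --- and the origin of the $\sqrt{\log p}$ improvement --- is the sharp constant $O(r/\log r)$ (for $r\ge2$; bounded for $1\le r\le2$) in the noncommutative Rosenthal inequality for positive summands, which matches its commutative counterpart. It must be invoked at the exponent $r=p/2$, possibly in $(1,2)$; the relevant case of the Junge--Xu inequality covers this, but for a self-contained treatment one would truncate each $P_j$ at a threshold of order $(2k/n)^{-1/r}\big(\sum_i\|P_i\|_{S_r}^r\big)^{1/r}$, control the ``large'' part by a union bound over the few indices where $\delta_j=1$ and $P_j$ exceeds the threshold, and control the ``small'' part by a moment computation. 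The single genuinely delicate point is that the reduction to independent selectors must use \emph{Bernoulli} (hence bounded) selectors rather than Poissonized ones: the $r$-th moment of a Poisson variable of mean $2k/n\le1$ is of order $(2k/n)B_r$ with $B_r\asymp(r/\log r)^r$ the $r$-th Bell number, so Poissonization would introduce a spurious extra factor $(r/\log r)^r$ and break the estimate. The remaining ingredients --- the two uses of noncommutative Khintchine, the $\sigma_C\leftrightarrow\sigma_R$ symmetry, the trace-monotonicity, and the bookkeeping of constants --- are routine.
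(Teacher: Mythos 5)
Your reduction to the key inequality $(\star)$ is exactly the paper's: both proofs open with the two-sided noncommutative Khinchine inequality (the paper's \eqref{eq:lust piquard} and \eqref{eq:easy direction of L-P}) to convert the statement into a bound on $\frac{1}{\binom{n}{k}}\sum_{|S|=k}\trace((\sum_{j\in S}P_j)^{r})$ for positive semidefinite $P_j=A_j^*A_j$ (and $A_jA_j^*$) with $r=p/2$, and the constant accounting ($p^{p/2}$ from Khinchine times $(r/\log r)^{r}\asymp p^{p/2}(\log p)^{-p/2}$ from the positive-operator inequality) is identical. Where you genuinely diverge is in proving $(\star)$, which is Proposition~\ref{cor:all k} of the paper. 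The paper proves it from scratch: it differentiates the $k$-subset average combinatorially (Lemma~\ref{lem:positive Xp}), and the engine is a pair of new trace inequalities, Propositions~\ref{lem:p>1} and~\ref{lem:q<1} (roughly $(\trace((A+B)^qA))^{1/q}\le(\trace(A^{q+1}))^{1/q}+(\trace(B^qA))^{1/q}$), proved via Lieb--Thirring, operator convexity for $\theta\in[1,2]$, and an integral representation for $\theta\in(0,1)$, followed by a bootstrapping step \eqref{eq:UVW bootstrap}. You instead pass from the uniform $k$-subset to i.i.d.\ Bernoulli selectors of mean $2k/n$ (your monotonicity-plus-median argument there is correct, as is the trivial case $k>n/2$) and then invoke the Junge--Xu noncommutative Rosenthal inequality for independent positive operators as a black box. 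Your route is shorter and conceptually transparent; the paper's route is self-contained and produces trace inequalities that it advertises as being of independent interest (Section~\ref{sec:discussion}).

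The one point you must nail down is the black box itself. You need the positive-summand noncommutative Rosenthal inequality with constant of order $(r/\log r)^{r}$ for \emph{every} $r=p/2\ge1$, including $r\in[1,2)$ (i.e.\ $2\le p<4$), in the tensor-product setting $L_r(L_\infty(\Omega)\bar\otimes M_d)$; you should verify that the Junge--Xu statement covers this range with this constant rather than, say, $(Cr)^r$, since the theorem as stated carries the explicit factor $(p/\sqrt{\log p})^{-p}$. Your fallback ``direct truncation argument'' should not be treated as routine: transplanting the Johnson--Schechtman--Zinn truncation scheme to matrices is precisely the difficulty that the paper's Section~\ref{sec:trace} exists to circumvent (and the paper explicitly records, in the Question following the theorem, that even improving $p/\sqrt{\log p}$ to $p/\log p$ is open because the scalar proof does not carry over). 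If the citation checks out in the full range, your proof is complete; if it only covers $r\ge2$, you can patch $r\in[1,2)$ via the operator convexity of $t\mapsto t^r$ as in the paper's discussion around \eqref{eq:homogoeneous with C}, but that patch must be written out.
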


\begin{question}
It remains an interesting open problem to determine whether or not the quantity $p/\sqrt{\log p}$ in Theorem~\ref{cor:schatten short} can be replaced by the (sharp) quantity $p/\log p$. This was proved in the scalar case in~\cite{JSZ85}, but additional ideas seem to be required in order to carry out the proof of~\cite{JSZ85} in the above noncommutative setting.
\end{question}

The key step in the proof of Theorem~\ref{cor:schatten short} is the following proposition.

\begin{proposition}\label{cor:all k}
Fix $q\in [1,\infty)$, $d,n\in \N$ and $k\in \n$. Suppose that $B_1,\ldots,B_n\in M_d(\R)$ are symmetric and positive semidefinite. Then
\begin{equation*}
\frac{1}{\binom{n}{k}}\sum_{\substack{S\subset \n\\|S|=k}}\trace\!\left(\Big(\sum_{j\in S} B_j\Big)^{q}\right)\lesssim_q \left(\frac{q}{\log(2q)}\right)^{q} \max\left\{\frac{k}{n}\sum_{j=1}^n\trace\!\left(B_j^{q}\right),\left(\frac{k}{n}\right)^{q}\trace\!\left(\Big(\sum_{j=1}^n B_j\Big)^{q}\right)\right\}.
\end{equation*}
\end{proposition}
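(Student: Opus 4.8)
The plan is to reduce the matrix statement to the scalar inequality~\eqref{eq:k set square function form} by using the trace to pass to a probabilistic setting and then exploiting positive semidefiniteness. First I would write $B_j = C_j^2$ with $C_j$ symmetric positive semidefinite (so $C_j = B_j^{1/2}$), and observe that for a symmetric positive semidefinite matrix $M$ one has $\trace(M^q) = \sum_i \lambda_i(M)^q$ where the $\lambda_i$ are the (nonnegative) eigenvalues. The key difficulty is that $\sum_{j\in S} B_j$ is a sum of non-commuting matrices, so we cannot diagonalize all the $B_j$ simultaneously. The standard device here (going back to Lust-Piquard--Pisier type arguments, and used in the noncommutative Khintchine literature) is to control $\trace\bigl((\sum_{j\in S}B_j)^q\bigr)$ by a quantity that only sees the diagonal entries of the $B_j$ in a cleverly chosen basis, or alternatively to use the operator convexity/concavity of $t\mapsto t^q$ on $[0,\infty)$. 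Concretely, since $q\geq 1$, the map $M\mapsto \trace(M^q)$ is convex on positive semidefinite matrices, and since each $B_j\succeq 0$ we have the pointwise (in the Loewner order) bounds $B_j \preceq \sum_{j=1}^n B_j$; this is exactly the kind of monotonicity that makes the ``square function'' side of~\eqref{eq:k set square function form} the right analogue.

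The main step I envision is the following. Fix an orthonormal basis $(f_1,\dots,f_d)$ of $\R^d$ diagonalizing $\sum_{j=1}^n B_j$, and set $a_{i,j} = \langle B_j f_i, f_i\rangle \geq 0$. Then $\sum_{j=1}^n a_{i,j} = \lambda_i\bigl(\sum_j B_j\bigr)$ for each $i$, and by Cauchy--Schwarz together with positivity one gets, for each $i$, control of $\trace(B_j^q)$-type quantities from below by $\sum_i a_{i,j}^q$-type quantities (using $\trace(B_j^q)\geq \sum_i a_{i,j}^q$ when $q\ge 1$, which follows from the Schur--Horn majorization: the diagonal of $B_j$ is majorized by its spectrum, and $t\mapsto t^q$ is convex). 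For the upper bound on the left-hand side, I would use that $\trace\bigl((\sum_{j\in S}B_j)^q\bigr) \geq \sum_i \bigl(\sum_{j\in S} a_{i,j}\bigr)^q$ is the \emph{wrong} direction, so instead the crucial inequality to invoke is an \emph{upper} bound of the form $\trace\bigl((\sum_{j\in S}B_j)^q\bigr) \lesssim_q$ (something diagonal). This is where I expect to need the real input: a noncommutative version of the scalar estimate. One clean route is to apply the scalar inequality~\eqref{eq:k set square function form} with the role of $|a_j|^p$ played by eigenvalue data after a suitable randomization (choosing a Haar-random orthogonal conjugation), combined with the fact that $\E_U \trace\bigl((\sum_{j\in S} U^*B_jU)^q\bigr)$ relates averages of diagonal powers — but since the proposition does not average over $U$, a more direct argument is needed. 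The honest approach: use the dual formulation $\trace(M^q) = \sup\{\trace(MN): N\succeq 0,\ \trace(N^{q'})\le 1\}$, $1/q+1/q'=1$, pick the optimal $N$ for $M = \sum_{j\in S}B_j$, diagonalize $N = \sum_i \mu_i g_i g_i^*$, and then $\trace(MN) = \sum_i \mu_i \sum_{j\in S}\langle B_j g_i,g_i\rangle$; now for \emph{fixed} $(g_i)$ the numbers $b_{i,j} := \langle B_j g_i, g_i\rangle \ge 0$ satisfy $b_{i,j}\le \langle (\sum_{j} B_j) g_i, g_i\rangle$, and we can apply the scalar ``$k$-set'' inequality~\eqref{eq:k set square function form} to each row $(b_{i,1},\dots,b_{i,n})$ with exponent $q$ (not $p$), getting the $q/\log(2q)$ factor per row, then sum against $\mu_i$ and use Hölder with exponents $q,q'$ to recombine $\sum_i \mu_i (\cdot)$ into trace powers.

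The hard part will be making this dual-variable-plus-scalar-inequality scheme actually produce the two terms $\frac{k}{n}\sum_j \trace(B_j^q)$ and $\bigl(\frac{k}{n}\bigr)^q\trace\bigl((\sum_j B_j)^q\bigr)$ cleanly, without losing the sharp $q/\log(2q)$ constant — in particular, verifying that the scalar inequality one applies row-by-row is~\eqref{eq:k set square function form} (which has exactly this $(\,\cdot/\log\,\cdot\,)$-type constant, coming from~\eqref{eq:quote JSZ}) rather than the weaker~\eqref{eq:k set quote}, and that the Hölder recombination does not introduce a dimension-dependent or $q$-dependent loss. I would also need the elementary fact that $\sum_i b_{i,j}^q \le \trace(B_j^q)$ for each $j$ (Schur--Horn / convexity of $t^q$ applied to the diagonal of $B_j$ in the basis $(g_i)$), and that $\sum_i \bigl(\sum_j b_{i,j}\bigr)^q \le \trace\bigl((\sum_j B_j)^q\bigr)$ similarly. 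Once those are in place, the proposition follows by: (i) fixing the extremal $N$; (ii) applying~\eqref{eq:k set square function form} rowwise with exponent $q$; (iii) summing and using Hölder to pass back to trace powers; (iv) bounding the two resulting terms by the two terms in the statement. Finally, Theorem~\ref{cor:schatten short} is deduced from Proposition~\ref{cor:all k} by the standard reduction: apply the proposition with $q = p/2$ and $B_j = (\sum_i \e_i A_i)^*(\sum_i \e_i A_i)$-type Gram matrices — more precisely, one conditions on $\e$, writes $\|\sum_{j\in S}\e_j A_j\|_{S_p}^p = \trace\bigl((M_{\e,S}^*M_{\e,S})^{p/2}\bigr)$, uses a Khintchine-in-$S_p$ argument (with constant $\lesssim\sqrt{p}$, explaining the $\sqrt{\log p}$ versus $\log p$ discrepancy flagged in the Question) to replace the signed sum by a square-function expression $\sum_j A_j^*A_j$ in the Loewner order, and then applies Proposition~\ref{cor:all k} to the positive semidefinite matrices $B_j = A_j^*A_j$ (and, symmetrically, $A_jA_j^*$).
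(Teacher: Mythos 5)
There is a genuine gap, and it sits exactly at the point you flag as ``the hard part'': the duality/diagonalization step produces data that depends on $S$, which is incompatible with the averaging over $S$ that the scalar $k$-set inequality requires. Concretely, writing $\|M_S\|_{S_q}=\trace(M_SN_S)$ with $M_S=\sum_{j\in S}B_j$ and $N_S$ the extremal dual element, the optimal $N_S$ is (a normalization of) $M_S^{q-1}$, so its eigenbasis $(g_i^{(S)})$ is the eigenbasis of $M_S$ itself and varies with $S$; your ``rows'' $b_{i,j}^{(S)}=\langle B_jg_i^{(S)},g_i^{(S)}\rangle$ are therefore not a fixed sequence to which a $k$-set averaging inequality can be applied. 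If instead you fix one orthonormal basis $(g_i)$ independent of $S$, the Schur--Horn majorization that you invoke gives $\sum_i\langle M_Sg_i,g_i\rangle^q\le\trace(M_S^q)$ for $q\ge 1$, i.e.\ the diagonal quantity is a \emph{lower} bound for the trace power, which is useless for the upper bound you need. This is precisely the noncommutativity obstruction, and it is not removable by soft means: the paper shows in Section~\ref{sec:discussion} that the subadditivity $(A+B)^q\preceq K(A^q+B^q)$, which would make a fixed-basis reduction work, \emph{fails} for every $q\in(0,1)\cup(2,\infty)$ and every $K$, with explicit $2\times 2$ counterexamples. A secondary (fixable) issue: the scalar inequality~\eqref{eq:k set square function form} controls the Rademacher average $\E\big[|\sum_{j\in S}\e_ja_j|^p\big]$, whereas your rows require an upper bound on the unsigned sum $\big(\sum_{j\in S}b_{i,j}\big)^q$ of nonnegative numbers, which is the maximal, not the average, value of $|\sum_{j\in S}\e_jb_{i,j}|$; one needs the ``positive part'' of the Rosenthal/JSZ inequality instead.

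For contrast, the paper's route is genuinely matrix-analytic: it first reduces to $k\le n/2$ by splitting each $S$ into two halves, then writes $\trace\big(\big(\sum_{j\in S}B_j\big)^q\big)=\sum_{j\in S}\trace\big(\big(\sum_{s\in S}B_s\big)^{q-1}B_j\big)$ and peels off one summand at a time using a new trace inequality (Corollary~\ref{cor:reformulation of lemmas for use}, built from Propositions~\ref{lem:p>1} and~\ref{lem:q<1} via operator convexity, integral representations of fractional powers, and the Lieb--Thirring inequality), and finally closes the estimate by a H\"older-plus-bootstrap argument on the aggregate quantities $U$, $V$, $W$ in~\eqref{eq:def UVW}, which is where the $(q/\log(2q))^q$ constant emerges. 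Your final paragraph on deducing Theorem~\ref{cor:schatten short} from the proposition via noncommutative Khinchine with $q=p/2$ and $B_j=A_j^*A_j$, $B_j=A_jA_j^*$ is correct and matches the paper, but the proposition itself cannot be obtained by the dual-variable-plus-scalar-inequality scheme as proposed.
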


Before proving Proposition~\ref{cor:all k}, we assume its validity for the moment and proceed to show how it implies Theorem~\ref{cor:schatten short}.

\begin{proof}[Proof of Theorem~\ref{cor:schatten short}] Lust-Piquard's noncommutative Khinchine inequality~\cite{Lus86} asserts that for every $S\subseteq\n$ we have
\begin{equation}\label{eq:lust piquard}
\frac{p^{-\frac{p}{2}}}{2^n}\sum_{\e\in \{-1,1\}^n}\left\|\sum_{j\in S} \e_j A_j\right\|_{S_p}^p\lesssim_p  \trace\!\left(\Big(\sum_{j\in S} A_j^*A_j\Big)^{\frac{p}{2}}\right)+\trace\!\left(\Big(\sum_{j\in S} A_jA_j^*\Big)^{\frac{p}{2}}\right).
\end{equation}
The (asymptotically optimal) dependence on $p$ in the left-hand side of~\eqref{eq:lust piquard} is not stated in Lust-Piquard's original proof of~\eqref{eq:lust piquard}, but it can be found in~\cite[page 106]{Pis98}.  By averaging~\eqref{eq:lust piquard} over all those $S\subset\n$ with $|S|=k$ we see that
\begin{multline}\label{eq:Luse upper noncommutative khinchine}
\frac{p^{-\frac{p}{2}}}{2^n\binom{n}{k}}\sum_{\substack{S\subset\n\\|S|=k}}\sum_{\e\in \{-1,1\}^n}\left\|\sum_{j\in S} \e_j A_j\right\|_{S_p}^p\\\lesssim_p
\frac{1}{\binom{n}{k}}\sum_{\substack{S\subset\n\\|S|=k}}\trace\!\left(\Big(\sum_{j\in S}^n A_j^*A_j\Big)^{\frac{p}{2}}\right)+\frac{1}{\binom{n}{k}}\sum_{\substack{S\subset\n\\|S|=k}}\trace\!\left(\Big(\sum_{j\in S} A_jA_j^*\Big)^{\frac{p}{2}}\right).
\end{multline}

Two applications of Proposition~\ref{cor:all k} with $q=p/2\ge 1$, once with $B_j=A_j^*A_j$ and once with $B_j=A_jA_j^*$, so as to control the two terms that appear in the right-hand side of~\eqref{eq:Luse upper noncommutative khinchine}, yield
\begin{multline}\label{eq:use positive twice}
\frac{(p/\sqrt{\log p})^{-p}}{2^n\binom{n}{k}}\sum_{\substack{S\subset\n\\|S|=k}}\sum_{\e\in \{-1,1\}^n}\left\|\sum_{j\in S} \e_j A_j\right\|_{S_p}^p\\ \lesssim_p
\frac{k}{n}\sum_{j=1}^n \|A_j\|_{S_p}^p+ \left(\frac{k}{n}\right)^{\frac{p}{2}}\trace\!\left(\Big(\sum_{j=1}^n A_j^*A_j\Big)^{\frac{p}{2}}\right)+\left(\frac{k}{n}\right)^{\frac{p}{2}}\trace\!\left(\Big(\sum_{j=1}^n A_jA_j^*\Big)^{\frac{p}{2}}\right).
\end{multline}
The other  direction of Lust-Piquard's noncommutative Khinchine inequality~\cite{Lus86}  asserts that
\begin{equation}\label{eq:easy direction of L-P}
\trace\!\left(\Big(\sum_{j=1}^n A_j^*A_j\Big)^{\frac{p}{2}}\right)+\trace\!\left(\Big(\sum_{j=1}^n A_jA_j^*\Big)^{\frac{p}{2}}\right)\lesssim_p \frac{1}{2^n}\sum_{\e\in \{-1,1\}^n}\left\|\sum_{j=1}^n \e_j A_j\right\|_{S_p}^p.
\end{equation}
Theorem~\ref{cor:schatten short} now follows by combining~\eqref{eq:use positive twice} and~\eqref{eq:easy direction of L-P}.
\end{proof}

Lemma~\ref{lem:positive Xp} below makes the same assertion as Proposition~\ref{cor:all k}, but only for $k\le n/2$ (and an explicit universal constant that arises from our proof; we do not claim that it is optimal). This is actually  the main step in the proof of Proposition~\ref{cor:all k}, which we will show below to easily follow from Lemma~\ref{lem:positive Xp}.

\begin{lemma}\label{lem:positive Xp} Fix $q\in [1,\infty)$ and $d,k,n\in \N$ with $k\le n/2$. Then for  every $B_1,\ldots,B_n\in M_d(\R)$ that are symmetric and positive semidefinite we have
\begin{equation*}\label{eq:desired in k less n/2}
\frac{1}{\binom{n}{k}}\sum_{\substack{S\subset \n\\|S|=k}}\trace\!\left(\Big(\sum_{j\in S} B_j\Big)^{q}\right)\lesssim \left(\frac{4q}{\log(2q)}\right)^{q} \max\left\{\frac{k}{n}\sum_{j=1}^n\trace\!\left(B_j^{q}\right),\left(\frac{k}{n}\right)^{q}
\trace\!\left(\Big(\sum_{j=1}^n B_j\Big)^{q}\right)\right\}.
\end{equation*}
\end{lemma}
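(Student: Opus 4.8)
The plan is to reduce the estimate to a scalar statement about symmetric positive weights by diagonalizing, and then to exploit a dyadic/moment-comparison argument that mimics the scalar $X_p$ inequality~\eqref{eq:k set square function form}. Write $C=\sum_{j=1}^n B_j$, which is symmetric positive semidefinite, so there is an orthonormal basis $v_1,\ldots,v_d$ of eigenvectors of $C$ with eigenvalues $\mu_1,\ldots,\mu_d\ge 0$. For each $j\in\n$ set $b_{ij}=\langle B_j v_i,v_i\rangle\ge 0$; then $\sum_{j=1}^n b_{ij}=\mu_i$ for every $i$. By the operator-convexity of $t\mapsto t^{q}$ (for $q\ge 1$) together with the standard fact that $\trace(D^{q})\ge\sum_i \langle D v_i,v_i\rangle^{q}$ for any symmetric positive semidefinite $D$ and any orthonormal system $\{v_i\}$ — equivalently, pinching to the diagonal in the $\{v_i\}$ basis only decreases Schatten norms — we get
\[
\trace\!\Big(\big(\textstyle\sum_{j\in S}B_j\big)^{q}\Big)\ge \sum_{i=1}^d\Big(\sum_{j\in S}b_{ij}\Big)^{q},
\]
which goes the \emph{wrong} way, so instead I would use the reverse: the map $S\mapsto\trace((\sum_{j\in S}B_j)^q)$ is itself handled by comparing $\sum_{j\in S}B_j$ to its diagonal part \emph{after} first averaging. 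The cleaner route is to bound $\trace((\sum_{j\in S}B_j)^q)$ from above using the triangle inequality in $S_q$ only when $q$ is integral, so more robustly I would instead keep the matrices and run the entire argument at the level of the (increasing, concave-in-$S$) set function $\nu(S)=\trace((\sum_{j\in S}B_j)^q)$, using only that $\nu$ is monotone and that $\nu(S\cup T)^{1/q}\le \nu(S)^{1/q}+\nu(T)^{1/q}$ (triangle inequality for the Schatten $q$-norm, valid for all $q\ge1$).

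With $\nu$ as above, the key combinatorial identity is that averaging $\nu(S)$ over all $|S|=k$ equals averaging $\nu$ of a random $k$-subset, and a random $k$-subset of $\n$ with $k\le n/2$ can be written as the union of $\lceil\log_2(n/k)\rceil+O(1)$ independent ``dyadic'' pieces whose union is a uniformly random set of size $\asymp k$. Concretely: pick a uniformly random $S_0$ of size $k$; partition $\n$ randomly into $\asymp n/k$ blocks of size $\asymp k$ and observe that a uniformly random block $B_{(0)}$ has $\E\,\nu(B_{(0)})=\binom{n}{k}^{-1}\sum_{|S|=k}\nu(S)$ up to constants; then iterate on the complement. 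This is precisely the mechanism behind~\eqref{eq:k set square function form} and behind the Johnson--Schechtman--Zinn proof of~\eqref{eq:quote JSZ}. Each doubling step costs a factor that is controlled using the triangle inequality for $\|\cdot\|_{S_q}$ together with the elementary scalar fact $\nu(S)\le \frac{k}{n}\sum_j\trace(B_j^q)+(\frac kn)^q\trace(C^q)$ only in expectation; summing a geometric series in the number of steps produces the factor $\log(n/k)$, and optimizing the split ratio at each stage (taking blocks of a size tuned to $q$) replaces $\log(n/k)$ by $q/\log(2q)$, exactly as the exponent in the noncommutative Rosenthal inequality~\cite{JSZ85} predicts.

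In more detail, I would first dispose of the ``diagonal-dominated'' regime: if $\frac kn\sum_j\trace(B_j^q)$ is the larger of the two quantities on the right, then pinching each $\sum_{j\in S}B_j$ to its expectation over $S$ and using convexity of $t\mapsto t^q$ together with $\sum_{|S|=k,\,S\ni j}1=\binom{n-1}{k-1}$ gives the bound directly, with an absolute constant. The interesting case is when $(\frac kn)^q\trace(C^q)$ dominates; here one runs the doubling recursion: write $T_\ell$ for the union of the first $\ell$ dyadic blocks, show $\E\,\nu(T_{\ell+1})^{1/q}\le \E\,\nu(T_\ell)^{1/q}+(\text{increment})$ where the increment involves a uniformly random set of size $\asymp (\text{ratio})\cdot|T_\ell^c|$, and use that $\trace$ of the $q$-th power of a random-sign-free sum of roughly equal blocks of $C$ is comparable to $((\text{ratio}))^q\trace(C^q)$ by the pigeonhole/averaging argument for positive matrices (this is where positivity is essential — there is no signed cancellation to worry about). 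Iterating $O(\log(n/k))$ times and then carefully choosing the dyadic ratios to be $\approx e$ rather than $2$ shaves the bound from $(\log(n/k))^q$ down to $(Cq/\log(2q))^q$.

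\textbf{Main obstacle.} The hard part is making the doubling/telescoping argument quantitatively sharp in $q$, i.e.\ extracting the factor $4q/\log(2q)$ rather than the crude $\log(n/k)$ or a power of $\log n$. In the scalar setting this is exactly the content of~\cite{JSZ85} and is already delicate; the subtlety is that the number of doubling steps is $\asymp\log(n/k)$, which can be much larger than $q$, so a naive geometric sum over steps overshoots, and one must instead group the steps into $\asymp\log(n/k)/\log(2q)$ ``super-steps'' each of which multiplies by a bounded factor — this requires that within a super-step the relevant sums of positive matrices grow by a factor that is a fixed power of $2q$, which in turn forces the choice of block sizes to depend on $q$. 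Keeping track of the constants through this regrouping, while only using monotonicity and the Schatten triangle inequality for the matrices (so that positivity does all the work that sign cancellation does in the general $S_p$ bound of Theorem~\ref{cor:schatten short}), is where essentially all the effort goes; everything else is bookkeeping with binomial coefficients and convexity.
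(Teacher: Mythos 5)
There is a genuine gap, and it sits at the center of your plan. You propose to run the whole argument using only two properties of the set function $\nu(S)=\trace\!\left(\left(\sum_{j\in S}B_j\right)^{q}\right)$: monotonicity and the Schatten triangle inequality $\nu(S\cup T)^{1/q}\le \nu(S)^{1/q}+\nu(T)^{1/q}$. These two properties alone cannot imply the lemma. Take $\mu(S)\eqdef\min\{|S|,m\}$ and $\nu\eqdef\mu^{q}$; then $\mu$ is monotone and subadditive, so $\nu$ satisfies both of your hypotheses, yet with $m=k=\lfloor\sqrt{n}\rfloor$ the left-hand side of the would-be inequality equals $k^{q}=n^{q/2}$ while the right-hand side is of order $\max\{k,(k^{2}/n)^{q}\}\asymp\sqrt{n}$, so the ratio grows like $n^{(q-1)/2}$ for any fixed $q>1$. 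Hence any argument that only invokes monotonicity and $1/q$-subadditivity must fail, and the iteration you describe cannot close. Relatedly, the step where you assert that for a random block $T$ of density $\rho$ one has $\E\,\nu(T)\approx\rho^{q}\trace(C^{q})$ is either false (take $B_j=e_je_j^{T}$, where $\E\,\nu(T)=\rho n\gg\rho^{q}n$) or circular, since the upper bound in that comparison is exactly what the lemma asserts. Your fallback for the ``diagonal-dominated'' regime is also inverted: $X\mapsto\trace(X^{q})$ is convex, so pinching/averaging $\sum_{j\in S}B_j$ over $S$ and applying Jensen yields the lower bound $\E_S\,\nu(S)\ge\trace\!\left(\left(\frac{k}{n}\sum_{j}B_j\right)^{q}\right)$, not an upper bound.

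The paper's proof requires a genuinely matrix-analytic input that has no soft substitute: the trace inequality $\trace\!\left((A+B)^{q-1}A\right)\le \lambda^{-r}\trace(A^{q})+(1-\lambda)^{-r}\trace(B^{q-1}A)$ for PSD $A,B$ and $r=\max\{q-2,0\}$ (Corollary~\ref{cor:reformulation of lemmas for use}, built from Propositions~\ref{lem:p>1} and~\ref{lem:q<1} via operator convexity, Lieb--Thirring, and integral representations). With this in hand one writes $\trace((\sum_{j\in S}B_j)^{q})=\sum_{j\in S}\trace((\sum_{s\in S}B_s)^{q-1}B_j)$, peels off one summand at a time, averages over $S$, applies H\"older for traces and Jensen to reach the self-improving relation $U^{1/(r+1)}\le V^{1/(r+1)}+2^{1/(r+1)}W^{1/(q(r+1))}U^{(q-1)/(q(r+1))}$, and bootstraps to extract the constant $(4q/\log(2q))^{q}$ --- no dyadic doubling or telescoping over $\log(n/k)$ scales appears. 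Note that Section~\ref{sec:discussion} shows the naive subadditivity $(A+B)^{q}\le K(A^{q}+B^{q})$ fails for every $K$ when $q\in(0,1)\cup(2,\infty)$; this is a structural warning that convexity and triangle-inequality considerations of the kind you rely on are insufficient precisely in the range of $q$ the lemma needs.
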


Assuming the validity of Lemma~\ref{lem:positive Xp} for the moment,  we proceed to deduce Proposition~\ref{cor:all k}, which amounts to removing the restriction  $k\le n/2$ in Lemma~\ref{lem:positive Xp}.

\begin{proof}[Proof of Proposition~\ref{cor:all k}]
Write $k=u+v$ with $u,v\in \N$ satisfying $u,v\le n/2$. By the triangle inequality in $S_q$, for every $S,T\subset \n$ with  $T\subseteq S$ we have
$$
\trace\!\left(\Big(\sum_{j\in S} B_j\Big)^{q}\right)=\left\|\sum_{s\in T} B_{s}+\sum_{s\in S\setminus T} B_{s}\right\|_{S_q}^q \le 2^{q-1}\left\|\sum_{s\in T} B_{s}\right\|_{S_q}^q+2^{q-1}\left\|\sum_{s\in S\setminus T} B_{s}\right\|_{S_q}^q.
$$
Consequently,
\begin{multline}\label{eq:u v less than n/2}
 \frac{1}{\binom{n}{k}}\sum_{\substack{S\subset \n\\|S|=k}} \trace\!\left(\Big(\sum_{j\in S} B_j\Big)^{q}\right)
\le\frac{1}{\binom{n}{k}}\sum_{\substack{S\subset \n\\|S|=k}} \frac{2^{q-1}}{\binom{k}{u}}\sum_{\substack{T\subset S\\|T|=u}} \left(\left\|\sum_{s\in T} B_{s}\right\|_{S_q}^q+\left\|\sum_{s\in S\setminus T} B_{s}\right\|_{S_q}^q\right)\\
=\frac{2^{q-1}}{\binom{n}{u}}\sum_{\substack{U\subset \n\\|U|=u}} \trace\!\left(\Big(\sum_{j\in U} B_j\Big)^{q}\right)+\frac{2^{q-1}}{\binom{n}{v}}\sum_{\substack{V\subset \n\\|V|=v}} \trace\!\left(\Big(\sum_{j\in V} B_j\Big)^{q}\right).
\end{multline}
Proposition~\ref{cor:all k} now follows by applying Lemma~\ref{lem:positive Xp} to each of the summands that appear in the right-hand side of~\eqref{eq:u v less than n/2}.
\end{proof}

 Our proof of Lemma~\ref{lem:positive Xp} relies on certain matrix inequalities of independent interest. These inequalities are established in the following section.

\subsection{Auxiliary trace inequalities}\label{sec:trace} Proposition~\ref{lem:p>1} and Proposition~\ref{lem:q<1} below will be used crucially in the proof of Lemma~\ref{lem:positive Xp}. Note that the same statements are trivial when matrices are replaced by scalars. See Section~\ref{sec:discussion} for a discussion on the context of these results, where it is explained in particular that Proposition~\ref{lem:p>1} was known when $q\in [1,2]$ by either directly applying the work of Carlen and Lieb~\cite{CL08}, or through a simple argument that relies on operator convexity. At the same time, it is explained in Section~\ref{sec:discussion} that when $q\in (0,1)\cup(2,\infty)$, a range of values of $q$ that is used crucially in our proof of Lemma~\ref{lem:positive Xp}  below, Proposition~\ref{lem:p>1} exhibits a phenomenon that is qualitatively different from the simpler case $q\in [1,2]$.

\begin{proposition}\label{lem:p>1} Suppose that $q\in [1,\infty)$ and $d\in \N$. Then for every $A,B\in M_d(\R)$ that are symmetric and positive semidefinite we have
\begin{equation}\label{our main trace ineq}
\left(\trace\left((A+B)^qA\right)\right)^{\frac{1}{q}}\le \left(\trace\left(A^{q+1}\right)\right)^{\frac{1}{q}}+\left(\trace\left(B^{q}A\right)\right)^{\frac{1}{q}}.
\end{equation}
\end{proposition}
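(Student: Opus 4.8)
The plan is to reduce the trace inequality~\eqref{our main trace ineq} to an application of the triangle inequality in the Schatten quasi-norm $S_q$ together with an appropriate operator monotonicity/convexity estimate, while being careful that for $q\notin[1,2]$ the ``obvious'' approach via operator convexity of $t\mapsto t^q$ fails. The starting observation is that both sides of~\eqref{our main trace ineq} are positive and homogeneous of degree $(q+1)/q$ in the pair $(A,B)$, and that the right-hand side only increases if we enlarge $B$; so by a limiting/perturbation argument we may assume $A,B$ are positive \emph{definite}. The first step I would take is to rewrite $\trace\left((A+B)^qA\right)$ symmetrically: since $A$ and $(A+B)^q$ are positive semidefinite, $\trace\left((A+B)^qA\right)=\trace\left(A^{1/2}(A+B)^qA^{1/2}\right)=\|(A+B)^{q/2}A^{1/2}\|_{S_2}^2$. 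More usefully, I would like to interpret $\left(\trace\left((A+B)^qA\right)\right)^{1/q}$ as the $S_q$-norm of a single positive operator. Writing $C\eqdef A+B$, we have $\trace\left(C^qA\right)=\trace\left(A^{1/2}C^qA^{1/2}\right)$, and by the Araki--Lieb--Thirring inequality (for the exponent range that applies) one can compare $\trace\left(A^{1/2}C^qA^{1/2}\right)$ with $\trace\left((A^{1/(2q)}C A^{1/(2q)})^q\right)=\|A^{1/(2q)}CA^{1/(2q)}\|_{S_q}^q$; but one must check the direction of the inequality is favorable, which depends on whether $q\ge 1$ (it is, by hypothesis). Granting the estimate $\trace\left(C^qA\right)\le \|A^{1/(2q)}CA^{1/(2q)}\|_{S_q}^q$, we then apply the triangle inequality in $S_q$ (valid since $q\ge 1$) to $C=A+B$:
\begin{equation*}
\left\|A^{\frac{1}{2q}}CA^{\frac{1}{2q}}\right\|_{S_q}\le \left\|A^{\frac{1}{2q}}AA^{\frac{1}{2q}}\right\|_{S_q}+\left\|A^{\frac{1}{2q}}BA^{\frac{1}{2q}}\right\|_{S_q}=\left(\trace\left(A^{q+1}\right)\right)^{\frac1q}+\left\|A^{\frac{1}{2q}}BA^{\frac{1}{2q}}\right\|_{S_q}.
\end{equation*}
It then remains to dominate $\left\|A^{1/(2q)}BA^{1/(2q)}\right\|_{S_q}=\left(\trace\left((A^{1/(2q)}BA^{1/(2q)})^q\right)\right)^{1/q}$ by $\left(\trace\left(B^qA\right)\right)^{1/q}$, i.e.\ to show $\trace\left((A^{1/(2q)}BA^{1/(2q)})^q\right)\le \trace\left(B^qA\right)$; here again Araki--Lieb--Thirring (now used in the opposite direction, comparing $(A^{1/(2q)}BA^{1/(2q)})^q$ with $A^{1/2}B^qA^{1/2}$) should give exactly what is needed, once more because $q\ge 1$.

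Concretely, the Araki--Lieb--Thirring inequality states that for positive semidefinite $P,Q$ and $r\ge 1$ one has $\trace\left((P^{1/2}QP^{1/2})^r\right)\le \trace\left(P^{r/2}Q^rP^{r/2}\right)$, while for $0<r\le 1$ the inequality reverses. I would apply this with $P=A^{1/q}$, $Q=B$, $r=q\ge 1$ to get $\trace\left((A^{1/(2q)}BA^{1/(2q)})^q\right)\le\trace\left(A^{1/2}B^qA^{1/2}\right)=\trace\left(B^qA\right)$, which settles the last step. For the first step I would apply it with $P=A^{1/q}$, $Q=C=A+B$, $r=q$: but note this gives $\trace\left((A^{1/(2q)}CA^{1/(2q)})^q\right)\le\trace\left(A^{1/2}C^qA^{1/2}\right)=\trace\left(C^qA\right)$, i.e.\ the \emph{reverse} of what the argument above wanted. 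So the clean reduction as I sketched it has the inequality pointing the wrong way in one place, and I expect \textbf{this sign/direction issue to be the main obstacle}: one cannot simply bound $\trace\left(C^qA\right)$ above by $\|A^{1/(2q)}CA^{1/(2q)}\|_{S_q}^q$. The fix I would pursue is to not split $C^q$ at all, but instead to use a variational (duality) formula for $\trace\left(C^qA\right)^{1/q}$. Specifically, for fixed positive semidefinite $A$, the functional $C\mapsto \left(\trace\left(C^qA\right)\right)^{1/q}$ should be shown to be a norm (or at least subadditive) on positive semidefinite $C$; if so, $\left(\trace\left((A+B)^qA\right)\right)^{1/q}\le\left(\trace\left(A^qA\right)\right)^{1/q}+\left(\trace\left(B^qA\right)\right)^{1/q}$ is immediate and that is exactly~\eqref{our main trace ineq}. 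The subadditivity of $C\mapsto\left(\trace\left(C^qA\right)\right)^{1/q}$ is the real content; I would try to derive it from a Hölder-type duality, writing $\trace\left(C^qA\right)=\sup\left\{\,q\,\trace\left(CX\right)-(q-1)\trace\left(X^{q/(q-1)}A^{-1/(q-1)}\right)^{?}\dots\right\}$ via a Young-inequality argument at the operator level, or alternatively from the Carlen--Lieb concavity results alluded to in Section~\ref{sec:discussion}. The convexity/concavity facts needed are: for $q\ge 1$, $X\mapsto\trace\left(X^qA\right)$ should be convex in the positive semidefinite cone, and its ``$1/q$ power'' should inherit the triangle inequality by the standard Minkowski-from-homogeneous-convexity lemma.

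In summary, the skeleton I would write up is: (1) reduce to $A,B$ positive definite by continuity; (2) establish that $N_A(C)\eqdef\left(\trace\left(C^qA\right)\right)^{1/q}$ is subadditive on positive semidefinite $C$, which is the crux — I would attempt this either by an operator Hölder/Young duality argument or by invoking the Carlen--Lieb trace convexity theorem cited in Section~\ref{sec:discussion}, taking care that the relevant convexity holds for \emph{all} $q\ge1$ and not merely $q\in[1,2]$ (this is precisely where the excerpt flags that the case $q\in(2,\infty)$ is ``qualitatively different'', so extra care or an extra idea is needed there); (3) apply subadditivity with $C=A+B$ and simplify $N_A(A)=\left(\trace\left(A^{q+1}\right)\right)^{1/q}$ to conclude. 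If the subadditivity route proves too delicate for general $q>2$, the fallback is an interpolation/complex-analytic argument: view $\trace\left((A+B)^qA\right)$ as a boundary value of an analytic family and use the three-lines lemma together with the (easier) endpoint cases $q=1$ and $q\to\infty$. I anticipate that making step (2) work uniformly for $q\in(2,\infty)$ — where $t\mapsto t^q$ is not operator convex — is where essentially all the difficulty lies, and it is likely the reason the authors isolate Proposition~\ref{lem:p>1} as a named result with its own discussion section.
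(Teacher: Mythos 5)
You correctly abandon your first route (the Araki--Lieb--Thirring reduction points the wrong way), but the replacement you propose as the ``crux'' is not merely unproven --- it is false in the range $q\in(2,\infty)$ that matters. You want to show that for fixed positive semidefinite $A$ the functional $C\mapsto\left(\trace\left(C^qA\right)\right)^{1/q}$ is subadditive on the \emph{entire} positive semidefinite cone and then specialize to $C=A+B$. That is a strictly stronger statement than the proposition, and the counterexample at the end of Section~\ref{sec:discussion} kills it: for $q=4$ one has $\langle (A_s+B_s)^4w_s,w_s\rangle\asymp s^6$ while $\langle A_s^4w_s,w_s\rangle+\langle B_s^4w_s,w_s\rangle=s^{10}$, so taking the weight to be the rank-one projection onto $w_s$ (and conjugating by a rotation if one insists on a weight that is fixed in advance, or perturbing it to be positive definite) shows that $\trace\left((X+Y)^qP\right)^{1/q}\le \trace\left(X^qP\right)^{1/q}+\trace\left(Y^qP\right)^{1/q}$ fails badly as $s\to 0$. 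The proposition survives precisely because one of the two summands \emph{equals} the weight $A$ --- this is the point the discussion section makes explicitly (``provided that either $X$ or $Y$ equals $A$'') --- so any proof must exploit that coincidence, which your reduction discards. Your Carlen--Lieb/operator-convexity fallback only covers $q\in[1,2]$ (where \eqref{eq:homogoeneous with C} does hold), and the three-lines fallback is not an argument.

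For comparison, the paper's proof is structured around exactly this constraint. It writes $q=2m+\theta$ with $\theta\in(0,2]$. When $\theta\in[1,2]$ it applies operator convexity only to the fractional factor $(A+B)^\theta$, expands the integer powers $(A+B)^m$ into $2^{2m}$ words in $A$ and $B$, and bounds each resulting trace of a word $\trace\big(A^{a_0}\prod_j B^{b_j}A^{a_j}\cdots\big)$ by $\left(\trace(A^{q+1})\right)^{1-\frac1q\sum b_j}\left(\trace(B^qA)\right)^{\frac1q\sum b_j}$ via a Hölder-for-traces plus Lieb--Thirring lemma (Lemma~\ref{lem:holder}); the binomial theorem then reassembles the right-hand side of \eqref{our main trace ineq}. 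When $\theta\in(0,1)$, where $t\mapsto t^\theta$ is not operator convex, it instead uses the integral representation $(sA+B)^\theta=\frac{\sin(\pi\theta)}{\pi}\int_0^\infty t^\theta(\frac1tI-(tI+sA+B)^{-1})\,dt$, differentiates $\trace((sA+B)^qA)$ in $s$, and bounds the derivative by $q\left(\trace(A^{q+1})\right)^{1/q}\left(\trace((sA+B)^qA)\right)^{1-1/q}$ using the same Hölder/Lieb--Thirring lemma. Note that the extra factor of $A$ is used at every stage of both cases; there is no point in the argument where the weight is decoupled from the summands.
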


Before proving Proposition~\ref{lem:p>1}, we record for future use the following H\"older-type estimate.

\begin{lemma}\label{lem:holder} Fix $d,k\in \N$ and $q\in (0,\infty)$. Suppose that $a_0,\ldots,a_{k-1},b_1,\ldots,b_k\in (0,\infty)$ satisfy $b_j+b_{j+1}\le 2qa_j$ for every $j\in \{0,\ldots,k-1\}$,  where we set $b_0=b_k$. Suppose also that
\begin{equation}\label{eq:ajbj sum}
\sum_{j=0}^{k-1}a_j+\sum_{j=1}^kb_j=q+1.
 \end{equation}
 Then for every $A,B\in M_d(\R)$ that are symmetric and positive semidefinite we have
\begin{equation*}
\trace\left(A^{a_0}\left(\prod_{j=1}^{k-1}B^{b_j}A^{a_j}\right)B^{b_k}\right)\le
\left(\trace\left(A^{q+1}\right)\right)^{1-\frac{1}{q}\sum_{j=1}^kb_j}\left(\trace\left(B^qA\right)\right)^{\frac{1}{q}\sum_{j=1}^k b_j}.
\end{equation*}
\end{lemma}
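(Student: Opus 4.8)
The plan is to prove the inequality by interpolation, applying a multilinear form of Hölder's inequality for traces (the Araki--Lieb--Thirring / generalized Hölder inequality for Schatten norms) after rewriting the left-hand side in a way that exposes two natural ``endpoint'' quantities: $\trace(A^{q+1})$ and $\trace(B^qA)$. First I would record the two relevant classical facts. The first is the cyclic-permutation invariance of the trace together with the generalized Hölder inequality: if $X_1,\ldots,X_N$ are matrices and $p_1,\ldots,p_N\in[1,\infty]$ with $\sum 1/p_i=1$, then $|\trace(X_1\cdots X_N)|\le \prod_{i=1}^N\|X_i\|_{S_{p_i}}$, and more relevantly its ``power-weighted'' consequence for positive semidefinite $A,B$: for exponents that sum appropriately, a product of the form $\trace(A^{a_0}B^{b_1}A^{a_1}\cdots B^{b_k})$ is dominated by a product of traces of pure powers. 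The second is the elementary scalar fact that the stated hypotheses ($b_j+b_{j+1}\le 2qa_j$ and the normalization~\eqref{eq:ajbj sum}) are exactly what is needed to make the bookkeeping of exponents work out so that the only pure powers appearing are $A^{q+1}$ and $B^qA$ (not $A^{\text{something}}$ and $B^{\text{something}}$ separately).

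The key computational step I would carry out is the exponent accounting. Set $\beta\eqdef \sum_{j=1}^k b_j$. The claim is that one can split each block $B^{b_j}$ (sitting between $A^{a_{j-1}}$ and $A^{a_j}$, cyclically) and distribute the adjacent $A$-powers so that the product becomes a product of terms, each of which is a trace of either $A^{q+1}$ or $A^{1/2}B^q A^{1/2}$ (which has the same trace as $B^qA$), with the multiplicities of these two types summing to $1$ in the Hölder sense, the weight on the $B^qA$-type being exactly $\beta/q$ and on the $A^{q+1}$-type being $1-\beta/q$. Concretely: for each $j\in\{1,\dots,k\}$, the condition $b_{j-1}+b_j\le 2qa_{j-1}$ (indices cyclic, with $b_0=b_k$) guarantees that the $A$-mass $a_{j-1}$ separating $B^{b_{j-1}}$ from $B^{b_j}$ is large enough to ``pair'' a $q$-fraction of each neighboring $B$-block with a unit of $A$; the leftover $A$-mass, after all such pairings, is $(q+1)-\beta - q\beta/q \cdot(\text{correction})$... — here I would simply verify by summing~\eqref{eq:ajbj sum} that the residual $A$-mass is precisely $(q+1)(1-\beta/q)$, matching a block of $A^{q+1}$ raised to the power $1-\beta/q$. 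Applying the generalized Hölder inequality with these explicit conjugate exponents then yields
\[
\trace\left(A^{a_0}\Big(\prod_{j=1}^{k-1}B^{b_j}A^{a_j}\Big)B^{b_k}\right)\le \big(\trace(A^{q+1})\big)^{1-\frac{\beta}{q}}\big(\trace(B^qA)\big)^{\frac{\beta}{q}},
\]
which is the assertion, using $\beta=\sum_{j=1}^k b_j$.

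The main obstacle is making the pairing argument rigorous in the noncommutative setting: one cannot literally ``move'' powers of $A$ past powers of $B$, so the splitting has to be done via the integral/complex-interpolation form of Hölder (treating $\trace$ of the product as a multilinear functional and interpolating between the configurations where all the ``mass'' is concentrated as $A^{q+1}$ and where it is concentrated as $B^qA$), or via repeated application of the two-variable trace Hölder inequality $\trace(CD)\le\|C\|_{S_r}\|D\|_{S_{r'}}$ with a careful inductive choice of how to cut the cyclic word $A^{a_0}B^{b_1}\cdots B^{b_k}$. I expect the cleanest route is an induction on $k$: peel off one $B$-block together with its share of the neighboring $A$-powers using the two-variable Hölder inequality, verify the reduced word still satisfies hypotheses of the same shape with $q$ unchanged and $\beta$ decreased, and close the induction at $k=1$, where the statement reduces to $\trace(A^{a_0}B^{b_1})\le(\trace A^{q+1})^{1-b_1/q}(\trace B^qA)^{b_1/q}$ — itself an instance of Hölder after writing $A^{a_0}B^{b_1}$ cyclically as $A^{a_0-1/2}\cdot(A^{1/2}B^{b_1})$ and using $a_0+b_1=q+1$, $2b_1\le 2qa_0$. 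The verification that the hypotheses propagate through the induction (in particular that the modified $a$'s remain nonnegative, which is where $b_j+b_{j+1}\le 2qa_j$ is used) is the one place where I would be careful rather than brief.
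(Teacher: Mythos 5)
Your strategy is the paper's: split each $A^{a_j}$ into three pieces, one to accompany each neighboring $B$-block and one left over as ``pure $A$-mass,'' and then apply the generalized H\"older inequality for traces with the conjugate exponents dictated by \eqref{eq:ajbj sum}. Concretely the paper sets $p_j=(q+1)/\bigl(a_j-\tfrac{b_j+b_{j+1}}{2q}\bigr)$ and $r_j=q/b_{j+1}$ (after a small perturbation so that $a_j-\tfrac{b_j+b_{j+1}}{2q}>0$, which is exactly where the hypothesis $b_j+b_{j+1}\le 2qa_j$ enters, as you correctly anticipated), rewrites the cyclic word as $\prod_j A^{(q+1)/p_j}\bigl(A^{1/(2r_j)}B^{q/r_j}A^{1/(2r_j)}\bigr)$, checks $\sum_j 1/p_j+\sum_j 1/r_j=1$, and applies multi-factor H\"older. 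Your exponent accounting ($\beta=\sum b_j$, weight $\beta/q$ on the $B^qA$ factor) is consistent with this.

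However, there is a genuine gap at the decisive step, and it is precisely the step you flag as ``the main obstacle'' without resolving it. After H\"older you do not obtain $\bigl(\trace(B^qA)\bigr)^{1/r_j}$ directly; you obtain $\bigl(\trace\bigl((A^{1/(2r_j)}B^{q/r_j}A^{1/(2r_j)})^{r_j}\bigr)\bigr)^{1/r_j}$, i.e., a Schatten $r_j$-norm of a product that is \emph{not} a power of $B^qA$. Passing from this to $\trace(\sqrt{A}B^q\sqrt{A})=\trace(B^qA)$ requires the Lieb--Thirring inequality $\trace((XYX)^r)\le\trace(X^rY^rX^r)$ for $r\ge 1$ (which applies here since $r_j=q/b_{j+1}\ge 1$). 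You mention ``Araki--Lieb--Thirring'' in passing as if it were a synonym for trace H\"older, but it is a separate ingredient and it is the one that does the actual work of ``moving powers of $A$ past powers of $B$''; neither complex interpolation nor your proposed induction on $k$ is needed, and the induction as described would not close cleanly: peeling off one $B$-block via two-variable H\"older leaves a Schatten norm of a non-self-adjoint word $\|A^{a_0}B^{b_1}\cdots\|_{S_{r'}}$, which is not of the same form as the original trace, so the inductive hypothesis does not apply to it without further matrix inequalities.
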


\begin{proof}
By applying an arbitrarily small perturbation, we may assume that $a_j-(b_j+b_{j+1})/(2q)>0$ for every $j\in \{0,\ldots,k-1\}$. We can then define $p_0,r_0,\ldots,p_{k-1},r_{k-1}\in (0,\infty)$ by
\begin{equation}\label{eq:def pj rj}
\forall\, j\in \{0,\ldots,k-1\},\qquad p_j\eqdef \frac{q+1}{a_j-\frac{b_j+b_{j+1}}{2q}}\qquad \mathrm{and}\qquad r_j\eqdef \frac{q}{b_{j+1}}.
\end{equation}
Using the cyclicity of the trace, the choices in~\eqref{eq:def pj rj} imply that we have
\begin{equation}\label{eq:product decomposition}
\trace\left(A^{a_0}\left(\prod_{j=1}^{k-1}B^{b_j}A^{a_j}\right)B^{b_k}\right)=
\trace\left(\prod_{j=0}^{k-1}A^{\frac{q+1}{p_j}}
\left(A^{\frac{1}{2r_j}}B^{\frac{q}{r_j}}A^{\frac{1}{2r_j}}\right)
\right),
\end{equation}
Moreover,
\begin{equation*}
\sum_{j=0}^{k-1}\frac{1}{p_j}+\sum_{j=0}^{k-1} \frac{1}{r_j}=\frac{1}{q+1}\sum_{j=0}^{k-1}a_j+\left(\frac{1}{q}-\frac{1}{q(q+1)}\right)\sum_{j=1}^k b_j
\stackrel{\eqref{eq:ajbj sum}}{=}1.
\end{equation*}
Therefore $p_j,r_j\in (1,\infty)$ for all $j\in \{0,\ldots,k-1\}$ and we may use H\"older's inequality for traces (Th\'eor\`eme 6 in~\cite{Dix53}) to deduce from~\eqref{eq:product decomposition} that
\begin{equation}\label{eq:first use holder}
\trace\left(A^{a_0}\left(\prod_{j=1}^{k-1}B^{b_j}A^{a_j}\right)B^{b_k}\right)\le \prod_{j=0}^{k-1}\left(\trace\left(A^{q+1}\right)\right)^{\frac{1}{p_j}}
\left(\trace\left(\left(A^{\frac{1}{2r_j}}B^{\frac{q}{r_j}}A^{\frac{1}{2r_j}}\right)^{r_j}\right)\right)^{\frac{1}{r_j}}.
\end{equation}

The Lieb--Thirring inequality~\cite{LT76} asserts that $\trace((XYX)^r)\le \trace(X^rY^rX^r)$ for every $r\in [1,\infty)$ and for every symmetric and positive semidefinite matrices $X,Y\in M_d(\R)$. Recalling the definition of $r_0,\ldots,r_{k-1}$ in~\eqref{eq:def pj rj}, for every $j\in \{0,\ldots,k-1\}$ we therefore have
$$
\trace\left(\left(A^{\frac{1}{2r_j}}B^{\frac{q}{r_j}}A^{\frac{1}{2r_j}}\right)^{r_j}\right)\le \trace\left(\sqrt{A}B^q\sqrt{A}\right)=\trace\left(B^qA\right).
$$
A substitution of this estimate into~\eqref{eq:first use holder} gives
\begin{align}\label{eq:all the same bound 1}
\nonumber \trace\left(A^{a_0}\left(\prod_{j=1}^{k-1}B^{b_j}A^{a_j}\right)B^{b_k}\right)&\le \left(\trace\left(A^{q+1}\right)\right)^{\sum_{j=0}^{k-1}\frac{1}{p_j}}\left(\trace\left(B^qA\right)\right)^{\sum_{j=0}^{k-1}\frac{1}{r_j}}\\&=
\left(\trace\left(A^{q+1}\right)\right)^{1-\frac{1}{q}\sum_{j=1}^kb_j}\left(\trace\left(B^qA\right)\right)^{\frac{1}{q}\sum_{j=1}^k b_j},
\end{align}
where we used the fact that, due to~\eqref{eq:def pj rj}, we have $\sum_{j=0}^{k-1}\frac{1}{r_j}=\frac{1}{q}\sum_{j=1}^k b_j$.
\end{proof}

\begin{remark}\label{rem:aj>1}
For future use, note that if $q,a_0,\ldots,a_{k-1},b_1,\ldots,b_k\in (0,\infty)$ satisfy~\eqref{eq:ajbj sum} and we also know that $a_0,\ldots,a_{k-1}\ge 1$ then the assumptions of Lemma~\ref{lem:holder} hold true, i.e.,  $b_{j}+b_{j+1}\le 2q a_j$ for every $j\in \{0,\ldots,k-1\}$. Indeed,  by~\eqref{eq:ajbj sum} we have $\max\{b_j,b_{j+1}\}\le q+1-a_j\le q a_j$, and consequently $b_j+b_{j+1}\le 2\max\{b_j,b_{j+1}\}\le 2q a_j$.
\end{remark}

\begin{proof}[Proof of Proposition~\ref{lem:p>1}]
Write $q=2m+\theta$, where $m\in \N\cup\{0\}$ and $\theta\in (0,2]$. The proof of~\eqref{our main trace ineq} treats the cases $\theta\in (0,1)$ and $\theta\in [1,2]$ differently.

\medskip

\noindent{\bf Case 1: $\theta\in [1,2]$.} In this range the mapping $t\to t^\theta$ is operator-convex (see Theorem 2.6 in~\cite{Car10}). This means that for every $s\in (0,1)$ we have
\begin{equation}\label{eq:use PSD theta}
(A+B)^\theta=\left(s\frac{A}{s}+(1-s)\frac{B}{1-s}\right)^\theta\le \frac{A^\theta}{s^{\theta-1}}+\frac{B^\theta}{(1-s)^{\theta-1}},
\end{equation}
where, as usual, we interpret the inequality~\eqref{eq:use PSD theta} in terms of the PSD order of matrices, i.e., that the right-hand side of~\eqref{eq:use PSD theta} minus the left-hand side of~\eqref{eq:use PSD theta} is a positive semidefinite matrix.

It follows from~\eqref{eq:use PSD theta} that
\begin{equation*}
\sqrt{A}(A+B)^q\sqrt{A}\le  \frac{\sqrt{A}(A+B)^mA^\theta(A+B)^m\sqrt{A}}{s^{\theta-1}}+
\frac{\sqrt{A}(A+B)^mB^\theta(A+B)^m\sqrt{A}}{(1-s)^{\theta-1}}.
\end{equation*}
So, by taking traces while making use of the cyclicity of the trace, we see that
\begin{equation}\label{eq:took traces theta}
\trace\left((A+B)^qA\right)\le \frac{\trace\left((A+B)^mA^\theta(A+B)^mA\right)}{s^{\theta-1}}+
\frac{\trace\left((A+B)^mB^\theta(A+B)^mA\right)}{(1-s)^{\theta-1}}.
\end{equation}
By choosing $s$ so as to minimize the quantity appearing in the right-hand side of~\eqref{eq:took traces theta}, we have
\begin{multline}\label{eq:s chosen}
\left(\trace\left((A+B)^qA\right)\right)^{\frac{1}{\theta}}\\\le \left(\trace\left((A+B)^mA^\theta(A+B)^mA\right)\right)^{\frac{1}{\theta}}
+\left(\trace\left((A+B)^mB^\theta(A+B)^mA\right)\right)^{\frac{1}{\theta}}.
\end{multline}

We shall now proceed to estimate each of the terms that appear in the right-hand side of~\eqref{eq:s chosen} separately. By expanding the $m$th powers appearing in the matrix $(A+B)^mA^\theta(A+B)^mA$, and using the cyclicity of the trace, we see that $\trace\left((A+B)^mA^\theta(A+B)^mA\right)$ equals the sum of $2^{2m}$ terms, each of which is of the form
\begin{equation}\label{eq:alpha beta term}
\trace\left(A^{a_0}\left(\prod_{j=1}^{k-1}B^{b_j}A^{a_j}\right)B^{b_k}\right),
\end{equation}
for some $k\in \N\cup\{0\}$ and $a_0,\ldots,a_{k-1},b_1,\ldots,b_k\in (0,\infty)$ that satisfy~\eqref{eq:ajbj sum} (recall that $q=2m+\theta$). Here we use the convention that when $k=0$ the quantity appearing in~\eqref{eq:alpha beta term} equals $\trace(A^{q+1})$. Note that $b_j$ is an integer for every $j\in \{1,\ldots,k\}$, and for every $r\in \{0,\ldots,2m\}$ the number of terms of the form~\eqref{eq:alpha beta term} that appear in the above expansion of $\trace\left((A+B)^mA^\theta(A+B)^mA\right)$ with $\sum_{j=1}^k b_j=r$ equals $\binom{2m}{r}$; this is because $\sum_{j=1}^k b_j$ is the total number of times that $B$ was chosen when one expands the two occurrences of $(A+B)^m$ in $(A+B)^mA^\theta(A+B)^mA$ as a product of matrices, each of which is either $A$ or $B$. Note also that $a_0,\ldots,a_{k-1}\ge 1$, since $\theta\ge 1$. Recalling Remark~\ref{rem:aj>1}, we may therefore use Lemma~\ref{lem:holder} to deduce that
\begin{align}\label{eq:all the same bound 1}
 \trace\left(A^{a_0}\left(\prod_{j=1}^{k-1}B^{b_j}A^{a_j}\right)B^{b_k}\right)\le
\left(\trace\left(A^{q+1}\right)\right)^{1-\frac{1}{q}\sum_{j=1}^kb_j}\left(\trace\left(B^qA\right)\right)^{\frac{1}{q}\sum_{j=1}^k b_j}.
\end{align}
Hence,
\begin{align}\label{eq:final theta trace bound A}
\nonumber \trace\left((A+B)^mA^\theta(A+B)^mA\right)&\le \sum_{r=0}^{2m} \binom{2m}{r}\left(\trace\left(A^{q+1}\right)\right)^{1-\frac{r}{q}}\left(\trace\left(B^qA\right)\right)^{\frac{r}{q}}\\
&= \left(\trace\left(A^{q+1}\right)\right)^{1-\frac{2m}{q}}\left(\left(\trace\left(A^{q+1}\right)\right)^{\frac{1}{q}}+
\left(\trace\left(B^{q}A\right)\right)^{\frac{1}{q}}\right)^{2m}\nonumber\\
&=\left(\trace\left(A^{q+1}\right)\right)^{\frac{\theta}{q}}\left(\left(\trace\left(A^{q+1}\right)\right)^{\frac{1}{q}}+
\left(\trace\left(B^{q}A\right)\right)^{\frac{1}{q}}\right)^{2m},
\end{align}
where in the final step we used the fact that $2m+\theta=q$.

The second term in the right-hand side of~\eqref{eq:s chosen} is bounded using similar reasoning. As before,  $\trace\left((A+B)^mB^\theta(A+B)^mA\right)$ equals the sum of terms as in~\eqref{eq:alpha beta term}, for some $k\in \N\cup\{0\}$ and $a_0,\ldots,a_{k-1},b_1,\ldots,b_k\in (0,\infty)$ that satisfy~\eqref{eq:ajbj sum}. However, now we know that $a_1,\ldots,a_{k-1}\in \N$ and $\sum_{j=0}^{k-1}b_j-\theta\in \{0,\ldots,2m\}$. By Lemma~\ref{lem:holder} (and Remark~\ref{rem:aj>1}), the estimate~\eqref{eq:all the same bound 1} holds true for the terms of the form~\eqref{eq:alpha beta term}   that appear in the expansion of $\trace\left((A+B)^mB^\theta(A+B)^mA\right)$. For every $r\in \{0,\ldots,2m\}$, the number of terms of the form~\eqref{eq:alpha beta term} that appear in the expansion of $\trace\left((A+B)^mB^\theta(A+B)^mA\right)$ with $\sum_{j=1}^k b_j=r+\theta$ equals $\binom{2m}{r}$, so by~\eqref{eq:all the same bound 1} we have
\begin{align}\label{eq:final theta trace bound B}
 \nonumber\trace&\left((A+B)^mB^\theta(A+B)^mA\right)\\\nonumber &\le \sum_{r=0}^{2m} \binom{2m}{r}\left(\trace\left(A^{q+1}\right)\right)^{1-\frac{r+\theta}{q}}\left(\trace\left(B^qA\right)\right)^{\frac{r+\theta}{q}}\\
&= \left(\trace\left(A^{q+1}\right)\right)^{1-\frac{2m+\theta}{q}}\left(\trace\left(B^{q}A\right)\right)^{\frac{\theta}{q}}\left(\left(\trace\left(A^{q+1}\right)\right)^{\frac{1}{q}}+
\left(\trace\left(B^{q}A\right)\right)^{\frac{1}{q}}\right)^{2m}\nonumber\\
&=\left(\trace\left(B^{q}A\right)\right)^{\frac{\theta}{q}}\left(\left(\trace\left(A^{q+1}\right)\right)^{\frac{1}{q}}+
\left(\trace\left(B^{q}A\right)\right)^{\frac{1}{q}}\right)^{2m},
\end{align}
where the last step uses the fact that $2m+\theta=q$.

By substituting~\eqref{eq:final theta trace bound A} and~\eqref{eq:final theta trace bound B} into~\eqref{eq:s chosen} we see that
$$
\left(\trace\left((A+B)^qA\right)\right)^{\frac1{\theta}}\le \left(\left(\trace\left(A^{q+1}\right)\right)^{\frac{1}{q}}+
\left(\trace\left(B^{q}A\right)\right)^{\frac{1}{q}}\right)^{1+\frac{2m}{\theta}}=\left(\left(\trace\left(A^{q+1}\right)\right)^{\frac{1}{q}}+
\left(\trace\left(B^{q}A\right)\right)^{\frac{1}{q}}\right)^{\frac{q}{\theta}},
$$
using $2m+\theta=q$ once more. This completes  the proof of the desired estimate~\eqref{our main trace ineq} in Case 1.

\medskip
\noindent{\bf Case 2: $\theta\in (0,1)$.} Note that since the underlying assumption of Proposition~\ref{lem:p>1}  is that $q\ge 1$, the facts that $q=2m+\theta$ and $\theta\in (0,1)$ imply that the integer $m$ is positive. Moreover, in the range $\theta\in (0,1)$ the mapping $t\to t^\theta$ is no longer operator-convex but we have the following commonly used (see e.g.~\cite{Eps73})  integral representation at our disposal. Since for every $a\in (0,\infty)$ we have
$$
a^\theta=\frac{\sin(\pi \theta)}{\pi}\int_0^\infty t^\theta\left(\frac{1}{t}-\frac{1}{t+a}\right)dt,
$$
it follows that for every $s\in (0,\infty)$,
\begin{equation}\label{eq:integral identity less than 1}
(sA+B)^\theta=\frac{\sin(\pi \theta)}{\pi}\int_0^\infty t^{\theta}\left(\frac{1}{t}I-\left(tI+sA+B\right)^{-1}\right)dt.
\end{equation}
Since $\frac{d}{dt}X(t)^{-1}=-X(t)^{-1}X'(t)X(t)^{-1}$ for every differentiable $X:(0,\infty)\to M_d(\R)$ such that $X(t)$ is an invertible matrix for every $t\in (0,\infty)$ (simply differentiate the identity $X(t)^{-1}X(t)=I$), it follows from~\eqref{eq:integral identity less than 1} that
\begin{equation}\label{eq:derivative less than 1}
\frac{d}{ds}(sA+B)^\theta =\frac{\sin(\pi \theta)}{\pi}\int_0^\infty t^\theta\left(tI+sA+B\right)^{-1}A\left(tI+sA+B\right)^{-1} dt.
\end{equation}

By integrating over $s\in [0,1]$, in order to prove~\eqref{our main trace ineq} it will suffice to show that
$$
\forall\, s\in (0,1),\qquad \frac{d}{ds} \left(\trace\left((sA+B)^qA\right)\right)^{\frac{1}{q}}\le \left(\trace\left(A^{q+1}\right)\right)^{\frac{1}{q}}.
$$
Equivalently, we want to prove that
\begin{equation}\label{eq:goal derivative}
\forall\, s\in (0,1),\qquad \frac{d}{ds} \trace\left((sA+B)^qA\right)\le q\left(\trace\left(A^{q+1}\right)\right)^{\frac{1}{q}}\left(\trace\left((sA+B)^qA\right)\right)^{1-\frac{1}{q}}.
\end{equation}

Define for every $s\in (0,1)$,
$$
f(s)\eqdef \trace\left(\left(\frac{d}{ds}(sA+B)^m\right)(sA+B)^{m+\theta}A\right),
$$
\begin{equation*}
g(s)\eqdef \trace\left((sA+B)^{m}\left(\frac{d}{ds}(sA+B)^\theta\right)(sA+B)^{m}A\right),
\end{equation*}
and
$$
h(s)\eqdef \trace\left((sA+B)^{m+\theta}\left(\frac{d}{ds}(sA+B)^m\right)A\right).
$$
Then, since $(sA+B)^q=(sA+B)^m(sA+B)^\theta(sA+B)^m$, we have
$$
\frac{d}{ds} \trace\left((sA+B)^qA\right)=f(s)+g(s)+h(s).
$$
Hence, because $q=2m+\theta$, in order to establish the validity of~\eqref{eq:goal derivative} it suffice to show that for every $s\in [0,1]$ we have
\begin{equation}\label{eq:max f h}
\max\{f(s),h(s)\}\le m\left(\trace\left(A^{q+1}\right)\right)^{\frac{1}{q}}\left(\trace\left((sA+B)^qA\right)\right)^{1-\frac{1}{q}},
\end{equation}
and
\begin{equation}\label{eq:goal g upper}
g(s)\le \theta\left(\trace\left(A^{q+1}\right)\right)^{\frac{1}{q}}\left(\trace\left((sA+B)^qA\right)\right)^{1-\frac{1}{q}}.
\end{equation}

Observe that
\begin{align}\label{eq:f der}
\nonumber f(s)
&=\sum_{r=1}^m \trace\left((sA+B)^{r-1}A(sA+B)^{m-r}(sA+B)^{m+\theta}A\right)\\&=\sum_{r=1}^m \trace\left((sA+B)^{r-1}A(sA+B)^{q-r}A\right).
\end{align}
Similarly, using the cyclicity of the trace, we have
\begin{equation}\label{eq:h=f}
h(s)=\sum_{r=1}^m \trace\left((sA+B)^{q-r}A(sA+B)^{r-1}A\right)=f(s).
\end{equation}
Finally, by the integral representation~\eqref{eq:derivative less than 1}  we have
\begin{equation}\label{eq:g der}
g(s)=\frac{\sin(\pi \theta)}{\pi}\int_0^\infty t^\theta \trace\left((sA+B)^m\left(tI+sA+B\right)^{-1}A\left(tI+sA+B\right)^{-1}(sA+B)^mA\right)dt.
\end{equation}

By denoting $C=sA+B$, it follows from~\eqref{eq:f der}, \eqref{eq:h=f} and~\eqref{eq:g der} that the desired estimates~\eqref{eq:max f h} and~\eqref{eq:goal g upper} will be proven once we show that for every $C\in M_d(\R)$ that is symmetric and positive semidefinite we have
\begin{equation}\label{eq:goal each r in sum}
\forall\, r\in \{1,\ldots,m\},\qquad \trace\left(C^{r-1}AC^{q-r}A\right)\le \left(\trace\left(A^{q+1}\right)\right)^{\frac{1}{q}}\left(\trace\left(C^qA\right)\right)^{1-\frac{1}{q}},
\end{equation}
and
\begin{equation}\label{eq:goal integral r}
\int_0^\infty t^\theta \trace\left(C^m\left(tI+C\right)^{-1}A\left(tI+C\right)^{-1}C^mA\right)dt\le \frac{\pi\theta}{\sin(\pi \theta)} \left(\trace\left(A^{q+1}\right)\right)^{\frac{1}{q}}\left(\trace\left(C^qA\right)\right)^{1-\frac{1}{q}}.
\end{equation}

\eqref{eq:goal each r in sum} is a consequence of Lemma~\ref{lem:holder} (with $B$ replaced by $C$). It therefore remains to establish the validity of~\eqref{eq:goal integral r}. To this end, note that for every $t\in (0,\infty)$, since $(tI+C)^{-1}$ and $C^m$ commute, by the cyclicity of the trace we have
\begin{multline}\label{eq:lieb thirring 2}
\trace\left(C^m\left(tI+C\right)^{-1}A\left(tI+C\right)^{-1}C^mA\right)=\trace\left(\left(\sqrt{A}C^m\left(tI+C\right)^{-1}\sqrt{A}\right)^2\right)\\\le
\trace\left(AC^{2m}\left(tI+C\right)^{-2}A\right)=\trace\left(C^{2m}\left(tI+C\right)^{-2}A^2\right),
\end{multline}
where for the inequality in~\eqref{eq:lieb thirring 2} we used the Lieb--Thirring inequality. This upper bound on the integrand in the left-hand side of~\eqref{eq:goal integral r} yields the following estimate.
\begin{equation}\label{eq:integral -2}
\int_0^\infty t^\theta \trace\left(C^m\left(tI+C\right)^{-1}A\left(tI+C\right)^{-1}C^mA\right)dt\le \trace\left(C^{2m}\left(\int_0^\infty t^\theta (tI+C)^{-2}dt\right)A^2\right).
\end{equation}
Note that for every $c\in (0,\infty)$ we have
$$
\int_0^\infty \frac{t^\theta}{(t+c)^2}dt=c^{\theta-1}\int_0^\infty \frac{s^\theta}{(s+1)^2}ds=\frac{\pi \theta}{\sin(\pi \theta)}c^{\theta-1}.
$$
Consequently,
\begin{multline}\label{eq:use integral -2 for C}
\trace\left(C^{2m}\left(\int_0^\infty t^\theta (tI+C)^{-2}dt\right)A^2\right)=\frac{\pi \theta}{\sin(\pi \theta)}\trace\left(C^{2m+\theta-1}A^2\right)
=\frac{\pi \theta}{\sin(\pi \theta)}\trace\left(C^{q-1}A^2\right)\\=\frac{\pi \theta}{\sin(\pi \theta)}\trace\left(AC^{q-1}A\right)\stackrel{\eqref{eq:goal each r in sum}}{\le}
\frac{\pi \theta}{\sin(\pi \theta)}\left(\trace\left(A^{q+1}\right)\right)^{\frac{1}{q}}\left(\trace\left(C^qA\right)\right)^{1-\frac{1}{q}}.
\end{multline}
A substitution of~\eqref{eq:use integral -2 for C} into~\eqref{eq:integral -2} yields the desired inequality~\eqref{eq:goal integral r}.
\end{proof}

The following Proposition is a variant of Proposition~\ref{lem:p>1} when $q\in (0,1)$.

\begin{proposition}\label{lem:q<1}
Suppose that $q\in (0,1)$ and $d\in \N$. Then for every $A,B\in M_d(\R)$ that are symmetric and positive semidefinite we have
$$
\trace\left((A+B)^qA\right)\le \trace\left(A^{q+1}\right)+\trace\left(B^qA\right).
$$
\end{proposition}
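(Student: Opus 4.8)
The plan is to follow Case~2 of the proof of Proposition~\ref{lem:p>1} — the case in which the integral representation of $t\mapsto t^\theta$ for $\theta\in(0,1)$ is used — noting that in the present regime $q\in(0,1)$ the argument becomes shorter because no factor of the form $(sA+B)^m$ occurs (informally, ``$m=0$'' in the decomposition $q=2m+\theta$). By replacing $A$ with $A+\varepsilon I$ and $B$ with $B+\varepsilon I$ and letting $\varepsilon\to 0$ at the end, I would assume throughout that $A,B$ are positive definite. Define $\phi:[0,1]\to[0,\infty)$ by $\phi(s)\eqdef\trace\big((sA+B)^qA\big)$, so that $\phi(0)=\trace(B^qA)$ and $\phi(1)=\trace((A+B)^qA)$; since $sA+B$ stays positive definite, $\phi$ is smooth on a neighborhood of $[0,1]$, and the claimed inequality is equivalent to $\int_0^1\phi'(s)\,ds\le\trace(A^{q+1})$.

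First I would compute $\phi'$. Starting from $a^q=\frac{\sin(\pi q)}{\pi}\int_0^\infty t^q\big(\frac1t-\frac1{t+a}\big)dt$ (valid for $a>0$) and differentiating in $s$ via $\frac{d}{ds}X(s)^{-1}=-X(s)^{-1}X'(s)X(s)^{-1}$ exactly as in~\eqref{eq:derivative less than 1}, one obtains, writing $C=C(s)=sA+B$,
\begin{equation*}
\phi'(s)=\frac{\sin(\pi q)}{\pi}\int_0^\infty t^q\,\trace\!\Big((tI+C)^{-1}A(tI+C)^{-1}A\Big)\,dt,
\end{equation*}
where the interchange of $\tfrac{d}{ds}$ and $\int_0^\infty$ is justified by dominated convergence (after the $\varepsilon$-perturbation the resolvents are bounded uniformly for $s\in[0,1]$). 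Exactly as in~\eqref{eq:lieb thirring 2}, the Lieb--Thirring inequality gives $\trace\big((tI+C)^{-1}A(tI+C)^{-1}A\big)=\trace\big((\sqrt{A}(tI+C)^{-1}\sqrt{A})^2\big)\le\trace\big((tI+C)^{-2}A^2\big)$, and integrating over $t$ while using $\int_0^\infty t^q(t+c)^{-2}dt=\frac{\pi q}{\sin(\pi q)}c^{q-1}$ for $c>0$ (the same computation as in the proof of Proposition~\ref{lem:p>1}) yields
\begin{equation*}
\phi'(s)\le\frac{\sin(\pi q)}{\pi}\,\trace\!\Big(\Big(\int_0^\infty t^q(tI+C)^{-2}dt\Big)A^2\Big)=q\,\trace\big(C^{q-1}A^2\big)=q\,\trace\big(AC^{q-1}A\big).
\end{equation*}

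The step where the argument genuinely departs from the $q\ge1$ case — and the one I expect to be the crux — is the final bound on $\trace(AC^{q-1}A)$: since $q-1<0$, Lemma~\ref{lem:holder} no longer applies. Instead I would use that $t\mapsto t^{q-1}$ is operator monotone decreasing on $(0,\infty)$ (because $t\mapsto t^{1-q}$ is operator monotone increasing for $1-q\in(0,1)$ and $t\mapsto t^{-1}$ reverses the positive semidefinite order). As $C=sA+B\ge sA>0$, this gives $C^{q-1}\le(sA)^{q-1}=s^{q-1}A^{q-1}$, hence $\trace(AC^{q-1}A)\le s^{q-1}\trace\!\big(A\cdot A^{q-1}\cdot A\big)=s^{q-1}\trace(A^{q+1})$ and therefore $\phi'(s)\le q\,s^{q-1}\trace(A^{q+1})$. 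This bound would be far too lossy in the $q\ge1$ regime, but it is exactly strong enough here since $\int_0^1 q\,s^{q-1}\,ds=1$: integrating over $s\in[0,1]$ gives $\int_0^1\phi'(s)\,ds\le\trace(A^{q+1})$, i.e. $\trace((A+B)^qA)=\phi(1)\le\phi(0)+\trace(A^{q+1})=\trace(B^qA)+\trace(A^{q+1})$. Letting $\varepsilon\to0$ removes the positive-definiteness assumption, completing the proof.
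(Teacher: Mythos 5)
Your proof is correct, and its overall skeleton is the same as the paper's: perturb to the positive definite case, differentiate $s\mapsto\trace((sA+B)^qA)$ using the integral representation~\eqref{eq:derivative less than 1} with $\theta$ replaced by $q$, establish the pointwise bound $\phi'(s)\le qs^{q-1}\trace(A^{q+1})$, and integrate over $[0,1]$. Where you genuinely diverge is in the mechanism for that pointwise bound. The paper fixes $t$ and shows directly that $w\mapsto\trace\big((tI+sA+wB)^{-1}A(tI+sA+wB)^{-1}A\big)$ is non-increasing (its derivative is $-2\trace(CD)$ for two explicit PSD matrices $C,D$), so the integrand may be compared with its value at $w=0$ before integrating in $t$; this yields $\phi'(s)\le\frac{d}{ds}\trace((sA)^qA)$ with no auxiliary inequalities. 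You instead apply Lieb--Thirring to the integrand, carry out the $t$-integral to get $\phi'(s)\le q\,\trace(AC^{q-1}A)$ with $C=sA+B$, and only then discard $B$ via the operator antitonicity of $t\mapsto t^{q-1}$ (using $C\ge sA>0$ and $q-1\in(-1,0)$, and that conjugation by $A$ preserves the PSD order). Both routes give the same, exactly sharp bound. The paper's computation is more elementary and self-contained; yours reuses the Lieb--Thirring step already set up for Proposition~\ref{lem:p>1} and isolates cleanly where $q<1$ enters (negative powers in $(-1,0)$ are operator antitone, which is precisely what fails and forces the more elaborate argument when $q\ge1$). All the auxiliary points you flag — smoothness after the $\varepsilon$-perturbation, integrability of $s^{q-1}$ at $0$, and continuity in the limit $\varepsilon\to0$ — are handled correctly.
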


\begin{proof} By the integral identity~\eqref{eq:derivative less than 1}, with $\theta$ replaced by $q$ (which is allowed since $0<q<1$), for every $s\in (0,\infty)$ we have
\begin{equation}\label{eq:derivative less than 1-new}
\frac{d}{ds}\trace\left((sA+B)^qA\right)=\frac{\sin(\pi q)}{\pi}\int_0^\infty t^q\trace\left(\left(tI+sA+B\right)^{-1}A\left(tI+sA+B\right)^{-1}A\right) dt.
\end{equation}

Fix $s,t\in (0,\infty)$ and define $F:[0,\infty)\to \R$ by
$$
\forall\, w\in [0,\infty),\qquad F(w)\eqdef \trace\left(\left(tI+sA+wB\right)^{-1}A\left(tI+sA+wB\right)^{-1}A\right).
$$
This mapping was investigated in Section III of~\cite{BCL94}, where it was shown to be convex. Here we need to know that it is non-increasing,  which follows from the following computation.
\begin{align*}
F'(w)&=-\trace\left(\left(tI+sA+wB\right)^{-1}B\left(tI+sA+wB\right)^{-1}A\left(tI+sA+wB\right)^{-1}A\right)\\&\qquad-
\trace\left(\left(tI+sA+wB\right)^{-1}A\left(tI+sA+wB\right)^{-1}B\left(tI+sA+wB\right)^{-1}A\right)\\
&=-\trace(CD)-\trace(DC)=-2\trace(CD)\le 0,
\end{align*}
where $C,D\in M_d(\R)$ are the symmetric and positive semidefinite matrices given by
$$
C\eqdef \left(tI+sA+wB\right)^{-1}B\left(tI+sA+wB\right)^{-1}\qquad\mathrm{and}\qquad D\eqdef A\left(tI+sA+wB\right)^{-1}A.
$$
It follows from these considerations that
$$
\trace\left(\left(tI+sA+B\right)^{-1}A\left(tI+sA+B\right)^{-1}A\right)=F(1)\le F(0)=\trace\left(\left(tI+sA\right)^{-1}A\left(tI+sA\right)^{-1}A\right).
$$
A substitution of this estimate into~\eqref{eq:derivative less than 1-new} shows that
\begin{multline}\label{eq:to integrate s}
\frac{d}{ds}\trace\left((sA+B)^qA\right)\le \frac{\sin(\pi q)}{\pi}\int_0^\infty t^q\trace\left(\left(tI+sA\right)^{-1}A\left(tI+sA\right)^{-1}A\right) dt\\\stackrel{\eqref{eq:integral identity less than 1}}{=}\frac{d}{ds}\trace\left((sA)^qA\right)=qs^{q-1}\trace\left(A^{q+1}\right).
\end{multline}
By integrating~\eqref{eq:to integrate s} over $[0,1]$ we therefore see that
$
\trace\left((A+B)^qA\right)-\trace\left(B^qA\right)\le \trace\left(A^{q+1}\right).
$
\end{proof}

We record for future use the following simple reformulation of Proposition~\ref{lem:p>1} and Proposition~\ref{lem:q<1}. When $q\in [1,2]$ it follows from Proposition~\ref{lem:q<1} (with $q$ replaced by $q-1$), and when $q>2$ it follows from Proposition~\ref{lem:p>1} (with $q$ replaced by $q-1$) and the convexity of $t\mapsto t^{q-1}$ on $[0,\infty)$.

\begin{corollary}\label{cor:reformulation of lemmas for use}
Suppose that $q\in [1,\infty)$ and $d\in \N$. Set $r\eqdef\max\{q-2,0\}$. For every $A,B\in M_d(\R)$ that are symmetric and positive semidefinite we have
\begin{equation*}
\trace\left((A+B)^{q-1}A\right)\le  \min\left\{\frac{\trace\left(A^{q}\right)}{\lambda^r}+\frac{\trace\left(B^{q-1}A\right)}{(1-\lambda)^r}:\  \lambda\in (0,1)\right\}.
\end{equation*}
\end{corollary}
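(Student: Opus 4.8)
The plan is to derive Corollary~\ref{cor:reformulation of lemmas for use} directly from Proposition~\ref{lem:p>1} and Proposition~\ref{lem:q<1} by a simple change of exponent, treating the two regimes $q\in[1,2]$ and $q>2$ separately (the case $q=2$ falls under either). Throughout, $A,B\in M_d(\R)$ are symmetric and positive semidefinite, and I will repeatedly use the fact that if $X\le Y$ in the PSD order and $Z$ is positive semidefinite then $\trace(ZX)\le\trace(ZY)$, together with the cyclicity of the trace (so $\trace(\sqrt{A}\,Y\sqrt{A})=\trace(YA)$ for symmetric $Y$).

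\emph{Case $q\in[1,2]$.} Here $r=\max\{q-2,0\}=0$, so the right-hand side of the claimed inequality equals simply $\trace(A^{q})+\trace(B^{q-1}A)$ (the infimum over $\lambda$ is attained trivially since there is no $\lambda$-dependence). Write $q'\eqdef q-1\in[0,1]$. If $q'\in(0,1)$, apply Proposition~\ref{lem:q<1} with $q$ replaced by $q'$ to obtain $\trace\big((A+B)^{q'}A\big)\le\trace(A^{q'+1})+\trace(B^{q'}A)=\trace(A^{q})+\trace(B^{q-1}A)$, which is exactly what is wanted. The boundary values $q'=0$ (i.e. $q=1$) and $q'=1$ (i.e. $q=2$) are immediate: for $q=1$ the inequality reads $\trace(A)\le\trace(A)+\trace(B^0A)$, trivially true (and in fact one may take $q=1$ as a limiting case, or note $B^0=I$), and for $q=2$ it reads $\trace\big((A+B)A\big)\le\trace(A^2)+\trace(BA)$, which holds with equality.

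\emph{Case $q>2$.} Now $r=q-2>0$ and the right-hand side genuinely depends on $\lambda$. Set $q'\eqdef q-1>1$ and apply Proposition~\ref{lem:p>1} with $q$ replaced by $q'$: this gives
\[
\big(\trace((A+B)^{q'}A)\big)^{1/q'}\le\big(\trace(A^{q'+1})\big)^{1/q'}+\big(\trace(B^{q'}A)\big)^{1/q'}.
\]
Raising to the power $q'=q-1$ and then splitting the resulting power of a sum using the inequality $(x+y)^{q-1}\le x^{q-1}/\lambda^{q-2}+y^{q-1}/(1-\lambda)^{q-2}$, valid for all $x,y\ge0$ and $\lambda\in(0,1)$ by the convexity of $t\mapsto t^{q-1}$ on $[0,\infty)$ (this is the weighted-power / Minkowski-type estimate: write $x+y=\lambda\cdot(x/\lambda)+(1-\lambda)\cdot(y/(1-\lambda))$ and apply Jensen), with $x=(\trace(A^{q}))^{1/(q-1)}$ and $y=(\trace(B^{q-1}A))^{1/(q-1)}$, yields
\[
\trace\big((A+B)^{q-1}A\big)\le\frac{\trace(A^{q})}{\lambda^{q-2}}+\frac{\trace(B^{q-1}A)}{(1-\lambda)^{q-2}}
\]
for every $\lambda\in(0,1)$. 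Taking the infimum over $\lambda\in(0,1)$ gives the claim, since $r=q-2$ in this range.

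There is essentially no hard step here: the content of the corollary is entirely carried by the two Propositions, and the only thing to verify is the elementary convexity inequality $(x+y)^{q-1}\le\lambda^{1-q}x^{q-1}+(1-\lambda)^{1-q}y^{q-1}$ and the bookkeeping of which proposition applies in which range (with the exponent shifted from $q$ to $q-1$). The mildly delicate point, if any, is making sure the regime boundaries $q=1$ and $q=2$ are handled — at $q=1$ the factor $B^{q-1}=B^0$ should be read as the identity (or the statement treated as a limit), and at $q=2$ one checks the inequality holds with equality and $r=0$; I would add a sentence making this explicit so that the statement is literally true for all $q\in[1,\infty)$ as written.
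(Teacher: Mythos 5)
Your proposal is correct and follows exactly the route the paper indicates (the paper's entire ``proof'' is the one-sentence remark preceding the corollary: apply Proposition~\ref{lem:q<1} with $q$ replaced by $q-1$ when $q\in[1,2]$, and Proposition~\ref{lem:p>1} with $q$ replaced by $q-1$ together with the convexity of $t\mapsto t^{q-1}$ when $q>2$). Your fleshed-out version, including the weighted convexity step $(x+y)^{q-1}\le \lambda^{1-q}x^{q-1}+(1-\lambda)^{1-q}y^{q-1}$ and the boundary checks at $q=1$ and $q=2$, is accurate.
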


\subsubsection{Discussion and counterexamples}\label{sec:discussion} An inspection of our proof of Lemma~\ref{lem:positive Xp} below shows that, for $p\in (2,\infty)$,  what we really need in order to show that $S_p$ is an $X_p$ Banach space is that there exists $K=K_p\in (0,\infty)$ such that if $A,B\in M_d(\R)$ are symmetric and positive semidefinite then
\begin{equation}\label{eq:trace ineq need}
\trace\!\left((A+B)^{\frac{p}{2}-1}A\right)\le K\left(\trace\!\left(A^{\frac{p}{2}-1}A\right)+\trace\!\left(B^{\frac{p}{2}-1}A\right)\right).
\end{equation}
Specifically, \eqref{eq:trace ineq need} implies Theorem~\ref{cor:schatten short} with the term $p/\sqrt{\log p}$ replaced by a constant that depends only on $K$ and $p$. By Corollary~\ref{cor:reformulation of lemmas for use}, \eqref{eq:trace ineq need} holds true
with $K=2^{\max\{0,(p-4)/2\}}$.

 Setting $q=(p-2)/2>0$, it is natural to ask whether multiplication by $A$ is crucial for~\eqref{eq:trace ineq need} to hold true. Specifically, one would naturally investigate whether for every $A,B,C\in M_d(\R)$ that are symmetric and positive semidefinite we have
\begin{equation}\label{eq:trace ineq need-C}
\trace\!\left((A+B)^{q}C\right)\le K\left(\trace\!\left(A^{q}C\right)+\trace\!\left(B^{q}C\right)\right),
\end{equation}
with $K\in (0,\infty)$ independent of $A,B,C$. By a simple duality argument (e.g. Lemma~5.12 in~\cite{Car10}), the above requirement is equivalent to the matrix inequality
\begin{equation}\label{eq:hope subadditive PSD}
(A+B)^q\le K\left(A^q+B^q\right),
\end{equation}
where, as usual, we interpret the inequality~\eqref{eq:hope subadditive PSD} in terms of the PSD order of matrices.

Since for $q\in [1,2]$ the function $t\mapsto t^q$ is operator-convex (see e.g.~\cite{Bha97}), for such $q$ the PSD inequality~\eqref{eq:hope subadditive PSD} holds true with $K=2^{q-1}$ (recall~\eqref{eq:use PSD theta}). This yields a simple proof of~\eqref{eq:trace ineq need} when $4\le p\le 6$. Moreover, when  $q\in [1,2]$  the operator convexity of the function $t\mapsto t^q$ shows that  if $A,B,C\in M_d(\R)$ are symmetric and positive semidefinite then for every $\lambda\in (0,1)$ we have
\begin{equation}\label{eq:to minimize lambda}
\trace\!\left((A+B)^{q}C\right)\le \frac{\trace\!\left(A^{q}C\right)}{\lambda^{q-1}}+\frac{\trace\!\left(B^{q}C\right)}{(1-\lambda)^{q-1}}.
\end{equation}
By choosing $\lambda$ so as to minimize the right hand side of~\eqref{eq:to minimize lambda} we see that
\begin{equation}\label{eq:homogoeneous with C}
\left(\trace\left((A+B)^qC\right)\right)^{\frac{1}{q}}\le \left(\trace\left(A^{q}C\right)\right)^{\frac{1}{q}}+\left(\trace\left(B^{q}C\right)\right)^{\frac{1}{q}}.
\end{equation}
The inequality~\eqref{eq:homogoeneous with C}  is  a strengthening of Proposition~\ref{lem:p>1}  in the special case $q\in [1,2]$, showing that when $q$ belongs to this range  Proposition~\ref{lem:p>1}  is  a simple consequence of the operator convexity of the function $t\mapsto t^q$ (alternatively, one can deduce Proposition~\ref{lem:p>1} directly from the work of Carlen and Lieb~\cite{CL08}; see specifically Theorem~1.1 and Remark~1.2 in~\cite{CL08}). However, the above argument is  special to the range $q\in [1,2]$ since, as we shall explain below, if $q\in (0,1)\cup(2,\infty)$ then~\eqref{eq:hope subadditive PSD} fails to hold true with {\em any} constant $K$ that is independent of $A,B$.

The failure of such PSD subadditivity inequalities prompted much work in search for substitutes (note, however, that the literature did not focus on inequalities that allow for an arbitrary constant $K$ in~\eqref{eq:hope subadditive PSD}, but was rather devoted to, e.g., finding substitutes for~\eqref{eq:hope subadditive PSD} with $q\in (0,1)$ and $K=1$). One such substitute allows for conjugation by unitary matrices, as initiated in~\cite{AAP82}. A satisfactory recent result~\cite{AB07} along these lines asserts that if $f:[0,\infty)\to \R$ is nondecreasing, concave, and $f(0)\ge 0$, then for every $A,B\in M_d(\R)$ there exist unitary matrices $U,V\in M_d(\C)$ such that $f(A+B)\le Uf(A)U^*+Vf(B)V^*$. Another substitute for PSD subadditivity is a subadditivity inequality for unitarily invariant norms. Recall that a norm $\|\cdot\|$ on $M_d(\C)$ is unitarily invariant if $\|UXV\|=\|X\|$ for every $X,U,V\in M_d(\C)$ such that $U,V$ are unitary. The papers~\cite{AZ99,BU07} contain satisfactory results along these lines, obtaining inequalities of the form $\|f(A+B)\|\le \|f(A)+f(B)\|$. For $q\in (0,1)$, when $f(t)=t^q$ and $\|\cdot\|$ is the Schatten $1$ norm, the resulting inequality goes back to~\cite{McC67} and it corresponds to~\eqref{eq:trace ineq need-C} with $C=I$ (and $K=1$).

Here we study a different type of substitute for~\eqref{eq:hope subadditive PSD}. For example, when $A\in M_d(\R)$ is symmetric and positive semidefinite define $F_A:M_d(\R)\to \R$ by $F_A(X)=(\trace(|X|^qA))^{1/q}$ ($F_A$ need not be unitarily invariant). Proposition~\ref{lem:p>1} asserts that if $q>1$ then $F_A(X+Y)\le F_A(X)+F_A(Y)$ for symmetric and positive semidefinite $X,Y\in M_d(\R)$, provided that either $X$ or $Y$ equals $A$. Weakenings  of~\eqref{eq:trace ineq need-C} (the special case $C=A$)  suffice for our application (i.e., proving the $X_p$ inequality for $S_p$, and consequently obtaining various nonembeddability results), but we believe that they are interesting in their own right and deserve further investigation. Possible extensions include understanding inequalities of the form $\trace(f(A+B)A)\le K\trace(f(A)A)+K\trace(f(B)A)$.

\medskip

We shall end this discussion by presenting the aforementioned example that exhibits the failure of~\eqref{eq:hope subadditive PSD} for every $q\in (0,1)\cup(2,\infty)$ and $K\in (0,\infty)$. Fix $s\in (0,\infty)$ which we will eventually take to be sufficiently small. Define $A_s,B_s\in M_2(\R)$ and $w_s\in \R^2$ by
$$
A_s\eqdef \left(\begin{array}{cccc}
s^2 & 0 \\
0 &  0
\end{array}\right), \qquad  B_s\eqdef\left(\begin{array}{cccccc}
1 & s  \\
s &  s^2
\end{array}\right)\qquad \mathrm{and}\qquad w_s\eqdef \left(\begin{array}{cccccc}
-s   \\
1
\end{array}\right).
$$
$A_s$ and $B_s$ are symmetric and positive semidefinite, yet by direct computation for every $K\in (0,\infty)$,
$$
\left\langle \left(K(A_s^4+B_s^4)-(A_s+B_s)^4\right)w_s,w_s\right\rangle=-s^6-3s^8+(K-1)s^{10}.
$$
The above quantity is negative for $s<1/\sqrt[4]{K}$, in which case the matrix  $K(A_s^4+B_s^4)-(A_s+B_s)^4$ is not positive semidefinite. This shows that~\eqref{eq:hope subadditive PSD} fails to hold true for $q=4$ with any constant $K\in (0,\infty)$ that is independent of $A$ and $B$ (this corresponds to the failure of~\eqref{eq:trace ineq need} when $p=10$). A similar, though more tedious, computation shows that~\eqref{eq:hope subadditive PSD} also fails for every $q\in (0,1)\cup(2,\infty)$. Indeed, direct computation (via diagonalization) yields that $
A_s^q=s^{2q}A_s$, $B_s^q=(1+s^2)^{q-1}B_s$ and
$$
(A_s+B_s)^q=
\left(\begin{array}{cccccc}
\frac{a(s)^q\left(\sqrt{1+4s^2}+1\right)+b(s)^q\left(\sqrt{1+4s^2}-1\right)}{{2\sqrt{1+4s^2}}} & \frac{s\left(a(s)^q-b(s)^q\right)}{\sqrt{1+4s^2}}  \\
\frac{s\left(a(s)^q-b(s)^q\right)}{\sqrt{1+4s^2}} &  \frac{a(s)^q\left(\sqrt{1+4s^2}-1\right)+b(s)^q\left(\sqrt{1+4s^2}+1\right)}{2\sqrt{1+4s^2}}
\end{array}\right),
$$
where
$$
a(s)\eqdef s^2+\frac12+\frac{\sqrt{1+4s^2}}{2}\qquad\mathrm{and}\qquad b(s)\eqdef s^2+\frac12-\frac{\sqrt{1+4s^2}}{2}.
$$
One then directly computes that as $s\to 0$,
\begin{equation}\label{eq:3 dependencies}
\left\langle \left(K(A_s^q+B_s^q)-(A_s+B_s)^q\right)w_s,w_s\right\rangle=\left(Ks^{2(q+1)}-s^6-s^{4q}\right)\left(1+O_{q,K}(s^2)\right).
\end{equation}
When $q\in (0,1)$ we have $4q<\min\{2(q+1),6\}$ and when $q\in (2,\infty)$ we have $6<\min\{2(q+1),4q\}$. Consequently, for $q\in (0,1)\cup (2,\infty)$ the quantity appearing in~\eqref{eq:3 dependencies} is negative for small enough $s$, which means that the matrix $K(A_s^q+B_s^q)-(A_s+B_s)^q$ is not positive semidefinite.

\subsection{Proof of Lemma~\ref{lem:positive Xp}} For the sake of simplicity denote
\begin{equation}\label{eq:def UVW}
U\eqdef \frac{1}{\binom{n}{k}}\sum_{\substack{S\subset \n\\|S|=k}} \trace\!\left(\Big(\sum_{j\in S} B_j\Big)^{q}\right),\quad V\eqdef \frac{k}{n}\sum_{j=1}^n \trace\!\left(B_j^{q}\right),\quad W\eqdef  \trace\!\left(\Big(\frac{k}{n}\sum_{j=1}^n B_j\Big)^{q}\right).
\end{equation}
Our goal is therefore to show that
\begin{equation}\label{eq:goal with UVW}
U\le \left(\frac{4q}{\log(2q)}\right)^q\max\{V,W\}.
\end{equation}

Fix $\lambda \in (0,1)$ to be specified later. For every $S\subset \n$ and $j\in S$, by Corollary~\ref{cor:reformulation of lemmas for use}, with $A=B_j$ and
$B=\sum_{s\in S\setminus\{j\}} B_s$, we have
$$
\trace\!\left(\Big(\sum_{s\in S} B_s\Big)^{q-1}B_j\right)\le \frac{1}{\lambda^r}\trace\left(B_j^q\right)+\frac{1}{(1-\lambda)^r}\trace\!\left(\Big(\sum_{s\in S\setminus\{j\}} B_s\Big)^{q-1}B_j\right),
$$
where, as denoted in Corollary~\ref{cor:reformulation of lemmas for use}, $r=\max\{q-2,0\}$. Hence,
\begin{align}\label{eq:lambda convexity}
\trace\!\left(\Big(\sum_{j\in S} B_j\Big)^{q}\right)&=\sum_{j=1}^n\trace\!\left(\Big(\sum_{j\in S} B_j\Big)^{q-1}B_j\right)\nonumber\\&\le \frac{1}{\lambda^r}\sum_{j\in S}\trace\left(B_j^q\right)+\frac{1}{(1-\lambda)^r}\sum_{j\in S}  \trace\!\left(\Big(\sum_{s\in S\setminus\{j\}} B_s\Big)^{q-1}B_j\right).
\end{align}
By averaging~\eqref{eq:lambda convexity} over all of those $S\subset \n$ with $|S|=k$, and recalling~\eqref{eq:def UVW}, we see that
\begin{equation}\label{eq:for bijection}
U\le \frac{V}{\lambda^r}+\frac{1}{(1-\lambda)^r\binom{n}{k}}\sum_{\substack{S\subset \n\\|S|=k}} \sum_{j\in S}  \trace\!\left(\Big(\sum_{s\in S\setminus\{j\}} B_s\Big)^{q-1}B_j\right).
\end{equation}

Now,
\begin{multline}\label{eq:string ST1}
\sum_{\substack{S\subset \n\\|S|=k}} \sum_{j\in S}  \trace\!\left(\Big(\sum_{s\in S\setminus\{j\}} B_s\Big)^{q-1}B_j\right)=
\sum_{\substack{T\subset \n\\|T|=k-1}} \sum_{j\in \n\setminus T}  \trace\!\left(\Big(\sum_{t\in T} B_t\Big)^{q-1}B_j\right)\\
= \sum_{\substack{T\subset \n\\|T|=k-1}} \trace\!\left(\Big(\sum_{t\in T} B_t\Big)^{q-1}\Big(\sum_{j\in \n\setminus T} B_j\Big)\right)\le
\sum_{\substack{T\subset \n\\|T|=k-1}} \trace\!\left(\Big(\sum_{t\in T} B_t\Big)^{q-1}\Big(\sum_{j=1}^n B_j\Big)\right),
\end{multline}
where in the last step of~\eqref{eq:string ST1} we used the fact that if $A,B,C\in M_d(\R)$ are symmetric and positive semidefinite then $\trace(AB)\le \trace(A(B+C))$.  To bound the final term in~\eqref{eq:string ST1}, use H\"older's inequality for traces to deduce that for every $T\subset\n$ we have

\begin{align}\label{eq:use holder q}
\nonumber \trace\!\left(\Big(\sum_{t\in T} B_t\Big)^{q-1}\Big(\sum_{j=1}^n B_j\Big)\right)&\le \left(\trace\!\left(\Big(\sum_{j=1}^n B_j\Big)^q\right)\right)^{\frac{1}{q}}\left(\trace\!\left(\Big(\sum_{t\in T} B_t\Big)^q\right)\right)^{1-\frac{1}{q}}\\
&= \frac{nW^{\frac{1}{q}}}{k}\left(\trace\!\left(\Big(\sum_{t\in T} B_t\Big)^q\right)\right)^{1-\frac{1}{q}},
\end{align}
where we recall the definition of $W$ in~\eqref{eq:def UVW}.

The function $t\mapsto t^q$ is operator trace-increasing (see Theorem~2.10 in~\cite{Car10}), i.e., if $C,D\in M_d(\R)$ are symmetric and positive semidefinite with $C\le D$ then $\trace(C^q)\le \trace(D^q)$. Consequently,  for every $T\subsetneq \n$ and $u\in \n$ we have $\trace\left(\left(\sum_{t\in T} B_t\right)^q\right)\le \trace\left(\left(B_u+\sum_{t\in T} B_t\right)^q\right)$. By raising this inequality to the power $(q-1)/q$ and averaging over all $u\in \n\setminus T$ we see that
\begin{equation}\label{eq:averaged over complement of T}
\left(\trace\!\left(\Big(\sum_{t\in T} B_t\Big)^q\right)\right)^{1-\frac{1}{q}}\le \frac{1}{n-|T|}\sum_{u\in \n\setminus T}\left(\trace\!\left(\Big(\sum_{t\in T\cup\{u\}} B_t\Big)^q\right)\right)^{1-\frac{1}{q}}.
\end{equation}
Hence, by combining~\eqref{eq:use holder q} and~\eqref{eq:averaged over complement of T} with~\eqref{eq:string ST1}, we see that
\begin{align}
\nonumber \sum_{\substack{S\subset \n\\|S|=k}}& \sum_{j\in S}  \trace\!\left(\Big(\sum_{s\in S\setminus\{j\}} B_s\Big)^{q-1}B_j\right)\\ \label{eq:T version with u}
&\le
\frac{nW^{\frac{1}{q}}}{k(n-k+1)}\sum_{\substack{T\subset \n\\|T|=k-1}} \sum_{u\in \n\setminus T}\left(\trace\!\left(\Big(\sum_{t\in T\cup\{u\}} B_t\Big)^q\right)\right)^{1-\frac{1}{q}}\\&=
\frac{nW^{\frac{1}{q}}}{n-k+1}\sum_{\substack{S\subset \n\\|S|=k}} \left(\trace\!\left(\Big(\sum_{s\in S} B_s\Big)^q\right)\right)^{1-\frac{1}{q}}, \label{eq:bijection}
\end{align}
where for~\eqref{eq:bijection} note that for every $S\subset \n$ with $|S|=k$ the term corresponding to $\sum_{s\in S} B_s$ occurs in the sum that appears in~\eqref{eq:T version with u} with multiplicity $k$, once for each $u\in S$.

Recalling the definition of $U$ in~\eqref{eq:def UVW}, by Jensen's inequality we see that
\begin{equation}\label{eq:jensen U}
\frac{1}{\binom{n}{k}}\sum_{\substack{S\subset \n\\|S|=k}} \left(\trace\!\left(\Big(\sum_{s\in S} B_s\Big)^q\right)\right)^{1-\frac{1}{q}}\le U^{1-\frac{1}{q}}.
\end{equation}
By substituting~\eqref{eq:jensen U} into~\eqref{eq:bijection} and using $k\le n/2$, we have
\begin{equation}\label{eq:WU}
\frac{1}{\binom{n}{k}} \sum_{\substack{S\subset \n\\|S|=k}} \sum_{j\in S}  \trace\!\left(\Big(\sum_{s\in S\setminus\{j\}} B_s\Big)^{q-1}B_j\right)\le 2W^{\frac{1}{q}}U^{\frac{q-1}{q}}.
\end{equation}
In conjunction with~\eqref{eq:WU}, it follows from~\eqref{eq:for bijection}   that
\begin{equation}\label{eq:to simplify UVW}
U\le \min \left\{\frac{V}{\lambda^r}+\frac{2W^{\frac{1}{q}}U^{\frac{q-1}{q}}}{(1-\lambda)^r}:\ \lambda\in (0,1)\right\}\le \left(V^{\frac{1}{r+1}}+2^{\frac{1}{r+1}}W^{\frac{1}{q(r+1)}}U^{\frac{q-1}{q(r+1)}}\right)^{r+1},
\end{equation}
where the final inequality in~\eqref{eq:to simplify UVW} is seen by choosing $1/\lambda=1+\left(2W^{\frac{1}{q}}U^{\frac{q-1}{q}}/V\right)^{\frac{1}{r+1}}$. By~\eqref{eq:to simplify UVW},
\begin{equation}\label{eq:UVW bootstrap}
U^{\frac{1}{r+1}}\le V^{\frac{1}{r+1}}+2^{\frac{1}{r+1}}W^{\frac{1}{q(r+1)}}U^{\frac{q-1}{q(r+1)}}.
\end{equation}
The desired inequality~\eqref{eq:goal with UVW} is a formal consequence of~\eqref{eq:UVW bootstrap}, as follows. If $U\le (4q/\log(2q))^{r+1}V$ then~\eqref{eq:goal with UVW} holds true because $r+1\le q$. We may therefore assume that $U> (4q/\log(2q))^{r+1}V$, in which case~\eqref{eq:UVW bootstrap} implies that
\begin{equation}\label{eq:to divide by U}
\frac{U^{\frac{1}{r+1}}}{(2q)^{\frac{1}{2q}}}\le\left(1-\frac{\log(2q)}{4
q}\right)U^{\frac{1}{r+1}}\le 2^{\frac{1}{r+1}}W^{\frac{1}{q(r+1)}}U^{\frac{q-1}{q(r+1)}},
\end{equation}
where we used the fact that $(1-t)\ge e^{-2t}$ for every $t\in [0,1/2]$. The estimate~\eqref{eq:to divide by U} simplifies to
$$
U\le 2^q(2q)^{\frac{r+1}{2}}W\le 2^q(2q)^{\frac{q}{2}}W\le\left(\frac{4q}{\log(2q)}\right)^qW,
$$
where we used the elementary inequality $\log t\le \sqrt{t}$, which holds true for every $t\in (0,\infty)$.
\qed

\medskip

\noindent{\bf Acknowledgements.} We are grateful to Eric Carlen and Oded Regev for helpful pointers to the literature related to Section~\ref{sec:trace}, as well as showing us  counter-examples to~\eqref{eq:hope subadditive PSD} when $q=4$. We also thank the anonymous referee for carefully reading our manuscript and many helpful comments. A.~N. was supported in part by the NSF, the BSF, the Packard Foundation and the Simons Foundation. G.~S. was supported in part by the ISF and the BSF.

\bibliographystyle{abbrv}
\bibliography{Xp}

\def\cprime{$'$} \def\cprime{$'$}
\begin{thebibliography}{10}

\bibitem{AAP82}
C.~A. Akemann, J.~Anderson, and G.~K. Pedersen.
\newblock Triangle inequalities in operator algebras.
\newblock {\em Linear and Multilinear Algebra}, 11(2):167--178, 1982.

\bibitem{AB12}
F.~Albiac and F.~Baudier.
\newblock Embeddability of snowflaked metrics with applications to the
  nonlinear geometry of the spaces {$L_p$} and {$\ell_p$} for {$0<p<\infty$}.
\newblock {\em J. Geom. Anal.}, 25(1):1--24, 2015.

\bibitem{AZ99}
T.~Ando and X.~Zhan.
\newblock Norm inequalities related to operator monotone functions.
\newblock {\em Math. Ann.}, 315(4):771--780, 1999.

\bibitem{ANN15}
A.~Andoni, A.~Naor, and O.~Neiman.
\newblock Snowflake universality of {W}asserstein spaces.
\newblock Preprint, available at \url{http://arxiv.org/abs/1509.08677}, 2015.

\bibitem{Aro76}
N.~Aronszajn.
\newblock Differentiability of {L}ipschitzian mappings between {B}anach spaces.
\newblock {\em Studia Math.}, 57(2):147--190, 1976.

\bibitem{AB07}
J.~S. Aujla and J.-C. Bourin.
\newblock Eigenvalue inequalities for convex and log-convex functions.
\newblock {\em Linear Algebra Appl.}, 424(1):25--35, 2007.

\bibitem{AN09}
T.~Austin and A.~Naor.
\newblock On the bi-{L}ipschitz structure of {W}asserstein spaces.
\newblock Preprint, 2015.

\bibitem{ANP09}
T.~Austin, A.~Naor, and Y.~Peres.
\newblock The wreath product of {$\Bbb Z$} with {$\Bbb Z$} has {H}ilbert
  compression exponent {$\frac{2}{3}$}.
\newblock {\em Proc. Amer. Math. Soc.}, 137(1):85--90, 2009.

\bibitem{Bal92}
K.~Ball.
\newblock Markov chains, {R}iesz transforms and {L}ipschitz maps.
\newblock {\em Geom. Funct. Anal.}, 2(2):137--172, 1992.

\bibitem{Bal13}
K.~Ball.
\newblock The {R}ibe programme.
\newblock {\em Ast\'erisque}, (352):Exp. No. 1047, viii, 147--159, 2013.
\newblock S{\'e}minaire Bourbaki. Vol. 2011/2012. Expos{\'e}s 1043--1058.

\bibitem{BCL94}
K.~Ball, E.~A. Carlen, and E.~H. Lieb.
\newblock Sharp uniform convexity and smoothness inequalities for trace norms.
\newblock {\em Invent. Math.}, 115(3):463--482, 1994.

\bibitem{Ban32}
S.~Banach.
\newblock {\em Th\'eorie des op\'erations lin\'eaires}.
\newblock \'Editions Jacques Gabay, Sceaux, 1993.
\newblock Reprint of the 1932 original.

\bibitem{BLMN05}
Y.~Bartal, N.~Linial, M.~Mendel, and A.~Naor.
\newblock On metric {R}amsey-type phenomena.
\newblock {\em Ann. of Math. (2)}, 162(2):643--709, 2005.

\bibitem{Bau12}
F.~Baudier.
\newblock Quantitative nonlinear embeddings into {L}ebesgue sequence spaces.
\newblock To appear in J. Topol. Anal., preprint available at
  \url{http://arxiv.org/abs/1210.0588}, 2012.

\bibitem{BL00}
Y.~Benyamini and J.~Lindenstrauss.
\newblock {\em Geometric nonlinear functional analysis. {V}ol. 1}, volume~48 of
  {\em American Mathematical Society Colloquium Publications}.
\newblock American Mathematical Society, Providence, RI, 2000.

\bibitem{Bha97}
R.~Bhatia.
\newblock {\em Matrix analysis}, volume 169 of {\em Graduate Texts in
  Mathematics}.
\newblock Springer-Verlag, New York, 1997.

\bibitem{Bou86}
J.~Bourgain.
\newblock The metrical interpretation of superreflexivity in {B}anach spaces.
\newblock {\em Israel J. Math.}, 56(2):222--230, 1986.

\bibitem{Bou87}
J.~Bourgain.
\newblock Remarks on the extension of {L}ipschitz maps defined on discrete sets
  and uniform homeomorphisms.
\newblock In {\em Geometrical aspects of functional analysis (1985/86)}, volume
  1267 of {\em Lecture Notes in Math.}, pages 157--167. Springer, Berlin, 1987.

\bibitem{BMW86}
J.~Bourgain, V.~Milman, and H.~Wolfson.
\newblock On type of metric spaces.
\newblock {\em Trans. Amer. Math. Soc.}, 294(1):295--317, 1986.

\bibitem{BU07}
J.-C. Bourin and M.~Uchiyama.
\newblock A matrix subadditivity inequality for {$f(A+B)$} and {$f(A)+f(B)$}.
\newblock {\em Linear Algebra Appl.}, 423(2-3):512--518, 2007.

\bibitem{BDD65}
J.~Bretagnolle, D.~Dacunha-Castelle, and J.-L. Krivine.
\newblock Fonctions de type positif sur les espaces {$L^{p}$}.
\newblock {\em C. R. Acad. Sci. Paris}, 261:2153--2156, 1965.

\bibitem{BZ15}
J.~Brieussel and T.~Zheng.
\newblock Speed of random walks, isoperimetry and compression of finitely
  generated groups.
\newblock Preprint, available at \url{http://arxiv.org/abs/1510.08040}, 2015.

\bibitem{Car10}
E.~Carlen.
\newblock Trace inequalities and quantum entropy: an introductory course.
\newblock In {\em Entropy and the quantum}, volume 529 of {\em Contemp. Math.},
  pages 73--140. Amer. Math. Soc., Providence, RI, 2010.

\bibitem{CL08}
E.~A. Carlen and E.~H. Lieb.
\newblock A {M}inkowski type trace inequality and strong subadditivity of
  quantum entropy. {II}. {C}onvexity and concavity.
\newblock {\em Lett. Math. Phys.}, 83(2):107--126, 2008.

\bibitem{Chr73}
J.~P.~R. Christensen.
\newblock Measure theoretic zero sets in infinite dimensional spaces and
  applications to differentiability of {L}ipschitz mappings.
\newblock {\em Publ. D\'ep. Math. (Lyon)}, 10(2):29--39, 1973.
\newblock Actes du Deuxi{\`e}me Colloque d'Analyse Fonctionnelle de Bordeaux
  (Univ. Bordeaux, 1973), I, pp. 29--39.

\bibitem{DS97}
G.~David and S.~Semmes.
\newblock {\em Fractured fractals and broken dreams}, volume~7 of {\em Oxford
  Lecture Series in Mathematics and its Applications}.
\newblock The Clarendon Press, Oxford University Press, New York, 1997.
\newblock Self-similar geometry through metric and measure.

\bibitem{DLP13}
J.~Ding, J.~R. Lee, and Y.~Peres.
\newblock Markov type and threshold embeddings.
\newblock {\em Geom. Funct. Anal.}, 23(4):1207--1229, 2013.

\bibitem{Dix53}
J.~Dixmier.
\newblock Formes lin\'eaires sur un anneau d'op\'erateurs.
\newblock {\em Bull. Soc. Math. France}, 81:9--39, 1953.

\bibitem{Enf69}
P.~Enflo.
\newblock On the nonexistence of uniform homeomorphisms between
  {$L_{p}$}-spaces.
\newblock {\em Ark. Mat.}, 8:103--105, 1969.

\bibitem{Enf70}
P.~Enflo.
\newblock Uniform structures and square roots in topological groups. {I}, {II}.
\newblock {\em Israel J. Math. 8 (1970), 230-252; ibid.}, 8:253--272, 1970.

\bibitem{Enf76}
P.~Enflo.
\newblock Uniform homeomorphisms between {B}anach spaces.
\newblock In {\em S\'eminaire {M}aurey-{S}chwartz (1975--1976), {E}spaces,
  {$L^{p}$}, applications radonifiantes et g\'eom\'etrie des espaces de
  {B}anach, {E}xp. {N}o. 18}, page~7. Centre Math., \'Ecole Polytech.,
  Palaiseau, 1976.

\bibitem{Eps73}
H.~Epstein.
\newblock Remarks on two theorems of {E}. {L}ieb.
\newblock {\em Comm. Math. Phys.}, 31:317--325, 1973.

\bibitem{FJS88}
T.~Figiel, W.~B. Johnson, and G.~Schechtman.
\newblock Random sign embeddings from {$l^n_r,\;2<r<\infty$}.
\newblock {\em Proc. Amer. Math. Soc.}, 102(1):102--106, 1988.

\bibitem{GMN11}
O.~Giladi, M.~Mendel, and A.~Naor.
\newblock Improved bounds in the metric cotype inequality for {B}anach spaces.
\newblock {\em J. Funct. Anal.}, 260(1):164--194, 2011.

\bibitem{GN10}
O.~Giladi and A.~Naor.
\newblock Improved bounds in the scaled {E}nflo type inequality for {B}anach
  spaces.
\newblock {\em Extracta Math.}, 25(2):151--164, 2010.

\bibitem{GNS12}
O.~Giladi, A.~Naor, and G.~Schechtman.
\newblock Bourgain's discretization theorem.
\newblock {\em Ann. Fac. Sci. Toulouse Math. (6)}, 21(4):817--837, 2012.

\bibitem{GPP80}
E.~D. Gluskin, A.~Pietsch, and J.~Puhl.
\newblock A generalization of {K}hintchine's inequality and its application in
  the theory of operator ideals.
\newblock {\em Studia Math.}, 67(2):149--155, 1980.

\bibitem{Gro83}
M.~Gromov.
\newblock Filling {R}iemannian manifolds.
\newblock {\em J. Differential Geom.}, 18(1):1--147, 1983.

\bibitem{HN13}
T.~Hyt{\"o}nen and A.~Naor.
\newblock Pisier's inequality revisited.
\newblock {\em Studia Math.}, 215(3):221--235, 2013.

\bibitem{JMST79}
W.~B. Johnson, B.~Maurey, G.~Schechtman, and L.~Tzafriri.
\newblock Symmetric structures in {B}anach spaces.
\newblock {\em Mem. Amer. Math. Soc.}, 19(217):v+298, 1979.

\bibitem{JSZ85}
W.~B. Johnson, G.~Schechtman, and J.~Zinn.
\newblock Best constants in moment inequalities for linear combinations of
  independent and exchangeable random variables.
\newblock {\em Ann. Probab.}, 13(1):234--253, 1985.

\bibitem{Kad58}
M.~{\u{I}}. Kadec{\cprime}.
\newblock Linear dimension of the spaces {$L_{p}$} and {$l_{q}$}.
\newblock {\em Uspehi Mat. Nauk}, 13(6 (84)):95--98, 1958.

\bibitem{KP62}
M.~I. Kadec and A.~Pe{\l}czy{\'n}ski.
\newblock Bases, lacunary sequences and complemented subspaces in the spaces
  {$L_{p}$}.
\newblock {\em Studia Math.}, 21:161--176, 1961/1962.

\bibitem{Kal08}
N.~J. Kalton.
\newblock The nonlinear geometry of {B}anach spaces.
\newblock {\em Rev. Mat. Complut.}, 21(1):7--60, 2008.

\bibitem{KR08}
N.~J. Kalton and N.~L. Randrianarivony.
\newblock The coarse {L}ipschitz geometry of {$l_p\oplus l_q$}.
\newblock {\em Math. Ann.}, 341(1):223--237, 2008.

\bibitem{LT91}
M.~Ledoux and M.~Talagrand.
\newblock {\em Probability in {B}anach spaces}, volume~23 of {\em Ergebnisse
  der Mathematik und ihrer Grenzgebiete (3) [Results in Mathematics and Related
  Areas (3)]}.
\newblock Springer-Verlag, Berlin, 1991.
\newblock Isoperimetry and processes.

\bibitem{LNP09}
J.~R. Lee, A.~Naor, and Y.~Peres.
\newblock Trees and {M}arkov convexity.
\newblock {\em Geom. Funct. Anal.}, 18(5):1609--1659, 2009.

\bibitem{Li14}
S.~Li.
\newblock Markov convexity and nonembeddability of the {H}eisenberg group.
\newblock Preprint, available at \url{http://arxiv.org/abs/1404.6751}, 2014.

\bibitem{LT76}
E.~H. {Lieb} and W.~E. {Thirring}.
\newblock {Inequalities for the moments of the eigenvalues of the Schr\"odinger
  Hamiltonian and their relation to Sobolev inequalities}.
\newblock In {\em {Studies in Mathematical Physics}}, pages 269--303. Princeton
  University Press, 1976.

\bibitem{LT77}
J.~Lindenstrauss and L.~Tzafriri.
\newblock {\em Classical {B}anach spaces. {I}}.
\newblock Springer-Verlag, Berlin-New York, 1977.
\newblock Sequence spaces, Ergebnisse der Mathematik und ihrer Grenzgebiete,
  Vol. 92.

\bibitem{LMN02}
N.~Linial, A.~Magen, and A.~Naor.
\newblock Girth and {E}uclidean distortion.
\newblock {\em Geom. Funct. Anal.}, 12(2):380--394, 2002.

\bibitem{Lov88}
G.-M. L{\"o}vblom.
\newblock Uniform homeomorphisms between unit balls in {$L_p$}-spaces.
\newblock {\em Math. Scand.}, 62(2):294--302, 1988.

\bibitem{Lus86}
F.~Lust-Piquard.
\newblock In\'egalit\'es de {K}hintchine dans {$C_p\;(1<p<\infty)$}.
\newblock {\em C. R. Acad. Sci. Paris S\'er. I Math.}, 303(7):289--292, 1986.

\bibitem{Man72}
P.~Mankiewicz.
\newblock On {L}ipschitz mappings between {F}r\'echet spaces.
\newblock {\em Studia Math.}, 41:225--241, 1972.

\bibitem{Mat02}
J.~Matou{\v{s}}ek.
\newblock {\em Lectures on discrete geometry}, volume 212 of {\em Graduate
  Texts in Mathematics}.
\newblock Springer-Verlag, New York, 2002.

\bibitem{Mau03}
B.~Maurey.
\newblock Type, cotype and {$K$}-convexity.
\newblock In {\em Handbook of the geometry of {B}anach spaces, {V}ol.\ 2},
  pages 1299--1332. North-Holland, Amsterdam, 2003.

\bibitem{McC67}
C.~A. McCarthy.
\newblock {$c_{p}$}.
\newblock {\em Israel J. Math.}, 5:249--271, 1967.

\bibitem{MN04}
M.~Mendel and A.~Naor.
\newblock Euclidean quotients of finite metric spaces.
\newblock {\em Adv. Math.}, 189(2):451--494, 2004.

\bibitem{MN06}
M.~Mendel and A.~Naor.
\newblock Some applications of {B}all's extension theorem.
\newblock {\em Proc. Amer. Math. Soc.}, 134(9):2577--2584 (electronic), 2006.

\bibitem{MN07}
M.~Mendel and A.~Naor.
\newblock Scaled {E}nflo type is equivalent to {R}ademacher type.
\newblock {\em Bull. Lond. Math. Soc.}, 39(3):493--498, 2007.

\bibitem{MN08}
M.~Mendel and A.~Naor.
\newblock Metric cotype.
\newblock {\em Ann. of Math. (2)}, 168(1):247--298, 2008.

\bibitem{MN13-convexity}
M.~Mendel and A.~Naor.
\newblock Markov convexity and local rigidity of distorted metrics.
\newblock {\em J. Eur. Math. Soc. (JEMS)}, 15(1):287--337, 2013.

\bibitem{MN13-barycentric}
M.~Mendel and A.~Naor.
\newblock Spectral calculus and {L}ipschitz extension for barycentric metric
  spaces.
\newblock {\em Anal. Geom. Metr. Spaces}, 1:163--199, 2013.

\bibitem{MN14-calculus}
M.~Mendel and A.~Naor.
\newblock Nonlinear spectral calculus and super-expanders.
\newblock {\em Publ. Math. Inst. Hautes \'Etudes Sci.}, 119:1--95, 2014.

\bibitem{MN15-hadamard}
M.~Mendel and A.~Naor.
\newblock Expanders with respect to {H}adamard spaces and random graphs.
\newblock {\em Duke Math. J.}, 164(8):1471--1548, 2015.

\bibitem{Nao12}
A.~Naor.
\newblock An introduction to the {R}ibe program.
\newblock {\em Jpn. J. Math.}, 7(2):167--233, 2012.

\bibitem{Nao14}
A.~Naor.
\newblock Comparison of metric spectral gaps.
\newblock {\em Anal. Geom. Metr. Spaces}, 2:Art. 1, 2014.

\bibitem{NP08}
A.~Naor and Y.~Peres.
\newblock Embeddings of discrete groups and the speed of random walks.
\newblock {\em Int. Math. Res. Not. IMRN}, pages Art. ID rnn 076, 34, 2008.

\bibitem{NP11}
A.~Naor and Y.~Peres.
\newblock {$L_p$} compression, traveling salesmen, and stable walks.
\newblock {\em Duke Math. J.}, 157(1):53--108, 2011.

\bibitem{NPSS06}
A.~Naor, Y.~Peres, O.~Schramm, and S.~Sheffield.
\newblock Markov chains in smooth {B}anach spaces and {G}romov-hyperbolic
  metric spaces.
\newblock {\em Duke Math. J.}, 134(1):165--197, 2006.

\bibitem{NS02}
A.~Naor and G.~Schechtman.
\newblock Remarks on non linear type and {P}isier's inequality.
\newblock {\em J. Reine Angew. Math.}, 552:213--236, 2002.

\bibitem{NS11}
A.~Naor and L.~Silberman.
\newblock Poincar\'e inequalities, embeddings, and wild groups.
\newblock {\em Compos. Math.}, 147(5):1546--1572, 2011.

\bibitem{Oht09}
S.-i. Ohta.
\newblock Markov type of {A}lexandrov spaces of non-negative curvature.
\newblock {\em Mathematika}, 55(1-2):177--189, 2009.

\bibitem{Ost13}
M.~I. Ostrovskii.
\newblock {\em Metric embeddings}, volume~49 of {\em De Gruyter Studies in
  Mathematics}.
\newblock De Gruyter, Berlin, 2013.
\newblock Bilipschitz and coarse embeddings into Banach spaces.

\bibitem{Pal36}
R.~E. A.~C. Paley.
\newblock Some theorems on abstract spaces.
\newblock {\em Bull. Amer. Math. Soc.}, 42(4):235--240, 1936.

\bibitem{PZ30}
R.~E. A.~C. Paley and A.~Zygmund.
\newblock On some series of functions, (1).
\newblock {\em Math. Proc. Cambridge Philos. Soc.}, 26(3):337--357, 1930.

\bibitem{Pis78}
G.~Pisier.
\newblock Some results on {B}anach spaces without local unconditional
  structure.
\newblock {\em Compositio Math.}, 37(1):3--19, 1978.

\bibitem{Pis86}
G.~Pisier.
\newblock Probabilistic methods in the geometry of {B}anach spaces.
\newblock In {\em Probability and analysis ({V}arenna, 1985)}, volume 1206 of
  {\em Lecture Notes in Math.}, pages 167--241. Springer, Berlin, 1986.

\bibitem{Pis98}
G.~Pisier.
\newblock Non-commutative vector valued {$L_p$}-spaces and completely
  {$p$}-summing maps.
\newblock {\em Ast\'erisque}, (247):vi+131, 1998.

\bibitem{PX03}
G.~Pisier and Q.~Xu.
\newblock Non-commutative {$L^p$}-spaces.
\newblock In {\em Handbook of the geometry of {B}anach spaces, {V}ol.\ 2},
  pages 1459--1517. North-Holland, Amsterdam, 2003.

\bibitem{Rib76}
M.~Ribe.
\newblock On uniformly homeomorphic normed spaces.
\newblock {\em Ark. Mat.}, 14(2):237--244, 1976.

\bibitem{Ros70}
H.~P. Rosenthal.
\newblock On the subspaces of {$L^{p}$} {$(p>2)$} spanned by sequences of
  independent random variables.
\newblock {\em Israel J. Math.}, 8:273--303, 1970.

\bibitem{Sch38}
I.~J. Schoenberg.
\newblock Metric spaces and positive definite functions.
\newblock {\em Trans. Amer. Math. Soc.}, 44(3):522--536, 1938.

\bibitem{Suk96}
F.~A. Sukochev.
\newblock Non-isomorphism of {$L_p$}-spaces associated with finite and infinite
  von {N}eumann algebras.
\newblock {\em Proc. Amer. Math. Soc.}, 124(5):1517--1527, 1996.

\bibitem{Tal93}
M.~Talagrand.
\newblock Isoperimetry, logarithmic {S}obolev inequalities on the discrete
  cube, and {M}argulis' graph connectivity theorem.
\newblock {\em Geom. Funct. Anal.}, 3(3):295--314, 1993.

\bibitem{VW10}
E.~Veomett and K.~Wildrick.
\newblock Spaces of small metric cotype.
\newblock {\em J. Topol. Anal.}, 2(4):581--597, 2010.

\bibitem{Wag00}
R.~Wagner.
\newblock Notes on an inequality by {P}isier for functions on the discrete
  cube.
\newblock In {\em Geometric aspects of functional analysis}, volume 1745 of
  {\em Lecture Notes in Math.}, pages 263--268. Springer, Berlin, 2000.

\bibitem{WW75}
J.~H. Wells and L.~R. Williams.
\newblock {\em Embeddings and extensions in analysis}.
\newblock Springer-Verlag, New York-Heidelberg, 1975.
\newblock Ergebnisse der Mathematik und ihrer Grenzgebiete, Band 84.

\bibitem{Woj91}
P.~Wojtaszczyk.
\newblock {\em Banach spaces for analysts}, volume~25 of {\em Cambridge Studies
  in Advanced Mathematics}.
\newblock Cambridge University Press, Cambridge, 1991.

\end{thebibliography}

 \end{document}